\documentclass[11pt,oneside,reqno]{amsart}
 \usepackage{amsmath}
\usepackage{amssymb}
\usepackage{color}
\usepackage{cite}
\usepackage{enumerate}
\usepackage{fancyhdr}
\usepackage{graphicx}
\usepackage{bm}
\usepackage{soul,xcolor}

\newcommand{\supp}{{\mathrm{supp}}}
\usepackage[margin=3.5cm]{geometry}

\usepackage{subfigure}
\usepackage{algorithm}
\usepackage{algorithmic}
\usepackage{float}
\usepackage{lipsum}


\numberwithin{equation}{section}

\def\span{\textrm{ span }}

\def\R{ {\mathbb R} }

\def\Z{ {\mathbb Z} }

\def\supp{\textrm{supp }}

\newtheorem{theorem}{Theorem}[section]
\newtheorem{lemma}[theorem]{Lemma}
\newtheorem{proposition}[theorem]{Proposition}

\newtheorem{corollary}[theorem]{Corollary}

\newtheorem{remark}[theorem]{Remark}

\newtheorem{example}[theorem]{Example}
\newtheorem{question}[theorem]{Question}

\newtheorem{assumption}[theorem]{Assumption}

\def\G{ {\mathcal G} }
\def\R{ {\mathbb R} }
\def\Z{ {\mathbb Z} }

\def\U{ {\bf U} }

\let\oldv\v
\def\v{{\bf v}}

\def\x{{\bf x}}

\def\span{{\rm{span}}}
\def\diag{{\rm{diag}}}
\numberwithin{equation}{section}

\title{Shift-invariant spaces on finite undirected graphs}
\author{ Yang Chen, Seok-Young Chung and  Qiyu Sun}
\address{Chen: Key Laboratory of Computing and Stochastic Mathematics (Ministry of Education), School of Mathematics and Statistics,
 Hunan Normal University, Changsha, Hunan 410081, P. R. China}
 \email{ychenmath@hunnu.edu.cn}

 \address{Chung: Department of Mathematics, Michigan State University, East Lansing, MI 48824, USA}    %
\email{sychung@msu.edu}

\address{Sun: School of Data, Mathematical, and Statistical Sciences, University of Central Florida, Orlando, FL 32816, USA}
\email{qiyu.sun@ucf.edu}

\begin{document}
	\date{}
	\pagestyle{plain}
	
	\date{}
\begin{abstract}
Shift-invariant spaces (SISs) on the real line provide a natural framework for representing,  analyzing and processing signals with inherent shift-invariant structure. 
In this paper, we extend this framework to the finite undirected graph setting by introducing the concept of graph shift-invariant spaces (GSISs). We examine several properties of GSISs, including their characterization via range functions and fiber functions in the Fourier domain, their connections to shift-invariant filters and polynomial filters, the frame and Riesz basis structures of finitely generated GSISs, and their intricate relationships with bandlimited spaces, finitely generated GSISs, and  graph reproducing kernel Hilbert spaces  with shift-invariant reproducing kernels (SIGRKHSs). Our analysis reveals several distinctions between SISs on the line and GSISs, such as 
the shift-invariance of the frame operator, the existence of shift-invariant dual frames, the emergence of fractional shift-invariance, and the interrelationships among GSISs, finitely generated GSISs, SIGRKHSs and bandlimited spaces.

In this paper, we also introduce a spectral decomposition of the identity associated with graph shifts and  propose 
a novel definition of the graph Fourier transform (GFT) of spectral type,  together with explicit formulations for the GFTs on complete graphs and circulant graphs. In addition, we establish a clear connection between polynomial filters and shift-invariant filters, and we derive  a graph uncertainty principle 
 governing the essential supports of a
nonzero graph signal and its GFT.

\end{abstract}

\maketitle
%
%

\section{Introduction}

A Shift-Invariant Space (SIS) $U$ on the real line is a linear space of real functions being invariant under integer shifts, i.e., $f(\cdot-k)\in U$ for all $f\in U$ and $k\in {\mathbb Z}$. 
Shift-invariant spaces provide a natural framework for capturing features preserved under integer shifts and for analyzing signals with inherent shift-invariant structure. This property makes them particularly powerful across a broad range of mathematical and engineering disciplines, including wavelet analysis,   Gabor analysis, approximation theory,  frame theory, and sampling theory. In these fields,  shift invariance is crucial 
 for  constructing well-structured bases and frames, and enabling efficient signal representation and fast data processing \cite{akram2001b, akram2001,  akram2005, bownik2000, bownik2003, chui2006, Daubechies1992, deBoor1994, Grochenig2001, Mallat1998, ron1997, ron1995,  Unser2000}.

Graph Signal Processing (GSP) extends classical signal processing concepts to represent, process, analyze,
and visualize 
signals and  
datasets defined on networks or irregular domains
\cite{dong2020, Isufi2024, Ortega2022, ortega2018, sandryhaila2013,  sandryhaila2014, shuman13, Stankovic2019, yan2024}.
Similar to the unit shift on the real line and the one-sample delay in classical signal processing, the concept of graph shifts has been introduced in GSP \cite{emirov2022, Ortega2022, sandryhaila2013, Stankovic2019}. These graph shifts  are usually designed to capture the structural characteristics of the underlying graph, with common choices including the adjacency matrix, the Laplacian matrix, and their variants. 
Serving as fundamental building blocks, graph shifts provide the foundation from which most GSP tools and techniques are derived, including  the Graph Fourier Transform (GFT) and  shift-invariant filtering procedure.
In analogy with the unit directional shifts in high-dimensional Euclidean spaces and unit-delay in multi-dimensional signal processing, we select real-valued, symmetric and commutative graph shifts 
${\bf S}_1, \ldots, {\bf S}_d$  
as the basic structural  blocks of this paper; see Assumption \ref{graphshift.assumption}.

Similar to the shift-invariant space on the real line,  a Graph Shift-Invariant Space (GSIS) $U$ on  a finite undirected graph ${\mathcal G}$ of order $N\ge 2$ is defined as
the linear space of graph signals that remains invariant under the graph shifts, i.e., 
\begin{equation}\label{sis.def}
{\bf S}_l{\bf x}\in U \ \  {\rm for \ all}  \ 1\le l\le d \
\ {\rm  and }\  \ {\bf x}\in U.
\end{equation} 
 Numerical simulations in \cite{Chung2024} suggest that GSIS may offer greater flexibility and practical interpretability for modeling graph signals and datasets that exhibit both shift-invariance structure and strong localization patterns. Such scenarios include transportation delays and diffusion phenomena in networked systems, where GSIS demonstrates advantages over widely used approaches such as the bandlimited model and the Graph Reproducing Kernel Hilbert space (GRKHS) model with shift-invariant reproducing kernels.

The concept of GSIS was introduced in \cite{Chung2024}, where its bandlimiting, reproducing kernel, and sampling properties were investigated.  Let ${\pmb \Lambda}=\{{\pmb \lambda}(n)\ | \ 1\le n\le N\}\subset {\mathbb R}^d$ be the joint spectrum of the graph shifts
${\bf S}_1, \ldots, {\bf S}_d$; see  \eqref{jointspectrum.def}.
Under the  distinct 
joint spectrum  assumption for the graph shifts, 
i.e., 
\begin{equation}\label{distincteigenvalue.assumption}
    {\pmb \lambda}(n)\ne {\pmb \lambda(n')} \  {\rm for \ all}\ 1\le n\ne n'\le N,
\end{equation}
it is shown in \cite{Chung2024} that
the terminologies of bandlimitedness, shift-invariance, and principal shift-invariance for a linear space of graph signals are essentially equivalent in the undirected finite graph setting.  We remark that these spaces characterize graph signals from different perspectives: bandlimitedness describes them in the Fourier domain, shift-invariance emphasizes their invariance in the spatial domain, and principal shift-invariance highlights their spatial–frequency localized representation.

A fundamental limitation of the existing GSIS theory is its heavy reliance on the distinct joint spectrum assumption \eqref{distincteigenvalue.assumption}. However, structure‑preserving graph shifts, such as the Laplacians on circulant and complete graphs, often exhibit eigenvalues with high multiplicity and therefore fail to satisfy this distinct joint spectrum condition; see \cite{Chung1997, Merris1994, Simic2015} and also Example \ref{uncertainty.example} and  Appendix \ref{circulantgraph.section} of this paper. This, in turn, introduces ambiguity into Fourier-domain signal representations, thereby compromising the performance of fundamental GSP tasks \cite{ Ortega2022, ortega2018,  Stankovic2019}.
 These challenges motivate us to study the GFT and GSISs without imposing the distinct joint spectrum constraint \eqref{distincteigenvalue.assumption} on the graph shifts ${\bf S}_1, \ldots, {\bf S}_d$. In this paper, we introduce a novel definition of the GFT and investigate bandlimiting, frame, and reproducing kernel properties of GSISs. 
 We  observe that removing the distinct joint spectrum constraint \eqref{distincteigenvalue.assumption} leads to several significant differences in the resulting GSIS theory.

\smallskip

The main contributions of this paper are summarized as follows:

\begin{itemize}
\item[{(i)}] We introduce a spectral decomposition  of the identity 
associated with the graph shifts ${\bf S}_1, \ldots, {\bf S}_d$; see \eqref{projection.eq0}, \eqref{projection.eq1}, \eqref{projection.eq2} and \eqref{projection.eq3}. 
Based on this spectral decomposition, we define a new GFT {\bf independently} of the eigendecomposition of the graph shifts, under which regular signals exhibit energy mainly concentrated in the low-frequency components of the Fourier domain; see \eqref{projection.eq4}, \eqref {projection.eq5},  \eqref{GFT.def} and Theorem \ref{bandapproximation.thm}.
 In addition, we characterize the sets of polynomial filters and shift-invariant filters, and show that the commutative semi-group of polynomial filters constitutes the {\bf center} of the noncommutative semi-group of shift-invariant filters; see Theorems \ref{polynomialfilter.thm}, \ref{polynomialinvariance.thm}, \ref{shiftinvariantfilter.thm}, and \ref{center.thm}.

\item[{(ii)}] We introduce the graph range function and the graph dimension function for a GSIS, and characterize a GSIS in the Fourier domain via its graph range function; see \eqref{rangefunction.def-1}, \eqref{rangefunction.def0}, \eqref{dimension.def} and Theorem \ref{rangecharacterization.thm}. This characterization parallels to the classical range function approach for a SIS on the real line \cite{bownik2000, bownik2003, deBoor1994}. Consequently, every GSIS can be described both as the range space of some shift-invariant filter and as the kernel space of another shift-invariant filter; see Theorem \ref{filterrange.thm}.
We remark that a GSIS is always pseudo-inverse shift-invariant, and it is also {\bf fractional} shift-invariant provided that all graph shifts are positive semidefinite; see Remarks \ref{pseudoinversesis.rem} and \ref{fractional.rem}. In contrast, in the  real line setting, inverse shift-invariance always holds whereas fractional shift-invariance typically fails \cite{akram2010, akram2011, cabrelli2016, fuhr2014, hardin2018}.

\item[{(iii)}] We show that a linear space of graph signals is bandlimited if and only if it is super–shift-invariant; see Theorem \ref{bandlimited.thm}. Consequently, every bandlimited space can be realized as the range space of some polynomial filter; see Corollary \ref{bandlimitedpolynomialfilter.cor} and cf. Theorem \ref{filterrange.thm}. This result recovers the conclusion in \cite{Chung2024} that a linear space of graph signals is bandlimited if and only if it is shift-invariant, provided that the distinct joint spectrum condition \eqref{distincteigenvalue.assumption} is satisfied, since in that scenario every shift-invariant filter reduces to a polynomial filter \cite{emirov2022, sandryhaila2013}. We remark that while every bandlimited space is a GSIS, the converse does {\bf not} necessarily hold without imposing the constraint \eqref{distincteigenvalue.assumption} on the joint spectrum of the graph shifts.

\item[{(iv)}] If 
the graph shifts ${\bf S}_1, \ldots, {\bf S}_d$ are invertible (as is the case for the unit shift operator on the line), then one may verify that
\begin{equation}\label{maximalsis.eq0}
{\bf S}_lU=U, \quad 1\le l\le d,
\end{equation}
hold for every GSIS $U$.
In Theorem \ref{sisequality.thm}, we 
use the dimension function of a GSIS $U$ to determine whether  a GSIS $U$ satisfies
\eqref{maximalsis.eq0} without requiring the invertibility of the graph shifts. Furthermore, 
in Theorem \ref{maximalsis.thm},    we show that ${\bf S}_1\ldots {\bf S}_dU$  is  the {\bf maximal} shift-invariant subspace  $W$ of a GSIS $U$ such that the space $W$ and its shifted versions ${\bf S}_l W, 1\le l\le d$, are the same, i.e., 
 \begin{equation}\label{maximalsis.eq1}
{\bf S}_l W=W,\quad  1\le l\le d.
\end{equation}

\item [{(v)}] We introduce the fiber function for a finitely generated GSIS and use it to characterize a finitely-generated GSIS; see \eqref{fiber.def} and Theorem \ref{fiber.section}, and cf. the analogous fiber characterizations of SISs in the  real line setting \cite{bownik2000, bownik2003, deBoor1994}. We show that every GSIS {\bf is} finitely generated, and that the length of a finitely generated GSIS coincides with the maximal bound of its dimension function; see Theorem \ref{FGSIS.thm} and Corollaries \ref{length.cor} and \ref{wholespacesis.cor}. Consequently, we recover the conclusion in \cite{Chung2024} that every GSIS is generated by a single graph signal,  where the distinct joint spectrum condition \eqref{distincteigenvalue.assumption} is imposed. 

\item[{(vi)}] For a  finitely-generated GSIS ${\mathcal S}(\Phi)$, we  provide a frame bound estimate for the generating system $\big\{{\mathbf S}_1^{\alpha_1}\cdots {\bf S}_d^{\alpha_d} \phi\ |\ \alpha_1, \ldots, \alpha_d\in {\mathbb Z}_+,\ \phi\in \Phi\big\}$;
 see Theorem \ref{frame.thm}. Unlike in the real line setting where the frame bound is governed solely by the Grammian of the fiber associated with the generator, the frame bound in Theorem \ref{frame.thm} is expressed as the product of {\bf two} quantities: one determined by the Grammian of the graph fiber, and the other 
 depending on  the distribution of distinct eigenvalues of the graph shifts.
 We further provide a characterization of the shift-invariant dual frame for a finitely generated GSIS, together with a necessary and sufficient condition for the existence of such a dual frame; see Theorems \ref{dualframe.thm} and \ref{dualframeexistence.thm}.   Remarkably,
 unlike in the  real line setting where a shift-invariant dual frame {\bf always} exists \cite{bownik2000, bownik2003,  christensen2004,  deBoor1994, hammod2011}, in the graph setting this is {\bf not} necessarily the case; see Corollary \ref{dualframe.PGSIS.cor} and   Example \ref{uncertainty.example.part2} for a detailed analysis of the existence of shift-invariant dual frames
 on a complete graph.
 A possible reason could be that, unlike in the real line setting, the frame operator $S$ in \eqref{frameoperator.def}
 does {\bf not} commute with the graph shifts ${\bf S}_1, \ldots, {\bf S}_d$ in general, see Theorem \ref{frameoperator.thm},  Corollary \ref{frameoperator.cor} and Proposition \ref{dualframeCompletegraph.pr}.
 In Section 
 \ref{FGSIS.section}, we also explore the Riesz property for the finitely-generated  GSIS; see Theorem \ref{riesz.thm}.

 \item[{(vii)}] For a linear space of graph signals equipped with the standard inner product, we show that it is shift-invariant {\bf if and only if} it is  a GRKHS whose reproducing kernel is a shift-invariant filter, and that it is bandlimited if and only if it is a GRKHS whose reproducing kernel is a polynomial filter; see Theorems \ref{rkhs.thm1} and \ref{rkhs.bandlimited.thm}. Consequently, we recover the conclusion in \cite{Chung2024} that every GSIS is a GRKHS with a shift-invariant reproducing kernel, provided that the distinct joint spectrum assumption \eqref{distincteigenvalue.assumption} holds. In addition, we provide a GFT-based characterization of the inner product of a GRKHS with a shift-invariant reproducing kernel, as well as a characterization of the isometric property of the shift-invariant linear operator between two GRKHSs with shift-invariant reproducing kernels; see Theorems \ref{decomposition.thm}, \ref{decomposition.polynomial.thm}, and \ref{rkhsisometric.thm}.
 
\end{itemize}

\smallskip

The paper is organized as follows.
In Section \ref{gft.subsection}, we propose a spectral decomposition of the identity associated with the graph shifts ${\bf S}_1, \ldots, {\bf S}_d$, and introduce a new definition of the GFT of spectral  type, which is independent of the choice of orthonormal basis in the eigen-decomposition of the graph shifts. In Section \ref{polynomialF.subsection}, we examine polynomial filters of the graph shifts and investigate their multiplier property in the Fourier domain, along with the spectral decomposition and invariance induced by the eigendecomposition of the graph shifts. In Section \ref{shiftinvariantF.subsection}, we present the spectral characterization of shift-invariant filters and identify the center of the noncommutative semigroup of shift-invariant filters.
In Section \ref{rangefunction.section}, we introduce the range function to characterize GSISs in the Fourier domain. In Section \ref{dimension.subsection}, we examine the dimension function properties of the sum of two GSISs, as well as the orthogonal complement of a GSIS. In Section \ref{filterrange.section}, we establish a connection between GSISs and shift-invariant filters and provide a spectral decomposition
for shift-invariant linear operators between two GSISs. In Section \ref{bandlimited.section}, we define the bandlimited space and demonstrate that it is super-shift-invariant, and therefore qualifies as a GSIS.  In Section \ref{maximalsis.section}, we examine whether a GSIS is the same as its shifted versions, and we further identify the maximal shift-invariant subspace of a GSIS that coincides with its shifted versions. In Section \ref{fiber.section}, we introduce the fiber function associated with  generators and use it to characterize finitely generated GSISs, and to show that every GSIS is finitely generated,  with the number of generators equal to the maximal bound on the dimension function of the GSIS.
In Section \ref{frame.section}, we consider the frame property of the shift-invariant system associated with a finitely generated GSIS, provide  characterizations for the shift-invariance of the frame operator and for the shift-invariant dual frame, and find a  necessary and sufficient condition for the existence of such a shift-invariant dual frame.
In Section \ref{riesz.section}, we provide a Fourier-domain characterization of when a finite system,
consisting of finitely many shifts of certain generators,  forms a Riesz basis for a finitely generated GSIS. In Section \ref{rkhs.section}, we consider GRKHSs with shift-invariant reproducing kernels, show that every GSIS is essentially a GRKHS of this type, and  provide a characterization for the inner product  of a GRKHS with shift-invariant kernels
in the Fourier domain. In Appendix \ref{circulantgraph.section}, we provide an explicit formulation of the  GFT on a  circulant graph. 
In Appendix \ref{UP.subsection}, we introduce a quantity associated with a  pair of nonempty sets in the spatial–Fourier domain, use it to derive a sufficient condition for the pair to form a strong annihilating pair, and thereby establish uncertainty principles governing the essential supports of a nonzero graph signal and its GFT.

\section{Spectral decomposition of the identity, graph Fourier transforms, polynomial filters and shift-invariant filters}
\label{preliminaries.section}

In this paper, we assume that the underlying graph ${\mathcal G}=(V, E, {\bf W})$
is 
    undirected, simple and finite, where we denote
    its set of vertices by $V$, its set of edges by $E\subset V\times V$, and its weight on edges by ${\bf W}=[w(i,j)]_{i,j\in V}$, and its order  by $N\ge 2$.

 A {\em graph shift}\ ${\bf S}$ on the graph ${\mathcal G}$,
represented by a matrix $\mathbf{S} =  [s(i,j)]_{i,j \in V}$, is defined such that 
$s(i,j) = 0$ unless $i=j$  or $(i, j)\in E$.
 The concept of a graph shift in GSP  plays a role analogous to the one-sample delay in classical signal processing and to the unit shift on the real line in the SIS theory.
 Graph shifts are intentionally designed to exhibit specific properties and carry a physical interpretation. They serve as the foundational building blocks from which many tools and techniques in GSP are derived to perform essential tasks such as denoising, compression, and feature extraction
\cite{emirov2022, gavili2017, Ortega2022, sandryhaila2013}. 
 Illustrative examples are
 the weighted adjacency matrix ${\mathbf W}$, 
   the Laplacian matrix ${\mathbf L}= \mathbf{D} -{ \mathbf W}$,
     the symmetrically normalized Laplacian ${\mathbf L}^{\rm sym} := \mathbf{D}^{-1/2}\mathbf{L}\mathbf{D}^{-1/2}$
  and their variants, 
   where ${\bf D}={\rm diag}[d(i)]_{i\in V}$ is the degree matrix with diagonal entries $d(i)=\sum_{j\in V} w(i,j), i\in V$.     Similar to the unit directional shifts in the Euclidean
space  ${\mathbb R}^d$, in this paper we impose the following  assumption regarding the choice of graph shifts
 ${\bf S}_1, \ldots, {\bf S}_d$ on the graph ${\mathcal G}$:

   \begin{assumption}\label{graphshift.assumption}
Graph shifts
${\bf S}_1, \ldots, {\bf S}_d$ are  real-valued, symmetric and commutative, i.e., 
\begin{equation} {\bf S}_l\in {\mathbb R}^{V\times V},\ \  {\bf S}_l^T={\bf S}_l\ \ {\rm and}\ \
{\bf S}_l {\bf S}_{l'}={\bf S}_{l'}{\bf S}_l \ 
\ {\rm for \ all}  \ 1\le l, l'\le d.
\end{equation}
\end{assumption}

\smallskip

The Graph Fourier Transform (GFT) is a fundamental tool in GSP 
to decompose graph signals into distinct frequency components and  to effectively represent them through diverse modes of variation \cite{Chen2023, Cheng2023, Chung1997, Chung2023, Isufi2024, Ortega2022, Stankovic2019,   Ricaud2019, shuman13}.
 A conventional definition of the GFT relies on the orthonormal basis obtained from the eigendecomposition \eqref{eigen.dec} of the graph shifts ${\bf S}_1, \ldots, {\bf S}_d$; 
 see \eqref{GFT.distinct}. However, the orthonormal basis in \eqref{eigen.dec} is not unique, which raises a natural question: can we define a GFT that is independent of the choice of orthonormal basis in the eigendecomposition?  In Section \ref{gft.subsection}, we introduce a spectral decomposition of the identity associated with the graph shifts  ${\bf S}_1, \ldots, {\bf S}_d$ to address this issue; see \eqref{projection.eq0}-\eqref{projection.eq3}. The new definition of the GFT, presented in \eqref{GFT.def}, is of spectral type and remains independent of the specific orthonormal basis chosen in \eqref{eigen.dec}, cf. the graphon Fourier transform \cite{Ghandehari22}. Under this formulation of GFT, the Parseval identity holds and regular signals exhibit energy concentration primarily in the low-frequency components of the Fourier domain; see \eqref{parsevelidentity} and Theorem \ref{bandapproximation.thm}.

\smallskip

Polynomial filters of  graph shifts have been extensively used in GSP \cite{coutino2019, emirov2022, hammod2011, Isufi2024, isufi2017,  Ortega2022,   sandryhaila2013,  sandryhaila2014,  segarra2017,  shuman13, shuman2018,   waheed2018}.  A polynomial filtering procedure operates as a shift-and-sum mechanism applied to the input signal in the spatial domain. It can be implemented in a distributed manner, where each vertex of the graph is equipped with a data processing unit with limited storage and computational capabilities, along with a communication module for data exchange exclusively with neighboring agents. In Section \ref{polynomialF.subsection}, we show that a polynomial filter is a multiplier in the Fourier domain and the orthogonal projections ${\bf P}_m, 1\le m\le M$, used in the definition of the GFT are polynomial filters;
see \eqref{polynomialGFT} and Theorem \ref{projection.polynomialfilter.thm}.
We remark that the polynomial filter property of the orthogonal projections associated with the graph shifts plays a crucial role in our analysis of the GSIS theory. 
In Section \ref{polynomialF.subsection}, we also characterize polynomial filters via its spectral decomposition and invariance  induced by the eigendecomposition \eqref{eigen.dec}; 
see Theorems \ref{polynomialfilter.thm} and \ref{polynomialinvariance.thm}.

\smallskip 

Shift-invariant filters have been used in graph reproducing kernel space and stationary graph signal processing \cite{chen2025, girault2015, jian2022, kotzag2019, marques2017,  ortega2018, perraudin2017,rmero2017, sandryhaila2013,   segarra2018, seto2014,  shuman13, smola2003, ward2020, zheng2025}. 
In Theorem \ref{shiftinvariantfilter.thm},  we provide a characterization of a shift-invariant filter in the spectral domain, cf. Theorem \ref{polynomialfilter.thm} for polynomial filters.
One may easily verify that the set  ${\mathcal T}$  of shift-invariant filters is a unital {\bf noncommutative} semi-group, and the set  ${\mathcal P}$ of polynomial filters is a commutative semi-subgroup. In  Theorem \ref{center.thm}, we show that  ${\mathcal P}$ is the {\bf center} of the   noncommutative semi-group ${\mathcal T}$. Thereby  we recover the result in \cite[Theorem A.3]{emirov2022} that every shift-invariant filter is a polynomial filter,  under the additional distinct 
joint spectrum  assumption \eqref{distincteigenvalue.assumption}; see Corollary \ref{shiftinvariantfilter.cor}.

\subsection{Graph Fourier transform and spectral decomposition of the identity associated with graph shifts}
\label{gft.subsection}
Let ${\bf S}_1, \ldots, {\bf S}_d$ be the graph shifts satisfying Assumption \ref{graphshift.assumption}. 
Then  we can find an orthogonal matrix ${\bf U}=[{\bf u}_1,\ldots, {\bf u}_N]$ so that they
can be  diagonalized simultaneously by the  matrix ${\bf U}$,
\begin{equation}\label{eigen.dec}
{\bf S}_l=\U {\pmb\Lambda}_l\U^T=\sum_{n=1}^N \lambda_l(n) {\bf u}_n {\bf u}_n^T,\quad  1\le l\le d,
 \end{equation}
where  for every $1\le l\le d$, the diagonal matrix ${\pmb\Lambda}_l=\diag [\lambda_l(1),\ldots, \lambda_l(N)]$ has  eigenvalues of the graph shift ${\bf S}_l$ as its diagonal entries.
Following \cite{emirov2022}, we define the {\em joint spectrum} of the commutative graph shifts
${\bf S}_1, \ldots, {\bf S}_d$ by
\begin{equation} \label{jointspectrum.def} {\pmb \Lambda}:=\big\{{\pmb \lambda}(n):=[\lambda_1(n), \ldots, \lambda_d(n)]^T \ \mid\ 1\le n\le N\big\} \subset {\mathbb R}^d.\end{equation}

 We call a function ${\bf x}:V\mapsto \R$ from the vertex set $V$ to the field of real numbers $\R$
as a {\em graph signal}  on the graph $\G$, represented by a vector ${\bf x}=[x(v)]_{v\in V}\in \R^V$. With the help of the orthonormal basis ${\bf u}_1, \ldots, {\bf u}_N$ in the eigendecomposition
 \eqref{eigen.dec} of the graph shifts ${\bf S}_1, \ldots, {\bf S}_d$, 
a conventional definition of  GFT of a graph signal ${\bf x}$ is defined by
\begin{equation}\label{GFT.distinct}
\widehat\x=\U^T\x=[ {\bf u}_1^T {\bf x},\  \ldots,\  {\bf u}_N^T {\bf x}]^T,
\end{equation}
where  ${\pmb \lambda}(1), \ldots, {\pmb \lambda}(N)$ in the joint spectrum ${\pmb \Lambda}$ are interpreted as the frequency components of the GFT and columns
 ${\bf u}_1, \ldots, {\bf u}_N$ in the orthogonal matrix  ${\bf U}$ as the corresponding variation modes \cite{Chung1997,  Chung2023, Isufi2024,  Ortega2022,   Ricaud2019, Stankovic2019}. 
We notice that the orthonormal basis ${\bf u}_1, \ldots, {\bf u}_N$ 
used to define GFT is {\bf not} unique. For instance, one may easily verify that
 the eigendecomposition  in  \eqref {eigen.dec} still hold with ${\bf u}_n$ replaced by $\tilde {\bf u}_n=\epsilon_n {\bf u}_n$ where $\epsilon_n\in \{-1, 1\}, 1\le n\le N$,  and hence $\tilde {\bf u}_1, \ldots, \tilde  {\bf u}_N$ can also be used to define GFT. It is worth noting that the algebraic structure of orthonormal bases arising from the eigendecomposition \eqref{eigen.dec} of the graph shifts ${\bf S}_1, \ldots, {\bf S}_d$ becomes more intricate when the distinct joint spectrum condition \eqref{distincteigenvalue.assumption} fails to hold \cite{Deri2017}.
 This naturally raises the following question:

 \begin{question} \label{Question1}
Can we define a graph Fourier transform   independent of the selection of the orthonormal basis  $\{{\bf u}_1, \ldots, {\bf u}_N\}$ in 
\eqref{eigen.dec}? 
\end{question}

Let ${\pmb \gamma}(m)=[\gamma_1(m), \ldots, \gamma_d(m)]^T\in {\mathbb R}^d, 1\le m\le M$, be distinct elements in the joint spectrum $\pmb \Lambda$ of the graph shifts ${\bf S}_1, \ldots, {\bf S}_d$, and without loss of generality, they are ordered within their magnitudes in  nondecreasing order,
\begin{equation}\label{gamma.order}
\|{\pmb \gamma}(1)\|\le \|{\pmb \gamma}(2)\|\le  \cdots \le \|{\pmb \gamma}(M)\|,
\end{equation}
where 
$\|{\pmb \gamma}(m)\|=\big(\sum_{l=1}^d |\gamma_l(m)|^2\big)^{1/2}, 1\le m\le M$. 
To answer Question \ref{Question1}, we define orthogonal projections  
\begin{equation}\label{projection.eq0}
{\bf P}_m=\sum_{\pmb\lambda(n)=\pmb\gamma(m)} {\bf u}_n {\bf u}_n^T,\quad 1\le m\le M.\end{equation}
One may verify that
${\bf P}_m, 1\le m\le M$,  form a spectral decomposition  of the identity:
\begin{equation}\label{projection.eq1}
    {\bf P}_m^2={\bf P}_m={\bf P}_m^T \ \ {\rm and}  \  \
    {\bf P}_m {\bf P}_{m'}={\bf 0} \ \ {\rm for  \ all}\ \ 1\le m\ne m'\le M,
\end{equation}
and  
\begin{equation}\label{projection.eq2}
\sum_{m=1}^M {\bf P}_m ={\bf I},
\end{equation}
where ${\bf I}$ is the identity matrix of  appropriate size.
Moreover, ${\bf P}_m, 1\le m\le M$, are invariant  and
commutative with the graph shifts  ${\bf S}_l, 1\le l\le d$,
by \eqref{eigen.dec} and \eqref{projection.eq0}, i.e., 
\begin{equation}\label{projection.eq3}
    {\bf P}_m {\bf S}_l=  {\bf S}_l {\bf P}_m= \gamma_l(m) {\bf P}_m
    \ \ {\rm for \ all}\ \ 1\le m\le M\ \ {\rm and} \ \ 1\le l\le d.
\end{equation}
Due to the above invariant and commutative property, we refer to
 ${\bf P}_m, 1\le m\le M$, as the {\em spectral decomposition of the identity} associated with the graph shifts 
 ${\bf S}_1, \ldots, {\bf S}_d$.

Denote the range space of the orthogonal projection ${\bf P}_m$ by
\begin{equation}\label{projection.eq4} 
W_m={\bf P}_m {\mathbb R}^V,\quad  1\le m\le M.\end{equation}
For every $1\le m\le M$, it follows from
\eqref{eigen.dec} and \eqref{projection.eq0} that
the range space $W_m$ is the intersection of 
eigenspaces $N({\bf S}_l-\gamma_l(m) {\bf I})$ of the graph shift ${\bf S}_l$ corresponding to eigenvalues $\gamma_l(m), 1\le l\le d$, i.e., 
\begin{equation}\label{projection.eq5}
W_m= \cap_{l=1}^d N({\bf S}_l-\gamma_l(m) {\bf I}), \quad 1\le m\le M.
\end{equation}
Consequently,   the spaces $W_m$ in  \eqref{projection.eq4}, and thereby  the  orthogonal projections  ${\bf P}_m, 1\le m\le M$,
  are {\bf independent} on the selection of the orthonormal basis ${\bf u}_1, \ldots, {\bf u}_N$
in the eigendecomposition \eqref{eigen.dec}.

Based on the above independence observation on the orthogonal projections ${\bf P}_m, 1\le m\le M$, we introduce a novel definition of the {\it graph Fourier transform} (GFT) of a graph signal $\bf x$ by
\begin{equation}\label{GFT.def}
\widehat {\bf x}:=[\widehat{\bf x}(1), \ldots, \widehat {\bf x}(M)]=[{\bf P}_1 {\bf x}, \ldots, {\bf P}_M\x] \in {\mathbb R}^{V\times M},\end{equation}
 cf. the graphon Fourier transform in \cite{Ghandehari22} via some spectral decomposition of the identity.
For $1\le m\le M$, we call the  $m$-th column $\widehat {\bf x}(m)$ in the GFT  
 of the graph signal ${\bf x}$ 
 as its {\em frequency component} corresponding to the {\em frequency} ${\pmb \gamma}(m)$.
Clearly, the original graph signal ${\bf x}$ can be reconstructed from its Fourier transform $\widehat{\bf x}$ by \eqref{projection.eq2}:   
\begin{equation}
{\bf x}=\widehat {\bf x} {\bf 1}=\sum_{m=1}^M \widehat {\bf x}(m)
\ \ {\rm for \ all} \ \ {\bf x}\in {\mathbb R}^V,
\end{equation}
where ${\bf 1}$ is the column vector of appropriate size with all entries taking value one. 

\smallskip

We now provide an explicit formulation of the just-defined  GFT on complete graphs, as discussed below, and of the GFT on circulant graphs, presented in Appendix \ref{circulantgraph.section}.

\begin{example} \label{uncertainty.example}
{\rm On the complete graph $K_N$ of order $N\ge 2$, one may verify that the symmetric Laplacian ${\bf L}^{\rm sym}(K_N)$
has value $1$ on the main diagonal and $-1/(N-1)$ on the off-diagonal, it has eigenvalue zero with multiplicity one and eigenvalue $N/(N-1)$ with multiplicity $N-1$, and the orthogonal projections ${\bf P}_1$ and ${\bf P}_2$ in \eqref{projection.eq0} are given by
\begin{equation}\label{uncertainty.example.eq1} 
{\bf P}_1= N^{-1} {\bf 1} {\bf 1}^T \ \ {\rm and}  \ \ {\bf P}_2={\bf I}-{\bf P}_1\end{equation}
respectively.
Therefore the GFT based on the symmetric Laplacian on the complete graph $K_N$ is given by
\begin{equation}\label{uncertainty.example.eq2}
\widehat {\bf x}(1)= \overline {\bf x}\ \ {\rm and}  \ \ \widehat {\bf x}(2)={\bf x}-\overline {\bf x}\end{equation}
for arbitrary graph signal ${\bf x}$, where $\overline {\bf x}= ({\bf 1}^T {\bf x}/N) {\bf 1}$ is the mean vector of the signal ${\bf x}$.
}\end{example}

Denote the Frobenius norm of a matrix ${\bf A}$ by $\|{\bf A}\|_F$
and the energy of a graph signal ${\bf x}=[x(v)]_{v\in V}$  by $\|{\bf x}\|_2=(\sum_{v\in V} |x(v)|^2)^{1/2}$. 
Then it follows from \eqref{projection.eq0} and \eqref{projection.eq1} that
  Parseval's identity holds  for the GFT defined in \eqref{GFT.def}:
\begin{equation}\label{parsevelidentity}
\|\widehat {\bf x}\|_F=\Big( \sum_{m=1}^M \|\widehat{\bf x}(m)\|_2^2\Big)^{1/2}=
\|{\bf x}\|_2\ \  {\rm for \ all}
\ \  {\bf x}\in {\mathbb R}^V. 
\end{equation}

\smallskip

Graph shifts ${\bf S}_1, \ldots, {\bf S}_d$ are typically selected to have some special features and are widely used to characterize the regularity of a graph signal
${\bf x}$, which is often quantified  in  the small ratios $\|{\bf S}_l{\bf x}\|_2/\|{\bf x}\|_2, 1\le l\le d$. In such a scenario,
as established in the next theorem, regular graph signals exhibit a concentration of energy predominantly in the low-frequency components of the Fourier domain, see Section \ref{bandapproximation.thm.pfsection} for the detailed proof.

\begin{theorem}\label{bandapproximation.thm}
    For a bandwidth $K\in\{1,2,\ldots, M\}$, define the low frequency component of the graph signal $\x$ with bandwith $K$ by
    \begin{equation}
    \label{bandapproximation.thm.eq1}
        \x_K=\sum_{m=1}^{K} \widehat {\bf x}(m).
    \end{equation}
    Then 
    \begin{equation}
    \label{bandapproximation.thm.eq2}
        \|{\bf x}- {\bf x}_K\|_2 
\le \frac{
\big(\sum_{l=1}^d \|{\bf S}_l {\bf x}\|_2^2\big)^{1/2}} {\|{\pmb \gamma}(K+1)\|},
    \end{equation}
    where ${\pmb \gamma}(K+1)$ is the cut-off frequency of the bandlimiting procedure.
\end{theorem}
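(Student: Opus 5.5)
The plan is to combine the spectral decomposition of the identity \eqref{projection.eq1}--\eqref{projection.eq3} with Parseval's identity \eqref{parsevelidentity}.

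First, by \eqref{projection.eq2} and \eqref{bandapproximation.thm.eq1}, the residual is the sum of the high-frequency components:
\[
{\bf x}-{\bf x}_K=\sum_{m=K+1}^M {\bf P}_m{\bf x}.
\]
The components ${\bf P}_m{\bf x}$ are mutually orthogonal by \eqref{projection.eq1}. Hence
\[
\|{\bf x}-{\bf x}_K\|_2^2=\sum_{m=K+1}^M\|{\bf P}_m{\bf x}\|_2^2 .
\]
We may assume $K<M$; otherwise the left-hand side vanishes and there is nothing to prove. We may also assume ${\pmb\gamma}(K+1)\neq{\bf 0}$, since otherwise the right-hand side is $+\infty$ under the usual convention.

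Second, we compute $\sum_l\|{\bf S}_l{\bf x}\|_2^2$ in the Fourier domain. By \eqref{projection.eq3} we have ${\bf P}_m{\bf S}_l{\bf x}=\gamma_l(m){\bf P}_m{\bf x}$. Applying Parseval's identity \eqref{parsevelidentity} to the signal ${\bf S}_l{\bf x}$ gives
\[
\|{\bf S}_l{\bf x}\|_2^2=\sum_{m=1}^M|\gamma_l(m)|^2\|{\bf P}_m{\bf x}\|_2^2 .
\]
Summing over $1\le l\le d$ yields
\[
\sum_{l=1}^d\|{\bf S}_l{\bf x}\|_2^2=\sum_{m=1}^M\|{\pmb\gamma}(m)\|^2\,\|{\bf P}_m{\bf x}\|_2^2 .
\]

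Third, we discard the terms with $m\le K$, which are nonnegative. For $m\ge K+1$ the ordering \eqref{gamma.order} gives $\|{\pmb\gamma}(m)\|\ge\|{\pmb\gamma}(K+1)\|$. Therefore
\[
\sum_{l=1}^d\|{\bf S}_l{\bf x}\|_2^2\ \ge\ \|{\pmb\gamma}(K+1)\|^2\sum_{m=K+1}^M\|{\bf P}_m{\bf x}\|_2^2=\|{\pmb\gamma}(K+1)\|^2\,\|{\bf x}-{\bf x}_K\|_2^2 .
\]
Dividing by $\|{\pmb\gamma}(K+1)\|^2$ and taking square roots gives \eqref{bandapproximation.thm.eq2}.

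No step is a genuine obstacle. The only point needing care is the multiplier identity $\|{\bf S}_l{\bf x}\|_2^2=\sum_m\gamma_l(m)^2\|{\bf P}_m{\bf x}\|_2^2$. This follows directly from ${\bf S}_l=\sum_m\gamma_l(m){\bf P}_m$, which is itself a consequence of \eqref{projection.eq2} and \eqref{projection.eq3}, together with the orthogonality relations in \eqref{projection.eq1}.
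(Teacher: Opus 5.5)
Your proposal is correct and follows essentially the same route as the paper: expand $\sum_{l}\|{\bf S}_l{\bf x}\|_2^2=\sum_m\|{\pmb\gamma}(m)\|^2\|{\bf P}_m{\bf x}\|_2^2$ via \eqref{projection.eq1}--\eqref{projection.eq3}, drop the terms with $m\le K$, and use the ordering \eqref{gamma.order} together with orthogonality to identify the remaining tail with $\|{\bf x}-{\bf x}_K\|_2^2$. One small remark: your caveat ${\pmb\gamma}(K+1)\neq{\bf 0}$ holds automatically, since the ${\pmb\gamma}(m)$ are distinct and ordered by norm, so only ${\pmb\gamma}(1)$ can vanish and $K+1\ge 2$; your separate treatment of $K=M$ correctly handles a small imprecision in the statement.
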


\subsection{Graph polynomial filters}
\label{polynomialF.subsection}

We say that ${\bf H}$ is a {\em polynomial filter}
of the graph shifts ${\bf S}_1, \ldots, {\bf S}_d$ if
\begin{equation} \label{filter.def}
{\bf H}:=h({\bf S}_1, \ldots, {\bf S}_d)=\sum_{l_1=0}^{L_1} \cdots \sum_{l_d=0}^{L_d} c_{l_1,\ldots,l_d} {\bf S}_1^{l_1}\ldots {\bf S}_d^{l_d}
\end{equation}
for some multivariate polynomial $h(t_1, \ldots, t_d)=\sum_{l_1=0}^{L_1} \cdots \sum_{l_d=0}^{L_d} c_{l_1,\ldots, l_d} t_1^{l_1} \ldots t_d^{l_d}$, and denote the set of all polynomial filters by ${\mathcal P}$
\cite{emirov2022}.  Clearly, 
${\mathcal P}$ is a linear space under  matrix addition and scalar multiplication, and   a unital  commutative semi-group under matrix multiplication.

By  \eqref{eigen.dec} and \eqref{projection.eq3}, we  observe that the graph shift operation in the spatial domain is a multiplier in the Fourier domain,
\begin{equation}\label{multiplier.prop}
\widehat{{\bf S}_l{\bf x}}=\widehat {\bf x} \,{\pmb\Gamma}_l,\ \ 1\le l\le d,
\end{equation}
where ${\pmb \Gamma}_l=\diag(\gamma_l(1),\gamma_l(2),\ldots, \gamma_l(M)), 1\le l\le d$.
Applying \eqref{multiplier.prop} repeatedly, we observe that
the polynomial filtering  procedure  in the spatial domain is a multiplier in the  Fourier domain,
$\widehat{{\bf H}{\bf x}}=
\widehat {\bf x} h({\pmb\Gamma}_1, \ldots, {\pmb \Gamma}_L)$, 
or equivalently
\begin{equation}\label{polynomialGFT}
\widehat {{\bf H} {\bf x}}(m)= h({\pmb \gamma}(m)) \widehat {\bf x}(m),\ \ 1\le m\le M. 
\end{equation}

Let $\delta$ be the conventional Kronecker delta function.
In the following theorem, we show that the orthogonal  projections ${\bf P}_m, 1\le m\le M$, in \eqref{projection.eq0} 
are polynomial filters of the graph shifts ${\bf S}_1, \ldots, {\bf S}_d$; see Section \ref{projection.polynomialfilter.thm.pfsection} for the detailed proof. 

\begin{theorem}\label{projection.polynomialfilter.thm} 
Let ${\bf P}_m, 1\le m\le M$, be  the orthogonal projections  in 
\eqref{projection.eq0}. Then 
\begin{equation} \label{projection.polynomialfilter.thm.eq0}
{\bf P}_m\in {\mathcal P},\quad 1\le m\le M.
\end{equation}
Moreover,  
\begin{equation} \label{projection.polynomialfilter.thm.eq1}
{\bf P}_m= h_m({\bf S}_1, \ldots, {\bf S}_d),\ \ 1\le m\le M,
\end{equation}
where $h_m, 1\le m\le M$, are multivariate polynomials of degree at most $M-1$ satisfying 
$h_m ({\pmb \gamma}(m'))=\delta_{mm'}, 1\le m, m'\le M$.
\end{theorem}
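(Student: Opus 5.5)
The plan is to construct, for each $1\le m\le M$, an explicit Lagrange-type interpolating polynomial $h_m$ on the finite set of distinct frequencies $\{{\pmb\gamma}(1),\ldots,{\pmb\gamma}(M)\}\subset\R^d$. Then \eqref{polynomialGFT} identifies $h_m({\bf S}_1,\ldots,{\bf S}_d)$ with ${\bf P}_m$.

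\textbf{Step 1: separating hyperplanes.} Since the ${\pmb\gamma}(m')$ are distinct, for each pair $m'\ne m$ there is an index $l(m,m')\in\{1,\ldots,d\}$ with $\gamma_{l(m,m')}(m)\ne\gamma_{l(m,m')}(m')$. Define
\begin{equation*}
h_m(t_1,\ldots,t_d)=\prod_{m'\ne m}\frac{t_{l(m,m')}-\gamma_{l(m,m')}(m')}{\gamma_{l(m,m')}(m)-\gamma_{l(m,m')}(m')}.
\end{equation*}
This is a product of $M-1$ affine factors, so it has degree at most $M-1$. Evaluating at ${\pmb\gamma}(m)$, every factor equals $1$. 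Evaluating at ${\pmb\gamma}(m')$ with $m'\ne m$, the factor indexed by $m'$ vanishes. Hence $h_m({\pmb\gamma}(m'))=\delta_{mm'}$.

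One could instead use a generic linear functional $\xi\in\R^d$ that separates all the ${\pmb\gamma}(m)$ and take univariate Lagrange polynomials in $\xi\cdot t$. The coordinatewise choice above is simpler and avoids a genericity argument.

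\textbf{Step 2: identification with ${\bf P}_m$.} Set ${\bf H}_m=h_m({\bf S}_1,\ldots,{\bf S}_d)\in{\mathcal P}$. By \eqref{polynomialGFT}, $\widehat{{\bf H}_m{\bf x}}(m')=h_m({\pmb\gamma}(m'))\widehat{\bf x}(m')=\delta_{mm'}{\bf P}_{m'}{\bf x}$. The reconstruction formula ${\bf y}=\sum_{m'}\widehat{\bf y}(m')$ then gives ${\bf H}_m{\bf x}={\bf P}_m{\bf x}$ for every ${\bf x}$, so ${\bf H}_m={\bf P}_m$.

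Alternatively, one can argue directly from \eqref{eigen.dec}. Since ${\bf S}_l^{k}=\sum_n\lambda_l(n)^k{\bf u}_n{\bf u}_n^T$, we get $h({\bf S}_1,\ldots,{\bf S}_d)=\sum_n h({\pmb\lambda}(n)){\bf u}_n{\bf u}_n^T$. Grouping the indices $n$ by the value ${\pmb\lambda}(n)={\pmb\gamma}(m')$ and using \eqref{projection.eq0} yields $\sum_{m'}h({\pmb\gamma}(m')){\bf P}_{m'}$.

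There is no real obstacle. The only point needing care is that the interpolation is multivariate. Choosing the separating coordinate $l(m,m')$ for each pair is what keeps the degree bound $M-1$ while guaranteeing that the denominators are nonzero.
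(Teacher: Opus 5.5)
Your proof is correct and follows the same route as the paper: you obtain an interpolating polynomial $h_m$ of degree at most $M-1$ with $h_m({\pmb\gamma}(m'))=\delta_{mm'}$, then use \eqref{polynomialGFT} (equivalently \eqref{projection.eq3}) together with $\sum_{m'}{\bf P}_{m'}={\bf I}$ to conclude $h_m({\bf S}_1,\ldots,{\bf S}_d)={\bf P}_m$. The only difference is that you construct $h_m$ explicitly as a product of $M-1$ affine factors in separating coordinates, whereas the paper cites a multivariate interpolation theorem from Cheney's book; your version makes that step self-contained.
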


We emphasize that the polynomial filter property of the orthogonal projections ${\bf P}_m$, $1\le m\le M$, established in Theorem \ref{projection.polynomialfilter.thm}, plays an essential role
in our analysis of the GSIS theory.

For a continuous function $f$ on  a neighborhood of the joint spectrum $\pmb \Lambda$ of the graph shifts ${\bf S}_1, \ldots, {\bf S}_d$,  
we have 
$$f({\bf S}_1, \ldots, {\bf S}_d)=\sum_{m=1}^M f({\pmb \gamma}(m)){\bf P}_m.$$
This together with Theorem \ref{projection.polynomialfilter.thm} implies that
\begin{equation} \label{analytic.polynomial} f({\bf S}_1, \ldots, {\bf S}_d)\in {\mathcal P}.
\end{equation}
Hence we have the following result about fractional graph shifts, which does {\bf not} hold in the  real line setting. 

\begin{corollary}\label{fractionalshift.cor}
If all graph shifts ${\bf S}_1, \ldots, {\bf S}_d$ are positive semi-definite, then all fractional shifts
${\mathbf S}_l^t, 1\le l\le d, t\ge 0$, are polynomial filters. 
\end{corollary}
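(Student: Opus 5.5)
The plan is to realize each fractional shift as a continuous function of the commuting family ${\bf S}_1,\ldots,{\bf S}_d$ and then invoke \eqref{analytic.polynomial}, which rests on Theorem \ref{projection.polynomialfilter.thm}. Fix $1\le l\le d$ and $t\ge 0$. Since ${\bf S}_l$ is positive semidefinite, every eigenvalue $\lambda_l(n)$ in \eqref{eigen.dec} is nonnegative. Hence $\gamma_l(m)\ge 0$ for all $1\le m\le M$, and the fractional power is defined spectrally by
$$
{\bf S}_l^t=\sum_{n=1}^N \lambda_l(n)^t\,{\bf u}_n{\bf u}_n^T ,
$$
with the convention $0^0=1$, so that ${\bf S}_l^0={\bf I}$. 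Grouping the indices $n$ according to the distinct joint eigenvalues ${\pmb\gamma}(m)$ and using \eqref{projection.eq0}, this becomes ${\bf S}_l^t=\sum_{m=1}^M \gamma_l(m)^t\,{\bf P}_m$.

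The key step is then to avoid any continuity issue. The function $s\mapsto s^t$ need not be continuous on a neighborhood of $\pmb\Lambda$: for $t=0$ it jumps at $0$, and it is undefined for negative $s$. For that reason I would not apply \eqref{analytic.polynomial} verbatim. Instead I would use Theorem \ref{projection.polynomialfilter.thm} directly, writing ${\bf P}_m=h_m({\bf S}_1,\ldots,{\bf S}_d)$ with polynomials $h_m$. This gives
$$
{\bf S}_l^t=h({\bf S}_1,\ldots,{\bf S}_d),\qquad h:=\sum_{m=1}^M \gamma_l(m)^t\,h_m .
$$
Here $h$ is a polynomial of degree at most $M-1$, so ${\bf S}_l^t\in{\mathcal P}$. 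Equivalently, $h$ is the interpolating polynomial with $h({\pmb\gamma}(m))=\gamma_l(m)^t$, and this can be checked against the multiplier formula \eqref{polynomialGFT}.

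I expect essentially no obstacle. The only point needing care is the $t=0$ or zero-eigenvalue case, together with making explicit that the fractional power is the spectral one and is independent of the basis chosen in \eqref{eigen.dec}. That independence follows because ${\bf P}_m$ is basis-independent by \eqref{projection.eq5}.
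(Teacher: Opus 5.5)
Your proof is correct and follows essentially the paper's route. The paper also writes ${\bf S}_l^t=\sum_{m=1}^M \gamma_l(m)^t{\bf P}_m$ and concludes membership in ${\mathcal P}$ from Theorem \ref{projection.polynomialfilter.thm}, packaged as \eqref{analytic.polynomial}; your direct use of that theorem, bypassing the continuity hypothesis, is a harmless tidying-up, since only the values on the joint spectrum matter (with your convention $0^0=1$ there is in fact no jump at $t=0$).
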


Next,  we provide a characterization for a filter to be a polynomial filter via its spectral decomposition; see Section \ref{polynomialfilter.thm.pfsection} for the detailed proof.

\begin{theorem}\label{polynomialfilter.thm} Let ${\bf H}$ be  a graph filter
and ${\bf P}_m, 1\le m\le M$, be as in 
\eqref{projection.eq0}.  
Then  ${\bf H}\in {\mathcal P}$ if and only if
  there exist real numbers $\mu_m, 1\le m\le M$, such that
\begin{equation}\label{shiftinvariantfilter.thm.eq1}
{\bf P}_m {\bf H}= {\bf H}{\bf P}_m=\mu_m{\bf P}_m,\quad 1\le m\le M. 
\end{equation} 
\end{theorem}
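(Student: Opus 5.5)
The argument splits into the two implications. Both rest on the spectral decomposition of the identity \eqref{projection.eq1}–\eqref{projection.eq3} and on Theorem \ref{projection.polynomialfilter.thm}.

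For necessity, suppose ${\bf H}=h({\bf S}_1,\ldots,{\bf S}_d)\in{\mathcal P}$ for some multivariate polynomial $h$. By \eqref{projection.eq3}, ${\bf S}_l{\bf P}_m={\bf P}_m{\bf S}_l=\gamma_l(m){\bf P}_m$ for every $l$. Iterating this relation on each monomial gives
\[
{\bf S}_1^{l_1}\cdots{\bf S}_d^{l_d}{\bf P}_m={\bf P}_m{\bf S}_1^{l_1}\cdots{\bf S}_d^{l_d}=\gamma_1(m)^{l_1}\cdots\gamma_d(m)^{l_d}\,{\bf P}_m .
\]
Summing with the coefficients of $h$ yields ${\bf H}{\bf P}_m={\bf P}_m{\bf H}=h({\pmb\gamma}(m)){\bf P}_m$. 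Hence \eqref{shiftinvariantfilter.thm.eq1} holds with $\mu_m=h({\pmb\gamma}(m))$; this is the operator form of \eqref{polynomialGFT}.

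For sufficiency, suppose ${\bf H}{\bf P}_m=\mu_m{\bf P}_m$ for all $m$. Using \eqref{projection.eq2} we write
\[
{\bf H}={\bf H}\sum_{m=1}^M{\bf P}_m=\sum_{m=1}^M\mu_m{\bf P}_m .
\]
Theorem \ref{projection.polynomialfilter.thm} gives ${\bf P}_m=h_m({\bf S}_1,\ldots,{\bf S}_d)$ with $h_m({\pmb\gamma}(m'))=\delta_{mm'}$. Therefore ${\bf H}=h({\bf S}_1,\ldots,{\bf S}_d)$ with $h=\sum_m\mu_m h_m$, so ${\bf H}\in{\mathcal P}$ because ${\mathcal P}$ is a linear space. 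This direction uses only the identity ${\bf H}{\bf P}_m=\mu_m{\bf P}_m$; the relation ${\bf P}_m{\bf H}=\mu_m{\bf P}_m$ follows automatically once ${\bf H}$ is shown to be polynomial. As a consistency check, $h({\pmb\gamma}(m))=\mu_m$.

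There is no real obstacle here. The substantive ingredient is the existence of the interpolating polynomials $h_m$, and that is already supplied by Theorem \ref{projection.polynomialfilter.thm}. The only point needing care is that the expansion ${\bf H}=\sum_m \mu_m{\bf P}_m$ uses the resolution of the identity \eqref{projection.eq2}, not any choice of eigenbasis.
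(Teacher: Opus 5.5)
Your proof is correct and follows essentially the paper's argument. Necessity is the operator form of \eqref{polynomialGFT}, with $\mu_m=h({\pmb\gamma}(m))$. For sufficiency, the paper builds an interpolating polynomial $h$ with $h({\pmb\gamma}(m))=\mu_m$ and checks that $\big({\bf H}-h({\bf S}_1,\ldots,{\bf S}_d)\big){\bf P}_m{\bf x}={\bf 0}$ for every $m$. This is the same as your route of writing ${\bf H}=\sum_m\mu_m{\bf P}_m$ and taking $h=\sum_m\mu_m h_m$ with the $h_m$ from Theorem \ref{projection.polynomialfilter.thm}; both use only ${\bf H}{\bf P}_m=\mu_m{\bf P}_m$ in that direction.
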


By \eqref{projection.eq1}, \eqref{projection.eq2} and  Theorem \ref{polynomialfilter.thm}, we see that an equivalent requirement for a  filter  ${\bf H}$ to be a polynomial filter is
 that there exist real numbers $\mu_m, 1\le m\le M$, such that
\begin{equation}\label{shiftinvariantfilter.thm.eq3}
{\bf H}=\sum_{m=1}^M\mu_m{\bf P}_m,
\end{equation} 
or equivalently in the Fourier domain,
\begin{equation} \label{shiftinvariantfilter.thm.eq2}
\widehat { {\bf H} {\bf x}}(m)= \mu_m \widehat {\bf x}(m), \quad 1\le m\le M,
\end{equation}
hold for all graph signals ${\bf x}$.

By \eqref{shiftinvariantfilter.thm.eq3}, we have the following  property for the power of a polynomial filter, cf. Corollary \ref{fractionalshift.cor}.

\begin{corollary}\label{inversepolynomial.cor}
Let ${\mathbf H}\in {\mathcal P}$. Then 
\begin{itemize}
\item[{(i)}]  ${\bf H}^t\in {\mathcal P} $ for all $t\ge 0$ if  ${\bf H}$ is positive semi-definite. 

\item[{(ii)}]  ${\bf H}^{-1}\in {\mathcal P}$ if  ${\bf H}$ is invertible. 

\item[{(iii)}] ${\bf H}^{s}\in {\mathcal P}$ for all $s\in {\mathbb R}$ if  ${\bf H}$ is positive definite.

\end{itemize}

 \end{corollary}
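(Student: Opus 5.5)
The plan is to use the spectral representation \eqref{shiftinvariantfilter.thm.eq3}. Since ${\bf H}\in{\mathcal P}$, Theorem \ref{polynomialfilter.thm} gives real numbers $\mu_m$ with ${\bf H}=\sum_{m=1}^M\mu_m{\bf P}_m$. For any function $g$ defined on the finite set $\{\mu_1,\ldots,\mu_M\}$, we set ${\bf G}:=\sum_{m=1}^M g(\mu_m){\bf P}_m$. By \eqref{projection.eq1}, ${\bf G}$ commutes with each ${\bf P}_m$, and
\begin{equation*}
{\bf P}_m{\bf G}={\bf G}{\bf P}_m=g(\mu_m){\bf P}_m,\quad 1\le m\le M.
\end{equation*}
Theorem \ref{polynomialfilter.thm} then gives ${\bf G}\in{\mathcal P}$. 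Equivalently, Theorem \ref{projection.polynomialfilter.thm} writes each ${\bf P}_m$ as a polynomial filter, and ${\mathcal P}$ is a linear space. So the whole proof reduces to identifying ${\bf H}^t$, ${\bf H}^{-1}$ and ${\bf H}^s$ with such a ${\bf G}$ for the appropriate $g$.

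For (i), we first note that ${\bf H}$ is symmetric, since each ${\bf P}_m$ is. Its eigenvalues are the $\mu_m$ with ${\bf P}_m\ne{\bf 0}$; every ${\bf P}_m$ is nonzero because $\pmb\gamma(m)$ lies in the joint spectrum. Positive semidefiniteness therefore gives $\mu_m\ge0$. The matrix power ${\bf H}^t$ is defined through the spectral calculus, and by \eqref{projection.eq1}--\eqref{projection.eq2} it equals $\sum_m\mu_m^t{\bf P}_m$, with the convention $0^0=1$ giving ${\bf H}^0={\bf I}=\sum_m{\bf P}_m$. This is ${\bf G}$ with $g(\mu)=\mu^t$. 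One should check that this agrees with the usual definition via an eigenbasis: choosing ${\bf U}$ as in \eqref{eigen.dec} gives ${\bf H}=\sum_n\mu_{m(n)}{\bf u}_n{\bf u}_n^T$, where $m(n)$ is the index with $\pmb\lambda(n)=\pmb\gamma(m(n))$, so ${\bf H}^t=\sum_n\mu_{m(n)}^t{\bf u}_n{\bf u}_n^T=\sum_m\mu_m^t{\bf P}_m$.

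For (ii), invertibility means all $\mu_m\ne0$. Define ${\bf G}=\sum_m\mu_m^{-1}{\bf P}_m$. Then \eqref{projection.eq1} and \eqref{projection.eq2} give ${\bf H}{\bf G}=\sum_m{\bf P}_m={\bf I}$, so ${\bf H}^{-1}={\bf G}\in{\mathcal P}$. For (iii), positive definiteness gives $\mu_m>0$, so $\mu_m^s$ is defined for every real $s$. The same argument as in (i), with $g(\mu)=\mu^s$, shows ${\bf H}^s\in{\mathcal P}$; alternatively, (iii) follows by combining (i) and (ii) for negative $s$.

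There is no real obstacle. The only point needing care is confirming that the matrix function ${\bf H}^t$ coincides with $\sum_m\mu_m^t{\bf P}_m$, even when distinct $m$ share the same $\mu_m$. This follows from the eigenbasis computation above.
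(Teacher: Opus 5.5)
Your proof is correct and takes the same route as the paper. The paper derives the corollary directly from the spectral representation ${\bf H}=\sum_{m=1}^M\mu_m{\bf P}_m$ in \eqref{shiftinvariantfilter.thm.eq3}, so ${\bf H}^t$, ${\bf H}^{-1}$ and ${\bf H}^s$ are again of the form $\sum_m g(\mu_m){\bf P}_m$ and hence lie in ${\mathcal P}$ by Theorem \ref{polynomialfilter.thm}; your checks that each ${\bf P}_m\ne{\bf 0}$ and that the spectral-calculus definition of ${\bf H}^t$ agrees with $\sum_m\mu_m^t{\bf P}_m$ supply details the paper leaves implicit.
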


\smallskip

We conclude this subsection  by characterizing polynomial filters via an invariance property induced by the eigendecomposition \eqref{eigen.dec};
see Section \ref{polynomialinvariance.thm.pfsection}
for the  detailed proof.

\begin{theorem}\label{polynomialinvariance.thm}  Let ${\bf H}$ be a graph filter. Then ${\bf H}\in {\mathcal P}$ if and only if
 ${\bf U}^T{\bf H} {\bf U}$ is independent on the orthogonal matrix
 ${\bf U}$ in the eigendecomposition \eqref{eigen.dec} of the graph shifts
${\bf S}_1, \ldots, {\bf S}_d$.
\end{theorem}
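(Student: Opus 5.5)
Both directions reduce to the spectral characterization in Theorem~\ref{polynomialfilter.thm}, in the form \eqref{shiftinvariantfilter.thm.eq3}. We first identify which orthogonal matrices occur in \eqref{eigen.dec}. Fix one such $\U$ and group its columns into blocks $\U_m=[{\bf u}_n]_{\pmb\lambda(n)=\pmb\gamma(m)}$. By \eqref{projection.eq5}, an orthogonal $\tilde\U$ satisfies \eqref{eigen.dec} with the same diagonal matrices ${\pmb\Lambda}_l$ if and only if its columns with $\pmb\lambda(n)=\pmb\gamma(m)$ form an orthonormal basis of $W_m$. Equivalently, $\tilde\U=\U{\bf Q}$, where ${\bf Q}$ is orthogonal and block diagonal with respect to the index sets $\{n: \pmb\lambda(n)=\pmb\gamma(m)\}$, $1\le m\le M$. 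We read ``independent of ${\bf U}$'' as: $\tilde\U^T{\bf H}\tilde\U=\U^T{\bf H}\U$ for every such $\tilde\U$, with the ordering of the joint spectrum fixed. (If the labelling may also be permuted, one only needs to check that the argument below is unchanged.)

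For necessity, let ${\bf H}\in{\mathcal P}$. By \eqref{shiftinvariantfilter.thm.eq3} and \eqref{projection.eq0},
\[
{\bf H}=\sum_{m}\mu_m{\bf P}_m=\sum_{n}\mu_{m(n)}{\bf u}_n{\bf u}_n^T,
\]
where $m(n)$ is the index with $\pmb\lambda(n)=\pmb\gamma(m(n))$. Hence $\U^T{\bf H}\U=\diag[\mu_{m(1)},\ldots,\mu_{m(N)}]$. The projections ${\bf P}_m$, and therefore the $\mu_m$, do not depend on $\U$, and the diagonal pattern is fixed by $\pmb\Lambda$. So $\U^T{\bf H}\U$ is independent of $\U$.

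For sufficiency, suppose ${\bf A}:=\U^T{\bf H}\U=\tilde\U^T{\bf H}\tilde\U$ for every admissible $\tilde\U$. Then ${\bf Q}^T{\bf A}{\bf Q}={\bf A}$ for every block-diagonal orthogonal ${\bf Q}$. We derive the structure of ${\bf A}$ in two steps.

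\begin{enumerate}
\item Take ${\bf Q}$ diagonal with $\pm1$ entries, which is always admissible. Then $a_{nn'}\epsilon_n\epsilon_{n'}=a_{nn'}$ for all sign choices, so ${\bf A}$ is diagonal.
\item Within a block $m$ of size at least two, take ${\bf Q}$ to be a permutation, e.g.\ a transposition of two indices in the block. This forces the diagonal entries in that block to coincide, say equal to $\mu_m$.
\end{enumerate}

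Thus ${\bf A}=\diag[\mu_{m(n)}]$, and
\[
{\bf H}=\U{\bf A}\U^T=\sum_m\mu_m\U_m\U_m^T=\sum_m\mu_m{\bf P}_m.
\]
By \eqref{shiftinvariantfilter.thm.eq3} (equivalently Theorem~\ref{polynomialfilter.thm}, or directly Theorem~\ref{projection.polynomialfilter.thm} together with linearity of ${\mathcal P}$), ${\bf H}\in{\mathcal P}$.

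The main obstacle is the first step: correctly describing the whole family of admissible $\U$ and making sure it contains both the sign changes and the within-block permutations. Once that description is settled, the rest is bookkeeping.
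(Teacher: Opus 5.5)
Your proof is correct and is essentially the paper's argument. Necessity comes from the fixed diagonal form of ${\bf U}^T{\bf H}{\bf U}$. Sufficiency uses $\pm1$ sign changes to force diagonality, then transpositions of columns sharing the same joint eigenvalue to force equal diagonal entries within each block, after which ${\bf H}=\sum_m\mu_m{\bf P}_m$ and Theorem~\ref{polynomialfilter.thm} finish.

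One correction to your parenthetical: if the labelling of the joint spectrum could be permuted, necessity would fail, since for ${\bf H}=h({\bf S}_1,\ldots,{\bf S}_d)$ the diagonal of ${\bf U}^T{\bf H}{\bf U}$ would be permuted. So the fixed-ordering reading you adopt is the one under which the theorem holds.
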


\subsection{Shift-invariant filters}
\label{shiftinvariantF.subsection}

We say that  ${\bf H}$ is a {\em  shift-invariant} filter if it  commutes with the
graph shifts ${\bf S}_1, \ldots, {\bf S}_d$, i.e.,  
\begin{equation}\label{shiftinvariant.def}
{\bf H} {\bf S}_l={\bf S}_l {\bf H}, \quad 1\le l\le d,
\end{equation}
and denote the set of all shift-invariant filters by ${\mathcal T}$.
One may easily verify that 
${\mathcal T}$ is a linear space under  matrix addition and scalar multiplication, and   a unital {\bf noncommutative} semi-group under matrix multiplication. 
Moreover,  both the  transpose and pseudo-inverse of a shift-invariant filter remain shift-invariant, and thereby
 the inverse of any invertible shift-invariant filter is  shift-invariant too.

Let ${\bf H}$ be a shift-invariant filter. 
Applying \eqref{shiftinvariant.def} repeatedly, we see that
a shift-invariant filter commutes with any polynomial filter.
This together with 
Theorem \ref{projection.polynomialfilter.thm} implies that
a shift-invariant filter ${\bf H}$ commutes with the projections ${\bf P}_m, 1\le m\le M$,  in 
\eqref{projection.eq0}, i.e., 
\begin{equation}\label{shiftinvariantfilter.projection}
{\bf H}{\bf P}_m=  {\bf P}_m {\bf H}, \quad 1\le m\le M. 
\end{equation} 
  This together with  \eqref{projection.eq1} and \eqref{projection.eq2} implies that
  \begin{equation}\label{shiftinvariantfilter.commutative.eq1}
{\bf H}{\bf P}_m=  {\bf P}_m {\bf H}={\bf P}^2_m {\bf H}={\bf P}_m {\bf H}{\bf P}_m, \quad 1\le m\le M, 
\end{equation}
 and \begin{equation}\label{shiftinvariantfilter.commutative.eq2}
{\bf H}=  {\bf H}\sum_{m=1}^{M}{\bf P}_m=\sum_{m=1}^{M}{\bf P}_m {\bf H}{\bf P}_m. 
\end{equation}
    Similar to the spectral decomposition
\eqref{shiftinvariantfilter.thm.eq3}
for a polynomial filter, we have 
the following spectral decomposition for  a shift-invariant filter; see Section 
\ref{shiftinvariantfilter.thm.pfsection} for the detailed proof and cf.
\eqref{shiftinvariantfilter.thm.eq3} and
Theorem \ref{polynomialfilter.thm} for polynomial filters.

\begin{theorem}\label{shiftinvariantfilter.thm}
Let ${\bf P}_m, 1\le m\le M$, be the orthogonal projections  in 
\eqref{projection.eq0}.
Then ${\bf H}\in {\mathcal T}$ if and only if     \begin{equation}\label{shiftinvariantfiltr.thm.eq1}
{\bf H}=\sum_{m=1}^{M}{\bf P}_m{\bf H}_m{\bf P}_m
\end{equation}
for some filters ${\bf H}_m, 1\le m\le M$.  
\end{theorem}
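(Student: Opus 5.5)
The plan is to prove the two implications separately, using only the spectral decomposition of the identity \eqref{projection.eq1}--\eqref{projection.eq3} and the identity \eqref{shiftinvariantfilter.commutative.eq2} derived just before the statement.

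For necessity, let ${\bf H}\in{\mathcal T}$. By \eqref{shiftinvariantfilter.projection}, which rests on Theorem \ref{projection.polynomialfilter.thm}, ${\bf H}$ commutes with every ${\bf P}_m$. Then \eqref{shiftinvariantfilter.commutative.eq2} gives ${\bf H}=\sum_{m=1}^M {\bf P}_m{\bf H}{\bf P}_m$. So \eqref{shiftinvariantfiltr.thm.eq1} holds with the choice ${\bf H}_m={\bf H}$ for all $1\le m\le M$. The one ingredient that needs care here is that ${\bf H}$ commutes with ${\bf P}_m$. This is exactly where the polynomial-filter representation of ${\bf P}_m$ from Theorem \ref{projection.polynomialfilter.thm} is used: if ${\bf H}$ commutes with each ${\bf S}_l$, it commutes with every polynomial in them.

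For sufficiency, suppose ${\bf H}=\sum_{m=1}^M{\bf P}_m{\bf H}_m{\bf P}_m$ for arbitrary filters ${\bf H}_m$. Fix $1\le l\le d$. By \eqref{projection.eq3}, ${\bf S}_l{\bf P}_m=\gamma_l(m){\bf P}_m={\bf P}_m{\bf S}_l$. Hence
\[
{\bf S}_l{\bf H}=\sum_{m=1}^M \gamma_l(m){\bf P}_m{\bf H}_m{\bf P}_m
\]
and
\[
{\bf H}{\bf S}_l=\sum_{m=1}^M {\bf P}_m{\bf H}_m{\bf P}_m{\bf S}_l=\sum_{m=1}^M \gamma_l(m){\bf P}_m{\bf H}_m{\bf P}_m.
\]
These two expressions coincide, so \eqref{shiftinvariant.def} holds and ${\bf H}\in{\mathcal T}$.

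Neither direction should present a real obstacle, since the substantive input, Theorem \ref{projection.polynomialfilter.thm}, is already available. The only point worth stressing is that sufficiency uses the eigen-relation \eqref{projection.eq3} on \emph{both} sides of each block ${\bf P}_m{\bf H}_m{\bf P}_m$: left multiplication by ${\bf S}_l$ acts through the left factor ${\bf P}_m$, and right multiplication acts through the right factor ${\bf P}_m$. No commutation between ${\bf H}_m$ and ${\bf S}_l$ is required.
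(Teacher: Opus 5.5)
Your proof is correct and matches the paper's argument. For necessity, ${\bf H}$ commutes with the polynomial-filter projections ${\bf P}_m$, which gives ${\bf H}=\sum_{m=1}^M{\bf P}_m{\bf H}{\bf P}_m$, i.e., ${\bf H}_m={\bf H}$. For sufficiency, you apply ${\bf S}_l{\bf P}_m={\bf P}_m{\bf S}_l=\gamma_l(m){\bf P}_m$ on either side of each block ${\bf P}_m{\bf H}_m{\bf P}_m$.
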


By Theorem \ref{shiftinvariantfilter.thm},  we have 
 \begin{corollary}\label{inverseinvariant.cor}
Let ${\mathbf H}\in {\mathcal T}$. Then 
\begin{itemize}
\item[{(i)}]  ${\bf H}^t\in {\mathcal T} $ for all $t\ge 0$ if  ${\bf H}$ is positive semi-definite. 

\item[{(ii)}]  ${\bf H}^{-1}\in {\mathcal T}$ if  ${\bf H}$ is invertible. 

\item[{(iii)}] ${\bf H}^{s}\in {\mathcal T}$ for all $s\in {\mathbb R}$ if  ${\bf H}$ is positive definite.

\end{itemize}

 \end{corollary}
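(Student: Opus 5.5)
The plan is to use the spectral decomposition of Theorem \ref{shiftinvariantfilter.thm}. Write ${\bf H}=\sum_{m=1}^M {\bf P}_m{\bf H}{\bf P}_m$ as in \eqref{shiftinvariantfilter.commutative.eq2}. The central observation is that ${\bf H}$ commutes with every ${\bf P}_m$ by \eqref{shiftinvariantfilter.projection}. Consequently ${\bf H}$ leaves each range space $W_m={\bf P}_m{\mathbb R}^V$ invariant, and ${\bf H}$ is block diagonal with respect to the orthogonal decomposition ${\mathbb R}^V=\oplus_{m=1}^M W_m$. Any matrix function of ${\bf H}$ that is defined blockwise will then again be a sum of the form $\sum_m {\bf P}_m {\bf G}_m {\bf P}_m$, and Theorem \ref{shiftinvariantfilter.thm} will give membership in ${\mathcal T}$.

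For (i), ${\bf H}$ is positive semi-definite and in particular symmetric, so the restriction ${\bf H}|_{W_m}$ is a positive semi-definite self-adjoint operator on $W_m$. I would diagonalize ${\bf H}$ by an orthonormal basis adapted to the splitting: choose an orthonormal eigenbasis of ${\bf H}|_{W_m}$ inside each $W_m$ and concatenate them. Then ${\bf H}^t$, defined through this spectral decomposition, equals $\sum_m {\bf P}_m ({\bf H}|_{W_m})^t {\bf P}_m$. Here $({\bf H}|_{W_m})^t$ is extended to ${\mathbb R}^V$ by zero on $W_m^\perp$, and this extension serves as the filter ${\bf H}_m$. Theorem \ref{shiftinvariantfilter.thm} then yields ${\bf H}^t\in{\mathcal T}$.

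An alternative route for (i), which avoids blocks entirely, is to note that ${\bf H}^t=g({\bf H})$ for a polynomial $g$ interpolating $s\mapsto s^t$ on the finite spectrum of ${\bf H}$. A polynomial in ${\bf H}$ commutes with each ${\bf S}_l$ because ${\bf H}$ does, so ${\bf H}^t\in{\mathcal T}$ directly. I would probably present this Lagrange interpolation argument, since it also handles (iii) immediately: for positive definite ${\bf H}$, the same interpolation applies to $s\mapsto s^s$ for any real $s$ on the positive spectrum.

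For (ii), the paper already notes that the inverse of an invertible shift-invariant filter is shift-invariant. The proof is to multiply ${\bf H}{\bf S}_l={\bf S}_l{\bf H}$ by ${\bf H}^{-1}$ on both sides, which gives ${\bf S}_l{\bf H}^{-1}={\bf H}^{-1}{\bf S}_l$. Alternatively, by Cayley--Hamilton, ${\bf H}^{-1}$ is a polynomial in ${\bf H}$.

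The only subtle point is that ${\bf H}$ need not be symmetric in (ii). The statements in (i) and (iii) presuppose symmetry, because positive (semi-)definiteness is meant for symmetric matrices, and the fractional power must be defined via the spectral theorem. I would state this convention explicitly and confirm that the interpolation polynomial reproduces the spectrally defined power. That check is routine: both sides act as $\mu^t$ on each eigenspace of ${\bf H}$ with eigenvalue $\mu$.
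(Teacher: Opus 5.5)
Your proposal is correct. Your first route is essentially the paper's. The paper derives this corollary in one line from Theorem \ref{shiftinvariantfilter.thm}: writing ${\bf H}=\sum_{m=1}^M{\bf P}_m{\bf H}{\bf P}_m$ exhibits ${\bf H}$ as block diagonal with respect to ${\mathbb R}^V=\oplus_{m=1}^M W_m$. The power, inverse or fractional power is then taken blockwise and is again of the form $\sum_m{\bf P}_m{\bf H}_m{\bf P}_m$. This parallels how Corollary \ref{inversepolynomial.cor} follows from \eqref{shiftinvariantfilter.thm.eq3}.

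The Lagrange interpolation argument you say you would actually present is a genuinely different route. It writes ${\bf H}^t=g({\bf H})$, and for (ii) it either multiplies ${\bf H}{\bf S}_l={\bf S}_l{\bf H}$ by ${\bf H}^{-1}$ or uses Cayley--Hamilton. It relies only on the defining relation ${\bf H}{\bf S}_l={\bf S}_l{\bf H}$. It never uses the projections ${\bf P}_m$ or their polynomial-filter property (Theorem \ref{projection.polynomialfilter.thm}), on which Theorem \ref{shiftinvariantfilter.thm} rests.

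Your route is therefore more elementary and more general. Any function of ${\bf H}$ defined through its spectrum is a polynomial in ${\bf H}$, so it commutes with every ${\bf S}_l$ and lies in ${\mathcal T}$. The paper's blockwise route instead shows explicitly how ${\bf H}^t$ acts on each frequency space $W_m$, which fits the spectral viewpoint used throughout the paper.

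One cosmetic fix: in (iii), rename the interpolation variable. ``$s\mapsto s^s$'' clashes with the exponent; write $\mu\mapsto\mu^s$ on the (positive) spectrum of ${\bf H}$ instead.
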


\smallskip

 Define 
the {\em center} of the unital noncommutative semi-group ${\mathcal T}$  by 
\begin{equation}\label{shiftinvariantcenter.def}
   C(\mathcal T):=\{{\bf G}\in\mathcal T\ |\ {\bf H}{\bf G}={\bf G}{\bf H} {\rm\ for\  all\ } \ {\bf H}\in \mathcal T\}.
\end{equation}  
By \eqref{projection.eq1} and
Theorems \ref{polynomialfilter.thm} and \ref{shiftinvariantfilter.thm}, we see that all polynomial filters are in the  center  $C(\mathcal T)$. In the following theorem, we show that the converse is also true; see Section 
\ref{center.thm.pfsection} for the detailed proof.

\begin{theorem}
\label{center.thm}
Let ${\mathcal P}$ and ${\mathcal T}$ be the unital semi-group of  polynomial filters and shift-invariant filters respectively, and let $C({\mathcal T})$ be as in  
\eqref{shiftinvariantcenter.def}.
Then 
\begin{equation}\label{center.thm.eq1}
C({\mathcal T})=
{\mathcal P}.
\end{equation}
\end{theorem}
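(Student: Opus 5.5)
The plan is to prove the two inclusions separately. The inclusion ${\mathcal P}\subset C({\mathcal T})$ is already indicated in the text. A polynomial filter ${\bf G}$ commutes with each ${\bf S}_l$, so ${\bf G}\in{\mathcal T}$. By Theorem \ref{polynomialfilter.thm}, or equivalently \eqref{shiftinvariantfilter.thm.eq3}, we can write ${\bf G}=\sum_m \mu_m {\bf P}_m$. By \eqref{shiftinvariantfilter.projection}, every ${\bf H}\in{\mathcal T}$ commutes with each ${\bf P}_m$, and hence ${\bf H}{\bf G}={\bf G}{\bf H}$. I will record this briefly and move on.

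For the reverse inclusion, take ${\bf G}\in C({\mathcal T})$. By Theorem \ref{polynomialfilter.thm}, it suffices to find reals $\mu_m$ with ${\bf P}_m{\bf G}={\bf G}{\bf P}_m=\mu_m{\bf P}_m$. Since ${\bf G}\in{\mathcal T}$, \eqref{shiftinvariantfilter.commutative.eq1} already gives ${\bf G}{\bf P}_m={\bf P}_m{\bf G}={\bf P}_m{\bf G}{\bf P}_m$. Consequently, ${\bf G}$ maps each range space $W_m$ in \eqref{projection.eq4} into itself. Let ${\bf G}_m$ denote the restriction of ${\bf G}$ to $W_m$. What remains is to show that ${\bf G}_m$ is a scalar multiple of the identity on $W_m$.

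To do this, I will test ${\bf G}$ against shift-invariant filters of the form ${\bf H}={\bf P}_m{\bf A}{\bf P}_m$ with ${\bf A}$ an arbitrary matrix. Each such ${\bf H}$ lies in ${\mathcal T}$ by Theorem \ref{shiftinvariantfilter.thm}, taking ${\bf H}_m={\bf A}$ and ${\bf H}_{m'}={\bf 0}$ for $m'\ne m$. Any linear operator on $W_m$ arises as ${\bf P}_m{\bf A}{\bf P}_m$ restricted to $W_m$. Commutation ${\bf G}{\bf H}={\bf H}{\bf G}$ then says that ${\bf G}_m$ commutes with every linear map on $W_m$. By the standard fact that the center of the full matrix algebra consists of scalars, we get ${\bf G}_m=\mu_m{\bf I}_{W_m}$. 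To make this concrete, choose ${\bf A}={\bf u}{\bf v}^T$ with ${\bf u},{\bf v}\in W_m$. Then ${\bf G}{\bf u}{\bf v}^T={\bf u}{\bf v}^T{\bf G}$, so ${\bf G}{\bf u}$ is parallel to ${\bf u}$ for every ${\bf u}\in W_m$. A linear map under which every vector is an eigenvector is a scalar multiple of the identity.

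Finally, I translate back. Since ${\bf G}{\bf P}_m$ equals ${\bf G}_m$ applied to ${\bf P}_m$, we obtain ${\bf G}{\bf P}_m=\mu_m{\bf P}_m={\bf P}_m{\bf G}$, and Theorem \ref{polynomialfilter.thm} gives ${\bf G}\in{\mathcal P}$. The only substantive step is the scalar-commutant argument on each $W_m$. That step is elementary, so I expect no real obstacle beyond checking that the rank-one filters ${\bf P}_m{\bf u}{\bf v}^T{\bf P}_m$ are genuinely shift-invariant, which follows from \eqref{projection.eq3}.
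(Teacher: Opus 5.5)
Your proposal is correct and follows essentially the same route as the paper. Both use Theorem \ref{shiftinvariantfilter.thm} to test ${\bf G}$ against the shift-invariant filters ${\bf P}_m{\bf A}{\bf P}_m$ with ${\bf A}$ arbitrary, deduce that the compression of ${\bf G}$ to each $W_m$ commutes with every operator on $W_m$ and is therefore a scalar $\mu_m$, and conclude with Theorem \ref{polynomialfilter.thm}; your rank-one choice ${\bf A}={\bf u}{\bf v}^T$ simply makes explicit the scalar-commutant fact that the paper uses in matrix coordinates with respect to an orthonormal basis of $W_m$.
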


Let ${\bf H}$ be a shift-invariant filter.
For the scenario that
 the distinct
joint spectrum  assumption 
\eqref{distincteigenvalue.assumption} holds, 
we have
\begin{equation}
{\bf H}{\bf P_m}={\bf P}_m {\bf H}{\bf P_m}=\mu_m {\bf P}_m
\end{equation}
for some real numbers $\mu_m, 1\le m\le M$,
where the first equality follows from
\eqref{projection.eq1} and \eqref{shiftinvariantfilter.projection},
and the second equality holds as the range  space $W_m$ of the orthogonal projection ${\bf P}_m$
is one-dimensional  for every $1\le m\le M$ by \eqref{distincteigenvalue.assumption}. 
Therefore as a consequence of Theorems \ref{shiftinvariantfilter.thm} and \ref{center.thm},  we recover the conclusion in 
 \cite[Theorem A.3]{emirov2022}.

\begin{corollary}\label{shiftinvariantfilter.cor} 
Let the graph shifts ${\bf S}_1, \cdots, {\bf S}_d$ satisfy 
\eqref{distincteigenvalue.assumption} and Assumption \ref{graphshift.assumption}.  Then 
\begin{equation}\label{shiftinvariantfilter.cor.eq1}
C({\mathcal T})={\mathcal P}={\mathcal T}.
\end{equation}
\end{corollary}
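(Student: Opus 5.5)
We need to show $C(\mathcal T)=\mathcal P=\mathcal T$ under the distinct joint spectrum assumption \eqref{distincteigenvalue.assumption}. Theorem \ref{center.thm} already gives $C(\mathcal T)=\mathcal P$. It therefore suffices to prove $\mathcal T\subset\mathcal P$. The reverse inclusion $\mathcal P\subset \mathcal T$ is immediate, because every polynomial in the commuting matrices ${\bf S}_1,\ldots,{\bf S}_d$ commutes with each ${\bf S}_l$.

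For $\mathcal T\subset\mathcal P$, let ${\bf H}\in\mathcal T$. Under \eqref{distincteigenvalue.assumption} the distinct joint eigenvalues ${\pmb\gamma}(m)$ are exactly the ${\pmb\lambda}(n)$, so $M=N$. After reindexing, ${\bf P}_m={\bf u}_m{\bf u}_m^T$ is a rank-one projection onto the one-dimensional space $W_m$.

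By \eqref{shiftinvariantfilter.projection}, or equivalently by Theorem \ref{shiftinvariantfilter.thm}, we have ${\bf H}{\bf P}_m={\bf P}_m{\bf H}{\bf P}_m$. Computing directly,
$$
{\bf P}_m{\bf H}{\bf P}_m={\bf u}_m({\bf u}_m^T{\bf H}{\bf u}_m){\bf u}_m^T=\mu_m{\bf P}_m, \qquad \mu_m:={\bf u}_m^T{\bf H}{\bf u}_m\in\mathbb R.
$$
Combining this with \eqref{shiftinvariantfilter.projection} gives ${\bf P}_m{\bf H}={\bf H}{\bf P}_m=\mu_m{\bf P}_m$ for every $m$. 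Theorem \ref{polynomialfilter.thm} then yields ${\bf H}\in\mathcal P$. Alternatively, by \eqref{shiftinvariantfilter.commutative.eq2}, ${\bf H}=\sum_m\mu_m{\bf P}_m$, which is a polynomial filter by Theorem \ref{projection.polynomialfilter.thm}.

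There is no real obstacle. The only point needing care is that each $W_m$ is one-dimensional, which follows from \eqref{projection.eq5} and the distinctness of the joint spectrum. That is what collapses ${\bf P}_m{\bf H}{\bf P}_m$ to a scalar multiple of ${\bf P}_m$.
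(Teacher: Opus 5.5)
Your proposal is correct and follows essentially the same route as the paper: under the distinct joint spectrum assumption each $W_m$ is one-dimensional, so ${\bf H}{\bf P}_m={\bf P}_m{\bf H}{\bf P}_m=\mu_m{\bf P}_m$ for every shift-invariant ${\bf H}$. Hence $\mathcal T\subset\mathcal P$, and Theorem~\ref{center.thm} supplies $C(\mathcal T)=\mathcal P$. Your explicit computation ${\bf u}_m({\bf u}_m^T{\bf H}{\bf u}_m){\bf u}_m^T$ just spells out the step the paper states in words.
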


\section{Graph shift-invariant spaces, bandlimited spaces and maximal invariant subspaces}
\label{sisbandlimiting.section}

Let ${\mathcal G}=(V, E, {\bf W})$ be an undirected simple graph of order $N\ge 2$,  ${\bf S}_1, \ldots, {\bf S}_d$ be graph shifts satisfying Assumption \ref{graphshift.assumption}, and
let ${\pmb \gamma}(m),  
1\le m\le M$, be distinct elements in the joint spectrum $\pmb \Lambda$  in \eqref{jointspectrum.def}.
We say  
that $J$ is a  {\em graph range function} if
\begin{equation} 
\label{rangefunction.def-1} J({\pmb \gamma}(m))\ 
{\rm is \ a \ linear \ subspace \ of }\  \R^V  \ {\rm for \ every} \ 1\le m\le M.
\end{equation}
In  Section \ref{rangefunction.section}, we first characterize a 
 GSIS in the Fourier domain via a graph range function; see Theorem \ref{rangecharacterization.thm}.
Moreover, from the argument used in the proof of Theorem \ref{rangecharacterization.thm}, we see that
the graph range function $J$ of a GSIS $U$ satisfies
\begin{equation}\label{rangefunction.def0}
J({\pmb \gamma}(m))\cap W_m=U\cap W_m, \quad 1\le m\le M.
\end{equation}
We remark that the characterization of GSISs in Theorem \ref{rangecharacterization.thm} is analogous to the Fourier-based description of SISs on the real line via range functions \cite{bownik2000, bownik2003, deBoor1994, ron1995}. However, in contrast to the equation \eqref{rangefunction.def0} in the graph setting, there is {\bf no} 
 direct analogue  in the real line setting.

For a GSIS $U$, define its {\em  dimension function} by  
\begin{equation}
    \label{dimension.def}
\dim_U({\pmb \gamma}(m))=\dim U\cap W_m,\  1\le m\le M.\end{equation} 
In Section \ref{dimension.subsection}, we examine various properties of the dimension function of a GSIS, focusing on the summation of two GSISs and the orthogonal complement of a GSIS; see Propositions \ref{dimensionsum.prop} and \ref{complement.prop}.

Given a shift-invariant filter ${\bf H}$, we may easily verify that
its range and kernel spaces are GSISs. In  Theorem
\ref{filterrange.thm}, we show that every GSIS is the  space of some shift-invariant filter and also the kernel space of another shift-invariant filter. In Section \ref{filterrange.section}, we also discuss 
injective, surjective and isometric properties of a
shift-invariant operator between two GSISs; see Proposition \ref{sioperator.prop}.
Here given GSISs $U_1$ and $U_2$, we say that
 a linear operator ${\bf T}: U_1\to U_2$ is
{\em shift-invariant} if it can be represented by a shift-invariant filter, i.e., 
\begin{equation} \label{shiftinvariantoperator.def}
{\bf T} {\bf S}_l={\bf S}_l {\bf T}, \quad 1\le l\le d. 
    \end{equation}

We say that a linear space $U$  of graph signals on the graph ${\mathcal G}$ is
 {\em bandlimited} if
\begin{equation}\label{bandlimited.def}
U\cap W_m=W_m \ {\rm or}\ \{0\}, \quad 1\le m\le M,
\end{equation}
and  {\em super-shift-invariant}
if
\begin{equation}\label{superSIS.def}
{\bf T} {\bf x}\in U \ \text{\rm for all shift-invariant filters} \ {\bf T}\in\mathcal T \ {\rm and\ graph\ signals}\ 
\x\in U.
\end{equation}
The bandlimited spaces, also known as Paley-Wiener spaces, have been widely used in  GSP;
see \cite{chen2015, Chung2024, pesenson2008, pesenson2009, puy2018}
and references therein.
The binary structure in \eqref{bandlimited.def} allows the bandlimited space to be reformulated in the familiar  Fourier domain framework:  
\begin{equation} \label{bandlimit.def2} B_\Omega=\big\{{\bf x}\in {\mathbb R}^V \!  \mid \! {\rm supp} \ \widehat {\bf x}\subset \Omega\big\}\end{equation}
where  $\supp{\widehat{\bf x}}=\{m\in\{1,2,\ldots, M\}\mid\widehat{\bf x}(m)\ne {\bf 0}\}$ and $\Omega$ is a subset of 
$\{1, \ldots, M\}$. In Theorem \ref{bandlimited.thm} of Section \ref{bandlimited.section}, we show that a linear space of graph signals  is super-shift-invariant if and only if it is bandlimited. 
The above equivalence is established in \cite{Chung2024}
when the graph shifts ${\bf S}_1, \ldots, {\bf S}_d$  satisfies
the district joint spectrum condition
\eqref{distincteigenvalue.assumption}. 
In Section \ref{bandlimited.section}, we also present a Beurling-type theorem for a bandlimited space; see Theorem \ref{Beurling.thm}.

Given  a  GSIS  $U$, we know that
 ${\bf S}_1 U, \ldots, {\bf S}_d U$ are  linear subspaces of $U$. In the scenario that the graph shifts ${\bf S}_1, \ldots, {\bf S}_d$ are invertible, the GSIS $U$ coincides with its shifted versions ${\bf S}_1 U, \ldots, {\bf S}_d U$; 
cf.  Corollary \ref{allpassinvariance.cor}.
This naturally raises the question of whether a GSIS and its shifted versions can coincide, i.e., \eqref{maximalsis.eq0} holds,  without imposing the invertibility of the graph shifts  ${\bf S}_1, \ldots, {\bf S}_d$.  
In Theorem \ref{sisequality.thm} of  Section \ref{maximalsis.section}, we provide an answer to the above question using the dimension function  of the GSIS $U$. 
In Section \ref{maximalsis.section},  we also find the maximal shift-invariant subspace  $W$ of $U$ such that
\eqref{maximalsis.eq1} holds;
see Theorem \ref{maximalsis.thm}.

Before proceeding with a more detailed analysis of GSISs, let us  make two  remarks concerning their pseudo-inverse shift-invariance
and fractional shift-invariance.  

\begin{remark}\label{pseudoinversesis.rem}
{\rm
Denote the pseudo inverses  of the graph shifts ${\bf S}_l$ by ${\bf S}_l^\dag, 1\le l\le d$. Since
the pseudo inverses of the graph shifts are polynomial filters by
\eqref{analytic.polynomial}, we  have
\begin{equation}
{\bf S}_1^\dag \x, \ldots, {\bf S}_d^\dag \x\in U \ {\rm for \  all}
\ \x\in U
\end{equation}
provided that $U$ is  a GSIS.
Therefore for the scenario where the graph shifts ${\bf S}_1, \ldots, {\bf S}_d$ are invertible, a GSIS $U$ is also {\em bi-directional shift-invariant}, i.e., 
\begin{equation}\label{bi_sis.def1}
{\bf S}_1^{\alpha_1}\cdots {\bf S}_d^{\alpha_d}{\bf x}\in U \quad  {\rm for \ all} \ \alpha_1, \ldots, \alpha_d\in\Z
\ {\rm  and } \ {\bf x}\in U.
\end{equation} 
In the  real line setting, where the shift operator is invertible, the notion of an  SIS is likewise defined in terms of bi-directional invariance \cite{akram2001, akram2005, bownik2000, bownik2003, chui2006, deBoor1994}.
}
\end{remark}

\begin{remark}\label{fractional.rem}
{\rm We note that, under the assumption that the graph shifts 
${\bf S}_1,\dots,{\bf S}_d$
 are positive semi-definite, their fractional powers 
 ${\mathbf S}_l^t, 1\le l\le d, t\ge 0$, are polynomial filters by Corollary \ref{inversepolynomial.cor}. 
 Consequently, any GSIS is  invariant under fractional shift operations as well. In contrast, this fractional shift-invariance does {\bf not} generally hold in the  real line setting, where such invariance fails in most cases \cite{akram2010, akram2011, cabrelli2016, fuhr2014, hardin2018}.
 }
\end{remark}

\subsection{Graph shift-invariant spaces and graph range function}
\label{rangefunction.section}

In the following theorem, 
we provide a characterization to a  GSIS via its graph range function; see Section \ref{rangecharacterization.thm.pfsection} for the detailed proof.

\begin{theorem}\label{rangecharacterization.thm}
 Define the  GFT $\widehat {\bf x}$ of a graph signal ${\bf x}$ as in 
\eqref{GFT.def}.
Then a linear space $U$ of graph signals is shift-invariant if and only if there exists a  graph range function $J$ such that 
    \begin{equation} \label{rangecharacterization.thm.eq0}
        U=\{{\bf x}\in {\mathbb R}^V\ \mid \ \widehat{{\bf x}}(m)\in J({\pmb\gamma}(m)),\ 1\le m\le M\}.
    \end{equation}
 \end{theorem}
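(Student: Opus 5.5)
The proof has two directions, and both rest on Theorem \ref{projection.polynomialfilter.thm}: each projection ${\bf P}_m$ is a polynomial filter $h_m({\bf S}_1,\ldots,{\bf S}_d)$.

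\emph{Necessity.} Let $U$ be shift-invariant. Then $U$ is invariant under every polynomial filter, since such a filter is a linear combination of products of the ${\bf S}_l$. In particular ${\bf P}_m U\subset U$ for all $1\le m\le M$. We take the canonical range function
\[
J({\pmb\gamma}(m)) := {\bf P}_m U = U\cap W_m .
\]
The equality ${\bf P}_mU=U\cap W_m$ holds because ${\bf P}_mU\subset U\cap W_m$, and conversely any ${\bf y}\in U\cap W_m$ satisfies ${\bf y}={\bf P}_m{\bf y}$. We then check \eqref{rangecharacterization.thm.eq0}.
\begin{itemize}
\item[(a)] If ${\bf x}\in U$, then $\widehat{\bf x}(m)={\bf P}_m{\bf x}\in J({\pmb\gamma}(m))$ for every $m$.
\item[(b)] If $\widehat{\bf x}(m)\in {\bf P}_mU\subset U$ for every $m$, then ${\bf x}=\sum_m \widehat{\bf x}(m)\in U$ by \eqref{projection.eq2}, since $U$ is a linear space.
\end{itemize}
This choice of $J$ also explains the identity \eqref{rangefunction.def0}.

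\emph{Sufficiency.} Let $J$ be any graph range function and define $U$ by \eqref{rangecharacterization.thm.eq0}. Linearity of $U$ is immediate, because each $J({\pmb\gamma}(m))$ is a subspace and ${\bf x}\mapsto\widehat{\bf x}(m)$ is linear. For shift-invariance we use the multiplier property \eqref{multiplier.prop}:
\[
\widehat{{\bf S}_l{\bf x}}(m)=\gamma_l(m)\,\widehat{\bf x}(m).
\]
If $\widehat{\bf x}(m)\in J({\pmb\gamma}(m))$, then the scalar multiple $\gamma_l(m)\widehat{\bf x}(m)$ also lies in $J({\pmb\gamma}(m))$. Hence ${\bf S}_l{\bf x}\in U$ for every $1\le l\le d$.

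There is no real obstacle in either direction. The only substantive point is the closure ${\bf P}_mU\subset U$ in the necessity part, which needs the polynomial-filter representation of ${\bf P}_m$. Note that $J({\pmb\gamma}(m))$ need not lie inside $W_m$ for a general range function. This does not matter for sufficiency, because the condition only ever tests the vectors $\widehat{\bf x}(m)$, and these already lie in $W_m$.
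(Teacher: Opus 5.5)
Your proposal is correct and follows the paper's proof essentially step for step. Sufficiency uses the multiplier property $\widehat{{\bf S}_l{\bf x}}(m)=\gamma_l(m)\widehat{\bf x}(m)$, and necessity takes $J({\pmb\gamma}(m))={\bf P}_mU$, uses Theorem~\ref{projection.polynomialfilter.thm} to get ${\bf P}_mU\subset U\cap W_m$, and reconstructs ${\bf x}=\sum_m {\bf P}_m{\bf x}\in U$. Your remark that ${\bf P}_mU$ in fact equals $U\cap W_m$ is a small, correct sharpening of the inclusion the paper states.
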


We remark that the range function $J$ in \eqref{rangecharacterization.thm.eq0} used to define the GSIS $U$  is not unique. However, from the argument in the proof of Theorem  \ref{rangecharacterization.thm}, we see that
for two range functions $J_1$ and $J_2$,
the  corresponding GSISs in  \eqref{rangecharacterization.thm.eq0}  are the same if and only if 
\begin{equation*}
J_1({\pmb \gamma}(m))\cap W_m= J_2({\pmb \gamma}(m))\cap W_m=U\cap W_m, \ 1\le m\le M,
\end{equation*}
where the  orthogonal subspaces $W_m, 1\le m\le M$, are given in \eqref {projection.eq4}. For a given GSIS $U$, we may select the range function $J$  in \eqref{rangecharacterization.thm.eq0}
so that 
\begin{equation*} 
J({\pmb \gamma}(m)) \subset W_m,\quad  1\le m\le M.
\end{equation*}
Such a range function is unique
and we call it as the {\em graph range function} associated with the GSIS $U$ and denote it by $J_U$, i.e., 
\begin{equation}\label{Juspace.def}
    J_U({\pmb \gamma}(m))=U\cap W_m, \ \ 1\le m\le M,
\end{equation}
and call its support  
\begin{equation}\label{spectrumsis.def}
\Omega=\big\{m\in \{1, \ldots ,M\} \ |\  J_U({\pmb \gamma}(m))\ne \{0\}\big \},
\end{equation}
as the {\em spectrum} of the GSIS $U$.
For a bandlimited space $B_\Omega$, we obtain from \eqref{bandlimit.def2}
and \eqref{spectrumsis.def} that its spectrum coincides with the supporting set $\Omega$ of its frequencies.

We say that a polynomial filter ${\bf P}=p({\bf S}_1, \ldots, {\bf S}_d)$  is an {\em all-pass filter}
if $p({\pmb \gamma}(m))\ne 0$ for all $1\le m\le M$, or equivalently  if  the matrix to represent the polynomial filter ${\bf P}$ is invertible.  By \eqref{projection.eq3} and Theorem 
\ref{rangecharacterization.thm}, we have the following result on the invariance of a GSIS under all-pass polynomial filtering procedure.

\begin{corollary}\label{allpassinvariance.cor}
    Let ${\bf P}$  be an all-pass polynomial filter
    and $U$ be a GSIS. Then 
 ${\bf P}U=U$.
\end{corollary}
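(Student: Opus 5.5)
To prove Corollary \ref{allpassinvariance.cor}, I will first show the inclusion ${\bf P}U\subset U$ and then obtain equality by a dimension count. The key fact is that ${\bf P}$ is invertible with ${\bf P}=\sum_{m=1}^M p({\pmb\gamma}(m)){\bf P}_m$. This representation follows from \eqref{projection.eq2} and \eqref{projection.eq3}: $p({\bf S}_1,\ldots,{\bf S}_d){\bf P}_m=p({\pmb\gamma}(m)){\bf P}_m$, and summing over $m$ gives the formula.

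For the inclusion, I will use the fact that ${\bf P}$ is a polynomial in ${\bf S}_1,\ldots,{\bf S}_d$. Since $U$ is invariant under each ${\bf S}_l$, it is invariant under every monomial ${\bf S}_1^{l_1}\cdots{\bf S}_d^{l_d}$. Because $U$ is a linear space, it is then invariant under every linear combination of these monomials, so ${\bf P}U\subset U$.

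Alternatively, one can argue through Theorem \ref{rangecharacterization.thm}. By \eqref{polynomialGFT}, $\widehat{{\bf P}{\bf x}}(m)=p({\pmb\gamma}(m))\widehat{\bf x}(m)$. This vector lies in the linear subspace $J({\pmb\gamma}(m))$ whenever $\widehat{\bf x}(m)$ does.

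For equality, note that ${\bf P}$ is invertible, since $p({\pmb\gamma}(m))\neq 0$ for all $m$. Hence ${\bf P}$ restricted to $U$ is injective, and $\dim {\bf P}U=\dim U$. Combined with ${\bf P}U\subset U$ and finite dimensionality, this gives ${\bf P}U=U$.

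A more constructive route uses the inverse directly. ${\bf P}^{-1}=\sum_m p({\pmb\gamma}(m))^{-1}{\bf P}_m$ is a polynomial filter by Corollary \ref{inversepolynomial.cor}(ii), so ${\bf P}^{-1}U\subset U$ by the same argument. Therefore $U={\bf P}({\bf P}^{-1}U)\subset {\bf P}U$.

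No step here is a real obstacle. The only point needing care is the equivalence between the all-pass condition and invertibility, which is exactly the spectral representation of ${\bf P}$ given in the first paragraph.
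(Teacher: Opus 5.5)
Your proof is correct. The paper gets the corollary directly from \eqref{projection.eq3} and Theorem~\ref{rangecharacterization.thm}. In the Fourier domain, ${\bf P}$ acts on the $m$-th frequency component as multiplication by the nonzero scalar $p({\pmb\gamma}(m))$. Multiplication by a nonzero scalar maps each subspace $J({\pmb\gamma}(m))$ onto itself, so both ${\bf P}U\subset U$ and $U\subset {\bf P}U$ follow from this one fiberwise observation.

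Your main route is more elementary. It never uses the range-function characterization: it needs only the invariance of $U$ under monomials in ${\bf S}_1,\ldots,{\bf S}_d$, injectivity of ${\bf P}$, and a dimension count in finite dimensions. The cost is that surjectivity comes out non-constructively.

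Your ``constructive'' variant is essentially the paper's fiberwise argument written in the spatial domain. It uses ${\bf P}^{-1}=\sum_m p({\pmb\gamma}(m))^{-1}{\bf P}_m\in{\mathcal P}$, and the preimage it produces for ${\bf y}\in U$ is exactly the signal with Fourier components $p({\pmb\gamma}(m))^{-1}\widehat{\bf y}(m)$.
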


\subsection{Graph shift-invariant space and dimension function} \label{dimension.subsection}
Let $\dim_U$ be the dimension function of the  GSIS $U$ defined in \eqref{dimension.def}.
By 
\eqref{Juspace.def},  we obtain
\begin{equation}  \label{dimension.def2}
    \dim_U({\pmb \gamma}(m))=\dim J_U({\pmb \gamma}(m)) ,\quad  1\le m\le M.
\end{equation}
This together with \eqref{projection.eq1} and \eqref{projection.eq2} implies that
\begin{equation}\label{dimensionU.eq}
\sum_{m=1}^M  \dim_U({\pmb \gamma}(m))=\sum_{m=1}^M  \dim J_U({\pmb \gamma}(m))=\dim U.
\end{equation}

From the definition of a GSIS, we may verify that
the sum and intersection of two GSISs are still shift-invariant. By   \eqref{dimension.def2}, we  have the following results for their dimension functions.

\begin{proposition} \label{dimensionsum.prop} Let 
 $U_1$ and $U_2$ be GSISs.  Then 
 $U_1+U_2$  and $U_1\cap U_2$ are  GSISs. Moreover, 
\begin{equation*}
J_{U_1+U_2}({\pmb \gamma}(m))=
J_{U_1}({\pmb \gamma}(m))+ J_{U_2}({\pmb \gamma}(m)),
\end{equation*}
\begin{equation*}
J_{U_1\cap U_2}({\pmb \gamma}(m))=
J_{U_1}({\pmb \gamma}(m))\cap J_{U_2}({\pmb \gamma}(m)),
\end{equation*}
and
\begin{equation}
\label{dimsum.eq}
\dim_{U_1+U_2}({\pmb \gamma}(m))=\dim_{U_1}({\pmb \gamma}(m))+\dim_{U_2}({\pmb \gamma}(m))-\dim_{U_1\cap U_2}({\pmb \gamma}(m)), \quad 1\le m\le M.\end{equation}
\end{proposition}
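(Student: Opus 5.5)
The plan is to verify shift-invariance of $U_1+U_2$ and $U_1\cap U_2$ directly from the definition \eqref{sis.def}, and then to compute the range functions $J_U({\pmb\gamma}(m))=U\cap W_m$ from \eqref{Juspace.def}. For invariance, take ${\bf x}={\bf x}_1+{\bf x}_2$ with ${\bf x}_i\in U_i$. Then ${\bf S}_l{\bf x}={\bf S}_l{\bf x}_1+{\bf S}_l{\bf x}_2\in U_1+U_2$. Likewise, if ${\bf x}\in U_1\cap U_2$, then ${\bf S}_l{\bf x}$ lies in both $U_1$ and $U_2$.

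The intersection formula is immediate: $(U_1\cap U_2)\cap W_m=(U_1\cap W_m)\cap(U_2\cap W_m)$.

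For the sum, the inclusion $J_{U_1}({\pmb\gamma}(m))+J_{U_2}({\pmb\gamma}(m))\subset (U_1+U_2)\cap W_m$ is trivial. The reverse inclusion is the only real step. Here I will use Theorem \ref{projection.polynomialfilter.thm}: ${\bf P}_m$ is a polynomial filter, so every GSIS is invariant under ${\bf P}_m$, i.e.\ ${\bf P}_mU_i\subset U_i$. Since $U_i\cap W_m\subset {\bf P}_mU_i$ as well, this gives ${\bf P}_mU_i=U_i\cap W_m$.

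Now let ${\bf x}={\bf x}_1+{\bf x}_2\in (U_1+U_2)\cap W_m$ with ${\bf x}_i\in U_i$. Then
\[
{\bf x}={\bf P}_m{\bf x}={\bf P}_m{\bf x}_1+{\bf P}_m{\bf x}_2\in (U_1\cap W_m)+(U_2\cap W_m),
\]
which proves the sum formula.

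Finally, \eqref{dimsum.eq} follows from \eqref{dimension.def2} and the standard identity $\dim(A+B)=\dim A+\dim B-\dim(A\cap B)$. We apply it with $A=J_{U_1}({\pmb\gamma}(m))$ and $B=J_{U_2}({\pmb\gamma}(m))$, using the two range-function identities just established. The only nontrivial point is the projection invariance ${\bf P}_mU\subset U$, and this is supplied by Theorem \ref{projection.polynomialfilter.thm}.
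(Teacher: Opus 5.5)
Your proof is correct and follows essentially the argument the paper intends. The paper only remarks that the claims follow from the definitions and \eqref{dimension.def2}; the one nontrivial step you make explicit, namely $(U_1+U_2)\cap W_m\subset J_{U_1}({\pmb\gamma}(m))+J_{U_2}({\pmb\gamma}(m))$ via ${\bf P}_mU_i\subset U_i\cap W_m$ from Theorem~\ref{projection.polynomialfilter.thm}, is the same device the paper uses in proving Theorem~\ref{rangecharacterization.thm}, and \eqref{dimsum.eq} then follows from $\dim(A+B)=\dim A+\dim B-\dim(A\cap B)$ exactly as you say.
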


 As a consequence of Proposition \ref{dimensionsum.prop}, we see that two GSISs ${ U}_1$ and ${ U}_2$
 has trivial intersection if and only if
the dimension function associated with $U_1+U_2$ is the sum of the dimension functions associated with $U_1$ and $U_2$.

For a GSIS $U$,  one may also verify that its orthogonal complement $U^\perp$ in ${\mathbb R}^V$ has the following property.

\begin{proposition}\label{complement.prop}
    Let $U$ be a GSIS. Then its orthogonal complement $U^\perp$
is  a GSIS as well. Moreover, the  range functions $J_U$ and $J_{U^\perp}$ for the GSIS $U$ and its orthogonal complement $U^\perp$  satisfy
\begin{equation}\label{orthogonalcomplement.eq1}
J_U({\pmb \gamma}(m))\oplus J_{U^\perp}({\pmb \gamma}(m))=W_m, \quad 1\le m\le M,
\end{equation}
and their dimension functions satisfy
$$
\dim_{U}({\pmb \gamma}(m))+\dim_{U^\perp}({\pmb \gamma}(m))=
\dim W_m, \quad 1\le m\le M.$$
\end{proposition}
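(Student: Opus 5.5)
The plan is to reduce everything to the fact that the projections ${\bf P}_m$ are polynomial filters (Theorem \ref{projection.polynomialfilter.thm}) and are therefore symmetric shift-invariant operators. First I show that $U^\perp$ is a GSIS. Take ${\bf y}\in U^\perp$, ${\bf x}\in U$ and $1\le l\le d$. Since ${\bf S}_l$ is symmetric by Assumption \ref{graphshift.assumption},
$$\langle {\bf S}_l{\bf y},{\bf x}\rangle=\langle {\bf y},{\bf S}_l{\bf x}\rangle=0,$$
because ${\bf S}_l{\bf x}\in U$. Hence ${\bf S}_l{\bf y}\in U^\perp$, which gives \eqref{sis.def} for $U^\perp$.

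Next I decompose $U$ along the spectral decomposition. Because every ${\bf P}_m$ is a polynomial filter and $U$ is invariant under each ${\bf S}_l$, we get ${\bf P}_mU\subset U$. Since also ${\bf P}_m U\subset W_m$, this gives ${\bf P}_mU\subset U\cap W_m$. The reverse inclusion is trivial, as ${\bf P}_m$ fixes $W_m$. Therefore ${\bf P}_mU=U\cap W_m=J_U({\pmb\gamma}(m))$. Combining this with \eqref{projection.eq2}, every ${\bf x}\in U$ satisfies ${\bf x}=\sum_m {\bf P}_m{\bf x}$ with ${\bf P}_m{\bf x}\in J_U({\pmb\gamma}(m))$, so
$$U=\bigoplus_{m=1}^M J_U({\pmb\gamma}(m)),$$
and the summands are mutually orthogonal by \eqref{projection.eq1}. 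The same argument applied to the GSIS $U^\perp$ gives $U^\perp=\bigoplus_m J_{U^\perp}({\pmb\gamma}(m))$.

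It remains to prove \eqref{orthogonalcomplement.eq1}. The spaces $J_U({\pmb\gamma}(m))=U\cap W_m$ and $J_{U^\perp}({\pmb\gamma}(m))=U^\perp\cap W_m$ are orthogonal subspaces of $W_m$, so their sum is direct and lies in $W_m$. For the spanning part, take ${\bf w}\in W_m$ and write ${\bf w}={\bf u}+{\bf v}$ with ${\bf u}\in U$ and ${\bf v}\in U^\perp$. Applying ${\bf P}_m$ gives
$${\bf w}={\bf P}_m{\bf w}={\bf P}_m{\bf u}+{\bf P}_m{\bf v},$$
where ${\bf P}_m{\bf u}\in U\cap W_m$ and ${\bf P}_m{\bf v}\in U^\perp\cap W_m$ by the invariance established above. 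This proves \eqref{orthogonalcomplement.eq1}. The dimension identity then follows by taking dimensions and using \eqref{dimension.def2}.

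The only step that needs care is showing that ${\bf P}_m$ preserves $U$ and $U^\perp$. This is exactly where Theorem \ref{projection.polynomialfilter.thm} is used. Mere commutation of ${\bf P}_m$ with the shifts would not be enough, since a general shift-invariant filter need not leave a GSIS invariant.
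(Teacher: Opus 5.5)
Your proof is correct. It is the routine verification the paper leaves to the reader: the paper offers only ``one may also verify'' here, and your argument uses exactly the intended ingredients. Symmetry of the ${\bf S}_l$ gives shift-invariance of $U^\perp$; Theorem \ref{projection.polynomialfilter.thm} gives ${\bf P}_mU=U\cap W_m$ and ${\bf P}_mU^\perp=U^\perp\cap W_m$, as in the proof of Theorem \ref{rangecharacterization.thm}; and splitting ${\bf w}={\bf P}_m{\bf u}+{\bf P}_m{\bf v}$ then yields \eqref{orthogonalcomplement.eq1} and the dimension identity.
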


\smallskip

By \eqref{dimension.def} and \eqref{dimensionU.eq}, we have 

  \begin{proposition}\label{sisequality.prop}
Let   $U_1$ and $U_2$ be GSISs and satisfy $U_1\subset U_2$. Then $U_1=U_2$
if and only if their dimension functions are the same. 
\end{proposition}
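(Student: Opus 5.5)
The plan is to reduce the comparison of $U_1$ and $U_2$ to a comparison of their pieces $U_i\cap W_m$, using the dimension identity \eqref{dimensionU.eq}. The ``only if'' direction is immediate. If $U_1=U_2$, then $U_1\cap W_m=U_2\cap W_m$ for every $1\le m\le M$, so $\dim_{U_1}({\pmb\gamma}(m))=\dim_{U_2}({\pmb\gamma}(m))$ by \eqref{dimension.def}.

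For the ``if'' direction, assume $\dim_{U_1}({\pmb \gamma}(m))=\dim_{U_2}({\pmb \gamma}(m))$ for all $1\le m\le M$. Summing over $m$ and applying \eqref{dimensionU.eq} to each of $U_1$ and $U_2$ gives
\begin{equation*}
\dim U_1=\sum_{m=1}^M \dim_{U_1}({\pmb\gamma}(m))=\sum_{m=1}^M \dim_{U_2}({\pmb\gamma}(m))=\dim U_2 .
\end{equation*}
Since $U_1\subset U_2$ are finite-dimensional subspaces of ${\mathbb R}^V$ of equal dimension, they coincide.

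One can also argue fiberwise, which shows why shift-invariance is essential. The inclusion $U_1\subset U_2$ gives $U_1\cap W_m\subset U_2\cap W_m$, and equal dimensions force equality for each $m$. For a GSIS $U$, each ${\bf P}_m$ is a polynomial filter by Theorem \ref{projection.polynomialfilter.thm}, so ${\bf P}_mU\subset U\cap W_m$. The reverse inclusion $U\cap W_m\subset {\bf P}_mU$ holds because ${\bf P}_m$ fixes $W_m$. Hence $U=\bigoplus_{m} (U\cap W_m)$ by \eqref{projection.eq2}, and this is exactly the content behind \eqref{dimensionU.eq}. Applying this decomposition to both $U_1$ and $U_2$ yields $U_1=U_2$.

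The only real point to check is this direct-sum decomposition, since it uses the hypothesis that the spaces are GSISs; it fails for a general subspace. In the paper it is already recorded as \eqref{dimensionU.eq}, so the proof is short once that identity is cited.
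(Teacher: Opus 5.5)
Your proof is correct and matches the paper's argument: the paper also deduces the proposition directly from the definition \eqref{dimension.def} and the identity \eqref{dimensionU.eq}, so equal dimension functions give $\dim U_1=\dim U_2$, which together with $U_1\subset U_2$ forces $U_1=U_2$. Your fiberwise remark correctly identifies where the GSIS hypothesis enters, namely the decomposition $U=\bigoplus_{m}(U\cap W_m)$, which rests on the ${\bf P}_m$ being polynomial filters.
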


Taking $U_2=R^V$ in  Proposition \ref{sisequality.prop}, we conclude that a GSIS $U$ is the whole space $R^V$ if and only if
  \begin{equation}
  \dim_U({\pmb \gamma}(m))=\dim W_m,\quad 1\le m\le M.
  \end{equation}

\subsection{Graph shift-invariant spaces and shift-invariant filters}
\label{filterrange.section}
Given a graph filter ${\bf H}$, define its range and kernel spaces by
\begin{equation}\label{range.def}R({\bf H})=\big\{ {\bf H}{\bf x}\ |\  {\bf x}\in {\mathbb R}^V\big\}
\end{equation}
and
\begin{equation}\label{kernel.def} K({\bf H})=\big\{ {\bf x} \in {\mathbb R}^V \ | \ {\bf H}{\bf x}={\bf 0}\big\}
\end{equation}
respectively. 
One may  verify that the
range space $R({\bf H})$ and the kernel space $K({\bf H})$
associated with a shift-invariant filter ${\bf H}$ are  GSISs. In the following theorem, we show that the converse is also true; see Section \ref{filterrange.thm.pfsection}
for the detailed proof.

\begin{theorem}\label{filterrange.thm} Let $U$ be a linear space of graph signals. Then 
$U$ is a GSIS if and only if it is the range space of a shift-invariant filter if and only if it is the kernel space of another shift-invariant filter.
\end{theorem}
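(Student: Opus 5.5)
The plan is to prove the cycle of implications: range space $\Rightarrow$ GSIS, kernel space $\Rightarrow$ GSIS, and GSIS $\Rightarrow$ both range space and kernel space.

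\emph{Range and kernel spaces are GSISs.} Let ${\bf H}\in{\mathcal T}$. If ${\bf x}={\bf H}{\bf y}$, then ${\bf S}_l{\bf x}={\bf H}({\bf S}_l{\bf y})\in R({\bf H})$. If ${\bf H}{\bf x}={\bf 0}$, then ${\bf H}{\bf S}_l{\bf x}={\bf S}_l{\bf H}{\bf x}={\bf 0}$. So $R({\bf H})$ and $K({\bf H})$ are invariant under every ${\bf S}_l$, i.e.\ they are GSISs.

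\emph{A GSIS is a range space.} Let $U$ be a GSIS and take for ${\bf H}$ the orthogonal projection ${\bf Q}_U$ onto $U$, so that $R({\bf Q}_U)=U$. The only thing to check is that ${\bf Q}_U$ commutes with the shifts. By Proposition \ref{complement.prop}, $U^\perp$ is also a GSIS. Since ${\bf S}_l$ is symmetric, it maps $U$ into $U$ and $U^\perp$ into $U^\perp$. Writing ${\bf x}={\bf Q}_U{\bf x}+({\bf I}-{\bf Q}_U){\bf x}$ and applying ${\bf S}_l$ gives
\[
{\bf Q}_U{\bf S}_l{\bf x}={\bf S}_l{\bf Q}_U{\bf x}\quad\text{for all }{\bf x},
\]
so ${\bf Q}_U\in{\mathcal T}$. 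Alternatively, one can use the decomposition $U=\bigoplus_m (U\cap W_m)$ given by Theorem \ref{rangecharacterization.thm} and \eqref{dimensionU.eq}. Then ${\bf Q}_U=\sum_m {\bf P}_m{\bf Q}_m{\bf P}_m$, where ${\bf Q}_m$ is the orthogonal projection onto $J_U({\pmb\gamma}(m))\subset W_m$, and Theorem \ref{shiftinvariantfilter.thm} shows directly that this filter lies in ${\mathcal T}$.

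\emph{A GSIS is a kernel space.} Take ${\bf H}={\bf I}-{\bf Q}_U$. It is shift-invariant because ${\bf I}$ and ${\bf Q}_U$ both are, and $K({\bf I}-{\bf Q}_U)=R({\bf Q}_U)=U$.

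The only step needing care is showing that the orthogonal projection onto $U$ commutes with the shifts. It rests on the symmetry of ${\bf S}_l$ (through the invariance of $U^\perp$ in Proposition \ref{complement.prop}), and I do not expect any real obstacle.
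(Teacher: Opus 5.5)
Your proof is correct. Your alternative argument is the paper's proof. The paper takes ${\bf Q}_m$ to be the orthogonal projection onto $W_m\cap U$ and sets ${\bf H}=\sum_{m=1}^M{\bf Q}_m$. This equals your $\sum_m{\bf P}_m{\bf Q}_m{\bf P}_m$, since ${\bf P}_m{\bf Q}_m={\bf Q}_m={\bf Q}_m{\bf P}_m$. The paper then checks ${\bf S}_l{\bf H}={\bf H}{\bf S}_l$ via \eqref{projection.eq3}, gets $R({\bf H})=\sum_m U\cap W_m=U$, and uses ${\bf G}={\bf I}-{\bf H}$ for the kernel.

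Your primary argument is genuinely different and more elementary. It uses only that each ${\bf S}_l$ is symmetric, so ${\bf S}_lU\subset U$ forces ${\bf S}_lU^\perp\subset U^\perp$, and therefore the orthogonal projection onto $U$ commutes with ${\bf S}_l$. It needs neither the spectral decomposition nor Theorem \ref{projection.polynomialfilter.thm}, and not even the commutativity of the shifts. It also makes $K({\bf I}-{\bf Q}_U)=U$ immediate. The paper, by contrast, has to tacitly recognize $\sum_m{\bf Q}_m$ as the orthogonal projection onto $U$, which rests on the orthogonal decomposition $U=\bigoplus_m(U\cap W_m)$.

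What the paper's route buys is an explicit block form of the filter that matches Theorem \ref{shiftinvariantfilter.thm}. The same construction is reused later as the reproducing kernel in Theorem \ref{rkhs.thm1}.

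One small point: your symmetry computation already proves that $U^\perp$ is shift-invariant, so the citation of Proposition \ref{complement.prop} is unnecessary.
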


For a shift-invariant linear operator $T$ between two GSISs,  by \eqref{projection.eq1}, \eqref{projection.eq2}, \eqref{shiftinvariantoperator.def} and Theorem \ref{projection.polynomialfilter.thm}, we  have the following spectral decomposition
    \begin{equation}\label{shiftinvariantoperator.eq0} T=\sum_{m=1}^M {\bf P}_m T {\bf P}_m.
    \end{equation}
Then following the standard argument, we have the following properties
for shift-invariant operators.

\begin{proposition}\label{sioperator.prop} Let $U_1, U_2$ be GSISs,
and  $T: U_1\to U_2$ be a shift-invariant linear operator.
Then 
\begin{itemize}
\item[{(i)}] $T$ is surjective if and only if ${\bf P}_mT{\bf P}_m: W_m\cap U_1\longmapsto W_m\cap U_2$ is surjective for every $1\le m\le M$.

\item[{(ii)}] $T$ is injective if and only if ${\bf P}_mT{\bf P}_m: W_m\cap U_1\longmapsto W_m\cap U_2$ is injective for every $1\le m\le M$.

\item[{(iii)}] $T$ is isometric if and only if ${\bf P}_mT{\bf P}_m: W_m\cap U_1\longmapsto W_m\cap U_2$ is isometric for every $1\le m\le M$.

\end{itemize}

\end{proposition}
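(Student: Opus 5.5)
The plan is to reduce everything to the blocks $W_m\cap U_1\to W_m\cap U_2$ by means of the decomposition \eqref{shiftinvariantoperator.eq0}. First I will show that $T$ maps $W_m\cap U_1$ into $W_m\cap U_2$. Since $T$ is shift-invariant, it commutes with every polynomial filter. By Theorem \ref{projection.polynomialfilter.thm} it therefore commutes with each ${\bf P}_m$, which gives $T{\bf P}_m={\bf P}_mT$ on $U_1$. Here we view $T$ as a shift-invariant filter whose restriction to $U_1$ is the operator; the GSISs are ${\bf P}_m$-invariant because ${\bf P}_m\in{\mathcal P}$. Hence for ${\bf x}\in W_m\cap U_1$ we get $T{\bf x}=T{\bf P}_m{\bf x}={\bf P}_mT{\bf x}\in W_m\cap U_2$, so the block ${\bf P}_mT{\bf P}_m$ restricted to $W_m\cap U_1$ is just $T|_{W_m\cap U_1}$. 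Next, from \eqref{projection.eq1}, \eqref{projection.eq2} and the invariance ${\bf P}_mU_i\subset U_i$, I get the orthogonal direct sums $U_i=\bigoplus_{m=1}^M (W_m\cap U_i)$, $i=1,2$: each ${\bf x}\in U_i$ splits as $\sum_m {\bf P}_m{\bf x}$ with ${\bf P}_m{\bf x}\in W_m\cap U_i$, and the pieces are mutually orthogonal. With this, $T$ acts block-diagonally: $T{\bf x}=\sum_m T_m{\bf P}_m{\bf x}$ with $T_m:=T|_{W_m\cap U_1}$.

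The three claims then follow. For (ii), the kernel of $T$ is $\bigoplus_m \ker T_m$, because the images $T_m{\bf P}_m{\bf x}$ lie in mutually orthogonal subspaces $W_m$. Thus $T{\bf x}=0$ if and only if every $T_m{\bf P}_m{\bf x}=0$, which gives injectivity blockwise in both directions. For (i), the range of $T$ is $\bigoplus_m R(T_m)$, and this equals $U_2=\bigoplus_m (W_m\cap U_2)$ if and only if each $R(T_m)=W_m\cap U_2$. For the forward direction I will apply ${\bf P}_m$ to an arbitrary ${\bf y}\in W_m\cap U_2$ written as ${\bf y}=T{\bf x}$, which gives ${\bf y}={\bf P}_m{\bf y}=T_m{\bf P}_m{\bf x}$. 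For (iii), Parseval \eqref{parsevelidentity} gives $\|T{\bf x}\|_2^2=\sum_m\|T_m{\bf P}_m{\bf x}\|_2^2$ and $\|{\bf x}\|_2^2=\sum_m\|{\bf P}_m{\bf x}\|_2^2$, so blockwise isometry implies isometry of $T$. Conversely, restricting $T$ to ${\bf x}\in W_m\cap U_1$ shows that each $T_m$ is isometric.

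The only delicate point is the first step: interpreting "shift-invariant operator $U_1\to U_2$" so that it commutes with ${\bf P}_m$ on $U_1$. I would use that $T$ is represented by a shift-invariant filter, as in \eqref{shiftinvariantoperator.def}, so that \eqref{shiftinvariantfilter.projection} applies directly. If only commutation on $U_1$ were assumed, I would instead note that ${\bf P}_m=h_m({\bf S}_1,\ldots,{\bf S}_d)$ maps $U_1$ into itself, and that $T$ commutes with each monomial in the shifts on $U_1$ by iterating \eqref{shiftinvariantoperator.def}. That again yields $T{\bf P}_m={\bf P}_mT$ on $U_1$.
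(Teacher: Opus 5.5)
Your proposal is correct and follows the paper's route: the paper obtains the block decomposition $T=\sum_{m=1}^M {\bf P}_m T{\bf P}_m$ from the fact that $T$ commutes with the polynomial filters ${\bf P}_m$ (Theorem \ref{projection.polynomialfilter.thm}), and then appeals to a ``standard argument''. That standard argument is exactly the blockwise kernel, range and Parseval reasoning you spell out. Your remark that commutation on $U_1$ alone would already suffice is a correct and harmless addition.
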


\subsection{Super-shift-invariant spaces and bandlimited spaces}
\label{bandlimited.section}

Clearly, super-shift-invariant spaces and bandlimited spaces are shift-invariant. 
In the following theorem, we establish the equivalence between
 super-shift-invariance and  bandlimitedness of a linear space.

\begin{theorem}\label{bandlimited.thm}
Let $U$ be a linear space of graph signals. Then  it is super-shift-invariant if and only if it is bandlimited.
\end{theorem}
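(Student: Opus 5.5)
We prove the two implications separately. Throughout we use the spectral decomposition of the identity ${\bf P}_m$, $1\le m\le M$, its range spaces $W_m$, and Theorem \ref{shiftinvariantfilter.thm}, which identifies $\mathcal T$ with the filters of the form $\sum_{m}{\bf P}_m{\bf H}_m{\bf P}_m$.

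\emph{Bandlimited $\Rightarrow$ super-shift-invariant.} Suppose $U$ is bandlimited, and let $\Omega=\{m\mid U\cap W_m=W_m\}$. First we show that $U=\bigoplus_{m\in\Omega}W_m$. For ${\bf x}\in U$ we have ${\bf P}_m{\bf x}\in U$, because ${\bf P}_m$ is a polynomial filter by Theorem \ref{projection.polynomialfilter.thm} and a bandlimited space is shift-invariant (this was noted just before the theorem). Hence ${\bf P}_m{\bf x}\in U\cap W_m$, which is $\{0\}$ whenever $m\notin\Omega$. Combined with ${\bf x}=\sum_m{\bf P}_m{\bf x}$, this gives the decomposition. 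Now take ${\bf T}\in\mathcal T$. By \eqref{shiftinvariantfilter.projection}, ${\bf T}$ commutes with each ${\bf P}_m$, so ${\bf T}W_m={\bf T}{\bf P}_m\mathbb R^V={\bf P}_m{\bf T}\mathbb R^V\subset W_m$. Therefore ${\bf T}U\subset\bigoplus_{m\in\Omega}W_m=U$.

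\emph{Super-shift-invariant $\Rightarrow$ bandlimited.} Suppose $U$ is super-shift-invariant and fix $m$ with $U\cap W_m\ne\{0\}$. Choose a nonzero ${\bf y}\in U\cap W_m$, and let ${\bf z}\in W_m$ be arbitrary. We build a shift-invariant filter sending ${\bf y}$ to ${\bf z}$: set ${\bf H}_m={\bf z}{\bf y}^T/\|{\bf y}\|_2^2$ and ${\bf T}={\bf P}_m{\bf H}_m{\bf P}_m$. By Theorem \ref{shiftinvariantfilter.thm} (with all other ${\bf H}_{m'}=0$), ${\bf T}\in\mathcal T$. Since ${\bf P}_m{\bf y}={\bf y}$ and ${\bf P}_m{\bf z}={\bf z}$, we get ${\bf T}{\bf y}={\bf z}$. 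Super-shift-invariance then gives ${\bf z}\in U$. As ${\bf z}\in W_m$ was arbitrary, $W_m\subset U$, so $U\cap W_m=W_m$. This is exactly \eqref{bandlimited.def}.

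The only step that needs any care is this converse direction. There one must make sure the rank-one map ${\bf y}\mapsto{\bf z}$, once sandwiched between copies of ${\bf P}_m$, really commutes with every graph shift. Theorem \ref{shiftinvariantfilter.thm} guarantees this. One can also check it directly from \eqref{projection.eq3}: ${\bf S}_l{\bf T}=\gamma_l(m){\bf T}={\bf T}{\bf S}_l$. Everything else is bookkeeping with \eqref{projection.eq1} and \eqref{projection.eq2}.
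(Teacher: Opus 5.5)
Your proof is correct and follows the paper's argument. The direction from bandlimited to super-shift-invariant uses the same decomposition ${\bf x}=\sum_m {\bf P}_m{\bf x}$ with ${\bf P}_m{\bf x}\in U\cap W_m$ and ${\bf T}W_m\subset W_m$. In the converse direction, your rank-one filter ${\bf P}_m{\bf z}{\bf y}^T{\bf P}_m/\|{\bf y}\|_2^2$ just makes explicit the paper's remark that the only nonzero subspace of $W_m$ invariant under all ${\bf P}_m{\bf H}_m{\bf P}_m$ is $W_m$ itself. As in the paper, your first direction relies on reading ``bandlimited'' as the space $B_\Omega$ of \eqref{bandlimit.def2}, hence shift-invariant. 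The literal condition \eqref{bandlimited.def} does not force shift-invariance on its own.
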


Following the argument used in Theorem \ref{filterrange.thm} and applying the conclusion in Theorem \ref{bandlimited.thm}, we have the following result to characterize bandlimited spaces via polynomial filters.

\begin{corollary}\label{bandlimitedpolynomialfilter.cor}
Let $U$ be a linear space of graph signals.
Then $U$ is bandlimited if and only if it is the range space of a polynomial filter if and only if it is the kernel space of another polynomial filter. 
\end{corollary}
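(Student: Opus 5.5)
We need to prove Corollary \ref{bandlimitedpolynomialfilter.cor}: $U$ is bandlimited if and only if it is the range space of a polynomial filter, if and only if it is the kernel space of another polynomial filter.

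The plan is to work with the reformulation \eqref{bandlimit.def2}: a bandlimited space is exactly $B_\Omega=\bigoplus_{m\in\Omega}W_m$ for some $\Omega\subset\{1,\ldots,M\}$. This follows from \eqref{bandlimited.def} together with the fact that $U$ is a GSIS, so $U=\bigoplus_m (U\cap W_m)$ by \eqref{shiftinvariantfilter.commutative.eq2}-type reasoning: each ${\bf P}_m\in\mathcal P$ by Theorem \ref{projection.polynomialfilter.thm}, hence maps $U$ into $U\cap W_m$. Throughout, polynomial filters will be handled through the representation \eqref{shiftinvariantfilter.thm.eq3}, ${\bf H}=\sum_m \mu_m{\bf P}_m$, which Theorem \ref{polynomialfilter.thm} makes equivalent to ${\bf H}\in\mathcal P$.

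\textbf{Bandlimited implies range and kernel.} Given $U=B_\Omega$, take ${\bf H}_1=\sum_{m\in\Omega}{\bf P}_m$ and ${\bf H}_2={\bf I}-{\bf H}_1=\sum_{m\notin\Omega}{\bf P}_m$. Both are polynomial filters by Theorem \ref{projection.polynomialfilter.thm}. By \eqref{projection.eq1} and \eqref{projection.eq2}, $R({\bf H}_1)=\bigoplus_{m\in\Omega}W_m=B_\Omega$. Likewise, ${\bf H}_2{\bf x}=0$ if and only if $\widehat{\bf x}(m)=0$ for all $m\notin\Omega$, which gives $K({\bf H}_2)=B_\Omega$.

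\textbf{Range or kernel implies bandlimited.} Let ${\bf H}=\sum_m\mu_m{\bf P}_m\in\mathcal P$ and set $\Omega=\{m:\mu_m\ne0\}$. By the multiplier property \eqref{shiftinvariantfilter.thm.eq2}, $\widehat{{\bf H}{\bf x}}(m)=\mu_m\widehat{\bf x}(m)$, so:
\begin{itemize}
\item[(a)] $R({\bf H})=B_\Omega$, since any ${\bf y}\in B_\Omega$ equals ${\bf H}\big(\sum_{m\in\Omega}\mu_m^{-1}{\bf P}_m{\bf y}\big)$, and conversely every output ${\bf H}{\bf x}$ has spectrum in $\Omega$;
\item[(b)] $K({\bf H})=B_{\Omega^c}$, since ${\bf H}{\bf x}=0$ exactly when $\widehat{\bf x}(m)=0$ for every $m$ with $\mu_m\ne0$.
\end{itemize}
In both cases the space has the binary structure \eqref{bandlimited.def}, so it is bandlimited.

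Alternatively, one could route the argument through Theorem \ref{bandlimited.thm}. The range and kernel of a polynomial filter are super-shift-invariant, because polynomial filters lie in the center $C(\mathcal T)$ by Theorem \ref{center.thm} and therefore commute with every ${\bf T}\in\mathcal T$. The direct computation above is shorter.

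There is no real obstacle. The only point needing care is the identification of a bandlimited $U$ with $B_\Omega$, which rests on $U$ splitting as $\bigoplus_m(U\cap W_m)$; that splitting in turn uses the fact that the ${\bf P}_m$ are polynomial filters.
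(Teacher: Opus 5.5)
Your proof is correct, and its forward direction is the paper's. The paper derives the corollary by following the proof of Theorem \ref{filterrange.thm}. For a bandlimited $U$ the projections ${\bf Q}_m$ onto $U\cap W_m$ used there are either ${\bf P}_m$ or ${\bf 0}$, so ${\bf H}=\sum_{m\in\Omega}{\bf P}_m$ and ${\bf G}={\bf I}-{\bf H}$ are exactly your ${\bf H}_1$ and ${\bf H}_2$.

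For the converse, the paper takes the route you list as an alternative. The range and kernel of a polynomial filter are super-shift-invariant; only the easy inclusion $\mathcal P\subset C(\mathcal T)$ is needed here. They are therefore bandlimited by Theorem \ref{bandlimited.thm}, whose proof uses that the only nonzero subspace of $W_m$ invariant under all maps ${\bf P}_m{\bf H}_m{\bf P}_m$ is $W_m$ itself.

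Your direct computation from ${\bf H}=\sum_m\mu_m{\bf P}_m$ avoids Theorem \ref{bandlimited.thm} and is more explicit. It gives $R({\bf H})=B_\Omega$ and $K({\bf H})=B_{\Omega^c}$ with $\Omega=\{m\mid \mu_m\ne 0\}$, which is the same computation the paper carries out in the proof of Theorem \ref{rkhs.bandlimited.thm}. The paper's route instead presents the corollary structurally, as the polynomial-filter analogue of Theorem \ref{filterrange.thm} with super-shift-invariance replacing shift-invariance.

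Your caveat about identifying a bandlimited $U$ with $B_\Omega$ is well taken. Condition \eqref{bandlimited.def} alone does not force shift-invariance. For nonzero ${\bf w}_1\in W_1$ and ${\bf w}_2\in W_2$, the span of ${\bf w}_1+{\bf w}_2$ meets every $W_m$ trivially, yet it is not the range of any polynomial filter. So both your argument and the paper's rely on the paper's convention that bandlimited spaces are shift-invariant, i.e.\ of the form \eqref{bandlimit.def2}.
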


By Theorem \ref{bandlimited.thm}, we recover the following result about the equivalence between shift-invariance and bandlimitedness in \cite{Chung2024}.

\begin{corollary}\label{bandlimited.cor} Let $U$ be a linear space of graph signals. 
If the graph shifts ${\bf S}_1, \ldots, {\bf S}_d$ satisfies  Assumption \ref{graphshift.assumption} and the distinct joint spectrum condition \eqref{distincteigenvalue.assumption}, then 
$U$ is shift-invariant if and only if it is super-shift-invariant if and only if it is bandlimited.
\end{corollary}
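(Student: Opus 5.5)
The plan is to deduce Corollary \ref{bandlimited.cor} by combining Theorem \ref{bandlimited.thm} with Corollary \ref{shiftinvariantfilter.cor}. Theorem \ref{bandlimited.thm} holds under Assumption \ref{graphshift.assumption} alone, so super-shift-invariance and bandlimitedness of $U$ are already equivalent. It therefore remains to show that shift-invariance and super-shift-invariance coincide under \eqref{distincteigenvalue.assumption}.

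One direction is immediate. The graph shifts ${\bf S}_l$ commute with each other by Assumption \ref{graphshift.assumption}, so each ${\bf S}_l$ lies in ${\mathcal T}$. Hence a super-shift-invariant space satisfies \eqref{sis.def}, that is, it is a GSIS.

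For the converse, let $U$ be a GSIS and ${\bf T}\in{\mathcal T}$. By Corollary \ref{shiftinvariantfilter.cor}, under \eqref{distincteigenvalue.assumption} we have ${\mathcal T}={\mathcal P}$, so ${\bf T}=h({\bf S}_1,\ldots,{\bf S}_d)$ for some multivariate polynomial $h$. Each monomial ${\bf S}_1^{l_1}\cdots{\bf S}_d^{l_d}$ maps $U$ into $U$ by repeated application of \eqref{sis.def}. Since $U$ is a linear space, it is closed under the linear combination defining ${\bf T}$, so ${\bf T}{\bf x}\in U$ for all ${\bf x}\in U$. Thus $U$ satisfies \eqref{superSIS.def}.

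Chaining these gives: shift-invariant $\Leftrightarrow$ super-shift-invariant $\Leftrightarrow$ bandlimited. There is no real obstacle, since the substantive inputs are Theorem \ref{bandlimited.thm} and Corollary \ref{shiftinvariantfilter.cor}. The only point to check is that Corollary \ref{shiftinvariantfilter.cor} is invoked under exactly the stated hypotheses. If a self-contained argument avoiding it were preferred, one could note that each $W_m$ is one-dimensional under \eqref{distincteigenvalue.assumption}, so $U\cap W_m\in\{\{0\},W_m\}$ automatically. Combined with Theorem \ref{rangecharacterization.thm}, which gives $U=\bigoplus_m (U\cap W_m)$ because ${\bf P}_m U\subset U$ by Theorem \ref{projection.polynomialfilter.thm}, this yields \eqref{bandlimited.def} directly.
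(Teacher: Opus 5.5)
Your proposal is correct and follows essentially the paper's route. The paper likewise obtains this corollary from Theorem \ref{bandlimited.thm} together with the fact that, under \eqref{distincteigenvalue.assumption}, every shift-invariant filter is a polynomial filter (Corollary \ref{shiftinvariantfilter.cor}), so shift-invariance and super-shift-invariance coincide; your optional direct argument is also sound, since one-dimensionality of $W_m$ alone makes \eqref{bandlimited.def} vacuous and the real content is the decomposition $U=\bigoplus_{m}(U\cap W_m)$, which gives $U=B_\Omega$.
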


We say that $B$ is a finite Blaschke product with real zeros if
it is a finite product of the form $(t-c_{k})/(1- {c}_{k}t)$ for some real numbers $c_k$ with $|c_k|<1$, i.e., 
$$B(t)=\prod_{k}   \frac{t-c_{k}}{1-c_{k}t}. $$
Under the  normalization assumption on
 eigenvalues of the graph shifts ${\bf S}_1, \ldots, {\bf S}_d$,
\begin{equation}\label{graphshift.productassumption}
 \|{\pmb \gamma}(m)\|<1, \ \ 1\le m\le M,
 \end{equation}
 combining \eqref{projection.eq3} with the  bandlimited characterization \eqref{bandlimit.def2} yields the following Beurling-type conclusion.

\begin{theorem}\label{Beurling.thm}
 Let the graph shifts ${\bf S}_1, \ldots, {\bf S}_d$
 satisfy \eqref{graphshift.productassumption} and Assumption \ref{graphshift.assumption}, 
 and let $U$ be a bandlimited space.
Then there exist $a_1, \ldots a_d$ with $\sum_{l=1}^d|a_l|^2=1$,  and a  Blaschke product $B(t)$
with zeros in $(-1, 1)$ such that
\begin{equation} \label{Beurling.cor.eq1} U=B({\bf T}) {\mathbb R}^V,
\end{equation}
where ${\bf T}= a_1{\bf S}_1+\ldots+a_d {\bf S}_d$.
\end{theorem}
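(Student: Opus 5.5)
Since $U$ is bandlimited, \eqref{bandlimit.def2} gives $U=B_\Omega=\bigoplus_{m\in\Omega}W_m$ for some $\Omega\subset\{1,\ldots,M\}$. It therefore suffices to construct $a=(a_1,\ldots,a_d)$ on the unit sphere and a Blaschke product $B$ such that the matrix $B({\bf T})$ is well defined and satisfies $B({\bf T})=\sum_{m}B(\langle a,{\pmb\gamma}(m)\rangle){\bf P}_m$ with $B(\langle a,{\pmb\gamma}(m)\rangle)=0$ exactly for $m\notin\Omega$. Then $B({\bf T}){\mathbb R}^V=\bigoplus_{m\in\Omega}W_m=U$. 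Throughout, \eqref{projection.eq3} is what makes this work: it gives ${\bf T}{\bf P}_m=\tau_m{\bf P}_m$ with $\tau_m:=\sum_l a_l\gamma_l(m)$. Consequently, for every rational function $R$ with no poles at the $\tau_m$ we have $R({\bf T})=\sum_m R(\tau_m){\bf P}_m$.

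\emph{Choice of direction $a$.} We need the numbers $\tau_m$, $1\le m\le M$, to be pairwise distinct; this is the only step needing care. For $m\ne m'$ the vectors ${\pmb\gamma}(m)$ and ${\pmb\gamma}(m')$ are distinct, so $\{a\in{\mathbb R}^d:\langle a,{\pmb\gamma}(m)-{\pmb\gamma}(m')\rangle=0\}$ is a proper hyperplane. A finite union of proper hyperplanes cannot cover the unit sphere, since each meets it in a set of surface measure zero. Hence there is a unit vector $a$ for which all $\tau_m$ are distinct.

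\emph{Zeros and well-definedness.} By Cauchy--Schwarz and \eqref{graphshift.productassumption}, $|\tau_m|\le\|a\|\,\|{\pmb\gamma}(m)\|<1$. Define
$$B(t)=\prod_{m\notin\Omega}\frac{t-\tau_m}{1-\tau_m t},$$
with zeros $c_m=\tau_m\in(-1,1)$. If $\Omega=\{1,\ldots,M\}$ the product is empty, $B\equiv1$, and $U={\mathbb R}^V$; the conclusion is then trivial. Each denominator matrix $I-\tau_m{\bf T}$ has eigenvalues $1-\tau_m\tau_{m'}$, and these are positive because $|\tau_m\tau_{m'}|<1$. So every factor is invertible, the factors commute (all are functions of ${\bf T}$), and $B({\bf T})$ is well defined. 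Moreover $B({\bf T})=\sum_m B(\tau_m){\bf P}_m$.

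\emph{Conclusion.} By construction $B(\tau_m)=0$ for $m\notin\Omega$. For $m\in\Omega$, distinctness of the $\tau$'s means every numerator factor $\tau_m-\tau_{m'}$ with $m'\notin\Omega$ is nonzero, so $B(\tau_m)\ne0$. Using \eqref{projection.eq1} and \eqref{projection.eq2}, the range of $\sum_{m\in\Omega}B(\tau_m){\bf P}_m$ is $\bigoplus_{m\in\Omega}W_m=U$. The main obstacle is the genericity choice of $a$, which reduces the joint spectrum to the spectrum of the single operator ${\bf T}$ without merging distinct frequencies; everything after that is a one-variable spectral calculus computation.
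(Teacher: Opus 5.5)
Your proof is correct and is essentially the paper's argument. The paper likewise picks a unit vector $a$ for which $\mu(m)=\sum_{l=1}^d a_l\gamma_l(m)$, $1\le m\le M$, are distinct, takes $B(t)=\prod_{m\notin\Omega}\frac{t-\mu(m)}{1-\mu(m)t}$, and combines \eqref{projection.eq3} with the Fourier-domain description \eqref{bandlimit.def2} of $U$. Your version also supplies the details the paper leaves implicit: the existence of the generic direction, the invertibility of ${\bf I}-\mu(m){\bf T}$, and the nonvanishing of $B(\mu(m))$ for $m\in\Omega$.
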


In particular, we may select ${\bf a}=[a_1, \ldots, a_d]^T$  and the  Blaschke product  $B(t)$ so that $\sum_{l=1}^d|a_l|^2=1$ and
\begin{equation}\label{FGSIS.thm.eq1-} \mu(m):=\sum_{l=1}^d a_l \gamma_l(m), 1\le m\le M, {\rm \ are\  distinct},\end{equation}
and 
the root set of $B(t)$ is $\mu(m)\in (-1,1),  m\not\in \Omega$,  
i.e.,  $$B(t)=\prod_{m\not\in \Omega} \frac{t-\mu(m)}{1- \mu(m) t}, $$
where 
 $\Omega$ 
is  the spectrum \eqref{spectrumsis.def} of the GSIS $U$.

\subsection{Maximal invariant subspaces and bi-directional shift-invariant spaces}
\label{maximalsis.section}

In the following theorem, we provide a characterization for the equalities in \eqref{maximalsis.eq0} to hold; see Section \ref{sisequality.thm.pfsection} for the detailed proof. 

\begin{theorem}\label{sisequality.thm} Let $U$ be a  GSIS. Then the following statements are equivalent:
\begin{itemize}
\item[{(i)}] $U$  satisfies \eqref{maximalsis.eq0}, i.e., ${\bf S}_1U=\cdots={\bf S}_dU=U$.

\item[{(ii)}] ${\bf S}_1\ldots {\bf S}_d U=U$.

\item[{(iii)}] $\dim_U({\pmb \gamma}(m))=0$
if $\gamma_1(m)\cdots \gamma_d(m)=0$. 

\end{itemize}
\end{theorem}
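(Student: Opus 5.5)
The plan is to work entirely in the Fourier domain, using the decomposition of a GSIS into its spectral pieces. By \eqref{Juspace.def} and \eqref{dimensionU.eq},
\[
U=\bigoplus_{m=1}^M \big(U\cap W_m\big).
\]
This holds because ${\bf P}_m\in{\mathcal P}$ by Theorem \ref{projection.polynomialfilter.thm}, so ${\bf P}_mU\subset U\cap W_m$ and ${\bf x}=\sum_m {\bf P}_m{\bf x}$. By \eqref{projection.eq3}, any polynomial filter ${\bf H}=h({\bf S}_1,\ldots,{\bf S}_d)$ acts on $U\cap W_m$ as multiplication by $h({\pmb\gamma}(m))$. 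Hence
\[
{\bf H}U=\bigoplus_{m:\,h({\pmb\gamma}(m))\ne0}\big(U\cap W_m\big).
\]
This single identity is the key lemma. I will apply it with $h(t)=t_l$ (giving ${\bf S}_lU$) and with $h(t)=t_1\cdots t_d$ (giving ${\bf S}_1\cdots{\bf S}_dU$).

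The lemma yields the following dimension formulas:
\[
\dim {\bf S}_lU=\sum_{m:\gamma_l(m)\ne0}\dim_U({\pmb\gamma}(m)),
\qquad
\dim {\bf S}_1\cdots{\bf S}_dU=\sum_{m:\gamma_1(m)\cdots\gamma_d(m)\ne0}\dim_U({\pmb\gamma}(m)).
\]
Since ${\bf H}U\subset U$ always holds for a GSIS, equality ${\bf H}U=U$ is equivalent to $\dim {\bf H}U=\dim U$. Alternatively, one can note that ${\bf H}U$ is itself a GSIS and invoke Proposition \ref{sisequality.prop}.

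Now compare with $\dim U=\sum_m\dim_U({\pmb\gamma}(m))$.

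(ii)$\Leftrightarrow$(iii): ${\bf S}_1\cdots{\bf S}_dU=U$ if and only if $\dim_U({\pmb\gamma}(m))=0$ whenever $\prod_l\gamma_l(m)=0$, which is exactly (iii).

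(i)$\Leftrightarrow$(iii): ${\bf S}_lU=U$ for all $l$ if and only if $\dim_U({\pmb\gamma}(m))=0$ whenever some $\gamma_l(m)=0$. This is again the condition that $\prod_l\gamma_l(m)=0$ forces $\dim_U({\pmb\gamma}(m))=0$.

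I could also note directly that (i)$\Rightarrow$(ii) by iterating, ${\bf S}_1\cdots{\bf S}_dU={\bf S}_1\cdots{\bf S}_{d-1}U=\cdots=U$, though this is not needed.

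The only step needing care is the lemma's claim that ${\bf H}(U\cap W_m)=U\cap W_m$ when $h({\pmb\gamma}(m))\neq0$ and $=\{0\}$ otherwise. It follows from ${\bf H}{\bf x}=h({\pmb\gamma}(m)){\bf x}$ for ${\bf x}\in W_m$ and the directness of the sum, since the $W_m$ are mutually orthogonal by \eqref{projection.eq1}. I expect no real obstacle beyond bookkeeping.
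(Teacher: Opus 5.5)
Your proof is correct. It uses the same two ingredients as the paper, namely ${\bf P}_m\in{\mathcal P}$ (Theorem \ref{projection.polynomialfilter.thm}) and the multiplier identity \eqref{projection.eq3}, but organizes them differently.

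The paper runs the cycle (i)$\Rightarrow$(ii)$\Rightarrow$(iii)$\Rightarrow$(i) elementwise:
\begin{itemize}
\item For (ii)$\Rightarrow$(iii), it writes ${\bf x}={\bf S}_1\cdots{\bf S}_d{\bf y}$ with ${\bf y}\in U$ and obtains ${\bf P}_m{\bf x}=\gamma_1(m)\cdots\gamma_d(m){\bf P}_m{\bf y}={\bf 0}$.
\item For (iii)$\Rightarrow$(i), it exhibits the explicit preimage ${\bf y}_l=\sum_{\gamma_l(m)\ne 0}\gamma_l(m)^{-1}{\bf P}_m{\bf x}\in U$, which satisfies ${\bf S}_l{\bf y}_l={\bf x}$. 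This is exactly ${\bf y}_l={\bf S}_l^{\dagger}{\bf x}$, cf. Remark \ref{pseudoinversesis.rem}.
\end{itemize}

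Your image formula ${\bf H}U=\bigoplus_{h({\pmb\gamma}(m))\ne 0}(U\cap W_m)$ contains both steps as its two inclusions. The forward inclusion is the multiplier computation. The reverse inclusion is the componentwise preimage ${\bf z}={\bf H}\big(h({\pmb\gamma}(m))^{-1}{\bf z}\big)$; summed over $m$, it reproduces the paper's ${\bf y}_l$.

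What your packaging buys is generality: it holds for every polynomial filter, so all the equivalences follow symmetrically. It also gives, as special cases, Corollary \ref{allpassinvariance.cor} and the claim $\dim_{U_M}({\pmb\gamma}(m))=0$ when $\gamma_1(m)\cdots\gamma_d(m)=0$, which the paper asserts just before Theorem \ref{maximalsis.thm}. What the paper's route buys is that it names the preimage directly as a pseudo-inverse.

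The dimension count is not needed. Since $U=\bigoplus_m(U\cap W_m)$, the equality ${\bf H}U=U$ can be read off from your formula directly.
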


Clearly, a bidirectional-shift-invariant space is shift-invariant.
By 
Theorem 
\ref{sisequality.thm}, 
the converse holds.

\begin{corollary}
    \label{bidirectionsis.cor}
    Let ${\bf S}_1, \ldots, {\bf S}_d$ be invertible graph shifts satisfying  Assumption \ref{graphshift.assumption} and $U$ be a linear space of graph shifts on the graph ${\mathcal G}$. Then 
    $U$ is shift-invariant if and only if it is bidirectional-shift-invariant. 
\end{corollary}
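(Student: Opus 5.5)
The statement to prove is Corollary \ref{bidirectionsis.cor}: with invertible graph shifts, a linear space is shift-invariant if and only if it is bidirectional-shift-invariant. The reverse implication is immediate. If $U$ satisfies \eqref{bi_sis.def1}, take $\alpha$ to be the unit multi-index $e_l$; then ${\bf S}_l{\bf x}\in U$ for every ${\bf x}\in U$ and every $1\le l\le d$, which is exactly \eqref{sis.def}.

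For the forward implication, let $U$ be a GSIS. I will show that ${\bf S}_l^{-1}U\subset U$ for each $l$, and then iterate. The cleanest route is Theorem \ref{sisequality.thm}. Since every ${\bf S}_l$ is invertible, no joint eigenvalue coordinate vanishes, so $\gamma_1(m)\cdots\gamma_d(m)\ne 0$ for all $1\le m\le M$. Condition (iii) of Theorem \ref{sisequality.thm} therefore holds vacuously, and by (i) we get ${\bf S}_lU=U$ for $1\le l\le d$. Applying ${\bf S}_l^{-1}$ to this identity gives $U={\bf S}_l^{-1}U$, so $U$ is invariant under each inverse shift.

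There are two alternative routes to the same conclusion. One is that ${\bf S}_l^{-1}={\bf S}_l^\dagger$ is a polynomial filter, by \eqref{analytic.polynomial} or Corollary \ref{inversepolynomial.cor}(ii), as already noted in Remark \ref{pseudoinversesis.rem}. The other is Corollary \ref{allpassinvariance.cor} applied to the all-pass filter ${\bf S}_l$.

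Finally, let $\alpha\in\Z^d$ and write ${\bf S}_1^{\alpha_1}\cdots{\bf S}_d^{\alpha_d}$ as a finite product of factors ${\bf S}_l$ and ${\bf S}_l^{-1}$; this is legitimate because the shifts commute. Each factor maps $U$ into $U$, so by induction on $\sum_l|\alpha_l|$ the whole product maps $U$ into $U$, which is \eqref{bi_sis.def1}. The only step needing care is checking that invertibility of each ${\bf S}_l$ forces $\gamma_l(m)\neq0$ for every $m$. This holds because the $\gamma_l(m)$ are exactly the eigenvalues of ${\bf S}_l$ by \eqref{eigen.dec} and \eqref{projection.eq3}.
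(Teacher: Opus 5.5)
Your proof is correct and takes the same route as the paper, which derives the nontrivial direction directly from Theorem~\ref{sisequality.thm}: condition (iii) is vacuous when every ${\bf S}_l$ is invertible, so ${\bf S}_lU=U$ and hence ${\bf S}_l^{-1}U=U$. Your induction over mixed products and your alternative routes via Remark~\ref{pseudoinversesis.rem} or Corollary~\ref{allpassinvariance.cor} only spell out what the paper leaves implicit.
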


Given a GSIS $U$, we set 
\begin{equation}\label{maximalsis.eq2} U_M={\bf S}_1\ldots {\bf S}_d U.\end{equation}  By \eqref{projection.eq3}, one may verify that
 $\dim_{U_M}({\pmb \gamma}(m))=0$
for all $1\le m\le M$ with
$\gamma_1(m)\cdots\gamma_d(m)=0$. 
Therefore 
\begin{equation*}
{\bf S}_l U_M=U_M,\quad  1\le l\le d,
\end{equation*}
by Theorem \ref{sisequality.thm}.
On the other hand, for any  shift-invariant subspace $W$ of $U$ satisfying \eqref{maximalsis.eq1},
 we have
$$W={\bf S}_1\cdots{\bf S}_dW\subset U_M.$$
This shows that the GSIS
$U_M$ in \eqref{maximalsis.eq2} is the maximal subspace satisfying 
\eqref{maximalsis.eq1},

\begin{theorem}\label{maximalsis.thm}
Let $U$ be a GSIS. Then 
${\bf S}_1\cdots {\bf S}_dU$ is its maximal shift-invariant subspace  satisfying
\eqref{maximalsis.eq1}, i.e., if
$W$ is a shift-invariant subspace of $U$ satisfying
\eqref{maximalsis.eq1}, then 
$W\subset {\bf S}_1\cdots {\bf S}_dU$.
\end{theorem}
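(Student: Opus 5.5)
The argument has two parts. First, $U_M:={\bf S}_1\cdots{\bf S}_dU$ is a shift-invariant subspace of $U$ satisfying \eqref{maximalsis.eq1}. Second, it contains every such $W$. Throughout, $U$ is decomposed through the spectral decomposition of the identity. Since ${\bf P}_m$ is a polynomial filter by Theorem \ref{projection.polynomialfilter.thm}, $U$ is invariant under each ${\bf P}_m$, and so
\[
U=\bigoplus_{m=1}^M (U\cap W_m).
\]
By \eqref{projection.eq3}, the product ${\bf S}_1\cdots{\bf S}_d$ acts on $U\cap W_m$ as multiplication by the scalar $\gamma_1(m)\cdots\gamma_d(m)$.

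For the first part, $U_M\subset U$ because $U$ is shift-invariant. $U_M$ is shift-invariant because the shifts commute: ${\bf S}_l{\bf S}_1\cdots{\bf S}_d{\bf x}={\bf S}_1\cdots{\bf S}_d({\bf S}_l{\bf x})$ with ${\bf S}_l{\bf x}\in U$. From the displayed decomposition,
\[
U_M=\bigoplus_{m:\ \gamma_1(m)\cdots\gamma_d(m)\neq 0}(U\cap W_m).
\]
Hence $U_M\cap W_m=U\cap W_m$ whenever the product is nonzero, and $U_M\cap W_m=\{0\}$ otherwise. In particular $\dim_{U_M}({\pmb\gamma}(m))=0$ whenever $\gamma_1(m)\cdots\gamma_d(m)=0$, which is condition (iii) of Theorem \ref{sisequality.thm} for $U_M$. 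That theorem then gives ${\bf S}_lU_M=U_M$ for all $1\le l\le d$. A direct check is also available: on each summand with nonzero product, every $\gamma_l(m)$ is nonzero, so each ${\bf S}_l$ acts bijectively on it.

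For maximality, let $W\subset U$ be shift-invariant with ${\bf S}_lW=W$ for all $l$. Applying these equalities successively gives
\[
W={\bf S}_dW={\bf S}_{d-1}{\bf S}_dW=\cdots={\bf S}_1\cdots{\bf S}_dW.
\]
Since $W\subset U$, we have ${\bf S}_1\cdots{\bf S}_dW\subset{\bf S}_1\cdots{\bf S}_dU=U_M$, and therefore $W\subset U_M$.

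There is no real obstacle. The only step that needs care is justifying that $U$ splits as $\bigoplus_m (U\cap W_m)$, which is exactly where Theorem \ref{projection.polynomialfilter.thm} enters. Once that splitting is in hand, the characterization of Theorem \ref{sisequality.thm} handles the remaining verification.
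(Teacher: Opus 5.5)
Your proof is correct and follows the paper's argument. The paper likewise uses \eqref{projection.eq3} to see that $\dim_{U_M}({\pmb\gamma}(m))=0$ whenever $\gamma_1(m)\cdots\gamma_d(m)=0$, invokes Theorem \ref{sisequality.thm} to get ${\bf S}_lU_M=U_M$, and obtains maximality from $W={\bf S}_1\cdots{\bf S}_dW\subset{\bf S}_1\cdots{\bf S}_dU$; your explicit description of $U_M$ as the sum of the $U\cap W_m$ with nonzero product $\gamma_1(m)\cdots\gamma_d(m)$ fills in the paper's ``one may verify'' step.
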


   For the scenario that the distinct  joint spectrum
   assumption \eqref{distincteigenvalue.assumption} holds, 
 a GSIS $U$ can be characterized by its supporting set in the Fourier domain; see \eqref{bandlimit.def2}.
Then the maximal shift-invariant space is described as follows.

\begin{corollary}
Let $U$ be a GSIS. If 
 the graph shifts ${\bf S}_1, \ldots, {\bf S}_d$ satisfies \eqref{distincteigenvalue.assumption},
then 
$${\bf S}_1\cdots {\bf S}_d U=\{{\bf x}\in U\ | \ {\rm supp} \ \widehat {\bf x}\subset \tilde \Omega\}
$$
where  $\Omega\subset \{1, \ldots, M\}$ is the set in \eqref{bandlimit.def2}
and  $\tilde \Omega=\{m\in \Omega\ | \ \gamma_l(m)\ne  0\ {\rm for \ all} \ 1\le l\le d\}$.
\end{corollary}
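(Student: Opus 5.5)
The strategy is to reduce both sides to statements about how ${\bf S}_1\cdots{\bf S}_d$ acts on the spectral pieces $U\cap W_m$. Under the distinct joint spectrum assumption \eqref{distincteigenvalue.assumption}, every $W_m$ is one-dimensional. Hence $U\cap W_m$ is either $W_m$ or $\{0\}$, so $U$ is bandlimited: $U=B_\Omega$ as in \eqref{bandlimit.def2}, where $\Omega$ is the spectrum \eqref{spectrumsis.def} of $U$. This is also a consequence of Corollary \ref{bandlimited.cor}. It therefore suffices to show
\[
{\bf S}_1\cdots{\bf S}_d B_\Omega=B_{\tilde\Omega}.
\]
The right-hand side is exactly $\{{\bf x}\in U\mid {\rm supp}\,\widehat{\bf x}\subset\tilde\Omega\}$, because $\tilde\Omega\subset\Omega$.

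First, compute the product in the Fourier domain. The filter ${\bf S}_1\cdots{\bf S}_d$ is a polynomial filter with multiplier $h({\pmb\gamma}(m))=\gamma_1(m)\cdots\gamma_d(m)$. So by \eqref{polynomialGFT}, for ${\bf x}\in\R^V$,
\[
\widehat{{\bf S}_1\cdots{\bf S}_d{\bf x}}(m)=\gamma_1(m)\cdots\gamma_d(m)\,\widehat{\bf x}(m),\quad 1\le m\le M.
\]

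For the inclusion ${\bf S}_1\cdots{\bf S}_d U\subset B_{\tilde\Omega}$, take ${\bf x}\in U$. For $m\notin\Omega$ we have $\widehat{\bf x}(m)=0$. For $m\in\Omega\setminus\tilde\Omega$ the multiplier vanishes. Hence the support of the image lies in $\tilde\Omega$.

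For the reverse inclusion, take ${\bf y}\in B_{\tilde\Omega}$ and define
\[
{\bf x}=\sum_{m\in\tilde\Omega}\big(\gamma_1(m)\cdots\gamma_d(m)\big)^{-1}{\bf P}_m{\bf y}.
\]
This lies in $B_{\tilde\Omega}\subset U$, and by the displayed multiplier identity ${\bf S}_1\cdots{\bf S}_d{\bf x}={\bf y}$. Equivalently, ${\bf x}=({\bf S}_1\cdots{\bf S}_d)^\dagger{\bf y}$, using \eqref{analytic.polynomial}.

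An alternative route goes through Theorem \ref{maximalsis.thm} and Theorem \ref{sisequality.thm}(iii). That route shows $B_{\tilde\Omega}$ satisfies \eqref{maximalsis.eq1}, which gives $B_{\tilde\Omega}\subset{\bf S}_1\cdots{\bf S}_dU$, and then uses the forward inclusion above. The direct computation is simpler.

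The main subtlety is the role of the distinctness assumption. It is needed only to identify $U$ with $B_\Omega$, i.e.\ to know that each $U\cap W_m$ is all of $W_m$ or trivial. Without it, the same argument gives the analogous statement with the range function $J_U$ in place of $\Omega$. Everything else is routine multiplier bookkeeping with \eqref{projection.eq3}.
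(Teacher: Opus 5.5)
Your proof is correct and follows essentially the route the paper intends. The paper gives no separate proof; it obtains the corollary from the identification $U=B_\Omega$ under \eqref{distincteigenvalue.assumption} together with the multiplier relation \eqref{projection.eq3}, the same computation it uses just before Theorem \ref{maximalsis.thm} to show $\dim_{{\bf S}_1\cdots{\bf S}_dU}({\pmb \gamma}(m))=0$ whenever $\gamma_1(m)\cdots\gamma_d(m)=0$. Your explicit preimage $\sum_{m\in\tilde\Omega}(\gamma_1(m)\cdots\gamma_d(m))^{-1}{\bf P}_m{\bf y}$ is the product-shift analogue of the construction in the proof of Theorem \ref{sisequality.thm}, (iii)$\Rightarrow$(i).
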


\section{Finitely-generated  graph shift-invariant spaces}
\label{FGSIS.section}

 Let $\Phi=\{\phi_1, \ldots, \phi_r\}$ be a  set of nonzero graph signals
on the graph $\G$ and set ${\bf S}^{\pmb\alpha}={\bf S}_1^{\alpha_1}\cdots{\bf S}_d^{\alpha_d}$ for $\pmb\alpha=[\alpha_1,\ldots, \alpha_d]\in {\mathbb Z}_+^d$.
 We say that a linear space of graph signals is a {\em GSIS generated by $\Phi$} if it is the minimal GSIS  containing $\Phi$, i.e.,
\begin{equation}\label{FGSIS.def}
\mathcal S(\Phi):=\span\{{\bf S}^{\pmb\alpha}\phi\mid \pmb\alpha\in  {\mathbb Z}_+^d, \phi\in\Phi\},
\end{equation}
and a {\em Principal  Graph Shift-Invariant Space}, PGSIS for abbreviation,  if it is generated by a single graph signal. 

Given a finite family  $\Phi=\{\phi_1, \ldots, \phi_r\}$ of graph signals, we define the {\em graph fiber} $F$, a $N\times r$-matrix-valued function defined on the frequencies,  by
\begin{equation}\label{fiber.def}
    F({\pmb \gamma}(m))= \big[\widehat{\phi}(m) 
    \big]_{\phi\in\Phi}, 1\le m\le M.
\end{equation}
In Theorem \ref{fiber.thm} of Section \ref{fiber.section}, we show that the graph range function of the GSIS 
${\mathcal S}(\Phi)$
 coincides with the range space of the graph fiber associated with its generator $\Phi$. Furthermore, in Theorem \ref{FGSIS.thm} of Section \ref{fiber.section}, we prove that every GSIS is finitely generated. 
For a GSIS $U$, we may define
its {\em length}, to be denoted by $L(U)$,
as the smallest integer
$L\ge 1$ such that there exist nonzero signals $\phi_1, \ldots, \phi_L$ such that
$U={\mathcal S}(\phi_1, \ldots, \phi_L)$.
This definition is well-posed by Theorem \ref{FGSIS.thm}. Moreover, we show that the length of a GSIS $U$
equals the maximal bound of its dimension function; see 
Corollary \ref{length.cor}. 
The conclusion in Theorem \ref{FGSIS.thm} is also established in \cite{Chung2024} under the additional assumption that
 the graph shifts ${\mathbf S}_1, \ldots, {\mathbf S}_d$  satisfy \eqref{distincteigenvalue.assumption}.
 A similar fiber characterization result to Theorem \ref{FGSIS.thm} has been obtained in the  real line setting \cite{bownik2000, bownik2003, deBoor1994, ron1997, ron1995}. However, the finite-generator property for GSISs does not always hold in the real line case; cf.  the finite-generator property for locally finite-dimensional shift-invariant spaces on the line \cite{akram2002}.

In Section \ref{frame.section}, we investigate
 frame  property for the GSIS  ${\mathcal S}(\Phi)$ generated by $\Phi$. To this end, we normalize the graph shifts  ${\bf S}_1, \ldots, {\bf S}_d$ so that 
their eigenvalues  satisfy
\begin{equation}\label{frame.thm.eq0}
|\gamma_l(m)|< 1 \ \ {\rm for \ all} \  1\le m\le M\ {\rm and}\  1\le l\le d.
\end{equation}
In Theorems \ref{frame.thm} and \ref{frameoperator.thm} of Section \ref{frame.section}, we establish upper and lower bound estimates for the frame system ${\mathbf S}^{\pmb \alpha} \phi, {\pmb \alpha}\in {\mathbb Z}_+^d, \phi\in \Phi$,  and provide a necessary and sufficient condition
for the shift-invariance of the corresponding frame operator.
We  observe that, unlike in the real line setting where the frame bound is governed solely by the Grammian of the fiber associated with the generator, the frame bound in Theorem \ref{frame.thm} is expressed as the product of {\bf two} quantities: one determined by the Grammian of the graph fiber, and the other 
 depending on  the distribution of distinct eigenvalues of the graph shifts.
More notably,  the frame operator associated with a finite-generated SIS is {\bf always} shift-invariant in the real line setting, while this property typically fails in the graph setting; see Theorem \ref{frameoperator.thm} and  Proposition \ref{dualframeCompletegraph.pr}.

We use $\langle \cdot, \cdot\rangle$
to denote the standard inner product on ${\mathbb R}^V$
and also the one on ${\mathbb R}^N$ in the absence of notation.
 We say that  $\tilde\Phi=\{\tilde \phi_1, \ldots, \tilde \phi_r\}\in {\mathcal S}(\Phi)$ generates a {\em shift-invariant dual frame} if
\begin{equation}\label{dualframe.thm.eq1}
{\bf x}=\sum_{i=1}^r \sum_{{\pmb \alpha}\in {\mathbb Z}_+^d} \langle \x, {\bf S}^{\pmb \alpha}\tilde \phi_i\rangle {\bf S}^{\pmb \alpha} \phi_i
    \end{equation}
    hold for all ${\bf x}\in {\mathcal S}(\Phi)$.
 In Theorem \ref{dualframe.thm}, we characterize the shift-invariant dual frame via the fiber matrices associated with the generators
$\Phi$ and $\tilde \Phi$.
Define the Grammian of the graph filter $F$ in \eqref{fiber.def} by
\begin{equation}\label{Cm.def}{\bf C}(m)=(F({\pmb \gamma}(m)))^T  F({\pmb \gamma}(m))\in {\mathbb R}^{r\times r}, \end{equation}
and denote  the range space of ${\bf C}(m)$ by
$X(m), 1\le m\le M$.
In  Theorem \ref{dualframeexistence.thm}, we show that  a necessary and sufficient condition for the existence of a shift-invariant dual frame is that
the sum of $X(m), 1\le m\le M$, are {\bf direct}.
 As a  consequence, Corollary \ref{dualframeexistence.cor} shows that a necessary condition for such existence is that the dimension of the GSIS does not exceed the number of generators.
We note that, in the  real line setting,  a shift-invariant dual frame {\bf always} exists \cite{christensen2004, gabardo2007}. 
In contrast, this is {\bf not} always the case in the graph setting; see Proposition \ref{dualframeCompletegraph.pr} in Example \ref{uncertainty.example.part2} for a detailed analysis of the existence of shift-invariant dual frames on a complete graph.

In Section \ref{riesz.section}, we study  Riesz property of a shift-invariant subsystem of  the generating family
$\{{\bf S}^{\pmb\alpha}\phi\mid \pmb\alpha\in  {\mathbb Z}_+^d, \phi\in\Phi\}$ of the GSIS ${\mathcal S}(\Phi)$.
First, we show that the generating family forms a Bessel sequence and provide an upper bound estimate; see Theorem \ref{bessel.thm}. We then observe that the system is finitely linearly dependent and therefore cannot serve as a Riesz basis for the generating GSIS 
${\mathcal S}(\Phi)$.
On the other hand, Theorem \ref{FGSIS.thm} shows that the shift-invariant space 
${\mathcal S}(\Phi)$
 can be generated by 
 ${\bf T}^k\phi, 0\le k\le M-1, \phi\in \Phi$, where ${\bf T}$ is given in 
 \eqref{FGSIS.thm.eq1}.
 Building on this, Theorem \ref{riesz.thm} provides a necessary and sufficient condition under which the above finite system constitutes a Riesz basis for the GSIS 
 ${\mathcal S}(\Phi)$.

\subsection{Graph shift-invariant spaces are always finitely-generated}
\label{fiber.section}

In the following theorem, we provide
a characterization of
the  GSIS ${\mathcal S}(\Phi)$
via the  graph fiber
of its generators; see Section \ref{fiber.thm.pfsection} for the detailed proof. 

\begin{theorem}\label{fiber.thm} Let $\Phi$ be a finite family of graph signals. Then
 the range function $J_{{\mathcal S}(\Phi)}$ of the GSIS ${\mathcal S}(\Phi)$ is  the range space of the graph fibers, i.e., 
 \begin{equation} \label{fiber.thm.eq1}
 J_{{\mathcal S}(\Phi)} ({\pmb \gamma}(m))={\rm span} \big\{\widehat{{\phi}}(m)\ | \ \phi\in\Phi\big\}, \ 1\le m \le M.
 \end{equation}
 \end{theorem}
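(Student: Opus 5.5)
We prove the two inclusions separately, using $J_{{\mathcal S}(\Phi)}({\pmb\gamma}(m))={\mathcal S}(\Phi)\cap W_m$ from \eqref{Juspace.def}. Two earlier facts do the work: ${\bf P}_m$ is a polynomial filter (Theorem \ref{projection.polynomialfilter.thm}), and shifts act on $W_m$ as scalars by \eqref{projection.eq3}.

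First inclusion: we show ${\rm span}\{\widehat\phi(m)\mid \phi\in\Phi\}\subset {\mathcal S}(\Phi)\cap W_m$.
By Theorem \ref{projection.polynomialfilter.thm}, $\widehat\phi(m)={\bf P}_m\phi=h_m({\bf S}_1,\ldots,{\bf S}_d)\phi$. This is a finite linear combination of ${\bf S}^{\pmb\alpha}\phi$, so it lies in ${\mathcal S}(\Phi)$ by \eqref{FGSIS.def}. It also lies in $W_m={\bf P}_m\R^V$, and both sets are linear spaces.

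Reverse inclusion: take ${\bf x}\in{\mathcal S}(\Phi)\cap W_m$ and write ${\bf x}=\sum_{\phi\in\Phi}\sum_{\pmb\alpha}c_{\pmb\alpha,\phi}{\bf S}^{\pmb\alpha}\phi$ as a finite sum. Since ${\bf x}\in W_m$, we have ${\bf x}={\bf P}_m{\bf x}$. By \eqref{projection.eq3}, ${\bf P}_m{\bf S}^{\pmb\alpha}=\gamma_1(m)^{\alpha_1}\cdots\gamma_d(m)^{\alpha_d}{\bf P}_m$. Hence
\[
{\bf x}=\sum_{\phi\in\Phi}\Big(\sum_{\pmb\alpha}c_{\pmb\alpha,\phi}\,{\pmb\gamma}(m)^{\pmb\alpha}\Big)\widehat\phi(m),
\]
which lies in the span of $\widehat\phi(m)$, $\phi\in\Phi$. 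Here ${\pmb\gamma}(m)^{\pmb\alpha}$ denotes the monomial $\gamma_1(m)^{\alpha_1}\cdots\gamma_d(m)^{\alpha_d}$.

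No step is a genuine obstacle. The only point needing care is that membership in $J_{{\mathcal S}(\Phi)}({\pmb\gamma}(m))$ is tested through the intersection with $W_m$, which is exactly what lets us apply ${\bf P}_m$ to ${\bf x}$ without changing it. The first inclusion is the part that depends on an earlier result, namely the polynomial-filter property of ${\bf P}_m$, which guarantees $\widehat\phi(m)\in{\mathcal S}(\Phi)$ rather than merely in some larger shift-invariant space.
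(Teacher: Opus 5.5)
Your proof is correct and follows essentially the same route as the paper. The inclusion $\supseteq$ comes from the polynomial-filter property of ${\bf P}_m$ (Theorem \ref{projection.polynomialfilter.thm}), and the inclusion $\subseteq$ comes from expanding an element of ${\mathcal S}(\Phi)$ and using ${\bf P}_m{\bf S}^{\pmb\alpha}={\pmb\gamma}(m)^{\pmb\alpha}{\bf P}_m$. The only difference is cosmetic: the paper computes $\widehat{\bf x}(m)$ for an arbitrary ${\bf x}\in{\mathcal S}(\Phi)$, while you restrict to ${\bf x}\in{\mathcal S}(\Phi)\cap W_m$ and invoke ${\bf x}={\bf P}_m{\bf x}$, which makes that final reduction explicit.
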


Denote the maximal and minimal positive singular value of a  matrix ${\bf A}$ by 
$\sigma_{\max}({\bf A})$ and
$\sigma_{\min}^+({\bf A})$ respectively.
By Theorems \ref{rangecharacterization.thm} and \ref{fiber.thm}, for any ${\bf x}\in {\mathcal S}(\Phi)$,
 there exist  coefficients $c_{\phi, m}, \phi\in \Phi, 1\le m\le M$,  such that
\begin{equation}\label{fiber.thm.eq3}
\widehat \x (m)=\sum_{\phi\in \Phi} c_{\phi, m} \widehat \phi(m)
\end{equation}
and
\begin{equation} \label{fiber.thm.eq4}
 \frac{ \|\widehat {\bf x}(m)\|_2}{ \sigma_{\max}(F({\pmb \gamma}(m))) } 
 \le \Big(\sum_{\phi\in \Phi} |c_{\phi, m}|^2\Big)^{1/2} \le \frac{ \|\widehat {\bf x}(m)\|_2}{\sigma_{\min}^+(F({\pmb \gamma}(m))) }. 
\end{equation}
We remark that  the maximal and  minimal positive singular values
$\sigma_{\max}
    (F({\pmb \gamma}(m)))$
and $\sigma_{\min}^+(F({\pmb \gamma}(m)))$ of the fiber matrix $F({\pmb \gamma}(m))$ at frequency ${\pmb \gamma}(m)$  also serve as the upper and lower frame bound for the fiber frame $\{\widehat \phi(m), \phi\in \Phi\}$ in its spanning subspace
$ J_{{\mathcal S}(\Phi)} ({\pmb \gamma}(m))$.
This representation establishes a quantitative  link between signals in the GSIS ${\mathcal S}(\Phi)$ 
 and their corresponding fibers in the Fourier domain at each frequency.

As a consequence of Theorem \ref{fiber.thm}, we see that
the  dimension function of the GSIS ${\mathcal S}(\Phi)$ is the same as the rank function of the graph fiber $F$ of its generator $\Phi$, i.e., 
\begin{equation}\label{fiberdimension.def}
\dim_{{\mathcal S}(\Phi)}({\pmb \gamma}(m))={\rm rank}\ F({\pmb \gamma}(m)), \ 1\le m\le M. 
\end{equation}

In the following theorem,  we show that every GSIS is finitely-generated; see Section \ref{FGSIS.thm.pfsection} for the detailed proof.

\begin{theorem}\label{FGSIS.thm}
Let  $U$ be linear space of graph signals,
and set
\begin{equation}\label{FGSIS.thm.eq1}
{\bf T}= a_1{\bf S}_1+\cdots+ a_d {\bf S}_d,
\end{equation}
where coefficients $a_l, 1\le l\le d$, are selected so that \eqref{FGSIS.thm.eq1-} holds.
Then the following statements are equivalent:
\begin{itemize}
\item[(i)]
The linear space $U$ is  shift-invariant.
 
 \item[(ii)] 
 The linear space $U$ is the GSIS ${\mathcal S}(\Phi)$
 generated by a  finite family $\Phi$
 of graph signals.

 \item[(iii)] The linear space $U$ is the GSIS 
 generated by a  finite family $\Phi$
 of graph signals with respect to the graph shift ${\bf T}$, i.e.,  
 $$U={\rm span}\{{\bf T}^k \phi\ | \ 0\le k\le M-1, \phi\in \Phi\}.$$
 \end{itemize}
\end{theorem}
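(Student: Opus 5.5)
We will prove the cycle (iii)$\Rightarrow$(ii)$\Rightarrow$(i)$\Rightarrow$(iii). The implication (ii)$\Rightarrow$(i) is immediate from the definition \eqref{FGSIS.def}: ${\mathcal S}(\Phi)$ is spanned by ${\bf S}^{\pmb\alpha}\phi$, and applying ${\bf S}_l$ to a spanning element gives another spanning element.

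For (iii)$\Rightarrow$(ii), we show that the space $V_{\bf T}:={\rm span}\{{\bf T}^k\phi\ |\ 0\le k\le M-1,\ \phi\in\Phi\}$ coincides with ${\mathcal S}(\Phi)$. One inclusion is clear, since ${\bf T}^k$ is a linear combination of monomials ${\bf S}^{\pmb\alpha}$, so $V_{\bf T}\subset {\mathcal S}(\Phi)$. For the reverse inclusion, note that ${\bf T}=\sum_{m}\mu(m){\bf P}_m$ by \eqref{projection.eq3}. Because the $\mu(m)$ are distinct by \eqref{FGSIS.thm.eq1-}, Lagrange interpolation in one variable gives polynomials $q_m$ of degree at most $M-1$ with $q_m(\mu(m'))=\delta_{mm'}$. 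Hence ${\bf P}_m=q_m({\bf T})$, and therefore $\widehat\phi(m)={\bf P}_m\phi\in V_{\bf T}$ for every $m$ and every $\phi\in\Phi$. Each ${\bf S}^{\pmb\alpha}\phi=\sum_m {\pmb\gamma}(m)^{\pmb\alpha}\widehat\phi(m)$ then lies in $V_{\bf T}$, so ${\mathcal S}(\Phi)\subset V_{\bf T}$. (Equivalently, one can invoke Theorem \ref{fiber.thm}.) This shows that (iii) is the same statement as (ii) for the same $\Phi$.

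The main step is (i)$\Rightarrow$(iii), which requires constructing generators. Let $U$ be a GSIS and $J_U({\pmb\gamma}(m))=U\cap W_m$ as in \eqref{Juspace.def}. By Theorem \ref{rangecharacterization.thm} and \eqref{dimensionU.eq}, $U=\bigoplus_m (U\cap W_m)$, since ${\bf P}_m{\bf x}\in U$ for ${\bf x}\in U$ (each ${\bf P}_m$ is a polynomial filter by Theorem \ref{projection.polynomialfilter.thm}). Let $L=\max_m\dim_U({\pmb\gamma}(m))$ (take $L=1$, $\phi_1=0$ if $U=\{0\}$, or allow a trivial generator). For each $m$, choose a basis $e_{m,1},\dots,e_{m,d_m}$ of $U\cap W_m$, and set $e_{m,i}=0$ for $d_m<i\le L$. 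Define $\phi_i=\sum_{m=1}^M e_{m,i}$ for $1\le i\le L$. Then $\widehat{\phi_i}(m)=e_{m,i}$, so ${\rm span}\{\widehat{\phi_i}(m)\}_i=U\cap W_m$. By the argument of the previous paragraph (or Theorem \ref{fiber.thm}), the span of ${\bf T}^k\phi_i$ equals $\bigoplus_m{\rm span}\{{\bf P}_m\phi_i\}_i=U$.

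The only delicate points are the existence of coefficients $a_l$ making the $\mu(m)$ distinct and the nonzero-generator convention. For the former, the set of ${\bf a}$ on the unit sphere with $\langle {\bf a},{\pmb\gamma}(m)-{\pmb\gamma}(m')\rangle=0$ for some $m\ne m'$ is a finite union of great subspheres of positive codimension, so a generic ${\bf a}$ works. For the latter, any $\phi_i$ that vanishes can simply be dropped. This construction also yields the length bound used later in Corollary \ref{length.cor}.
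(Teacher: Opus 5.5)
Your proof is correct and follows essentially the same route as the paper: you use the same generators $\phi_j=\sum_m {\bf e}_{m,j}$ with $r=\max_m \dim_U({\pmb\gamma}(m))$, and the same device of writing the projections ${\bf P}_m$ as polynomial filters. The only difference is organizational. You obtain ${\bf P}_m=q_m({\bf T})$ by one-variable interpolation and go directly from (i) to (iii). The paper instead uses the multivariate interpolants $h_m$ for (i)$\Rightarrow$(ii), and then separately rewrites ${\pmb\gamma}(m)^{\pmb\alpha}$ as polynomials in $\mu(m)$ of degree at most $M-1$ for (ii)$\Rightarrow$(iii).
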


The conclusion  in Theorem \ref{FGSIS.thm} is established in \cite{Chung2024}, where it is assumed that the condition \eqref{distincteigenvalue.assumption} is satisfied.

For a GSIS $U$, define 
\begin{equation}\label{maximaldimension.def} {\rm Maxdim}(U)=\max_{1\le m\le M} \dim_U({\pmb \gamma}(m)),\end{equation}
the  maximal bound of its dimension function. From the proof of Theorem 
\ref{FGSIS.thm}, we see that  for any given GSIS $U$, 
 there exist generators $\Phi=\{\phi_1, \ldots, \phi_r\}$ with $r={\rm Maxdim}(U)$
 such that $U={\mathcal S}(\Phi)$. On the other hand, if
 $U= {\mathcal S}(\Phi)$, it follows from \eqref{fiber.thm.eq1} that
the cardinality $\Phi$  
is no less than  ${\rm Maxdim}(U)$.
Consequently, we have the following result about the length of a GSIS. 

\begin{corollary} \label{length.cor} Let  $U$ be a GSIS,  and 
denote 
 its dimension function and length by $\dim_U$ and $L(U)$ respectively. Then 
\begin{equation}\label{length.cor.eq1}
L(U)= \max_{1\le m\le M} \dim_U({\pmb \gamma}(m)).
\end{equation}
\end{corollary}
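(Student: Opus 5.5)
We prove the two inequalities $L(U)\ge {\rm Maxdim}(U)$ and $L(U)\le {\rm Maxdim}(U)$, where ${\rm Maxdim}(U)$ is as in \eqref{maximaldimension.def}.

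\emph{Lower bound.} Suppose $U={\mathcal S}(\Phi)$ with $\Phi=\{\phi_1,\ldots,\phi_L\}$. By Theorem \ref{fiber.thm}, for each $m$ the space $J_U({\pmb \gamma}(m))$ is spanned by the $L$ vectors $\widehat{\phi_i}(m)$. Combining this with \eqref{fiberdimension.def} gives
\[
\dim_U({\pmb \gamma}(m))={\rm rank}\, F({\pmb \gamma}(m))\le L
\]
for every $m$. Taking the maximum over $m$ shows $L(U)\ge {\rm Maxdim}(U)$.

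\emph{Upper bound.} Set $r={\rm Maxdim}(U)$. If $r=0$ then $U=\{0\}$, which we exclude or treat as trivial by convention. Otherwise, for each $m$ choose a basis $\{{\bf v}_{m,1},\ldots,{\bf v}_{m,r_m}\}$ of $U\cap W_m=J_U({\pmb \gamma}(m))$, where $r_m=\dim_U({\pmb \gamma}(m))\le r$. Extend it by setting ${\bf v}_{m,i}={\bf 0}$ for $r_m<i\le r$. Then define
\[
\phi_i=\sum_{m=1}^M {\bf v}_{m,i}, \qquad 1\le i\le r .
\]
Each $\phi_i$ lies in $U$, and since ${\bf v}_{m,i}\in W_m$, its GFT satisfies $\widehat{\phi_i}(m)={\bf P}_m\phi_i={\bf v}_{m,i}$. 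Theorem \ref{fiber.thm} then gives $J_{{\mathcal S}(\Phi)}({\pmb \gamma}(m))={\rm span}\{{\bf v}_{m,i}\}=J_U({\pmb \gamma}(m))$ for all $m$. Because $U$ is shift-invariant and $\Phi\subset U$, we have ${\mathcal S}(\Phi)\subset U$. The two spaces have the same dimension function, so Proposition \ref{sisequality.prop} gives ${\mathcal S}(\Phi)=U$.

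The only subtlety is the requirement that the generators be nonzero. Since $r\ge1$, the index $m_0$ attaining the maximum satisfies $r_{m_0}=r$, so ${\bf v}_{m_0,i}\ne{\bf 0}$ for every $i\le r$. Hence every $\phi_i$ is nonzero, because its projection onto $W_{m_0}$ is nonzero. This yields $L(U)\le r$, and together with the lower bound proves \eqref{length.cor.eq1}.
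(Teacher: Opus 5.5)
Your proposal is correct and follows essentially the paper's route. The lower bound comes from Theorem~\ref{fiber.thm} (rank of the $N\times L$ fiber), and the upper bound uses the same generators $\phi_i=\sum_m {\bf v}_{m,i}$ as the proof of Theorem~\ref{FGSIS.thm}; the only differences are that you conclude ${\mathcal S}(\Phi)=U$ via Theorem~\ref{fiber.thm} and Proposition~\ref{sisequality.prop} instead of the paper's interpolating polynomials $Q_j$, and that you explicitly check the generators are nonzero, which the paper leaves implicit.
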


\noindent and 

\begin{corollary}\label{wholespacesis.cor}
The space ${\mathbb R}^V$ is a  PGSIS if and only if 
 the graph
shifts ${\bf S}_1, \ldots, {\bf S}_d$  satisfies the distinct joint spectrum condition \eqref{distincteigenvalue.assumption}.
\end{corollary}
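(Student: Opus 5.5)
The plan is to reduce the statement to Corollary \ref{length.cor} applied with $U={\mathbb R}^V$. By definition, ${\mathbb R}^V$ is a PGSIS exactly when its length $L({\mathbb R}^V)$ equals $1$. By Corollary \ref{length.cor},
$$L({\mathbb R}^V)=\max_{1\le m\le M}\dim_{{\mathbb R}^V}({\pmb\gamma}(m))=\max_{1\le m\le M}\dim W_m,$$
since ${\mathbb R}^V\cap W_m=W_m$ by \eqref{dimension.def}. So ${\mathbb R}^V$ is a PGSIS if and only if $\dim W_m=1$ for every $1\le m\le M$. Each $W_m$ is nonzero, because ${\pmb\gamma}(m)$ is attained by some ${\pmb\lambda}(n)$, so the condition says every $W_m$ has dimension at most one.

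Next I will translate this into the joint spectrum condition. By \eqref{projection.eq0}, $\dim W_m={\rm rank}\,{\bf P}_m$ equals the number of indices $n$ with ${\pmb\lambda}(n)={\pmb\gamma}(m)$. Therefore $\dim W_m=1$ for all $m$ holds exactly when each distinct joint eigenvalue is attained by exactly one index $n$. That is precisely \eqref{distincteigenvalue.assumption}, and it is equivalent to $M=N$. This proves both directions at once.

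The only step needing care is the use of Corollary \ref{length.cor}, which in turn rests on Theorem \ref{fiber.thm}. For the \emph{necessity} direction I can also give a self-contained check. If ${\mathbb R}^V={\mathcal S}(\phi)$, then Theorem \ref{fiber.thm} gives $W_m=J_{{\mathbb R}^V}({\pmb\gamma}(m))={\rm span}\{\widehat\phi(m)\}$, which has dimension at most one.

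For \emph{sufficiency} I can construct the generator explicitly as $\phi=\sum_n {\bf u}_n$. Then $\widehat\phi(m)$ spans the one-dimensional $W_m$ for every $m$, so by Theorem \ref{fiber.thm} and Proposition \ref{sisequality.prop} we get ${\mathcal S}(\phi)={\mathbb R}^V$. I expect no real obstacle beyond this bookkeeping.
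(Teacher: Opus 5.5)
Your proposal is correct and follows the paper's route: the paper derives this corollary directly from Corollary \ref{length.cor} with $U={\mathbb R}^V$. There $\dim_{{\mathbb R}^V}({\pmb\gamma}(m))=\dim W_m=\#\{n : {\pmb\lambda}(n)={\pmb\gamma}(m)\}$, so $L({\mathbb R}^V)=1$ exactly when the joint spectrum is distinct. Your supplementary checks via Theorem \ref{fiber.thm} and the explicit generator $\phi=\sum_n {\bf u}_n$ simply unpack the argument behind Corollary \ref{length.cor}, and they are also sound.
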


  \smallskip

\subsection{Frame property for shift-invariant systems}
\label{frame.section}

In the following theorem, we provide a frame bound estimate for the GSIS generated by $\Phi$; see Section \ref{frame.thm.pfsection} for the detailed proof.

\begin{theorem}\label{frame.thm}
Let ${\bf S}_1, \ldots, {\bf S}_d$ be the graph shifts that satisfy  Assumption \ref{graphshift.assumption}
and the normalization condition \eqref{frame.thm.eq0}. 
Set 
\begin{equation}\label{frame.thm.eq1}
{\bf A}=\left[\prod_{l=1}^d 
\big(1-\gamma_l(m)\gamma_l(m')\big)^{-1}\right]_{1\le m, m'\le M},
\end{equation}
where ${\pmb \gamma}(m)=[\gamma_1(m), \ldots, \gamma_d(m)]^T, 1\le m\le M$.
Let $\Phi=\{\phi_1,\ldots, \phi_r\}$,  ${ F}$
be its graph fiber in  
\eqref{fiber.def}, and 
$\mathcal S(\Phi)$ be the GSIS generated by $\Phi$.
Then 
\begin{eqnarray*} 
& &  \sigma_{\min}^+({\bf A})\Big(\inf_{1\le m\le M} \sigma_{\min}^+\big(F({\pmb \gamma}(m)\big)
\Big)  \|\x \|_2^2  \le     
\sum_{\phi\in \Phi} \sum_{{\pmb \alpha}\in {\mathbb Z}_+^d} |\langle \x, {\bf S}^{\pmb \alpha}\phi\rangle|^2\nonumber\\
&  & \qquad\le   \sigma_{\max}({\bf A})\Big(\sup_{1\le m\le M} \sigma_{\max} \big(F({\pmb \gamma}(m)\big) 
\Big)  \|\x \|_2^2 
\end{eqnarray*}
hold for all $\x\in {\mathcal S}(\Phi)$.
\end{theorem}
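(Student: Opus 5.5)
The plan is to compute the frame sum exactly in the Fourier domain, where it becomes a quadratic form in ${\bf A}$. Then I bound that form by the extreme singular values of ${\bf A}$ and of the fibers, and return to $\|{\bf x}\|_2$ by Parseval's identity \eqref{parsevelidentity}.

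\textbf{Step 1: the shifted inner products.} Fix ${\bf x}\in{\mathcal S}(\Phi)$ and $\phi\in\Phi$. By \eqref{projection.eq1}, \eqref{projection.eq2} and \eqref{projection.eq3},
\[
{\bf S}^{\pmb\alpha}\phi=\sum_{m=1}^M {\pmb\gamma}(m)^{\pmb\alpha}\,\widehat\phi(m),
\qquad {\pmb\gamma}(m)^{\pmb\alpha}:=\prod_{l=1}^d\gamma_l(m)^{\alpha_l}.
\]
The ${\bf P}_m$ are mutually orthogonal projections. Hence, setting $c_\phi(m):=\langle \widehat{\bf x}(m),\widehat\phi(m)\rangle$, we get
\[
\langle {\bf x},{\bf S}^{\pmb\alpha}\phi\rangle=\sum_{m=1}^M{\pmb\gamma}(m)^{\pmb\alpha}c_\phi(m).
\]

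\textbf{Step 2: the quadratic form.} Square and sum over ${\pmb\alpha}\in{\mathbb Z}_+^d$. The normalization \eqref{frame.thm.eq0} makes every geometric series converge, and the sum factorizes:
\[
\sum_{{\pmb\alpha}\in{\mathbb Z}_+^d}{\pmb\gamma}(m)^{\pmb\alpha}{\pmb\gamma}(m')^{\pmb\alpha}=\prod_{l=1}^d\big(1-\gamma_l(m)\gamma_l(m')\big)^{-1}.
\]
Therefore, with ${\bf c}_\phi=[c_\phi(1),\ldots,c_\phi(M)]^T$,
\[
\sum_{\phi\in\Phi}\sum_{{\pmb\alpha}}|\langle {\bf x},{\bf S}^{\pmb\alpha}\phi\rangle|^2=\sum_{\phi\in\Phi}{\bf c}_\phi^T{\bf A}\,{\bf c}_\phi .
\]

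\textbf{Step 3: positive definiteness of ${\bf A}$.} ${\bf A}$ is the Gram matrix of the sequences $v_m=({\pmb\gamma}(m)^{\pmb\alpha})_{{\pmb\alpha}}\in\ell^2({\mathbb Z}_+^d)$, so it is positive semidefinite. I claim it is in fact positive definite, so that $\sigma^+_{\min}({\bf A})$ is its smallest eigenvalue. Suppose $\sum_m a_m v_m=0$. Then $\sum_m a_m h({\pmb\gamma}(m))=0$ for every polynomial $h$. Taking the Lagrange-type polynomials $h_m$ of Theorem \ref{projection.polynomialfilter.thm}, which satisfy $h_m({\pmb\gamma}(m'))=\delta_{mm'}$, gives $a_m=0$ for every $m$. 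Consequently
\[
\sigma^+_{\min}({\bf A})\,\|{\bf c}_\phi\|_2^2\le {\bf c}_\phi^T{\bf A}{\bf c}_\phi\le\sigma_{\max}({\bf A})\,\|{\bf c}_\phi\|_2^2 .
\]

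\textbf{Step 4: the fiber factor.} Regroup the sum by frequency:
\[
\sum_{\phi}\|{\bf c}_\phi\|_2^2=\sum_{m=1}^M\big\|F({\pmb\gamma}(m))^T\widehat{\bf x}(m)\big\|_2^2 .
\]
\begin{itemize}
\item For the upper bound, use $\|F^T y\|_2\le\sigma_{\max}(F)\|y\|_2$.
\item For the lower bound, Theorem \ref{fiber.thm} gives $\widehat{\bf x}(m)\in J_{{\mathcal S}(\Phi)}({\pmb\gamma}(m))$, which is the range of $F({\pmb\gamma}(m))$. This range is the orthogonal complement of $\ker F({\pmb\gamma}(m))^T$, so $\|F^T\widehat{\bf x}(m)\|_2\ge\sigma^+_{\min}(F)\|\widehat{\bf x}(m)\|_2$.
\end{itemize}
Summing over $m$ and applying Parseval's identity \eqref{parsevelidentity} yields both inequalities.

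\textbf{Expected obstacle.} The steps above naturally produce the \emph{squares} $\sigma_{\max}(F({\pmb\gamma}(m)))^2$ and $\sigma^+_{\min}(F({\pmb\gamma}(m)))^2$, which are the extreme (positive) eigenvalues of the Grammian ${\bf C}(m)$ in \eqref{Cm.def}. I would therefore read the fiber factors in the statement as these Grammian quantities, i.e.\ as the squared singular values of $F$. I will check this reading against the remark that the frame bound is governed by the Grammian of the fiber, rather than derive an unsquared bound that the argument does not support.
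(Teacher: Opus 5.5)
Your proof is correct and follows the paper's proof essentially step for step: the Fourier-domain expansion of $\langle {\bf x},{\bf S}^{\pmb\alpha}\phi\rangle$, the quadratic form in ${\bf A}$, nonsingularity of ${\bf A}$ via the interpolating polynomials $h_m$ (the paper's Lemma \ref{frame.lemma}), the fiber bound, and Parseval's identity. Your squared reading of the fiber factors is the right one: the paper's chain of inequalities writes $\sigma_{\max}(F({\pmb\gamma}(m)))$ where $\sigma_{\max}(F({\pmb\gamma}(m)))^2$ is needed. The unsquared bounds cannot hold in general, because replacing $\Phi$ by $t\Phi$ leaves ${\mathcal S}(\Phi)$ unchanged and multiplies the frame sum by $t^2$, but multiplies the fiber singular values only by $|t|$.
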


Denote the {\em frame operator} by
\begin{equation}\label{frameoperator.def}
    S\x=\sum_{\phi\in \Phi} \sum_{\pmb\alpha\in {\mathbb Z}_+^d}\langle\x, {\bf S}^{\pmb\alpha}\phi\rangle {\bf S}^{\pmb\alpha}\phi, \quad \x\in {\mathcal S}(\Phi). 
\end{equation}
By  Theorem \ref{frame.thm}, we have
\begin{equation}
\sigma_{\min}^+({\bf A})\Big(\inf_{1\le m\le M} \sigma_{\min}^+\big(F({\pmb \gamma}(m)\big)
\Big)  {\bf I} \preceq   S\preceq   
\sigma_{\max}({\bf A})\Big(\sup_{1\le m\le M} \sigma_{\max} \big(F({\pmb \gamma}(m)\big) 
\Big)  {\bf I}.
\end{equation}
We remark that, unlike in the  real line setting, the frame operator 
$S$
 in \eqref{frameoperator.def} does {\bf not} always commute with the graph shifts 
 ${\mathbf S}_1, \ldots, {\mathbf S}_d$.
The possible reasons could be  that, in the  real line setting, the unit shift operator is both invertible and isometric, and the summation in the frame operator 
is taken over all shifts in the {\bf group} ${\mathbb Z}^d$. In contrast,  in the graph setting, 
 graph shifts are not necessarily invertible and, in general, cannot be assumed to be isometric, and moreover  the summation
 in \eqref{frameoperator.def} is taken over shifts in the
 {\bf semi-group} 
 ${\mathbb Z}_+^d$. In the following theorem, we provide a characterization for the shift-invariance of the frame operator $S$ in \eqref{frameoperator.def}; see Section \ref{frameoperator.thm.pfsection} for the detailed proof.

 \begin{theorem}\label{frameoperator.thm}
 Let  $\Phi=\{\phi_1, \ldots, \phi_r\}$,  ${\bf S}_1, \ldots, {\bf S}_d$ be the graph shifts that satisfy  Assumption \ref{graphshift.assumption}
and the normalization condition \eqref{frame.thm.eq0}, 
 and ${\bf C}(m), 1\le m\le M$, be the Grammian  of the  graph fiber   defined as in   \eqref{Cm.def}.
The  the frame operator  $S$  in \eqref{frameoperator.def} is  shift-invariant if and only if
 \begin{equation}\label {frameoperator.thm.eq1}
 {\bf C}(m) {\bf C}(m')=0 \ \ {\rm for \ all} \ 1\le m\ne m'\le M.
 \end{equation}
 \end{theorem}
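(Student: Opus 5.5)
The plan is to compute the frame operator $S$ in \eqref{frameoperator.def} explicitly in terms of the spectral decomposition of the identity. Then shift-invariance becomes a block-diagonality condition with respect to the projections ${\bf P}_m$.

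\emph{Step 1: a closed form for $S$.} By \eqref{projection.eq2} and \eqref{projection.eq3}, ${\bf S}^{\pmb\alpha}=\sum_{m=1}^M {\pmb\gamma}(m)^{\pmb\alpha}{\bf P}_m$, where ${\pmb\gamma}(m)^{\pmb\alpha}=\prod_l\gamma_l(m)^{\alpha_l}$. Write $\Phi$ also for the $N\times r$ matrix $[\phi_1,\ldots,\phi_r]$, so that $F({\pmb\gamma}(m))={\bf P}_m\Phi$. Since each ${\bf S}^{\pmb\alpha}$ is symmetric, the frame operator extends to all of ${\mathbb R}^V$ as the matrix
\begin{equation*}
S=\sum_{\pmb\alpha\in{\mathbb Z}_+^d}{\bf S}^{\pmb\alpha}\Phi\Phi^T{\bf S}^{\pmb\alpha}
=\sum_{m,m'=1}^M\Big(\sum_{\pmb\alpha}{\pmb\gamma}(m)^{\pmb\alpha}{\pmb\gamma}(m')^{\pmb\alpha}\Big)F({\pmb\gamma}(m))F({\pmb\gamma}(m'))^T .
\end{equation*}
The inner series is a product of geometric series. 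It converges by the normalization \eqref{frame.thm.eq0} and equals the entry $a_{mm'}$ of ${\bf A}$ in \eqref{frame.thm.eq1}. Moreover $a_{mm'}>0$, since every factor satisfies $1-\gamma_l(m)\gamma_l(m')>0$. Hence $S=\sum_{m,m'}a_{mm'}F({\pmb\gamma}(m))F({\pmb\gamma}(m'))^T$.

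\emph{Step 2: reduction to the ${\bf P}_m$.} The range of $F({\pmb\gamma}(m'))$ lies in ${\mathcal S}(\Phi)$ by Theorem \ref{fiber.thm}, so this matrix maps ${\mathbb R}^V$ into ${\mathcal S}(\Phi)$. It also vanishes on ${\mathcal S}(\Phi)^\perp$, because $F({\pmb\gamma}(m'))^T$ annihilates that complement. By Proposition \ref{complement.prop}, both ${\mathcal S}(\Phi)$ and ${\mathcal S}(\Phi)^\perp$ are shift-invariant. Therefore $S$ commutes with ${\bf S}_l$ on ${\mathcal S}(\Phi)$ if and only if the full matrix commutes with ${\bf S}_l$ on ${\mathbb R}^V$.

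Commuting with all ${\bf S}_l$ is equivalent to commuting with all ${\bf P}_m$. One direction holds because the ${\bf P}_m$ are polynomial filters by Theorem \ref{projection.polynomialfilter.thm}; the other because ${\bf S}_l=\sum_m\gamma_l(m){\bf P}_m$. By \eqref{shiftinvariantfilter.commutative.eq2}, commuting with all ${\bf P}_m$ is in turn equivalent to ${\bf P}_mS{\bf P}_{m'}=0$ for all $m\ne m'$. By Step 1, ${\bf P}_mS{\bf P}_{m'}=a_{mm'}F({\pmb\gamma}(m))F({\pmb\gamma}(m'))^T$. Since $a_{mm'}>0$, $S$ is shift-invariant if and only if $F({\pmb\gamma}(m))F({\pmb\gamma}(m'))^T=0$ for all $m\ne m'$.

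\emph{Step 3: translation to Grammians.} This is the only slightly delicate point. For matrices $F,G$ with $r$ columns, we have $FG^T=0$ if and only if $F^TFG^TG=0$. For the backward direction, use that $F^Tv=0$ implies $F^TFv'=0$ forces $Fv'=0$, i.e.\ $\ker F^TF=\ker F$ and ${\rm range}(F^TF)={\rm range}(F^T)$. Then $F^TFG^TG=0$ gives $FG^TG=0$, hence $FG^T G G^T=0$. Since ${\rm range}(GG^T)={\rm range}(G)$, this yields $FG^T=0$. Applying this with $F=F({\pmb\gamma}(m))$ and $G=F({\pmb\gamma}(m'))$ turns the condition of Step 2 into ${\bf C}(m){\bf C}(m')=0$ for $m\ne m'$, which is \eqref{frameoperator.thm.eq1}.

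I expect no serious obstacle. The care needed is in Step 2, where we justify passing between $S$ as an operator on ${\mathcal S}(\Phi)$ and as a matrix on ${\mathbb R}^V$, and in using the positivity of $a_{mm'}$ so that no cancellation can occur.
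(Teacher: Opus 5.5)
Your proposal is correct and takes essentially the paper's route. You reduce shift-invariance to ${\bf P}_m S{\bf P}_{m'}={\bf 0}$ for $m\ne m'$, compute these blocks as $\prod_{l=1}^d(1-\gamma_l(m)\gamma_l(m'))^{-1}F({\pmb\gamma}(m))F({\pmb\gamma}(m'))^T$, and convert their vanishing into ${\bf C}(m){\bf C}(m')={\bf 0}$; you are a bit more careful than the paper about passing between $S$ on ${\mathcal S}(\Phi)$ and its zero extension to ${\mathbb R}^V$, and you use the identity $FG^T={\bf 0}\iff F^TFG^TG={\bf 0}$ where the paper tests against the fibers via Theorem \ref{fiber.thm}, but these are only presentational differences.
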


 By \eqref{dimensionU.eq}, \eqref{fiberdimension.def}
and Theorem \ref{frameoperator.thm}, we have the following necessary condition for the shift-invariance of the frame operator in \eqref{frameoperator.def}, which  is {\bf not} required in the  real line setting, cf. Corollary \ref{dualframeexistence.cor} for a necessary condition of the existence of  shift-invariant dual frames.

\begin{corollary}
\label{frameoperator.cor}
     Let  $\Phi=\{\phi_1, \ldots, \phi_r\}$, and ${\bf S}_1, \ldots, {\bf S}_d$ be the graph shifts that satisfy  Assumption \ref{graphshift.assumption} and the normalization condition \eqref{frame.thm.eq0}.
     Then a necessary condition for the 
     shift-invariance of the frame operator $S$ in \eqref{frameoperator.def} is
     $$\dim {\mathcal S}(\Phi)\le r.$$
\end{corollary}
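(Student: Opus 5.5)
The plan is to derive the dimension bound directly from condition \eqref{frameoperator.thm.eq1} of Theorem \ref{frameoperator.thm}, combined with the dimension identities \eqref{dimensionU.eq} and \eqref{fiberdimension.def}. Assume the frame operator $S$ is shift-invariant. Then Theorem \ref{frameoperator.thm} gives ${\bf C}(m){\bf C}(m')={\bf 0}$ for all $m\ne m'$, where ${\bf C}(m)=F({\pmb\gamma}(m))^TF({\pmb\gamma}(m))$ is the $r\times r$ Grammian.

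The first step is to convert this product condition into an orthogonality statement for ranges. Each ${\bf C}(m)$ is symmetric and positive semidefinite, and its range $X(m)$ lies in ${\mathbb R}^r$. For $m\ne m'$ we have ${\bf C}(m){\bf C}(m')={\bf 0}$. Hence for any ${\bf y}={\bf C}(m){\bf a}$ and ${\bf z}={\bf C}(m'){\bf b}$,
\[
\langle {\bf y},{\bf z}\rangle={\bf a}^T{\bf C}(m){\bf C}(m'){\bf b}=0 .
\]
So the subspaces $X(1),\ldots,X(M)$ are mutually orthogonal in ${\mathbb R}^r$. In particular their sum is direct, and
\[
\sum_{m=1}^M \dim X(m)\le r .
\]

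The second step identifies $\dim X(m)$ with the rank of the fiber matrix. Since ${\rm rank}\,(F^TF)={\rm rank}\,F$ for any real matrix, we have $\dim X(m)={\rm rank}\,F({\pmb\gamma}(m))$. By \eqref{fiberdimension.def} this equals $\dim_{{\mathcal S}(\Phi)}({\pmb\gamma}(m))$. Summing over $m$ and applying \eqref{dimensionU.eq} then gives
\[
\dim{\mathcal S}(\Phi)=\sum_{m=1}^M \dim_{{\mathcal S}(\Phi)}({\pmb\gamma}(m))=\sum_{m=1}^M\dim X(m)\le r .
\]

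There is no serious obstacle in this argument. The only points needing care are the passage from ${\bf C}(m){\bf C}(m')={\bf 0}$ to orthogonality of the ranges, which uses the symmetry of the Grammians, and the rank identity ${\rm rank}\,(F^TF)={\rm rank}\,F$. Everything else is bookkeeping with the earlier results.
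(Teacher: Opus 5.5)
Your proposal is correct and follows the paper's route. The paper obtains the corollary directly from Theorem \ref{frameoperator.thm} together with \eqref{dimensionU.eq} and \eqref{fiberdimension.def}; you have simply written out the intermediate steps it leaves implicit, namely that ${\bf C}(m){\bf C}(m')={\bf 0}$ forces the ranges $X(m)$ to be mutually orthogonal in ${\mathbb R}^r$ (which the paper also notes later), and that ${\rm rank}\,(F^TF)={\rm rank}\,F$.
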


\begin{remark}
{\rm 
By the classical Cayley-Hamilton theorem, for every $1\le l\le d$, ${\bf S}_l^M$ is the linear combination of ${\bf S}_l^m, 0\le m\le M-1$. Therefore the GSIS generated by $\Phi$ is given by 
\begin{equation} \label{FGSIS.def2}
    \mathcal S(\Phi)=\span\{{\bf S}^{\pmb \alpha}\phi\mid {\bf 0}\le {\pmb \alpha}\le (M-1) {\bf 1}, \phi\in\Phi\}.
\end{equation}
Here ${\bf 0}\le {\pmb \alpha}\le (M-1) {\bf 1}$ means that all components of indices ${\pmb \alpha}$ are integers between $0$ and $M-1$.
Following the argument in Theorem \ref{frame.thm.eq1}, we have a similar frame bound estimate 
 with the matrix  ${\bf A}$ in \eqref{frame.thm.eq1} by 
$$A_M=\left[\prod_{l=1}^d \frac{1-\gamma_l(m)^M \gamma_l(m')^M}{1-\gamma_l(m)\gamma_l(m')}\right]_{1\le m, m'\le M}.$$
In particular,  for any  $\x\in {\mathcal S}(\Phi)$, we have
\begin{eqnarray}\label{frame.thm.ramarkeq1}
 & & \sigma_{\min}^+({\bf A}_M)\Big(\inf_{1\le m\le M} \sigma_{\min}^+\big(F({\pmb \gamma}(m)\big)
\Big)  \|\x \|_2^2
 \le 
\sum_{\phi\in \Phi} \sum_{ 0\le {\pmb \alpha}\le (M-1) {\bf 1}} |\langle \x, {\bf S}^{\pmb \alpha}\phi\rangle|^2\nonumber\\
&  & \qquad\le   \sigma_{\max}({\bf A}_M)\Big(\sup_{1\le m\le M} \sigma_{\max} \big(F({\pmb \gamma}(m)\big) 
\Big)  \|\x \|_2^2.
\end{eqnarray}
Define the frame operator
associated with the generating system in 
\eqref{FGSIS.def2} by
$$S_M{\bf x}=\sum_{\phi\in \Phi}\sum_{{\bf 0}\le {\pmb \alpha}\le (M-1){\bf 1}} \langle {\bf x}, {\bf S}^{\pmb \alpha} \phi\rangle {\bf S}^{\pmb \alpha} \phi,\ \  {\bf x}\in {\mathcal S}(\Phi).$$
One may follow the argument used in Theorem \ref{frameoperator.thm} and show that the frame operator ${\bf S}_M$ just-defined  is shift-invariant if and only if \eqref{frameoperator.thm.eq1} holds.
}
\end{remark}

In the following theorem, we provide a characterization for a shift-invariant dual frame;
see Section \ref{dualframe.thm.pfsection} for the detailed proof. 

\begin{theorem}\label{dualframe.thm}
 Let ${\bf S}_1, \ldots, {\bf S}_d$ be the graph shifts that satisfy \eqref{frame.thm.eq0} and  Assumption \ref{graphshift.assumption}.
Let $\Phi=\{\phi_1,\ldots, \phi_r\}$ and $\tilde \Phi=\{\tilde \phi_1, \ldots, \tilde \phi_r\}\in S(\Phi)$, and set
\begin{equation}\label{dualframe.thm.eq0} {\bf B}(m)=\big[\big\langle \widehat \phi_i(m), \widehat {\tilde \phi}_j(m)\big\rangle\big]_{1\le i, j\le r}.
\end{equation} 
Then  $\tilde \Phi$ generates a shift-invariant dual frame 
if and only if
 \begin{equation}\label{dualframe.thm.eq2}   S(\tilde \Phi)= S(\Phi),
 \end{equation}
and
\begin{equation} \label{dualframe.thm.eq3}
    {\bf B}(m) {\bf B}(m')=\left\{
    \begin{array}{ll} {\bf O}  & {\rm if}\ m\ne m'\\
    \prod_{l=1}^d [(1-\gamma_l(m)^2)] {\bf B}(m) & {\rm if}
    \ m'=m.
    \end{array}\right. 
\end{equation}
\end{theorem}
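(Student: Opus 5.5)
The plan is to compute the operator
$$D{\bf x}=\sum_{i=1}^r\sum_{{\pmb\alpha}\in{\mathbb Z}_+^d}\langle {\bf x},{\bf S}^{\pmb\alpha}\tilde\phi_i\rangle{\bf S}^{\pmb\alpha}\phi_i$$
frequency by frequency. We then translate ``$D{\bf x}={\bf x}$ for all ${\bf x}\in{\mathcal S}(\Phi)$'' into matrix identities on the graph fibers $F({\pmb\gamma}(m))$ and $\tilde F({\pmb\gamma}(m))$ of $\Phi$ and $\tilde\Phi$.

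\textbf{Step 1: Fourier form of $D$.} By \eqref{projection.eq2} and \eqref{projection.eq3} we have ${\bf S}^{\pmb\alpha}=\sum_m{\pmb\gamma}(m)^{\pmb\alpha}{\bf P}_m$. Hence
$$\langle {\bf x},{\bf S}^{\pmb\alpha}\tilde\phi_i\rangle=\sum_{m'}{\pmb\gamma}(m')^{\pmb\alpha}\langle\widehat{\bf x}(m'),\widehat{\tilde\phi}_i(m')\rangle
\quad\mbox{and}\quad
\widehat{{\bf S}^{\pmb\alpha}\phi_i}(m)={\pmb\gamma}(m)^{\pmb\alpha}\widehat\phi_i(m).$$
Summing the geometric series, which converges by \eqref{frame.thm.eq0}, gives $\sum_{\pmb\alpha}{\pmb\gamma}(m)^{\pmb\alpha}{\pmb\gamma}(m')^{\pmb\alpha}=A_{mm'}$, the entries of the matrix ${\bf A}$ in \eqref{frame.thm.eq1}. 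Therefore
$$\widehat{D{\bf x}}(m)=\sum_{m'=1}^M A_{mm'}\,F({\pmb\gamma}(m))\,\tilde F({\pmb\gamma}(m'))^T\,\widehat{\bf x}(m'),\qquad 1\le m\le M.$$

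\textbf{Step 2: decoupling.} By Theorems \ref{rangecharacterization.thm} and \ref{fiber.thm}, ${\mathcal S}(\Phi)$ consists exactly of the ${\bf x}$ whose components $\widehat{\bf x}(m')$ range independently over $J_{{\mathcal S}(\Phi)}({\pmb\gamma}(m'))={\rm range}\,F({\pmb\gamma}(m'))$. So we write $\widehat{\bf x}(m')=F({\pmb\gamma}(m')){\bf c}_{m'}$ with ${\bf c}_{m'}\in{\mathbb R}^r$ arbitrary. Then $D={\rm Id}$ on ${\mathcal S}(\Phi)$ is equivalent to two families of identities. For $m\ne m'$ we need $F({\pmb\gamma}(m))\tilde F({\pmb\gamma}(m'))^TF({\pmb\gamma}(m'))={\bf O}$. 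For $m=m'$ we need
$$A_{mm}\,F({\pmb\gamma}(m))\tilde F({\pmb\gamma}(m))^TF({\pmb\gamma}(m))=F({\pmb\gamma}(m)),\qquad A_{mm}=\prod_l(1-\gamma_l(m)^2)^{-1}.$$
Both families are to be rewritten through ${\bf B}(m)=F({\pmb\gamma}(m))^T\tilde F({\pmb\gamma}(m))$.

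\textbf{Step 3: the role of \eqref{dualframe.thm.eq2}.} For necessity, suppose ${\bf x}\in{\mathcal S}(\Phi)$ is orthogonal to ${\mathcal S}(\tilde\Phi)$. Then every coefficient $\langle{\bf x},{\bf S}^{\pmb\alpha}\tilde\phi_i\rangle$ vanishes, so the dual formula forces ${\bf x}=0$. Since $\tilde\Phi\subset{\mathcal S}(\Phi)$ is assumed, this gives ${\mathcal S}(\tilde\Phi)={\mathcal S}(\Phi)$. By Theorem \ref{fiber.thm} this is equivalent to ${\rm range}\,\tilde F({\pmb\gamma}(m))={\rm range}\,F({\pmb\gamma}(m))$ for every $m$. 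Consequently $\tilde F({\pmb\gamma}(m))=F({\pmb\gamma}(m)){\bf G}(m)$ for some ${\bf G}(m)$, and conversely with the roles of $\Phi$ and $\tilde\Phi$ swapped. We also use that $F^T$ is injective on ${\rm range}\,F$ and that $\ker F=\ker{\bf C}(m)$. With these facts we can multiply the identities of Step 2 on the left by $F({\pmb\gamma}(m))^T$ and on the right by $\tilde F$, and cancel in the reverse direction, without losing information. This should convert the Step 2 identities into exactly \eqref{dualframe.thm.eq3}: the off-diagonal products of the ${\bf B}$'s vanish, and the diagonal relation becomes ${\bf B}(m)^2=\prod_l(1-\gamma_l(m)^2)\,{\bf B}(m)$.

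\textbf{Main obstacle.} I expect the hard part to be this conversion, specifically keeping the transposes straight. The raw identity of Step 2 contains $\tilde F({\pmb\gamma}(m'))^TF({\pmb\gamma}(m'))={\bf B}(m')^T$, whereas \eqref{dualframe.thm.eq3} is stated for products ${\bf B}(m){\bf B}(m')$. Bridging the two requires both the range equality from \eqref{dualframe.thm.eq2} and the cancellation arguments above, and I will check carefully which orientation of ${\bf B}$ makes the equivalence exact. The sufficiency direction then reverses these steps: from \eqref{dualframe.thm.eq2} and \eqref{dualframe.thm.eq3}, recover the fiber identities of Step 2, and hence $D={\rm Id}$ on ${\mathcal S}(\Phi)$ via the formula in Step 1.
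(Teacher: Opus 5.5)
Your proposal is correct and follows essentially the paper's route. The shared steps are: the Fourier form of the reconstruction formula; decoupling via the independent choice of $\widehat{\bf x}(m')\in{\mathcal S}(\Phi)\cap W_{m'}$ (Theorems \ref{rangecharacterization.thm} and \ref{fiber.thm}); and then pairing with $\widehat{\tilde\phi}_{j'}(m)$. That pairing is left multiplication by $\tilde F({\pmb\gamma}(m))^T$, and it gives ${\bf B}(m)^T{\bf B}(m')^T={\bf O}$ and $({\bf B}(m)^T)^2=\prod_{l=1}^d(1-\gamma_l(m)^2)\,{\bf B}(m)^T$ directly. For the converse, the range equality from \eqref{dualframe.thm.eq2} makes $\tilde F({\pmb\gamma}(m))^T$ injective on ${\rm range}\,F({\pmb\gamma}(m))$, exactly as the paper uses it. Your orthogonality argument for the necessity of \eqref{dualframe.thm.eq2} is a slightly cleaner substitute for the paper's per-frequency argument.

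The transpose issue you flag is harmless, because \eqref{dualframe.thm.eq3} is unchanged when every ${\bf B}(m)$ is replaced by ${\bf B}(m)^T$. The off-diagonal condition runs over all ordered pairs $m\ne m'$, and ${\bf B}^2=c{\bf B}$ holds if and only if $({\bf B}^T)^2=c{\bf B}^T$.
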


Let  the Grammian  ${\bf C}(m), 1\le m\le M$, of the graph filter $F$ be as in \eqref{Cm.def},
and $X(m)$ be  the range space of
${\bf C}(m), 1\le m\le M$.
By Theorem \ref{fiber.thm}, we have
\begin{equation}\label{Xm.def}
X(m)=\Big\{\big[\big\langle\widehat \phi_i(m), \widehat \x(m)\big\rangle\big]_{1\le i\le r}\ \big | \ {\x}\in {\mathcal S}(\Phi)\Big\}
\end{equation}
and
\begin{equation} \label{Xm.eq1}
\dim X(m)=\dim_{{\mathcal S}(\Phi)}({\pmb \gamma}(m)), \quad  1\le m\le M.
\end{equation}
With the help of the characterization of shift-invariant dual frames in 
Theorem \ref{dualframe.thm}, we have the following result about the existence of shift-invariant dual frames; see Section \ref{dualframeexistence.thm.pfsection} for the detailed proof. 

\begin{theorem}\label{dualframeexistence.thm}
 Let ${\bf S}_1, \ldots, {\bf S}_d$ be graph shifts that satisfy \eqref{frame.thm.eq0} and  Assumption \ref{graphshift.assumption}.
Let $\Phi=\{\phi_1,\ldots, \phi_r\}$  and $X(m), 1\le m\le M$,
be as in \eqref{Xm.def}.
Then there exists $\tilde \Phi=\{\tilde \phi_1, \ldots, \tilde \phi_r\}\subset {\mathcal S}(\Phi)$ that generates a shift-invariant dual frame if
and only if the sum of $X(m), 1\le m\le M$, is direct.  
\end{theorem}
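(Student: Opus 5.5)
The plan is to reduce everything to the characterization in Theorem \ref{dualframe.thm}, and to parametrize candidate dual generators through the fiber. Write $F(m)=F({\pmb\gamma}(m))$ and $c_m=\prod_{l=1}^d(1-\gamma_l(m)^2)$; note $c_m>0$ by \eqref{frame.thm.eq0}.

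\emph{Parametrizing candidates.} Let $\tilde\Phi\subset{\mathcal S}(\Phi)$. By Theorems \ref{rangecharacterization.thm} and \ref{fiber.thm}, each $\widehat{\tilde\phi}_j(m)$ lies in $J_{{\mathcal S}(\Phi)}({\pmb\gamma}(m))={\rm span}\{\widehat\phi_i(m)\}$. Hence $\widehat{\tilde\phi}_j(m)=F(m)\,{\bf d}_j(m)$ for some ${\bf d}_j(m)\in\R^r$. Put ${\bf D}(m)=[{\bf d}_1(m),\ldots,{\bf d}_r(m)]$. Then ${\bf B}(m)=F(m)^TF(m){\bf D}(m)={\bf C}(m){\bf D}(m)$.

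Conversely, any choice of matrices ${\bf D}(m)$ defines $\tilde\phi_j:=\sum_m F(m){\bf d}_j(m)$. This lies in ${\mathcal S}(\Phi)$, because its $m$-th Fourier component is $F(m){\bf d}_j(m)\in W_m\cap{\mathcal S}(\Phi)$.

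\emph{Key rank observation.} Since the range of $F(m)$ meets the kernel of $F(m)^T$ trivially, we get ${\rm rank}\,{\bf B}(m)={\rm rank}\,F(m){\bf D}(m)$. Moreover, ${\rm range}\,{\bf B}(m)\subset{\rm range}\,{\bf C}(m)=X(m)$.

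By Theorem \ref{fiber.thm}, \eqref{fiberdimension.def} and Proposition \ref{sisequality.prop}, the condition ${\mathcal S}(\tilde\Phi)={\mathcal S}(\Phi)$ is equivalent to ${\rm rank}\,F(m){\bf D}(m)={\rm rank}\,F(m)=\dim X(m)$ for all $m$, using \eqref{Xm.eq1}. The inclusion ${\mathcal S}(\tilde\Phi)\subset{\mathcal S}(\Phi)$ is automatic. Thus \eqref{dualframe.thm.eq2} holds exactly when ${\rm range}\,{\bf B}(m)=X(m)$ for every $m$.

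\emph{Necessity.} Suppose $\tilde\Phi$ generates a shift-invariant dual frame. By Theorem \ref{dualframe.thm}, the matrices ${\bf E}_m:=c_m^{-1}{\bf B}(m)$ are idempotents with ${\rm range}\,{\bf E}_m=X(m)$ and ${\bf E}_m{\bf E}_{m'}={\bf O}$ for $m\ne m'$. Take $\sum_m{\bf x}_m=0$ with ${\bf x}_m={\bf E}_m{\bf y}_m\in X(m)$. Applying ${\bf E}_k$ kills every term with $m\ne k$ and leaves ${\bf x}_k$, so ${\bf x}_k=0$. Hence the sum of the $X(m)$ is direct.

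\emph{Sufficiency.} Assume the sum is direct, and pick a complement $Y$ with $\R^r=X(1)\oplus\cdots\oplus X(M)\oplus Y$. Let ${\bf E}_m$ be the projection onto $X(m)$ along the other summands. These are idempotents with ${\bf E}_m{\bf E}_{m'}={\bf O}$ for $m\ne m'$ and range $X(m)$.

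Set ${\bf D}(m)=c_m{\bf C}(m)^\dagger{\bf E}_m$. Since ${\bf C}(m)$ is symmetric, ${\bf C}(m){\bf C}(m)^\dagger$ is the orthogonal projection onto $X(m)$, which fixes the range of ${\bf E}_m$. Therefore ${\bf B}(m)={\bf C}(m){\bf D}(m)=c_m{\bf E}_m$. This gives \eqref{dualframe.thm.eq3}, together with ${\rm range}\,{\bf B}(m)=X(m)$, hence \eqref{dualframe.thm.eq2}. Theorem \ref{dualframe.thm} then finishes the proof.

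The main point to get right is the rank argument linking \eqref{dualframe.thm.eq2} to ${\rm range}\,{\bf B}(m)=X(m)$. The rest is routine linear algebra.
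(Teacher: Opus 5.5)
Your proof is correct and follows the paper's argument. For necessity you read $c_m^{-1}{\bf B}(m)$ as mutually annihilating idempotents onto $X(m)$; for sufficiency you build oblique projections ${\bf E}_m$ (the paper's ${\bf Q}_m$ from Lemma \ref{sumprojection.lem}) and make the same choice ${\bf D}(m)=c_m{\bf C}(m)^\dagger{\bf E}_m$. Your rank observation, which shows that \eqref{dualframe.thm.eq2} is equivalent to ${\rm range}\,{\bf B}(m)=X(m)$ for all $m$, is a useful addition: the paper's sufficiency argument invokes Theorem \ref{dualframe.thm} without explicitly checking \eqref{dualframe.thm.eq2}.
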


By \eqref{dimensionU.eq}, \eqref{Xm.eq1} and Theorem \ref{dualframeexistence.thm}, we have  the following necessary condition for the existence of shift-invariant dual frame, 
which  is {\bf not} required in the  real linear setting, cf. Corollary \ref{frameoperator.cor} for a necessary condition
for the shift-invariance of the frame operator in \eqref{frameoperator.def}.

\begin{corollary} \label{dualframeexistence.cor}
 Let  $\Phi=\{\phi_1,\ldots, \phi_r\}$ and ${\bf S}_1, \ldots, {\bf S}_d$ be the graph shifts that satisfy \eqref{frame.thm.eq0} and  Assumption \ref{graphshift.assumption}. Then a necessary condition for the existence of a shift-invariant dual frame for the shift-invariant space ${\mathcal S}(\Phi)$ is
  \begin{equation} \label{dualframeexistence.cor.eq1}
  \dim {\mathcal S}(\Phi)\le r.
  \end{equation}
\end{corollary}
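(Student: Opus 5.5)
Corollary \ref{dualframeexistence.cor} follows from Theorem \ref{dualframeexistence.thm} by a dimension count. The plan is as follows.

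First, assume a shift-invariant dual frame generated by some $\tilde\Phi\subset{\mathcal S}(\Phi)$ exists. By Theorem \ref{dualframeexistence.thm}, the sum $X(1)+\cdots+X(M)$ of the range spaces $X(m)\subset{\mathbb R}^r$ of the Grammians ${\bf C}(m)$ is then direct. Since all these subspaces live in ${\mathbb R}^r$, directness gives
\begin{equation*}
\sum_{m=1}^M \dim X(m)=\dim\Big(\bigoplus_{m=1}^M X(m)\Big)\le r .
\end{equation*}

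Next, identify each summand with the dimension function. By \eqref{Xm.eq1}, $\dim X(m)=\dim_{{\mathcal S}(\Phi)}({\pmb\gamma}(m))$. One can check this directly: ${\bf C}(m)=F({\pmb\gamma}(m))^TF({\pmb\gamma}(m))$ has the same rank as $F({\pmb\gamma}(m))$, and that rank equals the dimension function by \eqref{fiberdimension.def}.

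Finally, sum over $m$ and apply \eqref{dimensionU.eq} with $U={\mathcal S}(\Phi)$:
\begin{equation*}
\dim{\mathcal S}(\Phi)=\sum_{m=1}^M\dim_{{\mathcal S}(\Phi)}({\pmb\gamma}(m))=\sum_{m=1}^M\dim X(m)\le r .
\end{equation*}
This gives \eqref{dualframeexistence.cor.eq1}.

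There is no real obstacle, since the substantive work sits in Theorem \ref{dualframeexistence.thm}. The only point needing care is the equality $\dim X(m)=\dim_{{\mathcal S}(\Phi)}({\pmb\gamma}(m))$, which rests on the rank identity ${\rm rank}\,({\bf A}^T{\bf A})={\rm rank}\,{\bf A}$ combined with Theorem \ref{fiber.thm}.
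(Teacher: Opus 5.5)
Your proposal is correct and follows the paper's own argument: the paper derives the corollary directly from Theorem \ref{dualframeexistence.thm}, \eqref{Xm.eq1} and \eqref{dimensionU.eq}, since directness of $X(1)+\cdots+X(M)$ inside ${\mathbb R}^r$ forces $\sum_m \dim X(m)\le r$, and that sum equals $\dim{\mathcal S}(\Phi)$. Your extra check of \eqref{Xm.eq1}, using ${\rm rank}({\bf A}^T{\bf A})={\rm rank}\,{\bf A}$ together with \eqref{fiberdimension.def}, is a valid justification of the one step you flagged.
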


For a PGSIS  ${\mathcal S}(\phi)$ generated by a single nonzero graph signal $\phi$, we obtain the following result by Corollary \ref{dualframeexistence.cor}. 

\begin{corollary}\label{dualframe.PGSIS.cor}
Let   ${\mathcal S}(\phi)$ be a PGSIS. Then it has a shift-invariant dual frame if and only if it is one-dimensional, in other words, 
$\widehat \phi(m)\ne 0$ for only one $1\le m\le M$.
\end{corollary}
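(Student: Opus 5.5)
The plan is to prove both directions of Corollary \ref{dualframe.PGSIS.cor} by specializing Theorem \ref{dualframeexistence.thm} and Corollary \ref{dualframeexistence.cor} to $r=1$.

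For necessity, suppose ${\mathcal S}(\phi)$ has a shift-invariant dual frame. Corollary \ref{dualframeexistence.cor} with $r=1$ gives $\dim {\mathcal S}(\phi)\le 1$. Since $\phi\ne 0$, we get $\dim{\mathcal S}(\phi)=1$. To translate this into the Fourier condition, we use \eqref{dimensionU.eq} and \eqref{fiberdimension.def}:
$$\sum_{m=1}^M {\rm rank}\, F({\pmb\gamma}(m))=\dim {\mathcal S}(\phi).$$
Here $F({\pmb\gamma}(m))=\widehat\phi(m)$ is a single column, so its rank is $1$ exactly when $\widehat\phi(m)\ne 0$. Hence $\dim{\mathcal S}(\phi)$ equals the number of indices $m$ with $\widehat\phi(m)\ne0$. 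Dimension one therefore means exactly one such index. This also shows that the two formulations in the statement are equivalent.

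For sufficiency, assume $\widehat\phi(m)\ne 0$ only for $m=m_0$. We verify the criterion of Theorem \ref{dualframeexistence.thm}. With $r=1$, each ${\bf C}(m)=\|\widehat\phi(m)\|_2^2$ is a $1\times1$ matrix, so $X(m)\subset\mathbb R$. Thus $X(m)=\mathbb R$ when $m=m_0$ and $X(m)=\{0\}$ otherwise. The sum of these subspaces is trivially direct, since only one of them is nonzero. Theorem \ref{dualframeexistence.thm} then yields $\tilde\phi\in{\mathcal S}(\phi)$ generating a shift-invariant dual frame.

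As a sanity check, we can also exhibit $\tilde\phi$ directly via Theorem \ref{dualframe.thm}. Take $\tilde\phi=c\phi$ with $c=\prod_{l}(1-\gamma_l(m_0)^2)/\|\widehat\phi(m_0)\|_2^2$. Then ${\bf B}(m)=0$ for $m\ne m_0$ and ${\bf B}(m_0)^2=\prod_l(1-\gamma_l(m_0)^2){\bf B}(m_0)$. Also $c\ne0$ by \eqref{frame.thm.eq0}, so ${\mathcal S}(\tilde\phi)={\mathcal S}(\phi)$. No step is a real obstacle; the only point needing care is the rank-to-dimension bookkeeping in the necessity step.
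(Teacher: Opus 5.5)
Your proof is correct and follows the paper's route. Necessity comes from Corollary \ref{dualframeexistence.cor} with $r=1$, together with the rank-counting via \eqref{dimensionU.eq} and \eqref{fiberdimension.def}; sufficiency comes from the trivially direct sum of the $X(m)$ in Theorem \ref{dualframeexistence.thm}, and your explicit dual $\tilde\phi=c\phi$ checked against Theorem \ref{dualframe.thm} is a valid but optional confirmation.
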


Given a shift-invariant space generated by  $\Phi$, we remark that the existence of a shift-invariant dual frame is closely related to the shift-invariance of the frame operator $S$ in \eqref{frameoperator.def}. Specifically, it follows from Theorem \ref{frameoperator.thm} that the frame operator $S$  is shift-invariant if and only if the subspaces $X(m),  1 \le m \le M$, in \eqref{Xm.def} are mutually orthogonal, which is stronger than the direct sum requirement imposed  in Theorem \ref{dualframeexistence.thm} on those subspaces. 
In the following example, we provide a detailed analysis of the shift-invariance of the frame operator and the existence of a shift-invariant dual frame for a finitely generated GSIS on a complete graph.

\smallskip

\begin{example}\label{uncertainty.example.part2} {\rm (Continuation of Example \ref{uncertainty.example})\quad  Let $\Phi=\{\phi_1, \ldots, \phi_r\}$ be a family of graph signals on the complete graph $K_N$. As the spectrum of the symmetric Laplacian ${\bf L}^{\rm sym}(K_N)$ has only two different eigenvalues $0$ and $N/(N-1)$. 
 By \eqref{uncertainty.example.eq2} and \eqref{Xm.def}, we have
\begin{equation} \label{uncertainty.example.part2.eq1}
    X(1)=\Big\{\big[\langle \widehat\phi_i(1), \widehat\x(1)\rangle\big]_{1\le i\le r} \Big| 
    \ \x\in\mathcal S(\Phi)\Big\}=\span\big\{[\bar{\phi}_1,\ldots, \bar{\phi}_r]^T\big\},
\end{equation}
and 
\begin{eqnarray} \label{uncertainty.example.part2.eq2}
    X(2) & = & \Big\{\big[\langle \widehat\phi_i(2), \widehat\x(2)\rangle\big]_{1\le i\le r}\Big| \ \x\in\mathcal S(\Phi)\Big\}\nonumber\\
    & = & \span\Big\{\big[\langle \phi_i-\bar \phi_i {\bf 1},\phi_{j}-\bar \phi_j {\bf 1} \rangle \big]_{1\le i\le r}\ \Big|\ 1\le j\le r\Big\}, 
\end{eqnarray}
where we denote the average of $\phi_i$ by $\bar \phi_i=\langle \phi_i, {\bf 1}\rangle/N, 1\le i\le r$.
By Theorem  \ref{dualframeexistence.thm}, a shift-invariant dual frame exists when the spectrum of the GSIS ${\mathcal S}(\Phi)$ contains only one frequency. In particular, one may verify that $\dim X(1)=0$ if and only if $\bar \phi_i=0, 1\le i\le r$, while $\dim X(2)=0$
if and only if $\phi_i, 1\le i\le r$ are parallel to the vector ${\bf 1}$, or equivalently  $\phi_i=c_i{\bf 1}$ for some numbers $c_i\in {\mathbb R}, 1\le i\le r$.  

Now let us consider the existence of shift-invariant dual frame when both $X(1)$ and $X(2)$ are nontrivial.  
By Theorem \ref{dualframeexistence.thm}, it suffices to verify whether
$X(1)\cap X(2)$ is trivial or not.
Then  by \eqref{uncertainty.example.part2.eq1} and the observation $\langle \widehat {\bf x}(2), {\bf 1}\rangle=0$ for all $\x\in {\mathcal S}(\Phi)$, the problems reduces  to whether we can find ${\bf z}\in {\rm span} \{\phi_1-\bar \phi_1 {\bf 1}, \ldots, \phi_r-\bar \phi_r {\bf 1}\}$ such that
\begin{equation}  \label{uncertainty.example.part2.eq3}
\langle \phi_i, {\bf z}\rangle=\bar \phi_i,\quad  1\le i\le r.
\end{equation}
Let $\psi_1, \ldots, \psi_s$ be an orthonormal basis for the space
spanned by $\phi_1, \ldots, \phi_r$. Then we reformulate
\eqref{uncertainty.example.part2.eq3}
as follows:  
\begin{equation}  \label{uncertainty.example.part2.eq4}
\langle \psi_i, {\bf z}\rangle=\bar \psi_i,\quad  1\le i\le s.
\end{equation}
Write ${\bf z}=\sum_{i=1}^s c_i (\psi_i- \bar \psi_i {\bf 1}\rangle $. Then the equation  \eqref{uncertainty.example.part2.eq4} can be rewritten as 
a linear system, 
\begin{equation} \label{uncertainty.example.part2.eq5}
c_i- N\bar \psi_i \Big(\sum_{j=1}^s c_j \bar\psi_j\Big)=\bar \psi_i, \quad 1\le i\le s.
\end{equation}
If $ N\sum_{i=1}^s \bar \psi_i^2\ne 1$, then
we see that $c_i=\bar \psi_i (1-N \sum_{i=1}^s \bar \psi_i^2)^{-1}, 1\le i\le s$, is a  nonzero solution of the linear system
\eqref{uncertainty.example.part2.eq5} and hence $X(1)\cap X(2)$ is nontrivial. 
On the other hand,  if $ N\sum_{i=1}^s \bar \psi_i^2=1$,
multiplying $\bar \psi_i$ at both sides of the equation \eqref{uncertainty.example.part2.eq5} and sum up over all $1\le i\le s$ yields
$$\Big(1-N  \sum_{i=1}^s\bar \psi_i^2\Big) \sum_{j=1}^s c_j \bar\psi_j =\sum_{i=1}^s\bar \psi_i^2, $$
which is contradiction as $\sum_{i=1}^s \bar \psi_i^2\ne 0$
by the assumption $\dim X(1)\ne 0$.
On the other hand, we obtain from by the orthonormal property for $\psi_i, 1\le i\le s$, that
$$\sum_{i=1}^s \bar \psi_i^2=N^{-2}\sum_{i=1}^s |\langle \psi_i, {\bf 1}\rangle|^2\le N^{-1}
$$
and the equality hold if and only if ${\bf 1}\in {\rm span}\{\psi_1, \ldots, \psi_s\}={\rm span} \{\phi_1, \ldots, \phi_r\}$.

By similar argument, we can show that the linear subspaces
$X(1)$ and $X(2)$ in \eqref{uncertainty.example.part2.eq1} and
\eqref{uncertainty.example.part2.eq2}
are orthogonal if and only if
$$\sum_{j=1}^r \langle \phi_i-\bar{\phi_i}{\bf 1},
 \phi_j-\bar{\phi_j}{\bf 1}\rangle \bar \phi_j=0, \ \ 1\le i\le r,
$$
if and only if 
$$\Big\langle \phi_i, \sum_{j=1}^r \bar\phi_j \phi_j -\Big(\sum_{j'=1}^r (\bar \phi_{j'})^2\Big){\bf 1}\Big\rangle=0, \ \ 1\le i\le r,$$
if and only if 
\begin{equation}
\label{completegraph.orthogonalsum}
\sum_{j=1}^r  \bar {\phi_j} \phi_j = \Big(\sum_{j=1}^r  
 \big(\bar{\phi_j}\big)^2\Big) {\bf 1}.
\end{equation}
Therefore we conclude that 

\begin{proposition}\label{dualframeCompletegraph.pr}
Let $\Phi=\{\phi_1, \ldots, \phi_r\}$ be a family of graph signals on the complete graph $K_N$.
Then the following statements hold.
\begin{itemize}
    \item[{(i)}] There exists a shift-invariant dual frame for the GSIS ${\mathcal S}(\Phi)$ if and only if either ${\bf 1}\in {\rm span} \{\phi_1, \ldots, \phi_r\}$  or $\bar \phi_i=0$ for all $1\le i\le r$.

    \item[{(ii)}] The frame operator $S$ in \eqref{frameoperator.def} is shift-invariant if and only if \eqref{completegraph.orthogonalsum} hold.

\end{itemize}  
\end{proposition}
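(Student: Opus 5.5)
We derive both parts from the two general criteria already established, specialized to the two-frequency structure of Example \ref{uncertainty.example}. Here $M=2$, ${\bf P}_1=N^{-1}{\bf 1}{\bf 1}^T$ and ${\bf P}_2={\bf I}-{\bf P}_1$. Part (i) will come from Theorem \ref{dualframeexistence.thm}: a shift-invariant dual frame exists iff $X(1)\cap X(2)=\{0\}$. Part (ii) will come from Theorem \ref{frameoperator.thm}: $S$ is shift-invariant iff ${\bf C}(1){\bf C}(2)={\bf 0}$. Since ${\bf C}(m)$ is symmetric positive semidefinite with range $X(m)$, this is equivalent to $X(1)\perp X(2)$. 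Throughout we use \eqref{uncertainty.example.part2.eq1} and \eqref{uncertainty.example.part2.eq2}: $X(1)={\rm span}\{{\bf v}\}$ with ${\bf v}=[\bar\phi_1,\ldots,\bar\phi_r]^T$, and $X(2)$ is the range of the Gram matrix of the vectors $\phi_i-\bar\phi_i{\bf 1}$.

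For (i) we split into cases. If $\bar\phi_i=0$ for all $i$, then $X(1)=\{0\}$ and the sum is trivially direct. If $X(1)\neq\{0\}$ but $X(2)=\{0\}$, then every $\phi_i$ is a multiple of ${\bf 1}$; because the generators are nonzero, ${\bf 1}$ lies in their span, and again the sum is direct. This case is consistent with the claimed criterion. In the remaining case both spaces are nonzero. Then $X(1)\cap X(2)\ne\{0\}$ iff ${\bf v}\in X(2)$. Since $X(2)=\{[\langle\phi_i-\bar\phi_i{\bf 1},{\bf z}\rangle]_i : {\bf z}\in{\rm span}\{\phi_j-\bar\phi_j{\bf 1}\}\}$ and any such ${\bf z}$ is orthogonal to ${\bf 1}$, this becomes the solvability of \eqref{uncertainty.example.part2.eq3}. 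Passing to an orthonormal basis $\psi_1,\ldots,\psi_s$ of ${\rm span}\,\Phi$ is an invertible linear change of the equations, so it preserves solvability and yields \eqref{uncertainty.example.part2.eq4}. Writing ${\bf z}=\sum_i c_i(\psi_i-\bar\psi_i{\bf 1})$ then gives the linear system \eqref{uncertainty.example.part2.eq5}.

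The main obstacle is analysing this system, and we handle it with the scalar $t=N\sum_i\bar\psi_i^2$. If $t\ne1$, the explicit choice $c_i=\bar\psi_i(1-t)^{-1}$ solves \eqref{uncertainty.example.part2.eq5}, so the intersection is nontrivial. If $t=1$, multiply \eqref{uncertainty.example.part2.eq5} by $\bar\psi_i$ and sum over $i$. The left side becomes $(1-t)\sum_j c_j\bar\psi_j=0$, while the right side is $\sum_i\bar\psi_i^2>0$ because $X(1)\ne\{0\}$; hence there is no solution and the sum is direct. Finally, Bessel's inequality gives $\sum_i\bar\psi_i^2=N^{-2}\sum_i\langle\psi_i,{\bf 1}\rangle^2\le N^{-1}$, with equality iff ${\bf 1}\in{\rm span}\,\Phi$. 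So in this last case directness holds iff ${\bf 1}\in{\rm span}\,\Phi$. Conversely, if neither condition in (i) holds, then $X(1)\ne\{0\}$, and $X(2)\neq\{0\}$ as well, since otherwise ${\bf 1}\in{\rm span}\,\Phi$. We are then in the last case with $t\ne1$, and no dual frame exists.

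For (ii), note that $X(1)\perp X(2)$ iff ${\bf C}(2){\bf v}={\bf 0}$. The $i$-th entry of ${\bf C}(2){\bf v}$ is $\sum_j\langle\phi_i-\bar\phi_i{\bf 1},\phi_j-\bar\phi_j{\bf 1}\rangle\bar\phi_j=\langle\phi_i,{\bf w}\rangle$, where ${\bf w}=\sum_j\bar\phi_j\phi_j-(\sum_j\bar\phi_j^2){\bf 1}$. This identity holds because $\langle{\bf w},{\bf 1}\rangle=N\sum_j\bar\phi_j^2-N\sum_j\bar\phi_j^2=0$. Thus the condition says ${\bf w}\perp\phi_i$ for all $i$. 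Together with ${\bf w}\perp{\bf 1}$, this means ${\bf w}$ is orthogonal to ${\rm span}(\Phi\cup\{{\bf 1}\})$. But ${\bf w}$ itself lies in that span, so ${\bf w}={\bf 0}$, which is exactly \eqref{completegraph.orthogonalsum}. The converse is immediate, since ${\bf w}={\bf 0}$ gives ${\bf C}(2){\bf v}={\bf 0}$.
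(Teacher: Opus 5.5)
Your proof is correct and follows the paper's argument essentially step for step: the same reduction via Theorems \ref{dualframeexistence.thm} and \ref{frameoperator.thm} to whether $X(1)\cap X(2)$ is trivial or $X(1)\perp X(2)$, the same linear system \eqref{uncertainty.example.part2.eq5} analysed through $N\sum_i\bar\psi_i^2$ and Bessel's inequality, and the same vector ${\bf w}$ in part (ii). Your explicit treatment of the case $X(2)=\{0\}$, and your observation that ${\bf w}\perp{\rm span}(\Phi\cup\{{\bf 1}\})$ while lying in that span forces ${\bf w}={\bf 0}$, fill in steps the paper leaves implicit; one small correction is that the passage to the orthonormal basis $\psi_1,\ldots,\psi_s$ is not an invertible change of equations when $r>s$, but it is still valid because every equation has the form $\ell(\phi_i)=0$ for a single fixed linear functional $\ell$.
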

}
\end{example}

\subsection{Riesz property for finitely-generated shift-invariant spaces}
\label{riesz.section}

In the following theorem, we show that
$\{{\bf S}^{\pmb\alpha}\phi \ | \ \pmb\alpha\in {\mathbb Z}_+^d, \phi\in\Phi\}$ forms a Bessel sequence for the GSIS $S(\Phi)$; see Section \ref{bessel.thm.pfsection} for the detailed proof.

\begin{theorem}\label{bessel.thm}
Let ${\bf S}_1, \ldots, {\bf S}_d$ be the graph shifts satisfying Assumption \ref{graphshift.assumption} and
the normalization condition 
\eqref{frame.thm.eq0}, and $\Phi$ be a finite family of graph signals,  Then 
 $\{{\bf S}^{\pmb\alpha}\phi \ | \ \pmb\alpha\in {\mathbb Z}_+^d, \phi\in\Phi\}$ forms a Bessel sequence for the  GSIS $\mathcal S(\Phi)$, i.e.,
\begin{equation}\label{bessel.thm.eq2}
\Big\|\sum_{\phi\in \Phi} \sum_{{\pmb \alpha}\in {\mathbb Z}_+^d} c_{\pmb\alpha,\phi}{\bf S}^{\pmb\alpha}\phi\Big\|_2\le 
\Bigg( \sum_{m=1}^M  \frac{\big(\sigma_{\max}(F({\pmb \gamma}(m))\big )^2}
{\prod_{l=1}^{d} (1-\gamma_l(m)^2)}\Bigg)^{1/2}
 \Big(\sum_{\phi\in \Phi} \sum_{{\pmb \alpha}\in {\mathbb Z}_+^d} |c_{{\pmb \alpha}, \phi}|^2\Big)^{1/2}
\end{equation} 
hold for all sequences $c=[c_{{\pmb \alpha}, \phi}]_{{\pmb \alpha}\in {\mathbb Z}_+^d, \phi\in \Phi}$,
where $\sigma_{max}(F({\pmb \gamma}(m)))$
is the maximal singular value of the fiber matrix $F({\pmb \gamma}(m)), 1\le m\le M$ of the generator $\Phi$.
\end{theorem}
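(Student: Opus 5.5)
The plan is to work entirely in the Fourier domain, using the spectral decomposition of the identity. Set ${\bf y}=\sum_{\phi\in\Phi}\sum_{\pmb\alpha\in\mathbb Z_+^d} c_{\pmb\alpha,\phi}{\bf S}^{\pmb\alpha}\phi$. We may assume $\sum|c_{\pmb\alpha,\phi}|^2<\infty$, since otherwise there is nothing to prove. Convergence of the series is part of what has to be shown; the estimate below gives absolute convergence of each frequency component, hence convergence of ${\bf y}$.

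By Parseval's identity \eqref{parsevelidentity}, $\|{\bf y}\|_2^2=\sum_{m=1}^M\|\widehat{\bf y}(m)\|_2^2$. By the multiplier property \eqref{multiplier.prop}, iterated as in \eqref{polynomialGFT}, we have $\widehat{{\bf S}^{\pmb\alpha}\phi}(m)={\pmb\gamma}(m)^{\pmb\alpha}\widehat\phi(m)$, where ${\pmb\gamma}(m)^{\pmb\alpha}=\prod_l\gamma_l(m)^{\alpha_l}$. Hence
\begin{equation*}
\widehat{\bf y}(m)=\sum_{\phi\in\Phi} d_{\phi}(m)\,\widehat\phi(m)=F({\pmb\gamma}(m))\,{\bf d}(m),
\qquad d_\phi(m):=\sum_{\pmb\alpha\in\mathbb Z_+^d}c_{\pmb\alpha,\phi}\,{\pmb\gamma}(m)^{\pmb\alpha},
\end{equation*}
with ${\bf d}(m)=[d_\phi(m)]_{\phi\in\Phi}$. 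It follows that $\|\widehat{\bf y}(m)\|_2\le\sigma_{\max}(F({\pmb\gamma}(m)))\,\|{\bf d}(m)\|_2$.

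Next, bound each $d_\phi(m)$ by Cauchy--Schwarz in $\pmb\alpha$:
\begin{equation*}
|d_\phi(m)|^2\le\Big(\sum_{\pmb\alpha}|c_{\pmb\alpha,\phi}|^2\Big)\sum_{\pmb\alpha}{\pmb\gamma}(m)^{2\pmb\alpha}
=\Big(\sum_{\pmb\alpha}|c_{\pmb\alpha,\phi}|^2\Big)\prod_{l=1}^d\big(1-\gamma_l(m)^2\big)^{-1}.
\end{equation*}
The geometric series factorizes over coordinates and converges by the normalization \eqref{frame.thm.eq0}; this is also what justifies absolute convergence of $d_\phi(m)$. Summing over $\phi$ gives $\|{\bf d}(m)\|_2^2\le\prod_l(1-\gamma_l(m)^2)^{-1}\sum_{\phi,\pmb\alpha}|c_{\pmb\alpha,\phi}|^2$.

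Combining the two steps and summing over $m$ gives
\begin{equation*}
\|{\bf y}\|_2^2\le\sum_{m=1}^M\frac{\sigma_{\max}(F({\pmb\gamma}(m)))^2}{\prod_{l}(1-\gamma_l(m)^2)}\sum_{\phi,\pmb\alpha}|c_{\pmb\alpha,\phi}|^2,
\end{equation*}
which is \eqref{bessel.thm.eq2} after taking square roots. The only delicate point is the interchange of the infinite sum with the projections ${\bf P}_m$, which amounts to justifying $\widehat{\bf y}(m)=F({\pmb\gamma}(m)){\bf d}(m)$. This is handled by the absolute convergence established above: each ${\bf P}_m$ is a bounded finite matrix, so it commutes with the convergent series, and the remaining steps are routine.
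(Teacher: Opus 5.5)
Your proposal is correct and follows the paper's proof essentially step for step. Both use Parseval's identity, the frequency-wise representation $\widehat{\bf y}(m)=F({\pmb\gamma}(m))\,{\bf d}(m)$ bounded via $\sigma_{\max}(F({\pmb\gamma}(m)))$, Cauchy--Schwarz in ${\pmb\alpha}$, and the factorized geometric series $\sum_{\pmb\alpha}{\pmb\gamma}(m)^{2\pmb\alpha}=\prod_{l=1}^d(1-\gamma_l(m)^2)^{-1}$; your justification of absolute convergence, which the paper leaves implicit, is a welcome addition.
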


Let ${\bf T}$ be the graph shift so chosen that \eqref{FGSIS.thm.eq1} holds. 
By the classical Cayley-Hamilton theorem, for every $1\le l\le d$, ${\bf T}^M$ is the linear combination of ${\bf T}^m, 0\le m\le M-1$. Therefore
${\bf S}^{\pmb\alpha}\phi,  \pmb\alpha\in {\mathbb Z}_+^d, \phi\in\Phi$, are linearly dependent and cannot form a Riesz basis for the GSIS ${\mathcal S}(\Phi)$. However we have
\begin{equation}\label{riesz.thm.eq0}
{\mathcal S}(\Phi)={\rm span} \{ {\bf T}^k \phi\ |\  0\le k\le M-1, \phi\in \Phi\}
\end{equation}
by Theorem \ref{FGSIS.thm}.
In the following theorem, we provide a necessary and sufficient condition  on the Riesz  of  the generator $\Phi$; see Section \ref{riesz.thm.pfsection} for the detailed proof.

\begin{theorem}\label{riesz.thm}
Let ${\bf T}$ be the graph shift so chosen  that \eqref{FGSIS.thm.eq1} holds.  Then
$\{{\bf T}^k \phi\ |\  0\le k\le M-1, \phi\in \Phi\}$ form a Riesz basis for the GSIS  ${\mathcal S}(\Phi)$ if and only if
\begin{equation}\label{riesz.thm.eq1}\dim_{{\mathcal S}(\Phi)}({\pmb \gamma}(m))=r \  {\rm for \ all}\  1\le m\le M.\end{equation}
\end{theorem}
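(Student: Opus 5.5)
The criterion will come from a dimension count. Write $\Phi=\{\phi_1,\ldots,\phi_r\}$. The system $\{{\bf T}^k\phi \mid 0\le k\le M-1,\ \phi\in\Phi\}$ has $rM$ elements, and by \eqref{riesz.thm.eq0} it spans ${\mathcal S}(\Phi)$. In finite dimensions a finite spanning system is a Riesz basis for its span exactly when it is linearly independent. So the system is a Riesz basis if and only if $\dim {\mathcal S}(\Phi)=rM$.

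By \eqref{dimensionU.eq},
\[
\dim{\mathcal S}(\Phi)=\sum_{m=1}^M \dim_{{\mathcal S}(\Phi)}({\pmb \gamma}(m)).
\]
By \eqref{fiberdimension.def}, each term equals ${\rm rank}\, F({\pmb \gamma}(m))$, which is at most $r$ because $F({\pmb \gamma}(m))$ has $r$ columns. The sum therefore equals $rM$ if and only if every term equals $r$, which is exactly \eqref{riesz.thm.eq1}.

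A direct Fourier-side check gives the same conclusion. By \eqref{projection.eq3}, $\widehat{{\bf T}^k\phi}(m)=\mu(m)^k\widehat\phi(m)$, where $\mu(m)=\sum_l a_l\gamma_l(m)$. Hence a relation $\sum_{k,i}c_{k,i}{\bf T}^k\phi_i=0$ holds if and only if, for every $m$,
\[
\sum_i p_i(\mu(m))\,\widehat\phi_i(m)=0, \qquad p_i(t)=\sum_{k=0}^{M-1} c_{k,i}t^k .
\]
The $\mu(m)$ are distinct by \eqref{FGSIS.thm.eq1-}, so Lagrange interpolation makes the map $(c_{k,i})\mapsto (p_i(\mu(m)))_{i,m}$ a bijection from $\mathbb{R}^{rM}$ onto $\mathbb{R}^{rM}$; this is the Vandermonde step. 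Consequently, linear independence of the system is equivalent to the columns of each $F({\pmb \gamma}(m))$ being independent, that is, ${\rm rank}\,F({\pmb \gamma}(m))=r$ for every $m$. I expect no step to be a real obstacle; the only points needing care are this Vandermonde invertibility and the finite-dimensional equivalence between a Riesz basis and a linearly independent spanning system.
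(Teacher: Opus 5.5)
Your proof is correct, but your main argument takes a different route from the paper's. You reduce everything to the count
\[
\dim{\mathcal S}(\Phi)=\sum_{m=1}^M {\rm rank}\,F({\pmb\gamma}(m))\le rM,
\]
which comes from \eqref{dimensionU.eq} and \eqref{fiberdimension.def}. Combined with the spanning property \eqref{riesz.thm.eq0}, the theorem becomes the statement that $rM$ spanning vectors form a basis exactly when the dimension is $rM$.

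The paper argues directly with the system instead. For necessity, it takes a nonzero vector $[c_1,\ldots,c_r]^T$ in the kernel of $F({\pmb\gamma}(m))$ and the interpolation polynomial $Q_m$ with ${\bf P}_m=Q_m({\bf T})$. Together they give the explicit nontrivial relation $\sum_{l=1}^r c_l Q_m({\bf T})\phi_l=0$. For sufficiency, it combines the fiberwise Riesz bounds $r_0(m), r_1(m)$ of \eqref{riesz.thm.rem1} with the extreme singular values of the Vandermonde matrix ${\bf V}$ in \eqref{riesz.thm.rem2}.

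Your count is shorter and uses only facts already established. However, in finite dimensions ``Riesz basis'' just means ``basis'', so your count says nothing about the constants. The paper's route is what produces the explicit Riesz bounds \eqref{riesz.thm.rem3}. Your second, Fourier-side argument is essentially the paper's mechanism: invertibility of ${\bf V}$ decouples the frequencies. If you tracked norms through that argument rather than only kernels, you would recover those bounds.
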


By  Theorem \ref{fiber.thm}, we see that an equivalent statement for the requirement
 \eqref{riesz.thm.eq1} is that 
$\widehat \phi_1(m), \ldots, \widehat \phi_r(m)$ is a Riesz basis for the space $W_m\cap {\mathcal S}(\Phi)$  for every $1\le m\le M$. Therefore there exist positive numbers $r_0(m)$ and $r_1(m)$ such that
\begin{equation}\label{riesz.thm.rem1}
r_0(m)\Big(\sum_{l=1}^r|c_l|^2\Big)^{1/2}
\le \Big\|\sum_{l=1}^r c_l \widehat \phi_l(m)\Big\|_2\le 
r_1(m)\Big(\sum_{l=1}^r|c_l|^2\Big)^{1/2}
\end{equation}
hold for all sequence $\{c_l\}_{l=1}^r$, cf. 
the frame representation \eqref{fiber.thm.eq3}
and \eqref{fiber.thm.eq4}
of signals in $W_m\cap {\mathcal S}(\Phi)$.

For the graph shift ${\bf T}$ in  Theorem \ref{riesz.thm}, we define
the Vandermonde matrix
\begin{equation}
\label{riesz.thm.rem2} {\bf V}:=[(\mu(m))^{k}]_{1\le m\le M, 0\le k\le M-1}.\end{equation}
Then ${\bf V}$ is nonsingular by \eqref{FGSIS.thm.eq1}.
From the proof of Theorem \ref{riesz.thm}, we have the following Riesz bound estimate:
\begin{eqnarray}  \label{riesz.thm.rem3}
 & &  \sigma_{\min}({\bf V}) \Big(\min_{1\le m\le M}  r_0(m)\Big)
\Big(\sum_{k=0}^{M-1} \sum_{l=1}^r |c_{k,l}|^2\Big)^{1/2}
\le 
\Big\|\sum_{k=0}^{M-1} \sum_{l=1}^r c_{k,l} {\bf T}^k \phi_l\Big\|_2\nonumber\\
& & \qquad \le 
\sigma_{\max}({\bf V}) \Big(\max_{1\le m\le M}  r_1(m)\Big)
\Big(\sum_{k=0}^{M-1} \sum_{l=1}^r |c_{k,l}|^2\Big)^{1/2}
\end{eqnarray}
holds for all sequences $\{c_{k,l}\}_{0\le k\le M-1, 1\le l\le r}$, where
$\sigma_{\max}({\bf V})$ and $\sigma_{\min}({\bf V})$
are the maximal and minimal singular value of the matrix ${\bf V}$ in \eqref{riesz.thm.rem2} respectively.

We finish this  section with the Riesz basis property for a GSIS on a complete graph. 

\begin{example}\label{uncertainty.example.part3} {\rm (Contiuation of Examples \ref{uncertainty.example} and \ref{uncertainty.example.part2})\quad  Let $\Phi=\{\phi_1, \ldots, \phi_r\}$ be a family of graph signals on the complete graph $K_N$.  
 By \eqref{uncertainty.example.part2.eq1},  \eqref{uncertainty.example.part2.eq2}
 and  Theorem \ref{riesz.thm},
 a sufficient and necessary condition for  $\Phi$ and ${\bf L}^{\rm sym}(K_N)\Phi$ form a Riesz basis for the GSIS ${\mathcal S}(\Phi)$ is that
 $$\dim X(1)=\dim X(2)=r=1.$$
 In the other words, $r=1$ and the sole generator $\phi_1$ satisfies $\bar\phi_1\ne 0$ and
 $\phi_1\ne \bar\phi_1 {\bf 1}$.
}
\end{example}

\section{Graph reproducing kernel Hilbert spaces with shift-invariant reproducing kernels}
\label{rkhs.section}

Let ${\mathcal G}=(V, E, {\bf W})$ be a undirected simple graph of order $N\ge 2$, and ${\bf S}_1, \ldots, {\bf S}_d$ be graph shifts satisfying Assumption \ref{graphshift.assumption}.
Let $H$ be a Hilbert space of graph signals on the graph ${\mathcal G}$ and denote its inner product and norm  by $\langle \cdot, \cdot\rangle_H$ and $\|\cdot\|_H$ respectively.
We say that the Hilbert space $H$  is a {\em Graph Reproducing Kernel Hilbert Space} (GRKHS) if for every vertex $v\in V$ there exists a positive constant $C_v$ such that
\begin{equation}
|x(v)|\le C_v\|{\bf x}\|_H \ \ {\rm hold \ for \ all}  \ {\bf x}=[x(v)]_{v\in V}\in H.
\end{equation}
  For a GRKHS, there exists a reproducing kernel ${\bf K}=[K(v, v')]_{v, v'\in V}$ such that
$$x(v)=\langle {\bf x}, K(\cdot, v)\rangle_{H} \ \
 {\rm hold \ for \ all}  \ {\bf x}=[x(v)]_{v\in V}\in H \ {\rm and} \ v\in V.$$
  For a GRKHS  $H$ with the reproducing kernel ${\bf K}$, we can express any graph signal in $H$ as a  linear combination of the columns of ${\bf K}$, i.e.,
\begin{equation}\label{SIGRKHS.def}
H=\big\{{\bf Kc}\ |\ {\bf c}\in \R^V\big\}.
\end{equation}

RKHSs on the real line have been widely adopted in kernel-based learning for function estimation \cite{scholkopf2002,  shalev2014,  zhang2009}. For efficient learning of signals in a GRKHS  on an undirected graph, the reproducing kernels are typically chosen to be shift-invariant \cite{nikolentzos2021, rmero2017,  seto2014,  smola2003}.
Here we  say that the reproducing kernel ${\bf K}$ of a GRKHS $H$ is  {\em shift-invariant} if
\begin{equation} \label{rksis.def}
{\bf K} {\bf S}_l={\bf S}_l {\bf K} \ \ {\rm hold \  for \ all}\ 1\le l\le d,
\end{equation}
and that $H$ is a {\em Shift-Invariant GRKHS} of graph signals, SIGRKHS for abbreviation, if its reproducing kernel is shift-invariant. 
By \eqref{SIGRKHS.def}, an SIGRKHS $H$ is the range space of a shift-invariant filter and hence it is shift-invariant, i.e.,
\begin{equation}
{\bf S}_l {  H}\subset { H} \ {\rm hold \ for \ all}\ 1\le l\le d.
\end{equation}
In the following theorem, we show that the converse holds as well; see Section
\ref{rkhs.thm1.pfsection} for the detailed proof.

\begin{theorem} \label{rkhs.thm1}
Let $U$
 be a linear space of graph signals. Then 
$U$
 is shift-invariant if and only if it is an SIGRKHS  embedded with the standard inner product in 
${\mathbb R}^V$.
\end{theorem}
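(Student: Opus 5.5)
Both directions reduce to looking at the orthogonal projection onto $U$.

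For the direction ``SIGRKHS $\Rightarrow$ shift-invariant'', let $U$ be an SIGRKHS with the standard inner product and shift-invariant reproducing kernel ${\bf K}$. By \eqref{SIGRKHS.def}, $U=R({\bf K})$ is the range of a shift-invariant filter. For ${\bf x}={\bf K}{\bf c}\in U$ we then have ${\bf S}_l{\bf x}={\bf K}({\bf S}_l{\bf c})\in U$, so $U$ is shift-invariant. Alternatively, one can invoke Theorem \ref{filterrange.thm}.

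For the converse, let $U$ be a GSIS with the standard inner product of ${\mathbb R}^V$. Since $U$ is finite dimensional, point evaluations are bounded: $|x(v)|\le\|{\bf x}\|_2$. Hence $U$ is a GRKHS.

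Its reproducing kernel is the orthogonal projection ${\bf Q}$ onto $U$. This is because ${\bf Q}$ is symmetric, its columns ${\bf Q}{\bf e}_v$ lie in $U$, and $\langle {\bf x},{\bf Q}{\bf e}_v\rangle=\langle {\bf Q}{\bf x},{\bf e}_v\rangle=x(v)$ for ${\bf x}\in U$. By uniqueness of the reproducing kernel, ${\bf K}={\bf Q}$.

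It remains to show that ${\bf Q}$ commutes with each ${\bf S}_l$. By Proposition \ref{complement.prop}, $U^\perp$ is also a GSIS. Write any ${\bf y}={\bf Q}{\bf y}+({\bf I}-{\bf Q}){\bf y}$. Then ${\bf S}_l{\bf Q}{\bf y}\in U$ and ${\bf S}_l({\bf I}-{\bf Q}){\bf y}\in U^\perp$. Applying ${\bf Q}$ gives ${\bf Q}{\bf S}_l{\bf y}={\bf S}_l{\bf Q}{\bf y}$, so ${\bf K}$ satisfies \eqref{rksis.def}.

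A second route to the same conclusion uses the range-function picture. Since ${\bf S}_l$ is symmetric and $U$ is invariant under it, $U^\perp$ is invariant as well, and hence ${\bf Q}=\sum_m {\bf P}_m{\bf Q}_m{\bf P}_m$, where ${\bf Q}_m$ is the projection onto $J_U({\pmb\gamma}(m))$. This exhibits ${\bf Q}$ in the form of Theorem \ref{shiftinvariantfilter.thm}.

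The only real point to check is the identification of the kernel with the orthogonal projection, together with invariance of $U^\perp$. The latter follows directly from symmetry of the graph shifts: $\langle {\bf S}_l{\bf z},{\bf x}\rangle=\langle{\bf z},{\bf S}_l{\bf x}\rangle=0$ for ${\bf z}\in U^\perp$ and ${\bf x}\in U$. I do not expect a serious obstacle.
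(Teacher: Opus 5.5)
Your proof is correct. The easy direction matches the paper's, and for the converse you use the same reproducing kernel, namely the orthogonal projection onto $U$. The difference lies in how you verify the commutation ${\bf K}{\bf S}_l={\bf S}_l{\bf K}$.

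The paper starts from $U=\bigoplus_{m=1}^M (W_m\cap U)$, which comes from the Fourier-domain description of GSISs and ultimately rests on the projections ${\bf P}_m$ being polynomial filters. It then chooses orthonormal bases ${\bf u}_{m,i}$ of the subspaces $W_m\cap U$ and sets ${\bf K}=\sum_{m}\sum_{i}{\bf u}_{m,i}{\bf u}_{m,i}^T$. Since every ${\bf u}_{m,i}$ is a joint eigenvector, the identity ${\bf K}{\bf S}_l={\bf S}_l{\bf K}=\sum_m\gamma_l(m)\sum_i{\bf u}_{m,i}{\bf u}_{m,i}^T$ is immediate. The paper then checks that $\{{\bf K}{\bf c}\,|\,{\bf c}\in{\mathbb R}^V\}=U$ and that ${\bf K}{\bf x}={\bf x}$ on $U$. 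This is essentially your second route.

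Your primary argument instead uses only that each ${\bf S}_l$ is symmetric. The invariant subspace $U$ is therefore reducing, so its orthogonal projection commutes with ${\bf S}_l$. This bypasses the spectral decomposition of the identity altogether and does not even need the shifts to commute. You also spell out the reproducing identity $\langle{\bf x},{\bf Q}{\bf e}_v\rangle=x(v)$ and the uniqueness of the kernel, both of which the paper leaves implicit.

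What the paper's construction buys is the explicit block form of the kernel: a sum over $m$ of the orthogonal projections onto $J_U({\pmb\gamma}(m))$. That structure is reused in Theorems \ref{rkhs.bandlimited.thm} and \ref{decomposition.thm}.
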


A GSIS may not be bandlimited. In the following theorem, we provide another characterization for  bandlimited spaces via GRKHSs with reproducing kernel being  polynomial filters; see Section \ref{rkhs.bandlimited.thm.pfsection} for the detailed proof.

\begin{theorem}\label{rkhs.bandlimited.thm}
Let $U$ be a linear space of graph shifts.  Then $U$ is a bandlimited space if and only if it is 
an SIGRKHS  embedded with the standard inner product in 
${\mathbb R}^V$ and the reproducing kernel being a polynomial filter. 
\end{theorem}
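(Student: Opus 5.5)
The plan is to reduce the statement to Theorem \ref{bandlimited.thm}, Corollary \ref{bandlimitedpolynomialfilter.cor} and Theorem \ref{rkhs.thm1}. The key observation is that when $U$ carries the standard inner product, its reproducing kernel is forced to be the orthogonal projection ${\bf P}_U$ onto $U$. Indeed, $K(\cdot,v)\in U$ and $\langle {\bf x}, K(\cdot,v)\rangle = x(v) = \langle {\bf x}, {\bf e}_v\rangle$ for all ${\bf x}\in U$. Hence $K(\cdot,v)$ is the orthogonal projection of ${\bf e}_v$ onto $U$, so ${\bf K}={\bf P}_U$. The theorem therefore becomes: $U$ is bandlimited if and only if ${\bf P}_U\in{\mathcal P}$.

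For sufficiency, suppose ${\bf P}_U\in{\mathcal P}$. By \eqref{SIGRKHS.def}, $U$ is the range space of the polynomial filter ${\bf K}={\bf P}_U$. Corollary \ref{bandlimitedpolynomialfilter.cor} then says $U$ is bandlimited. Alternatively, Theorem \ref{polynomialfilter.thm} gives ${\bf P}_U{\bf P}_m=\mu_m{\bf P}_m$. Since ${\bf P}_U$ is idempotent, $\mu_m\in\{0,1\}$, so $U\cap W_m=W_m$ or $\{0\}$, which is \eqref{bandlimited.def}.

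For necessity, let $U$ be bandlimited, so $U=B_\Omega$ as in \eqref{bandlimit.def2}. Then $U=\bigoplus_{m\in\Omega}W_m$, because by \eqref{bandlimited.def} and the decomposition ${\bf x}=\sum_m {\bf P}_m{\bf x}$, each component ${\bf P}_m{\bf x}$ with ${\bf x}\in U$ lies in $U\cap W_m$. This requires $U$ to be shift-invariant, which holds since bandlimited spaces are super-shift-invariant by Theorem \ref{bandlimited.thm}, and each ${\bf P}_m$ is a polynomial filter by Theorem \ref{projection.polynomialfilter.thm}. The $W_m$ are mutually orthogonal by \eqref{projection.eq1}, so ${\bf P}_U=\sum_{m\in\Omega}{\bf P}_m$. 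This lies in ${\mathcal P}$ by \eqref{shiftinvariantfilter.thm.eq3}, or directly by Theorem \ref{projection.polynomialfilter.thm}. It is also shift-invariant. Therefore $U$, with the standard inner product, is an SIGRKHS whose kernel ${\bf P}_U$ is a polynomial filter.

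I expect the main point needing care to be the identification ${\bf K}={\bf P}_U$. Once the inner product is fixed as the standard one, the kernel is unique, so any kernel in the hypothesis must equal ${\bf P}_U$. Everything else follows directly from results stated earlier in the paper.
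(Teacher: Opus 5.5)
Your proposal is correct and is essentially the paper's argument. For necessity the paper takes the same kernel ${\bf K}=\sum_{m\in\Omega}{\bf P}_m$ and checks that its range is $U$ and that ${\bf K}{\bf x}={\bf x}$ on $U$; for sufficiency it writes ${\bf K}=\sum_{m=1}^M h({\pmb\gamma}(m)){\bf P}_m$ and computes the range to be $\bigoplus_{h({\pmb\gamma}(m))\neq 0}W_m$, which is exactly the content of Corollary \ref{bandlimitedpolynomialfilter.cor} that you invoke. Your identification ${\bf K}={\bf P}_U$ and the idempotency variant are valid extras the paper does not need, and you can drop the detour through Theorem \ref{bandlimited.thm}, since $B_\Omega=\bigoplus_{m\in\Omega}W_m$ follows directly from ${\bf x}=\sum_m {\bf P}_m{\bf x}$.
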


Under the  additional assumption that \eqref{distincteigenvalue.assumption} holds, 
 the conclusions in Theorems \ref{rkhs.thm1}  and 
\ref{rkhs.bandlimited.thm}  become equivalent by Corollaries \ref{shiftinvariantfilter.cor} and \ref{bandlimited.cor}, which has been proved in \cite{Chung2024}.

 In the following theorem, we show that the inner product of any SIGRKHS can be defined by a generalized inner product in the Fourier domain; see Section \ref{decomposition.thm.pfsection} for the detailed proof.

\begin{theorem}\label{decomposition.thm}
Let $H$ be a GRKHS of graph signals. Then  $H$ 
is an SIGRKHS  if and only if there exist
 GRKHSs $H_m\subset W_m$ with inner
product $\langle \cdot, \cdot\rangle_{H_m}, 1\le m\le M,$ such that
\begin{equation}\label{decomposition.thm.eq1}
H=\bigoplus_{m=1}^M H_m
\end{equation}
and
\begin{equation}\label{decomposition.thm.eq2}
\langle {\bf x}, {\bf y}\rangle_H=\sum_{m=1}^M \langle \widehat {\bf x}(m), \widehat {\bf y}(m)\rangle_{H_m}\ {\rm for \ all} \ {\bf x}, {\bf y}\in H.\end{equation}
\end{theorem}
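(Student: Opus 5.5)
The plan is to prove the two implications separately, using the description of an SIGRKHS as the range of its kernel and the spectral decomposition of shift-invariant filters.

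\emph{Necessity.} Let $H$ be an SIGRKHS with shift-invariant kernel ${\bf K}$.
\begin{itemize}
\item By \eqref{SIGRKHS.def}, $H=R({\bf K})$. Since ${\bf K}\in\mathcal T$, \eqref{shiftinvariantfilter.projection} gives ${\bf K}{\bf P}_m={\bf P}_m{\bf K}$. Hence ${\bf P}_mH\subset H$, so $\widehat{\bf x}(m)\in H\cap W_m$ for every ${\bf x}\in H$.
\item Set $H_m=H\cap W_m={\bf P}_m H$, equipped with the restriction of $\langle\cdot,\cdot\rangle_H$. Each $H_m$ is a finite-dimensional Hilbert space of graph signals, hence a GRKHS.
\item The reconstruction formula ${\bf x}=\sum_m\widehat{\bf x}(m)$ gives $H=\sum_m H_m$. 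The sum is direct because the $W_m$ are mutually orthogonal in $\mathbb R^V$.
\item For \eqref{decomposition.thm.eq2}, it suffices to show $\langle {\bf P}_m{\bf x},{\bf P}_{m'}{\bf y}\rangle_H=0$ for $m\ne m'$. This does not follow from Euclidean orthogonality, since the inner product is $\langle\cdot,\cdot\rangle_H$.
\end{itemize}
This $H$-orthogonality is the main obstacle. I would use the reproducing property: every ${\bf y}\in H$ has the form ${\bf y}={\bf K}{\bf c}$, and $\langle {\bf x},{\bf K}{\bf c}\rangle_H={\bf c}^T{\bf x}=\langle {\bf x},{\bf c}\rangle$. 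Take ${\bf x}\in H_m$ and ${\bf y}={\bf P}_{m'}{\bf K}{\bf c}={\bf K}{\bf P}_{m'}{\bf c}$. Then
$$\langle {\bf x},{\bf y}\rangle_H=\langle {\bf x},{\bf P}_{m'}{\bf c}\rangle=\langle {\bf P}_{m'}{\bf x},{\bf c}\rangle=0,$$
because ${\bf x}\in W_m$ and $m\neq m'$. Every element of $H_{m'}$ is of this form, so $H_m\perp_H H_{m'}$. Expanding ${\bf x}=\sum_m\widehat{\bf x}(m)$ and ${\bf y}=\sum_{m'}\widehat{\bf y}(m')$ then gives \eqref{decomposition.thm.eq2}.

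\emph{Sufficiency.} Suppose $H=\bigoplus_m H_m$ with $H_m\subset W_m$ and the inner product given by \eqref{decomposition.thm.eq2}.
\begin{itemize}
\item Let ${\bf K}_m$ be the reproducing kernel of $H_m$, taken as the kernel of $H_m$ on all of $V$. Since its columns lie in $H_m\subset W_m$, we have ${\bf P}_m{\bf K}_m={\bf K}_m$. By symmetry of a real reproducing kernel, also ${\bf K}_m{\bf P}_m={\bf K}_m$.
\item Define ${\bf K}=\sum_m{\bf K}_m$. For ${\bf x}\in H$ and $v\in V$, the $v$-th column of ${\bf K}$ has $m$-th frequency component ${\bf K}_m(\cdot,v)$. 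By \eqref{decomposition.thm.eq2},
$$\langle {\bf x},{\bf K}(\cdot,v)\rangle_H=\sum_m\langle\widehat{\bf x}(m),{\bf K}_m(\cdot,v)\rangle_{H_m}=\sum_m\widehat{\bf x}(m)(v)=x(v).$$
Hence ${\bf K}$ is the reproducing kernel of $H$, by uniqueness of reproducing kernels.
\item Since ${\bf K}=\sum_m{\bf P}_m{\bf K}_m{\bf P}_m$, Theorem \ref{shiftinvariantfilter.thm} gives ${\bf K}\in\mathcal T$. Therefore $H$ is an SIGRKHS.
\end{itemize}

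Besides the $H$-orthogonality in the necessity part, the one place needing care is showing ${\bf K}_m{\bf P}_m={\bf K}_m$, i.e.\ that the rows of ${\bf K}_m$ also lie in $W_m$. This follows from the symmetry ${\bf K}_m^T={\bf K}_m$ of a real kernel.
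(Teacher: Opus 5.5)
Your proof is correct and follows essentially the same route as the paper. In the necessity you take $H_m=H\cap W_m$ with the inherited inner product and get the $H$-orthogonality of the pieces from the reproducing identity together with ${\bf K}{\bf P}_m={\bf P}_m{\bf K}$; the paper does the same, writing it as $\langle {\bf K}{\bf P}_m{\bf c},{\bf K}{\bf P}_{m'}{\bf d}\rangle_H=({\bf P}_m{\bf c})^T{\bf K}({\bf P}_{m'}{\bf d})=0$. In the sufficiency you build ${\bf K}=\sum_m{\bf P}_m{\bf K}_m{\bf P}_m$ and check the reproducing property and shift-invariance, as the paper does. The differences are minor: you use uniqueness of the reproducing kernel, since $H$ is assumed to be a GRKHS, instead of re-deriving $H=\{{\bf K}{\bf c}\}$ and bounded point evaluations, and you explicitly justify ${\bf K}_m={\bf P}_m{\bf K}_m{\bf P}_m$ via the symmetry of ${\bf K}_m$, which the paper uses tacitly.
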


From the proof of Theorem \ref{decomposition.thm}, given a SIGRKHS $H$, we notice that the  GRKHSs $H_m$ and their associated inner product can be selected as follows: 
\begin{equation}\label{decomposition.thm..eq5} H_m=H\cap W_m \ {\rm and} \ \langle {\bf x}, {\bf y}\rangle_{H_m}= \langle {\bf x}, {\bf y}\rangle_H \ {\rm for \ all} \ {\bf x}, {\bf y}\in H_m, 1\le m\le M.\end{equation}

From the proof of Theorem \ref{decomposition.thm}, we have the following result when  the RKHS has a reproducing kernel being a polynomial filter.

\begin{theorem}
\label{decomposition.polynomial.thm}
Let $H$ is SIGRKHS with the reproducing kernel ${\bf K}$ being a polynomial filter if and only if
there exist 
 GRKHSs $H_m\subset W_m, 1\le m\le M$
with inner
product being a multiple of standard Euclidean dot product 
such that
\begin{equation}\label{decomposition.polynomial.thm.eq1}
H=\bigoplus_{m=1}^M H_m
\end{equation}
and
\begin{equation}\label{decomposition.polynomialthm.eq2}
\langle {\bf x}, {\bf y}\rangle_H=\sum_{m=1}^M  
\langle \widehat {\bf x}(m), \widehat {\bf y}(m)\rangle_{H_m}=\sum_{m=1}^M
\mu_m 
 (\widehat {\bf x}(m))^T \widehat {\bf y}(m)\ \ {\rm for \ all} \ {\bf x}, {\bf y}\in H,\end{equation}
where  $\mu_m\ge 0, 1\le m\le M$, are  nonnegative numbers.
    \end{theorem}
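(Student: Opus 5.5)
The plan is to reduce both directions to the spectral form \eqref{shiftinvariantfilter.thm.eq3} of a polynomial filter, $K=\sum_{m}\nu_m{\bf P}_m$, and then read off the inner product of $H$ from the standard RKHS identity $\langle {\bf K}{\bf c},{\bf K}{\bf c}'\rangle_H={\bf c}^T{\bf K}{\bf c}'$. This identity holds for any finite-dimensional GRKHS with kernel ${\bf K}$, by \eqref{SIGRKHS.def} and the reproducing property.

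\emph{Necessity.} Let ${\bf K}\in{\mathcal P}$ be the reproducing kernel of $H$. By Theorem \ref{polynomialfilter.thm} we can write ${\bf K}=\sum_{m=1}^M\nu_m{\bf P}_m$. Since ${\bf K}$ is a reproducing kernel it is positive semidefinite, so $\nu_m\ge0$. By \eqref{SIGRKHS.def} and \eqref{projection.eq1}, $H={\bf K}{\mathbb R}^V=\bigoplus_{\nu_m>0}W_m$. Accordingly, set $H_m=W_m$ if $\nu_m>0$ and $H_m=\{0\}$ otherwise; this agrees with the choice \eqref{decomposition.thm..eq5} coming from Theorem \ref{decomposition.thm}. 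For ${\bf x}={\bf K}{\bf c}$ and ${\bf y}={\bf K}{\bf c}'$ we then compute
$$\langle{\bf x},{\bf y}\rangle_H={\bf c}^T{\bf K}{\bf c}'={\bf x}^T{\bf K}^\dagger{\bf y}=\sum_{\nu_m>0}\nu_m^{-1}(\widehat{\bf x}(m))^T\widehat{\bf y}(m).$$
Here we use ${\bf K}^\dagger=\sum_{\nu_m>0}\nu_m^{-1}{\bf P}_m$ together with \eqref{projection.eq1}. This gives \eqref{decomposition.polynomialthm.eq2} with $\mu_m=\nu_m^{-1}$ when $\nu_m>0$ and $\mu_m=0$ otherwise. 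Each $H_m$, equipped with $\mu_m$ times the dot product, is trivially a GRKHS.

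\emph{Sufficiency.} Assume the decomposition holds. Positive definiteness of $\langle\cdot,\cdot\rangle_H$ forces $\mu_m>0$ whenever $H_m\ne\{0\}$. Let ${\bf Q}_m$ denote the orthogonal projection onto $H_m\subset W_m$. The candidate kernel is ${\bf K}=\sum_{H_m\ne\{0\}}\mu_m^{-1}{\bf Q}_m$. For ${\bf x}\in H$ we have $\widehat{\bf x}(m)\in H_m$, and $\widehat{{\bf K}{\bf e}_v}(m)=\mu_m^{-1}{\bf Q}_m{\bf e}_v$ because ${\bf P}_m{\bf Q}_m={\bf Q}_m$. Hence
$$\langle{\bf x},{\bf K}{\bf e}_v\rangle_H=\sum_m(\widehat{\bf x}(m))^T{\bf Q}_m{\bf e}_v=\sum_m\widehat{\bf x}(m)(v)=x(v),$$
so ${\bf K}$ is the reproducing kernel of $H$. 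By uniqueness of reproducing kernels, ${\bf K}$ is in fact \emph{the} kernel.

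\emph{Main obstacle.} It remains to show ${\bf K}\in{\mathcal P}$. By Theorem \ref{polynomialfilter.thm} this holds exactly when ${\bf Q}_m\in\{{\bf 0},{\bf P}_m\}$ for every $m$, that is, when $H_m\in\{\{0\},W_m\}$. In that case $H$ is bandlimited, consistent with Theorem \ref{rkhs.bandlimited.thm}. If some $H_m$ were a proper nonzero subspace of $W_m$, then ${\bf K}$ would be shift-invariant, by Theorem \ref{shiftinvariantfilter.thm}, but not polynomial. So the converse needs the condition $H_m=W_m$ or $\{0\}$. The wording ``$H_m\subset W_m$ with inner product a multiple of the dot product'' must therefore be read with this condition included, namely $H_m=W_m$ whenever $\mu_m>0$ and $H_m=\{0\}$ when $\mu_m=0$. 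That is exactly the form produced in the necessity part. I would make this reading explicit in the proof, so that both directions match.
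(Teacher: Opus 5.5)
Your proof is correct and follows essentially the route the paper intends. The paper gives no separate proof; it only says the result follows from the proof of Theorem \ref{decomposition.thm}. The sufficiency part of that proof builds exactly your kernel ${\bf K}=\sum_m {\bf P}_m{\bf K}_m{\bf P}_m$ with ${\bf K}_m=\mu_m^{-1}{\bf Q}_m$. Its necessity part takes $H_m=H\cap W_m$ with the inner product induced from $H$, after showing $\langle {\bf P}_m{\bf x},{\bf P}_{m'}{\bf y}\rangle_H=0$ for $m\ne m'$. Your necessity argument is a little more direct. The identity $\langle{\bf x},{\bf y}\rangle_H={\bf x}^T{\bf K}^\dagger{\bf y}$, with ${\bf K}^\dagger=\sum_{\nu_m>0}\nu_m^{-1}{\bf P}_m$, gives the weights $\mu_m=\nu_m^{-1}$ in one line, and the paper never writes these weights down explicitly.

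Your caveat about the converse is also right, and it points to a defect in the statement, not in your proof. The literal ``if'' direction fails whenever some $W_m$ has dimension at least two. Here is a concrete counterexample:
\begin{itemize}
\item Work on the complete graph $K_N$ with $N\ge 3$ and the shift ${\bf L}^{\rm sym}(K_N)$, as in Example \ref{uncertainty.example}.
\item Take $H={\rm span}\{{\pmb\delta}_{v_1}-{\pmb\delta}_{v_2}\}\subset W_2$ with the standard inner product, $H_1=\{0\}$, $H_2=H$, $\mu_1=0$ and $\mu_2=1$. All hypotheses of the statement hold.
\item The reproducing kernel is $\frac12({\pmb\delta}_{v_1}-{\pmb\delta}_{v_2})({\pmb\delta}_{v_1}-{\pmb\delta}_{v_2})^T$. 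It commutes with ${\bf L}^{\rm sym}(K_N)=\frac{N}{N-1}{\bf P}_2$, so it is shift-invariant.
\item This kernel annihilates ${\bf 1}$ and has rank one, while ${\bf P}_2$ has rank $N-1\ge 2$. So it is not of the form $\nu_1{\bf P}_1+\nu_2{\bf P}_2$, and by Theorem \ref{polynomialfilter.thm} it is not a polynomial filter.
\end{itemize}

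So do not say the statement ``must be read'' with $H_m\in\{\{0\},W_m\}$. State this as an added hypothesis, and cite a counterexample like the one above to justify it. The added hypothesis is equivalent to requiring $H$ to be bandlimited, which matches Theorem \ref{rkhs.bandlimited.thm}. Under \eqref{distincteigenvalue.assumption} every $W_m$ is one-dimensional, so the extra condition holds automatically. The corollary that follows the theorem is therefore unaffected.
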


As a consequence of Theorem \ref{decomposition.polynomial.thm}, we recover a conclusion in \cite{Chung2024} on the inner product of a 
SIGRKHS.

\begin{corollary}
Let $H$ be a SIGRKHS and assume that the distinct joint spectrum requirement \eqref{distincteigenvalue.assumption}
is satisfied. Then 
there exist nonnegative numbers $\mu_m, 1\le m\le M$, such that
\begin{equation}
\langle {\bf x}, {\bf y}\rangle_H=\sum_{m=1}^M \mu_m 
(\widehat {\bf x}(m))^T \widehat {\bf y}(m)\ {\rm for \ all} \ {\bf x}, {\bf y}\in H.
\end{equation}

\end{corollary}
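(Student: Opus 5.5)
The statement is the last corollary before the break. Under the distinct joint spectrum assumption \eqref{distincteigenvalue.assumption}, every SIGRKHS $H$ should carry an inner product of the form $\sum_m \mu_m (\widehat{\bf x}(m))^T\widehat{\bf y}(m)$. The plan is to reduce this to Theorem \ref{decomposition.polynomial.thm}. The reduction needs the reproducing kernel ${\bf K}$ of $H$ to be a polynomial filter.

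First I show that ${\bf K}$ is a polynomial filter. By definition of an SIGRKHS, ${\bf K}$ satisfies \eqref{rksis.def}, so ${\bf K}\in{\mathcal T}$. Under \eqref{distincteigenvalue.assumption}, Corollary \ref{shiftinvariantfilter.cor} gives ${\mathcal T}={\mathcal P}$, and hence ${\bf K}\in{\mathcal P}$. Theorem \ref{decomposition.polynomial.thm} then supplies GRKHSs $H_m\subset W_m$ with $H=\bigoplus_m H_m$ and nonnegative $\mu_m$ such that \eqref{decomposition.polynomialthm.eq2} holds. This is exactly the claimed identity.

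As a backup, I would also argue directly from Theorem \ref{decomposition.thm}, in case one worries about how Theorem \ref{decomposition.polynomial.thm} was derived. That theorem gives $\langle{\bf x},{\bf y}\rangle_H=\sum_m\langle\widehat{\bf x}(m),\widehat{\bf y}(m)\rangle_{H_m}$ with $H_m=H\cap W_m$, as in \eqref{decomposition.thm..eq5}. Under \eqref{distincteigenvalue.assumption} each $W_m$ is one-dimensional, spanned by a unit vector ${\bf u}_{n_m}$. So each $H_m$ is either $\{0\}$ or $W_m$.
\begin{itemize}
\item If $H_m=W_m$, the inner product on this one-dimensional space is $\langle a{\bf u},b{\bf u}\rangle_{H_m}=ab\,\|{\bf u}\|_{H}^2$. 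I set $\mu_m=\|{\bf u}_{n_m}\|_H^2>0$, which gives $\langle\widehat{\bf x}(m),\widehat{\bf y}(m)\rangle_{H_m}=\mu_m(\widehat{\bf x}(m))^T\widehat{\bf y}(m)$.
\item If $H_m=\{0\}$, then $\widehat{\bf x}(m)=0$ for ${\bf x}\in H$, and I set $\mu_m=0$.
\end{itemize}
Summing over $m$ gives the formula.

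The only real care point is checking that $\widehat{\bf x}(m)\in H_m$ for ${\bf x}\in H$, so that the inner product on $H_m$ can legitimately be applied to it. This holds because ${\bf P}_m$ is a polynomial filter (Theorem \ref{projection.polynomialfilter.thm}) and $H$ is shift-invariant. Otherwise the argument is immediate.
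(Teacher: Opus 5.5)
Your main argument is correct and follows the paper's route exactly. Under \eqref{distincteigenvalue.assumption}, Corollary \ref{shiftinvariantfilter.cor} makes the shift-invariant reproducing kernel ${\bf K}$ a polynomial filter, and Theorem \ref{decomposition.polynomial.thm} then gives the stated formula with nonnegative $\mu_m$. Your backup argument, which uses Theorem \ref{decomposition.thm} together with the one-dimensionality of each $W_m$, is also valid but not needed.
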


We finish this section with the characterization to 
an isometric shift-invariant linear operator $T$ between two  SIGRKHSs; see Section \ref{rkhsisometric.thm.pfsection} for the detailed proof and  cf. Proposition \ref{sioperator.prop}. 

\begin{theorem}\label{rkhsisometric.thm} Let $H_1$ and $H_2$ be SIGRKHSs, and $T$ be a shift-invariant linear operator from $H_1$ to $H_2$, represented by a shift-invariant filter. Then  $T$ is isometric,  i.e., 
\begin{equation}
\|T{\bf x}\|_{H_2}=\|{\bf x}\|_{H_1} \ {\rm for \ all} \ {\bf x}\in H_1, 
\end{equation}
if and only if  ${\bf P}_mT{\bf P}_m: W_m\cap H_1\longmapsto W_m\cap H_2, 1\le m\le M$,
are isometric.
\end{theorem}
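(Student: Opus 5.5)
The plan is to use the spectral decomposition of a shift-invariant operator, \eqref{shiftinvariantoperator.eq0}, together with the orthogonal decomposition of an SIGRKHS from Theorem \ref{decomposition.thm} and its proof, in particular \eqref{decomposition.thm..eq5}. For $i=1,2$ we write
\[
H_i=\bigoplus_{m=1}^M (H_i\cap W_m),
\]
where the summands are mutually orthogonal with respect to $\langle\cdot,\cdot\rangle_{H_i}$, and
\[
\|{\bf x}\|_{H_i}^2=\sum_{m=1}^M \|\widehat{\bf x}(m)\|_{H_i}^2 .
\]
Here $\widehat{\bf x}(m)={\bf P}_m{\bf x}\in H_i\cap W_m$. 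This membership holds because ${\bf P}_m$ is a polynomial filter by Theorem \ref{projection.polynomialfilter.thm}, and $H_i$ is shift-invariant, so $H_i$ is invariant under ${\bf P}_m$.

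\emph{Step 1.} Since $T$ commutes with the shifts, it commutes with every ${\bf P}_m$. Hence ${\bf P}_m T{\bf x}=T{\bf P}_m{\bf x}={\bf P}_mT{\bf P}_m({\bf P}_m{\bf x})$. So the $m$-th frequency component of $T{\bf x}$ is ${\bf P}_mT{\bf P}_m$ applied to $\widehat{\bf x}(m)$. It lies in $W_m\cap H_2$ because $T$ maps $H_1$ into $H_2$ and $H_2$ is ${\bf P}_m$-invariant. Thus ${\bf P}_mT{\bf P}_m$ is a well-defined map $W_m\cap H_1\to W_m\cap H_2$.

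\emph{Step 2.} Using the norm decomposition in $H_2$,
\[
\|T{\bf x}\|_{H_2}^2=\sum_{m=1}^M \|{\bf P}_mT{\bf P}_m\widehat{\bf x}(m)\|_{H_2}^2,
\]
while $\|{\bf x}\|_{H_1}^2=\sum_m\|\widehat{\bf x}(m)\|_{H_1}^2$.

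\emph{Sufficiency.} If each ${\bf P}_mT{\bf P}_m$ is isometric on $W_m\cap H_1$ (with the restricted inner products), the two sums agree termwise, so $T$ is isometric.

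\emph{Necessity.} Take ${\bf x}\in W_m\cap H_1$. Then $\widehat{\bf x}(m')=0$ for $m'\ne m$, and $T{\bf x}=T{\bf P}_m{\bf x}={\bf P}_mT{\bf P}_m{\bf x}$. Isometry of $T$ then gives $\|{\bf P}_mT{\bf P}_m{\bf x}\|_{H_2}=\|{\bf x}\|_{H_1}$ directly.

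The main point requiring care is the orthogonality of the pieces $H\cap W_m$ in the non-standard inner product $\langle\cdot,\cdot\rangle_H$. This is exactly the content of \eqref{decomposition.thm.eq2} with the choice \eqref{decomposition.thm..eq5}, so it should be cited from Theorem \ref{decomposition.thm} rather than reproved. If one prefers a direct argument, the route is as follows. The kernel ${\bf K}$ is positive semidefinite, shift-invariant, and satisfies $\langle {\bf K}{\bf c},{\bf K}{\bf c}'\rangle_H={\bf c}^T{\bf K}{\bf c}'$. Since ${\bf K}$ commutes with ${\bf P}_m$, for ${\bf x}={\bf K}{\bf c}\in W_m$ and ${\bf y}={\bf K}{\bf c}'\in W_{m'}$ we may replace ${\bf c}$ by ${\bf P}_m{\bf c}$ and ${\bf c}'$ by ${\bf P}_{m'}{\bf c}'$. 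This yields $\langle{\bf x},{\bf y}\rangle_H=({\bf P}_m{\bf c})^T{\bf K}{\bf P}_{m'}{\bf c}'=0$ for $m\ne m'$.
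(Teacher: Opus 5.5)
Your proposal is correct and takes the same route as the paper. You commute $T$ with the polynomial filters ${\bf P}_m$ and use the $\langle\cdot,\cdot\rangle_{H_i}$-orthogonal decomposition $H_i=\bigoplus_{m=1}^M (H_i\cap W_m)$ from Theorem \ref{decomposition.thm}, with the choice \eqref{decomposition.thm..eq5}, to compare the norms termwise; necessity then comes from restricting to ${\bf x}\in W_m\cap H_1$. Your optional direct orthogonality argument simply reproduces the corresponding step in the paper's proof of Theorem \ref{decomposition.thm}.
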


From the proof of  Theorem \ref{rkhsisometric.thm}, we see that  a
shift-invariant linear operator $T$ from a SIGRKHS $H_1$ to another
SIGRKHS $H_2$ is bounded, and its operator norm
$\|T\|$ is the same as the  maximal bound of the operator norm $\|{\bf P}_mT{\bf P}_m\|$
of ${\bf P}_m T{\bf P}_m$ from $H_1\cap W_m$ to $H_2\cap W_m, 1\le m\le M$, i.e., 
$\|T\|= \max_{1\le m\le M} \|{\bf P}_mT{\bf P}_m\|$.

\section{Proofs}

In this section, we collect the proofs of  Theorems \ref{bandapproximation.thm}, \ref{projection.polynomialfilter.thm},  \ref{polynomialfilter.thm},
\ref{polynomialinvariance.thm},  
  \ref{shiftinvariantfilter.thm}, \ref{center.thm},
 \ref{rangecharacterization.thm}, \ref{filterrange.thm}, \ref{bandlimited.thm}, \ref{sisequality.thm}, \ref{fiber.thm},  \ref{FGSIS.thm},  \ref{frame.thm}, \ref{frameoperator.thm}, \ref{dualframe.thm}, \ref{dualframeexistence.thm}, \ref{bessel.thm}, \ref{riesz.thm},
 \ref{rkhs.thm1}, \ref{rkhs.bandlimited.thm},   \ref{decomposition.thm}, and \ref{rkhsisometric.thm}.

\subsection{Proof of Theorem \ref{bandapproximation.thm}}
\label{bandapproximation.thm.pfsection}
By \eqref{projection.eq1}, \eqref{projection.eq2} and \eqref{projection.eq3}, we have
\begin{eqnarray*}
    \sum_{l=1}^d \|{\bf S}_l {\bf x}\|_2^2
   & = & \sum_{l=1}^d \sum_{m=1}^M |\gamma_l(m)|^2 \|{\bf P}_m {\bf x}\|_2^2
     =  \sum_{m=1}^M \|{\pmb \gamma}(m)\|^2 \| {\bf P}_m {\bf x}\|^2\\
    & \ge &  \|{\pmb \gamma}(K+1)\|^2 \sum_{m=K+1}^M \| {\bf P}_m {\bf x}\|^2
    = \|{\pmb \gamma}(K+1)\|^2  \|{\bf x}-{\bf x}_K\|^2.
\end{eqnarray*}
Dividing both sides of the inequality by
$\|{\pmb \gamma}(K+1)\|^2$
 and subsequently taking square roots yield
 the desired approximation error estimate in 
\eqref{bandapproximation.thm.eq2}.

 \subsection{Proof of Theorem \ref{projection.polynomialfilter.thm}}
 \label{projection.polynomialfilter.thm.pfsection}

Clearly it suffices to prove \eqref{projection.polynomialfilter.thm.eq1}.
As ${\pmb \gamma}(m'), 1\le m'\le M$, are distinct, we can find a multivariate polynomial $h_{m}$ of degree at most $M-1$ for every $1\le m\le M$ such that
 \begin{equation} \label{shiftinvariantfilter.thm.pfeq1}
 h_m({\pmb \gamma}(m'))=\delta_{mm'}, \ 1\le m'\le M,
 \end{equation}
\cite[Theorem 1 on p. 58]{cheney2000}.
 Then for  arbitrary graph signal $\x$ on $\G$, we have
 \begin{eqnarray*}
 \big({\bf P}_m-h_m({\bf S}_1, \ldots, {\bf S}_d)\big)\x \hskip-0.1in&=\hskip-0.1in& {\bf P}_m{\bf x}-
\sum_{m'=1}^M  h_m({\bf S}_1, \ldots, {\bf S}_d) {\bf P}_{m'}\x
\\
 \hskip-0.1in&=\hskip-0.1in& {\bf P}_m{\bf x}- \sum_{m'=1}^{M}  h_m({\pmb \gamma}(m')) {\bf P}_{m'}\x=\bf 0,
 \end{eqnarray*}
 where  the first equality holds by  \eqref{projection.eq2}, the second equality follows from  \eqref{projection.eq3} and \eqref{polynomialGFT}, and the last equality is true by  \eqref{shiftinvariantfilter.thm.pfeq1}.  Therefore the orthogonal projections ${\bf P}_m, 1\le m\le M$, are polynomial filters.

\subsection{Proof of Theorem \ref{polynomialfilter.thm}}
\label{polynomialfilter.thm.pfsection}

  
 The necessity of the  characterization \eqref{shiftinvariantfilter.thm.eq1} follows easily from \eqref{polynomialGFT}.
In particular, for the polynomial filter ${\bf H}$ in \eqref{filter.def}, we have
\begin{equation} \label{shiftinvariantfilter.thm.pfeq2}
\mu_m= h({\pmb \gamma}(m)),\ \ 1\le m\le M.\end{equation}

Now the sufficiency of the  requirement \eqref{shiftinvariantfilter.thm.eq1}.  As ${\pmb \gamma}(m), 1\le m\le M$, are distinct, we can find a multivariate polynomial $h$ of degree at most $M-1$ such that \eqref{shiftinvariantfilter.thm.pfeq2} holds \cite[Theorem 1 on p. 58]{cheney2000}. 
Then for  arbitrary graph signal $\x$ on $\G$, we obtain 
 \begin{eqnarray*}
 \big({\bf H}-h({\bf S}_1, \ldots, {\bf S}_d)\big)\x \hskip-0.1in&=\hskip-0.1in& 
\sum_{m=1}^M \big ({\bf H}-h({\bf S}_1, \ldots, {\bf S}_d)\big) {\bf P}_m\x
\\
 \hskip-0.1in&=\hskip-0.1in& \sum_{m=1}^{M} \mu_m {\bf P}_m{\bf x}- h({\pmb \gamma}(m)) {\bf P}_m\x={\bf 0},
 \end{eqnarray*}
 where  the first equality holds by  \eqref{projection.eq2}, the second equality follows from  \eqref{polynomialGFT} and \eqref{shiftinvariantfilter.thm.eq1}, 
 and the last equality is true by  \eqref{shiftinvariantfilter.thm.pfeq2}. This proves the sufficiency and hence completes the proof. 

\subsection{Proof of Theorem \ref{polynomialinvariance.thm}}
\label{polynomialinvariance.thm.pfsection}

Take a polynomial filter ${\bf H}$ in \eqref{filter.def}. For any orthogonal matrix ${\bf U}$
in the eigendecomposition \eqref{eigen.dec} of the graph shifts
${\bf S}_1, \ldots, {\bf S}_d$, applying \eqref{eigen.dec} repeatedly yields 
\begin{equation} \label{polynomialinvariance.thm.pfeq1}
    {\bf U}^T {\bf H} {\bf U}= {\bf U}^T h({\bf S}_1, \ldots, {\bf S}_d) {\bf U}=
    h({\pmb  \Lambda}_1, \ldots, {\pmb \Lambda}_d). 
\end{equation}
This proves the necessity.

Next  the sufficiency. Take an orthogonal matrix  ${\bf U}=[{\bf u}_1, \ldots, {\bf u}_N]$ in the eigendecomposition \eqref{eigen.dec} of the graph shifts
${\bf S}_1, \ldots, {\bf S}_d$, and define
${\bf W}= {\bf U}^T{\bf H} {\bf U}=[w(n, n')]_{1\le n, n'\le N}$. 
First we  prove the following: 

 {\em Claim 1:  ${\bf W}$ is a  diagonal matrix.}

Let  ${\bf V}={\bf US}$, where ${\bf S}={\rm diag}[\epsilon_1, \ldots, \epsilon_N]$ is a diagonal matrix with
$\epsilon_n\in \{-1, 1\}, 1\le n \le N$, arbitrary chosen. One  may verify that ${\bf V}$  is an  orthogonal matrix that can be  used in the eigendecomposition \eqref{eigen.dec}. Then by the invariance assumption, we have
${\bf S}^T {\bf W} {\bf S}={\bf W}$, or equivalently
$$ w(n, n')\epsilon(n)\epsilon(n')=w(n, n'), \ \ 1\le n, n'\le N. $$
This together with   arbitrary selections of $\epsilon_n\in \{-1, 1\}, 1\le n\le N$, proves  Claim 1.

\smallskip

By Claim 1, we conclude that
\begin{equation} \label{polynomialinvariance.thm.pfeq3}
{\bf H}=\sum_{n=1}^N w(n, n) {\bf u}_n {\bf u}_n^T.
\end{equation}
Next we show that some of diagonal entries of the matrix ${\bf W}$ must be the same. 

\noindent {\em Claim 2:  $w(n, n)=w(n', n')$ if ${\pmb \lambda}(n)={\pmb \lambda}(n')$.
}

Let ${\bf V}$ be the orthogonal matrix by swapping the $n$-th and $n'$-th columns of the matrix ${\bf U}$. Then by the assumption that ${\pmb \lambda}(n)={\pmb \lambda}(n')$, 
${\bf V}$ is an orthogonal matrix that can be  used in the eigendecomposition \eqref{eigen.dec}. Therefore the desired conclusion  in Claim 2 follows directly from ${\bf U}^T {\bf H} {\bf U}={\bf V}^T {\bf H} {\bf V}$.

 Set $\mu(m)=w(n, n), 1\le m\le M$, where 
${\pmb \lambda}(n)={\pmb \gamma}(m)$. The above definition of $\mu(m), 1\le m\le M$, is well-defined by Claim 2. 
Therefore
\begin{equation} \label{polynomialinvariance.thm.pfeq4}
{\bf H}=\sum_{n=1}^N w(n, n) {\bf u}_n {\bf u}_n^T=
\sum_{m=1}^M \mu(m) \sum_{{\pmb \lambda}(n)={\pmb \gamma}(m)} {\bf u}_n {\bf u}_n^T=
\sum_{m=1}^M \mu(m){\bf P}_m,
\end{equation}
where the first equality holds by
 \eqref{polynomialinvariance.thm.pfeq3}, the second one follows from 
 Claim 2 and definition of $\mu(m), 1\le m\le M$, and the last equality
 is true by  the definition of the orthogonal projections ${\bf P}_m, 1\le m\le M$
 in 
\eqref{projection.eq0}.
Therefore the desired  conclusion for the filter ${\bf H}$
follows from  \eqref{shiftinvariantfilter.thm.eq3}, \eqref{polynomialinvariance.thm.pfeq4} and Theorem \ref{polynomialfilter.thm}.

 \subsection{Proof of Theorem \ref{shiftinvariantfilter.thm}}
\label{shiftinvariantfilter.thm.pfsection}

    Take an arbitrary shift invariant filter ${\bf H}\in \mathcal T$. Then 
    \begin{equation*}        {\bf H} =\sum_{m=1}^{M}{\bf P}_m {\bf H}=\sum_{m=1}^{M}{\bf P}_m^2 {\bf H}=
    \sum_{m=1}^{M}{\bf P}_m {\bf H} {\bf P}_m.
    \end{equation*}
    where the first two equalities follows from \eqref{projection.eq1} and \eqref{projection.eq2}, and the third  equation holds by \eqref{shiftinvariant.def} and  Theorem \ref{projection.polynomialfilter.thm}. This proves the necessity. 
    
    Take a filter ${\bf H}=\sum_{m=1}^{M}{\bf P}_m{\bf H}_m{\bf P}_m$ for some filters ${\bf H}_m, 1\le m\le M$.
 By   \eqref{projection.eq3},
   we have  
     \begin{equation*}
 {\bf H} {\bf S}_l=     \sum_{m=1}^{M}{\bf P}_m{\bf H}_m{\bf P}_m {\bf S}_l=\sum_{m=1}^{M}\gamma_l(m)  {\bf P}_m{\bf H}_m{\bf P}_m = \sum_{m=1}^{M} {\bf S}_l {\bf P}_m{\bf H}_m{\bf P}_m={\bf S}_l {\bf H}, \ \ 1\le l\le d.
    \end{equation*}
 This proves ${\bf H}$ is shift-invariant and hence the sufficiency.

\subsection{Proof of Theorem \ref{center.thm}}
\label{center.thm.pfsection}

Applying \eqref{shiftinvariant.def} repeatedly
yields $\mathcal P\subset C(\mathcal T)$. Therefore it suffices to prove
\begin{equation} \label{center.thm.pfeq1}C(\mathcal T)\subset \mathcal P.\end{equation}
Take an arbitrary  filter ${\bf G}\in C({\mathcal T})$. Then there exist filters ${\bf G}_m, 1\le m\le M$, by Theorem \ref{shiftinvariantfilter.thm} such that ${\bf G}=\sum_{m=1}^M{\bf P}_m{\bf G}_m{\bf P}_m$. By \eqref{projection.eq1},  \eqref{shiftinvariantcenter.def} and Theorem \ref{shiftinvariantfilter.thm},
we have 
   \begin{equation*}
      \sum_{m=1}^M {\bf P}_m{\bf B}_m{\bf P}_m{\bf G}_m{\bf P}_m= {\bf HG}={\bf GH}=\sum_{m=1}^M {\bf P}_m{\bf G}_m{\bf P}_m{\bf B}_m{\bf P}_m
   \end{equation*}
   for arbitrary matrices ${\bf B}_m, 1\le m\le M$.
   Therefore
   \begin{equation} \label{center.thm.pfeq2}
   {\bf P}_m{\bf B}_m{\bf P}_m{\bf G}_m{\bf P}_m= {\bf P}_m{\bf G}_m{\bf P}_m{\bf B}_m{\bf P}_m, \quad 1\le m\le M,
   \end{equation}
   holds for all matrices ${\bf B}_m, 1\le m\le M$.

   Let ${\bf u}_n, 1\le n\le N$, be an orthonormal basis in the    eigendecomposition
 \eqref{eigen.dec} of graph shifts ${\bf S}_1, \ldots, {\bf S}_d$,
 $I_m=\{n\ | \ {\pmb \lambda}(n)={\pmb \gamma}(m)\}$,
 and write
 $\tilde {\bf B}_m=[{\bf u}_n^T {\bf B}_m {\bf u}_{n'}]_{n, n'\in I_m}$ and
 $ \tilde {\bf G}_m=[{\bf u}_n^T {\bf G}_m {\bf u}_{n'}]_{n, n'\in I_m}, 1\le m\le M$.
 Then
 by \eqref{projection.eq0}, we can write 
 \eqref{center.thm.pfeq2}
 as
 \begin{equation}
 \tilde {\bf  B}_m \tilde {\bf G}_m=\tilde {\bf G}_m \tilde {\bf B}_m
 \end{equation}
 hold for all matrices $\tilde {\bf B}_m$ of size $|I_m|, 1\le m\le M$. This implies that $\tilde {\bf G}_m$ is a multiple of the identity matrix, which in turn proves that 
 $${\bf P}_m {\bf G}_m{\bf P}_m=\mu_m {\bf P}_m$$
 for some $\mu_m, 1\le m\le M$. Therefore ${\bf G}$ is a polynomial filter by Theorem \ref{polynomialfilter.thm}.

\subsection{Proof of Theorem \ref{rangecharacterization.thm}}
\label {rangecharacterization.thm.pfsection}


First the sufficiency. Let $U$ be the linear space of graph signals in \eqref{rangecharacterization.thm.eq0} and take ${\bf x}\in U$. For $1\le l\le d$ and $1\le m\le M$, we have
\begin{equation*}
 \widehat {{\mathbf S}_l {\bf x}}(m)=\gamma_l(m) 
\widehat {{\bf x}}(m)\in J({\pmb \gamma}(m)),
\end{equation*}
where the first equality hold by \eqref{multiplier.prop}
and the inclusion follows from 
\eqref{rangecharacterization.thm.eq0} and the linear space property for the range spaces of the range function  $J$.  Hence $U$ is shift-invariant and the sufficiency follows.

Next the necessity.  Define the range function $J$ by
\begin{equation} \label{rangecharacterization.thm.pfeq1}
J({\pmb\gamma}(m))=\{\widehat{{\bf x}}(m)\mid {\bf x}\in U\}=\{ {\bf P}_m {\bf x}\ \mid\ {\bf x}\in U\}, \  1\le m\le M,\end{equation}
where ${\bf P}_m, 1\le m\le M$, are the orthogonal projections in 
\eqref{projection.eq0}.  Let $W_m$
be as in \eqref {projection.eq4}. By  Theorem \ref{projection.polynomialfilter.thm} and the shift-invariance of the linear space $U$, we conclude that
$J({\pmb \gamma}(m))$ is a linear subspace of $W_m\cap U$
for every $1\le m\le M$, and hence $J$ is a graph range function.

For the  graph range function $J$ in \eqref{rangecharacterization.thm.pfeq1}, define
    \begin{equation*}\label{rangecharacterization.thm.pfeq2}
      \widetilde   U=\{\x\in {\mathbf R}^V\ \mid \ \widehat{{\bf x}}(m)\in J(\pmb\gamma(m)),\ 1\le m\le M\}.
    \end{equation*}
    Clearly  $U\subset \tilde U$. On the other hand, 
    for any ${\bf x}\in \tilde U$, we have 
    $${\bf x}=\sum_{m=1}^M {\bf P}_m {\bf x}\in 
    J({\pmb\gamma}(1))+ \cdots+ J({\pmb\gamma}(M))\subset
    U.$$
    This proves that $\tilde U=U$ and hence  completes the proof of the necessity. 

\subsection{Proof of Theorem \ref{filterrange.thm}}
\label{filterrange.thm.pfsection}
  Clearly it suffices to show that
 a  GSIS $U$ is the range space of some shift-invariant filter  ${\bf H}$ and also the kernel space of some shift-invariant filter ${\bf G}$.

Let  ${\bf Q}_m, 1\le m\le M$, be the orthogonal projections onto $W_m\cap U$.
Then for every $1\le m\le M$, we have
\begin{equation}\label{filterrange.thm.pfeq1}
{\bf Q}_m^2={\bf Q}_m, \ {\bf Q}_m^T={\bf Q}_m\ {\rm and}\  {\bf Q}_m{\mathbb R}^V=W_m\cap U.
\end{equation}
Define
\begin{equation}\label{filterrange.thm.pfeq2} {\bf H}=\sum_{m=1}^M {\bf Q}_m.\end{equation}
Then for every $1\le l\le d$,
\begin{equation}
{\bf S}_l {\bf H}=\sum_{m=1}^M \gamma_l(m) {\bf P}_m {\bf Q}_m=
\sum_{m=1}^M \gamma_l(m)  {\bf Q}_m
=\sum_{m=1}^M \gamma_l(m)  {\bf Q}_m{\bf P}_m= {\bf H} {\bf S}_l,
\end{equation}
where the first and last equalities follow from 
\eqref{projection.eq1} and \eqref{projection.eq3}, and the second and third equalities hold by \eqref{filterrange.thm.pfeq1}. This proves that the filter ${\bf H}$ in \eqref{filterrange.thm.pfeq2} is shift-invariant.

By \eqref{filterrange.thm.pfeq2}, we see that
\begin{eqnarray} R({\bf H}) \hskip-0.1in  & = \hskip-0.05in&\hskip-0.05in \Big\{\sum_{m=1}^M {\bf Q}_m {\bf x}\ |\ {\bf x}\in {\mathbb R}^V\Big\}\nonumber\\
\hskip-0.1in  & = \hskip-0.05in&\hskip-0.05in \Big\{\sum_{m=1}^M {\bf Q}_m {\bf x}_m\ |\ {\bf x}_m\in W_m, 1\le m\le M\Big\}
=\sum_{m=1}^M U\cap W_m=U,
\end{eqnarray}
where the second inequality hold
by \eqref{filterrange.thm.pfeq1}
and \eqref{projection.eq2}. Therefore ${\bf H}$ is a shift-invairant filter with $U$ as its range space.

Define ${\bf G}={\bf I}-{\bf H}$. Then since ${\bf H}$ is orthogonal projection onto $U$, one may verify that ${\bf G}$
is a shift-invariant filter  with $U$ as its kernel space.

\subsection{Proof of Theorem \ref{bandlimited.thm}}
\label{bandlimited.thm.pfsection}


First the necessity.  By
\eqref{rangefunction.def0} and Theorem
\ref{rangecharacterization.thm}, it suffices to prove
\begin{equation}\label{bandlimited.thm.pfeq1}
U\cap W_m=W_m \ \ {\rm if }\ \ U\cap W_m\ne\{0\}. 
\end{equation}
By \eqref{superSIS.def} and Theorem
\ref{projection.polynomialfilter.thm},
\begin{equation}\label{bandlimited.thm.pf.eq2}
{\bf H} (U\cap W_m)\subset U\cap W_m, \ 1\le m\le M, 
\end{equation}
hold for all shift-invariant filters ${\bf H}\in {\mathcal T}$.
From   \eqref{projection.eq1}, \eqref{bandlimited.thm.pf.eq2}
and Theorem \ref{shiftinvariantfilter.thm}, it follows that
\begin{equation}\label{bandlimited.thm.pf.eq3}
{\bf P}_m {\bf H}_m {\bf P}_m (U\cap W_m)\subset U\cap W_m
\end{equation}
hold for all filters ${\bf H}_m$. As  the only nontrivial linear space invariant under
all transformation on $W_m$ is the whole space $W_m$ itself. This proves 
\eqref{bandlimited.thm.pfeq1}.

Now the sufficiency. Take a shift-invariant filter ${\bf H}\in {\mathcal T}$ and $\x\in U$.  By Theorem \ref{projection.polynomialfilter.thm} and the shift-invariance of $U$, we have 
\begin{equation}\label{bandlimited.thm.pf.eq5}
{\bf P}_m\x\in W_m\cap U,\ 1\le m\le M.
\end{equation}
Then 
\begin{eqnarray}
{\bf H}\x= \sum_{m=1}^M {\bf P}_m {\bf H}{\bf P}_m\x= \sum_{{\bf P}_m\x\ne {\bf 0}} 
{\bf P}_m {\bf H} {\bf P}_m {\bf x}
\in \bigoplus_{{\bf P}_m\x\ne {\bf 0}}  W_m\subset U,
\end{eqnarray}
where the first equality holds by \eqref{shiftinvariantfilter.commutative.eq2},  and the last inclusion is true by \eqref{bandlimited.thm.pfeq1}.  This proves the super-shift-invariance for the linear space $U$.

\subsection{Proof of Theorem \ref{sisequality.thm}}
\label{sisequality.thm.pfsection}

%
%
%

(i)$\Longrightarrow$(ii): \quad The conclusion follows from  applying
\eqref{maximalsis.eq0} repeatedly. 

(ii)$\Longrightarrow$(iii): \quad Take
${\bf x}\in U$ and $1\le m\le M$ with $\gamma_l(m)=0$
for some $1\le l\le d$. Then by the assumption on the GSIS $U$,  there exists
 ${\bf y}\in U$ such that
${\bf x}={\bf S}_1\ldots {\bf S}_d {\bf y}$. Therefore
\begin{equation*} 
    {\bf P}_m{\bf x}= \gamma_1(m) \ldots \gamma_d(m) {\bf P}_m {\bf y}=0,\end{equation*}
    where the first equality follows from \eqref{projection.eq3} and the second equality holds as 
    $\gamma_1(m) \ldots \gamma_d(m)=0$ by the selection of $1\le m\le M$.
This proves the conclusion in  (iii).

(iii)$\Longrightarrow$(i):\quad Take $1\le l\le d$. By the shift-invariance assumption on the linear space $U$, we have ${\bf S}_l U\subset U$. Then it suffices to prove
\begin{equation} \label{sisequality.thm.pfeq1} U\subset {\bf S}_l U.
\end{equation}
Take ${\bf x}\in U$.
By the assumption  (iii), 
\begin{equation} \label{sisequality.thm.pfeq2} {\bf P}_m{\bf x}=0\end{equation}
for all $1\le m\le M$ such that $\gamma_l(m)=0$.
Set ${\bf y}_l=\sum_{\gamma_l(m)\ne 0} (\gamma_l(m))^{-1} {\bf P}_m {\bf x}$.
Then ${\bf y}_l\in U$ by Theorem \ref{projection.polynomialfilter.thm} and the shift-invariance  assumption on  the linear space $U$.
Also we have
\begin{equation}
{\bf S}_l {\bf y}_l= \sum_{\gamma_l(m)\ne 0} (\gamma_l(m))^{-1} {\bf S}_l {\bf P}_m {\bf x}=
\sum_{\gamma_l(m)\ne 0}  {\bf P}_m {\bf x}=
\sum_{m=1}^M  {\bf P}_m {\bf x}={\bf x},
\end{equation}
where the second equality follows from \eqref{projection.eq3}, the third one holds by  \eqref{sisequality.thm.pfeq2} and the last one is true by \eqref{projection.eq2}. This proves 
\eqref{sisequality.thm.pfeq1}  and completes the proof. 

\subsection{Proof of Theorem \ref{fiber.thm}}     \label{fiber.thm.pfsection}

 Take $1\le m\le M$.
By Theorem \ref{projection.polynomialfilter.thm} and
the shift-invariance of the linear space ${\mathcal S}(\Phi)$, we clearly have
$${\rm span} \big\{  \widehat {\phi}(m)\ | \ \phi\in\Phi\big\} \subset {\mathcal S}(\Phi)\cap W_m=J_{{\mathcal S}(\Phi)} ({\pmb \gamma}(m)).$$
Then it suffices to prove that
\begin{equation} \label{fiber.thm.pfeq1}
J_{{\mathcal S}(\Phi)} ({\pmb \gamma}(m)) \subset {\rm span} 
\big\{\widehat{\phi}(m)\ | \ \phi\in\Phi\big\}.
\end{equation}

 Take $\x\in {\mathcal S}(\Phi)$. Then 
$${\bf x}=\sum_{{\pmb \alpha}\in {\mathbb Z}_+^d} \sum_{\phi\in \Phi}c_{\phi,\pmb\alpha}{\bf S}^{\pmb\alpha}\phi$$
for some finite collection of coefficients
 $c_{\phi, {\pmb \alpha}}$. This together with  \eqref{projection.eq3} implies that 
$$\widehat{\bf x}(m)=\sum_{\phi\in\Phi}h_\phi(\pmb\gamma(m)) \widehat \phi(m),$$ 
where
$h_{\phi}(\pmb\gamma(m))=\sum_{\pmb\alpha \in {\mathbb Z}_+^d}c_{\phi,\pmb\alpha}\pmb\gamma(m)^{\pmb\alpha}$.
This proves \eqref{fiber.thm.pfeq1}
and completes the proof.

\subsection{Proof of Theorem \ref{FGSIS.thm}} 
\label{FGSIS.thm.pfsection}

   
(i)$\Longrightarrow$(ii): Let  $r 
 = \max_{1\le m\le M} \dim U\cap W_m$, and let
\begin{equation} \label{FGSIS.thm.pfeq1} \phi_j=\sum_{m=1}^M {\bf e}_{m, j}, \ 1\le j\le r
    \end{equation}
    be chosen so that for every  $1\le m\le M$, 
    ${\bf e}_{m, j}, 1\le j\le r$, forms a spanning set
    for the linear space $U\cap W_m$. The existence 
    of such spanning sets follows 
    as $\dim U\cap W_m\le r$ for all $1\le m\le M$.
Set $\Phi=\{\phi_j\}_{j=1}^r$. Then it reduces to establishing
\begin{equation} \label{FGSIS.thm.pfeq2} 
U={\mathcal S}(\Phi).
\end{equation}
By  \eqref{FGSIS.thm.pfeq1} and the shift-invariance of the linear space $U$, we have 
${\mathcal S}(\Phi)\subset U$.
Then it suffices to prove
\begin{equation}  \label{FGSIS.thm.pfeq4} U\subset
{\mathcal S}(\Phi).
\end{equation}
Take ${\bf x}\in U$. By \eqref{FGSIS.thm.pfeq1}, there exists coefficients $c_{m,j}, 1\le j\le r$, for every $1\le m\le M$, such that
\begin{equation} \label{FGSIS.thm.pfeq5}
{\bf P}_m {\bf x}=\sum_{j=1}^r c_{m, j} {\bf e}_{m,j}=\sum_{j=1}^r c_{m, j} {\bf P}_m \phi_j,
\end{equation}
where the last equality holds by \eqref{projection.eq1}.
Let  $h_{m}, 1\le m\le M$,  be  multivariate polynomials in \eqref{shiftinvariantfilter.thm.pfeq1}
and define
$Q_j=\sum_{m=1}^M c_{m,j} h_m$.
Then  
\begin{equation}  \label{FGSIS.thm.pfeq6}
Q_j({\pmb \gamma}(m))= c_{m,j},\quad  1\le m\le M.
\end{equation}
By  \eqref{projection.eq3}, 
 \eqref{FGSIS.thm.pfeq5} and
\eqref{FGSIS.thm.pfeq6}, we have
$$ {\bf P}_m {\bf x}=
\sum_{j=1}^r Q_j({\pmb \gamma}(m)) {\bf P}_m \phi_j=\sum_{j=1}^r Q_j({\bf S}_1, \ldots, {\bf S}_d) {\bf P}_m \phi_j.
$$
Hence
$$
{\bf x}=\sum_{m=1}^M {\bf P}_m {\bf x}= \sum_{j=1}^r Q_j({\bf S}_1, \ldots, {\bf S}_d) \phi_j\in {\mathcal S}(\Phi).$$
This proves \eqref{FGSIS.thm.pfeq4} and hence the conclusion in (ii) holds.  

 (ii)$\Longrightarrow$(iii):  For any $k\ge 0$, define 
 \begin{equation}\label{FGSIS.thm.pfeq9}
 {\bf T}^k=\Big(\sum_{l=1}^d{a_l}{\bf S}_l\Big)^k=\sum_{|\pmb\alpha|=k}c_{\pmb\alpha}{\bf S}^{\pmb\alpha}=:P_k({\bf S}_1, \ldots, {\bf S}_d),
 \end{equation} where $|\pmb\alpha|=\alpha_1+\ldots+\alpha_d$ and 
 $c_{\pmb\alpha}=\frac{k!}{\alpha_1!\cdots \alpha_d!}a_1^{\alpha_1}\cdots a_d^{\alpha_d}$. Then for any $\phi\in\Phi$, 
 \begin{equation*}
     {\bf T}^k\phi=P_k({\bf S}_1, \ldots, {\bf S}_d)\phi\in U. 
 \end{equation*}
  It implies that \begin{equation}\label{FGSIS.thm.pfeq7}
      \widetilde U:=\span\{ {\bf T}^k\phi\mid k\ge 0, \phi\in\Phi\}\subset U.
  \end{equation}
  On the other hand, by \eqref{FGSIS.thm.eq1},  for any $\pmb\alpha\in\Z_+^d$,  there exist  real numbers $p_{\pmb\alpha,1},\ldots, p_{\pmb\alpha,M-1}$ such that 
  \begin{equation*} 
\pmb\gamma(m)^{\pmb\alpha}=\sum_{k=0}^{M-1}p_{\pmb\alpha,k}\mu(m)^{k}, 1\le m\le M,
  \end{equation*}
  where $\mu(m)$, $1\le m\le M$, are defined in \eqref{FGSIS.thm.eq1-}.
  Then for any graph signal $\x=\sum_{\pmb\alpha\in\Z_+^d}\sum_{\phi\in\Phi}\tilde{c}_{\pmb\alpha, \phi}\mathcal S^{\pmb\alpha}\phi\in U$, we have
  \begin{eqnarray*} 
  \x&=&\sum_{m=1}^{M}{\bf P}_m\x=\sum_{m=1}^M\sum_{\pmb\alpha\in\Z_+^d}\sum_{\phi\in\Phi}\tilde{c}_{\pmb\alpha,\phi}\pmb\gamma(m)^{\pmb\alpha}\widehat\phi(m)\nonumber\\
&=&\sum_{m=1}^M\sum_{\pmb\alpha\in\Z_+^d}\sum_{\phi\in\Phi}\sum_{k=0}^{M-1}\tilde{c}_{\pmb\alpha,\phi}p_{\pmb\alpha,k}\mu(m)^{k}\widehat\phi(m)\in \bigoplus _{1\le m\le M}{\bf P}_m{\widetilde U}=\widetilde U.
  \end{eqnarray*}
  where the inclusion follows from $P_m({\bf T}^k\phi)=\mu(m)^k\widehat\phi(m)$, and the last equality holds since the eigenvalues $\{\mu(m)\}_{m=1}^M$ of ${\bf T}$ are pairwise distinct.
  Therefore $U\subset \widetilde U$. This together with \eqref{FGSIS.thm.pfeq7} proves the conclusion in (iii).

   (iii)$\Longrightarrow$(i): For any $1\le l\le d$ and $\phi\in \Phi$, by \eqref{FGSIS.thm.pfeq9},
   \begin{equation*}
       {\bf S}_l{\bf T}^k\phi={\bf S}_l\sum_{|\pmb\alpha|=k}c_{\pmb\alpha}{\bf S}^{\pmb\alpha}\phi=\sum_{|\pmb\alpha|=k}c_{\pmb\alpha}{\bf S}_1^{\alpha_1}\ldots {\bf S}_{l-1}^{\alpha_{l-1}}{\bf S}_{l}^{\alpha_{l}+1}{\bf S}_{l+1}^{\alpha_{l+1}}\ldots {\bf S}_{d}^{\alpha_d}\phi\in U.
   \end{equation*}
   This implies that $U$ is shift-invariant and completes the proof.

\subsection{Proof of Theorem \ref{frame.thm}}
\label{frame.thm.pfsection}

To prove Theorem \ref{frame.thm}, we need a technical lemma.

\begin{lemma}\label{frame.lemma}
Let  ${\bf A}$ be as in \eqref{frame.thm.eq1}. 
Then ${\bf A}$ is nonsingular.
\end{lemma}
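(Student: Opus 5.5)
We plan to show that ${\bf A}$ is in fact positive definite, by writing it as a Gram matrix in a suitable Hilbert space. The starting point is the geometric series expansion, valid under the normalization \eqref{frame.thm.eq0}:
\begin{equation*}
\prod_{l=1}^d\big(1-\gamma_l(m)\gamma_l(m')\big)^{-1}=\prod_{l=1}^d\sum_{k=0}^\infty \gamma_l(m)^k\gamma_l(m')^k=\sum_{{\pmb \alpha}\in{\mathbb Z}_+^d}{\pmb\gamma}(m)^{\pmb\alpha}\,{\pmb\gamma}(m')^{\pmb\alpha},
\end{equation*}
where ${\pmb\gamma}(m)^{\pmb\alpha}=\prod_l\gamma_l(m)^{\alpha_l}$. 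The series converges absolutely because $|\gamma_l(m)\gamma_l(m')|<1$. Setting ${\bf v}_m=[{\pmb\gamma}(m)^{\pmb\alpha}]_{{\pmb\alpha}\in{\mathbb Z}_+^d}$, the sum of squares of its entries is $\prod_l(1-\gamma_l(m)^2)^{-1}<\infty$, so ${\bf v}_m\in\ell^2({\mathbb Z}_+^d)$. With this, ${\bf A}$ is exactly the Gram matrix $[\langle {\bf v}_m,{\bf v}_{m'}\rangle]_{m,m'}$.

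It therefore suffices to prove that ${\bf v}_1,\ldots,{\bf v}_M$ are linearly independent in $\ell^2({\mathbb Z}_+^d)$. Suppose $\sum_m c_m{\bf v}_m=0$. Reading this identity coordinatewise gives $\sum_m c_m{\pmb\gamma}(m)^{\pmb\alpha}=0$ for every ${\pmb\alpha}$. By linearity, $\sum_m c_m\,p({\pmb\gamma}(m))=0$ for every multivariate polynomial $p$.

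The points ${\pmb\gamma}(m)$ are distinct, so we can take $p=h_{m_0}$, the Lagrange-type interpolating polynomials with $h_{m_0}({\pmb\gamma}(m))=\delta_{m_0m}$ from Theorem \ref{projection.polynomialfilter.thm} (equivalently, \eqref{shiftinvariantfilter.thm.pfeq1}). This yields $c_{m_0}=0$ for each $m_0$. So the Gram matrix is positive definite, and in particular ${\bf A}$ is nonsingular. As a byproduct, $\sigma^+_{\min}({\bf A})$ is its smallest eigenvalue, which is what Theorem \ref{frame.thm} needs.

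The only delicate point is justifying the exchange of product and sum in the series expansion, that is, expanding the product of $d$ absolutely convergent series into a single sum over ${\mathbb Z}_+^d$. This is routine given the strict inequality in \eqref{frame.thm.eq0}. I expect no real obstacle; the key idea is to recognize ${\bf A}$ as a Gram matrix of monomial sequences and then use polynomial interpolation at distinct points.
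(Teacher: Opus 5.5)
Your proof is correct and is essentially the paper's argument. The paper writes ${\bf u}^T{\bf A}{\bf u}=\sum_{{\pmb\alpha}\in{\mathbb Z}_+^d}\big|\sum_{m}u_m{\pmb\gamma}(m)^{\pmb\alpha}\big|^2$, which is your Gram-matrix identity in quadratic-form language; it then deduces $\sum_m u_m{\pmb\gamma}(m)^{\pmb\alpha}=0$ for all ${\pmb\alpha}$ and forces each $u_m=0$ using the same interpolating polynomials $h_m$ from \eqref{shiftinvariantfilter.thm.pfeq1}. Your version states positive definiteness explicitly, which the paper's argument also gives without saying so.
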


\begin{proof} Suppose, on the contrary, that
the matrix ${\bf A}$ in \eqref{frame.thm.eq1} is singular. Then there exists ${\bf 0}\ne {\bf u}=[u_1, \ldots, u_M]\in {\mathbb R}^M$ such that
${\bf A} {\bf u}={\bf 0}$.
Observe that
$${\bf u}^T {\bf A} {\bf u}
=\sum_{{\pmb \alpha}\in {\mathbb Z}_+^d}
\Big|\sum_{m=1}^M  u_m {\pmb \gamma}(m)^{\pmb \alpha}\Big|^2.$$
Therefore 
\begin{equation}
\label{frame.lemma.pf.eq1}
\sum_{m=1}^M  u_m  {\pmb \gamma}(m)^{\pmb \alpha}=0 \ \ {\rm for \ all}\ {\pmb \alpha}\in {\mathbb Z}_+^d.
\end{equation}
Let $h_m, 1\le m\le M$, be multivariate polynomials in  \eqref{shiftinvariantfilter.thm.pfeq1}.
By \eqref{frame.lemma.pf.eq1}, we obtain
$$u_m =\sum_{m'=1}^M u_{m'} h_m({\pmb \gamma}(m'))=0,\quad  1\le m\le M.$$
This is a contradiction.
\end{proof}

Now we prove Theorem \ref{frame.thm}.

\begin{proof} [Proof of Theorem \ref{frame.thm}] Take $\x\in {\mathcal S}(\Phi)$.
Then we have
\begin{eqnarray} \label{frame.thm.eq2}  \sum_{\phi\in \Phi} \sum_{{\pmb \alpha}\in {\mathbb Z}_+} |\langle \x, {\bf S}^{\pmb \alpha}\phi\rangle|^2 
& = & 
\sum_{\phi\in \Phi} \sum_{{\pmb \alpha}\in {\mathbb Z}_+} \Big|\sum_{m=1}^M \langle \widehat {\bf x}(m), \widehat \phi(m)\rangle {\pmb \gamma}(m)^{\pmb \alpha}\Big|^2\nonumber\\
& = & 
\sum_{\phi\in \Phi}
\sum_{m, m'=1}^M \langle \widehat {\bf x}(m), \widehat \phi(m)\rangle 
\langle \widehat {\bf x}(m'), \widehat \phi(m')\rangle \prod_{l=1}^d [1-\gamma_l(m) \gamma(m')]^{-1}\nonumber\\
&\le & 
\sigma_{\max}({\bf A}) \sum_{\phi\in \Phi} \sum_{m=1}^M |\langle \widehat \x(m), \widehat \phi(m)\rangle|^2
\nonumber\\
& \le & 
\sigma_{\max}({\bf A})\sum_{m=1}^M \sigma_{\max}(F({\pmb\gamma}(m)) 
\|\widehat {\bf x}(m)\|^2\nonumber\\
& \le & 
\sigma_{\max}({\bf A})\Big(\sup_{1\le m\le M} \sigma_{\max}(F({\pmb\gamma}(m))  \Big)
\|{\bf x}\|^2,
\end{eqnarray}
where the first equation  \eqref{GFT.def} and \eqref{projection.eq3},
and the last inequality holds by 
the Paserval’s identity \eqref{parsevelidentity}.
By similar argument, we have
\begin{equation} \label{frame.thm.eq3}\sum_{\phi\in \Phi} \sum_{{\pmb \alpha}\in {\mathbb Z}_+} |\langle \x, {\bf S}^{\pmb \alpha}\phi\rangle|^2\ge 
\sigma_{\min}^+({\bf A})
\Big(\min_{1\le m\le M} \sigma_{\min}^+(F(\pmb\gamma(m))) \Big)
\|{\bf x}\|^2.
\end{equation}
Combining \eqref{frame.thm.eq2} and \eqref{frame.thm.eq3} completes the proof. 
\end{proof}

\subsection{Proof of Theorem \ref{frameoperator.thm}}
\label{frameoperator.thm.pfsection}
By \eqref{shiftinvariantoperator.eq0}, one may verify that  the frame operator $S$ is shift-invariant if and only if 
 $S=\sum_{m=1}^M {\bf P}_m S {\bf P}_m$ if and only if
 ${\bf P}_M  S {\bf P}_{m'}=0$ for all $1\le m\ne m'\le M$. On the other hand, for $1\le m\ne m'\le M$, we obtain from \eqref{projection.eq3} 
  that
 $${\bf P}_M  S {\bf P}_{m'} {\bf x}=
 \sum_{\phi\in\Phi} \sum_{{\pmb \alpha}\in {\mathbb Z}_+} \langle {\bf P}_{m'}{\bf x}, {\bf S}^{\pmb \alpha} \phi\rangle {\bf P}_m {\bf S}^{\pmb \alpha}\phi=
 \prod_{l=1}^d 
\big(1-\gamma_l(m)\gamma_l(m')\big)^{-1}
\sum_{\phi\in \Phi}  \langle  {\bf x}, \widehat \phi(m')\rangle  \widehat \phi(m).
 $$ 
This together with Theorem \ref{fiber.thm} implies that
 the frame operator $S$ 
 in \eqref{frameoperator.def} is shift-invariant if and only if \begin{equation*}\sum_{\phi\in \Phi}  \langle  {\bf x},
 \widehat \phi(m')\rangle 
 \langle \widehat \phi(m), {\bf y}\rangle =0, \ \ {\rm for \ all}\ \   {\bf x}, {\bf y} \in  {\mathcal S}(\Phi)
 \end{equation*}
 hold for all $1\le m\ne m'\le M$, or equivalently
 \begin{equation*}\sum_{\phi\in \Phi}  \langle  \widehat {\bf x}(m'),
 \widehat \phi(m')\rangle 
 \langle \widehat \phi(m), \widehat {\bf y}(m)\rangle =0
 \end{equation*}
 hold for all $\widehat {\bf x}(m')\in W_{m'}\cap {\mathcal S}(\Phi)$, $\widehat {\bf y}(m) \in W_m\cap  {\mathcal S}(\Phi)$ and $1\le m\ne m'\le M$.
This together with \eqref{Juspace.def} and Theorem \ref{fiber.thm} proves the desired conclusion.

\subsection{Proof of Theorem \ref{dualframe.thm}}
\label{dualframe.thm.pfsection} $\Longrightarrow$:   Taking Fourier transform at both side of \eqref{dualframe.thm.eq1} yields
\begin{equation} \label{dualframe.thm.pfeq1}
\widehat \x(m)=\sum_{i=1}^r \sum_{m'=1}^M
\big\langle \widehat \x(m'), \widehat {\tilde \phi}_i(m')\big\rangle
\prod_{l=1}^d 
[1-\gamma_l(m)\gamma_l(m')]^{-1} \widehat \phi_i(m),\quad  1\le m\le M.
\end{equation}
As $\widehat \x(m)\in {\mathcal S}(\Phi)\cap W_m, 1\le m\le M$, can be chosen independently by Theorem \ref{rangecharacterization.thm}, we have
\begin{equation} \label{dualframe.thm.pfeq2}
\sum_{i=1}^r 
\big\langle \widehat \x(m'), \widehat{\tilde \phi}_i(m')\big\rangle
\widehat \phi_i(m)=0,\quad  1\le m'\ne m\le M, 
\end{equation}
and 
\begin{equation} \label{dualframe.thm.pfeq3}
\widehat \x(m)=\sum_{i=1}^r \big\langle \widehat \x(m), \widehat {\tilde \phi}_i(m)\big\rangle 
\prod_{l=1}^d [1-\gamma_l(m)^2]^{-1} \widehat\phi_i(m), \quad 1\le m\le M.
\end{equation}
By \eqref{dualframe.thm.pfeq3}, Theorem \ref{fiber.thm} and  the assumption $\tilde \phi_i\in {\mathcal S}(\Phi), 1\le i\le r$, we conclude that
\begin{equation}  \label{dualframe.thm.pfeq4}
{\rm span}\Big \{\widehat{\tilde \phi}_i(m)\ |\ 1\le i\le r\Big\}=
{\rm span} \big\{\widehat{\phi}_i(m)\ |\ 1\le i\le r\big\}={\mathcal S}(\Phi)\cap W_m, \quad 1\le m\le M.
\end{equation}
This together with Theorem \ref{fiber.thm} proves
\eqref{dualframe.thm.eq2}.

Replacing  ${\bf x}$ by $\phi_j, 1\le j\le r$ in 
\eqref{dualframe.thm.pfeq2} and \eqref{dualframe.thm.pfeq3}, and then taking inner product with $\tilde \phi_{j'}, 1\le j'\le r$, proves \eqref{dualframe.thm.eq3}.

\smallskip
 $\Longleftarrow$:\quad By Theorems \ref{rangecharacterization.thm} and \ref{fiber.thm}, it suffices to verify
 \eqref{dualframe.thm.pfeq1} for all ${\bf x}\in {\mathcal S}(\Phi)$.
 Therefore it reduces to verify that
 \begin{equation} \label{dualframe.thm.pfeq5} 
{\bf u}_j(m)=0
\end{equation}
hold for all $1\le j\le r$ and $1\le m\le M$, where
\begin{equation}\label{dualframe.thm.pfeq6}  {\bf u}_j(m):=\widehat \phi_j(m)-\sum_{i=1}^r \sum_{m'=1}^M
\big\langle \widehat{\phi}_j(m'), \widehat {\tilde \phi}_i(m')\big\rangle
\prod_{l=1}^d 
[1-\gamma_l(m)\gamma_l(m')]^{-1} \widehat \phi_i(m).
\end{equation}
By \eqref{dualframe.thm.eq3} and \eqref{dualframe.thm.pfeq6}, we have
\begin{equation} \label{dualframe.thm.pfeq7} 
{\bf u}_j(m)\in {\mathcal S}(\Phi)\cap W_m,\ 1\le j\le r,\end{equation}
and
\begin{equation} \label{dualframe.thm.pfeq8}
\big\langle {\bf u}_j(m), \widehat {\tilde \phi}_{j'}(m)\big\rangle =0 \ {\rm for \ all}\ 1\le j, j'\le r \ {\rm and} \ 1\le m\le M.
\end{equation}
  By \eqref{dualframe.thm.eq2} and Theorem \ref{fiber.thm}, we see that
 \eqref{dualframe.thm.pfeq4} holds. Combining 
  \eqref{dualframe.thm.pfeq4},  \eqref{dualframe.thm.pfeq7}  and \eqref{dualframe.thm.pfeq8} proves \eqref{dualframe.thm.pfeq5}, and hence the  frame reconstruction formula \eqref{dualframe.thm.eq1}.

\subsection{Proof of Theorem \ref{dualframeexistence.thm}}
\label{dualframeexistence.thm.pfsection}

To  prove Theorem \ref{dualframeexistence.thm}, we require a technical lemma on oblique projections.

\begin{lemma}\label{sumprojection.lem}
Let $X(1), \ldots, X(M)$ be linear subspaces of ${\mathbb R}^r$ such that their sum is direct.  Then there exist oblique projections ${\bf Q}_m, 1\le m\le M$, such that 
\begin{equation} \label{sumprojection.lem.eq1} {\bf Q}_m^2={\bf Q}_m\  {\rm and}\   {\bf Q}_m {\mathbb R}^r=X(m)\ {\rm for\ all} \ 1\le m\le M,\end{equation}
and 
\begin{equation}\label{sumprojection.lem.eq2} {\bf Q}_mX(m')=\{0\} \ {\rm for \ all}\ 1\le m\ne m'\le M.
\end{equation}
\end{lemma}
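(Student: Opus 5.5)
The plan is to build the projections from a basis adapted to the direct sum. Because $X(1)+\cdots+X(M)$ is direct, we can pick a basis of each $X(m)$, say $\{{\bf x}_{m,1},\ldots,{\bf x}_{m,k_m}\}$ with $k_m=\dim X(m)$. The union of these bases is then linearly independent in ${\mathbb R}^r$ and spans $Y:=X(1)\oplus\cdots\oplus X(M)$. Next, extend this union to a basis of ${\mathbb R}^r$ by adding vectors ${\bf z}_1,\ldots,{\bf z}_s$ that span a complement $Z$ of $Y$; a concrete choice is $Z=Y^\perp$. This gives the decomposition
\begin{equation*}
{\mathbb R}^r=X(1)\oplus X(2)\oplus\cdots\oplus X(M)\oplus Z .
\end{equation*}

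With this decomposition in hand, we define ${\bf Q}_m$ as the oblique projection onto $X(m)$ along $\big(\bigoplus_{m'\ne m}X(m')\big)\oplus Z$. Explicitly, every ${\bf y}\in{\mathbb R}^r$ has a unique expansion ${\bf y}=\sum_{m'=1}^M {\bf y}_{m'}+{\bf z}$ with ${\bf y}_{m'}\in X(m')$ and ${\bf z}\in Z$, and we set ${\bf Q}_m{\bf y}:={\bf y}_m$. Uniqueness of the expansion is exactly the directness of the sum together with the choice of $Z$ as a complement, so ${\bf Q}_m$ is a well-defined linear map.

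The required properties then follow directly from the definition. For \eqref{sumprojection.lem.eq1}: the image ${\bf Q}_m{\bf y}={\bf y}_m$ already lies in $X(m)$, and its own expansion has only the $m$-th component, so ${\bf Q}_m{\bf y}_m={\bf y}_m$. This gives ${\bf Q}_m^2={\bf Q}_m$ and ${\bf Q}_m{\mathbb R}^r=X(m)$, since every element of $X(m)$ is fixed by ${\bf Q}_m$. For \eqref{sumprojection.lem.eq2}: when $m'\ne m$, a vector ${\bf x}\in X(m')$ has the trivial expansion whose $m$-th component is ${\bf 0}$, so ${\bf Q}_m{\bf x}={\bf 0}$. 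In matrix form, if ${\bf B}$ is the invertible matrix whose columns are the chosen basis vectors, then ${\bf Q}_m={\bf B}{\bf E}_m{\bf B}^{-1}$, where ${\bf E}_m$ is the diagonal $0/1$ selector for the columns coming from $X(m)$.

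There is no real obstacle. The only point that needs care is noting that directness is what makes the union of the bases linearly independent, which in turn guarantees that the expansion exists and is unique.
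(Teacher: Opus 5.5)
Your proof is correct and is essentially the paper's argument. The paper also extends the union of bases of the $X(m)$ to a basis of ${\mathbb R}^r$ and sets ${\bf Q}_m=\sum_i w_{m,i}\tilde w_{m,i}^T$, where the $\tilde w_{m,i}$ belong to the dual (biorthogonal) basis; this is exactly your ${\bf B}{\bf E}_m{\bf B}^{-1}$, since the rows of ${\bf B}^{-1}$ are that dual basis.
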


Although the conclusion in Lemma \ref{sumprojection.lem} appears in the literature, we provide a sketch of its proof  for the completeness of this paper.

\begin{proof} [Proof of Lemma \ref{sumprojection.lem}] Let $1\le m\le M$ and  $w_{m, i}, 1\le i\le \dim X(m)$, be a Riesz basis of the linear space  $X(m)$. Then by the direct sum assumption, $\{w_{m,i} \ | \ 1\le i\le \dim X(m), 1\le m\le M\}$ is a Riesz basis for $X(1)+\cdots+X(M)$. Then there exist 
$\tilde w_{m, i}, 1\le i\le \dim X(m), 1\le m\le M$, such that
\begin{equation}\label{dualframeexistence.lem.pfeq1}
    \langle \tilde w_{m,i}, w_{m', i'}\rangle=0 
\end{equation}
for all $1\le i\le \dim X(m), 1\le i'\le \dim X({m'}), 1\le m\ne m'\le M$,
and
\begin{equation}  \label{dualframeexistence.lem.pfeq2}
    \langle \tilde w_{m,i}, w_{m, i'}\rangle =\delta_{ii'}\end{equation}
for all $1\le i, i'\le \dim X(m)$ and $1\le m\le M$,    where $\delta$ is the conventional Kronecker symbol.
The above set $\{\tilde w_{m, i}, 1\le i\le \dim X(m), 1\le m\le M\}$
can be chosen to be part of the dual Riesz basis corresponding to a Riesz basis of  ${\mathbb R}^r$ obtained from extending the  basis  $\{w_{m,i}, 1\le i\le \dim X(m), 1\le m\le M\}$ for  the  linear subspace $X(1)+\ldots+X(M)$.
Define
$${\bf Q}_m=\sum_{i=1}^{\dim X(m)}  w_{m, i}  {\tilde w}_{m, i}^T, \quad 1\le m\le M.$$ 
Then one may verify that ${\bf Q}_m, 1\le m\le M$, are the oblique projections satisfying
the desired properties \eqref{sumprojection.lem.eq1} and \eqref{sumprojection.lem.eq2}. 
\end{proof}
Now we prove Theorem \ref{dualframeexistence.thm}.
\begin{proof}[Proof of Theorem  \ref{dualframeexistence.thm}]
$\Longrightarrow$:\quad Let $\tilde \Phi=\{\tilde \phi_1, \ldots, \tilde \phi_r\}$ generates a shift-invariant dual frame. 
By Theorem \ref{dualframe.thm}, ${\bf B}(m)$ are a multiple of an oblique projection on its range space for every $1\le m\le M$. On the other hand,  the range space of the matrix ${\bf B}(m)$ and ${\bf C}(m)$ are the same by 
\eqref{dualframe.thm.pfeq4}. 
Hence
${\bf B}(m)$ is the  multiple of an oblique projection operator on  $X(m)$.
By \eqref{dualframe.thm.eq3}, we see that
\begin{equation} \label{dualframeexistence.thm.pfeq1} {\bf B}(m') {\bf y}=0\ {\rm  for\ all} \ {\bf y}\in X(m)  \ {\rm and} \ 1\le m'\ne m\le M\end{equation}
and
\begin{equation} \label{dualframeexistence.thm.pfeq2} {\bf B}(m) {\bf y}= \Big(\prod_{l=1}^d \big(1-\gamma_l(m)^2\big)\Big) {\bf y}\ {\rm  for\ all} \ {\bf y}\in X(m).\end{equation}
Then 
the direct  sum conclusion follows from \eqref{dualframeexistence.thm.pfeq1} and \eqref{dualframeexistence.thm.pfeq2}, since for any ${\bf y}=\sum_{m=1}^M  {\bf y}_m$
with  ${\bf y}_m\in X(m), 1\le m\le M$, one may verify that
$$  {\bf y}_m= \Big(\prod_{l=1}^d(1-\gamma_l(m)^2)^{-1}\Big) {\mathbf B}(m) {\bf y}, \ 1\le m\le M,$$ 
are uniquely determined from ${\bf y}\in X(1)+\cdots+X(M)$.

\smallskip 

$\Longleftarrow$:\quad  Let $X(m)$ be the range space of matrices ${\bf C}(m), 1\le m\le M$
 in \eqref{Cm.def}. By Lemma \ref{sumprojection.lem} and  the assumption on $X(m), 1\le m\le M$, there exist oblique projections ${\bf Q}_m, 1\le m\le M$ such that
 \eqref{sumprojection.lem.eq1} and \eqref{sumprojection.lem.eq2} hold.
 Set 
 \begin{equation} \label{dualframeexistence.thm.pfeq3} 
 {\bf D}(m)=\Big(\prod_{l=1}^d [1-\gamma_l(m)^2]\Big)  {\bf C}(m)^\dag {\bf Q}_m \end{equation}
 and
write
 ${\bf D}(m)=[D_m(j,j')]_{1\le j, j'\le r}$, 
 where ${\bf C}(m)^\dag$ is the pseudo inverse of the matrix ${\bf C}(m)$.
 Define $\tilde \phi_j$ with the help of GFT by
 \begin{equation} \label{dualframeexistence.thm.pfeq4}
 \widehat{\tilde \phi}_j(m)=  \sum_{j'=1}^r D_m(j',j) \widehat \phi_{j'}(m), \quad 1\le j\le r. 
\end{equation} 
 Then it follows from \eqref{dualframeexistence.thm.pfeq1}
 and \eqref{dualframeexistence.thm.pfeq2} that 
 the matrices ${\bf B}(m), 1\le m\le M$, in \eqref{dualframe.thm.eq0}
 corresponding to $\tilde \Phi=\{\tilde \phi_1, \ldots, \tilde \phi_r\}$
 satisfy
 \begin{equation} \label{dualframeexistence.thm.pfeq5}
     {\bf B}(m)=[\langle \widehat \phi_j(m), \widehat {\tilde \phi}_{j'}\rangle]_{1\le j, j'\le r}
     = {\bf C}(m) {\bf D} (m)= \Big(\prod_{l=1}^d [1-\gamma_l(m)^2]\Big) {\bf Q}_m,\quad 1\le m\le M,
 \end{equation}
where the last equality hold as the range space of ${\bf Q}_m$ is the same as ${\bf C}(m)$. 
Hence \eqref{dualframe.thm.eq3} holds by  \eqref{sumprojection.lem.eq1}, \eqref{sumprojection.lem.eq2} and \eqref{dualframeexistence.thm.pfeq5}.
This  proves  $\tilde \Phi=\{\tilde \phi_1, \ldots, \tilde \phi_r\}$ generates a dual shift-invariant  frame by Theorem \ref{dualframe.thm}. This completes the proof. 
\end{proof}

\subsection{Proof of Theorem \ref{bessel.thm}}
\label{bessel.thm.pfsection}
  Set
${\bf x}=\sum_{\phi\in \Phi} \sum_{{\pmb \alpha}\in {\mathbb Z}_+^d} c_{\pmb\alpha,\phi}{\bf S}^{\pmb\alpha}\phi$.
Then 
\begin{equation}\label{bessel.thm.pfeq1}
\widehat {\bf x} (m)= \sum_{\phi\in \Phi}
\sum_{{\pmb \alpha}\in {\mathbb Z}_+^d}  c_{\pmb\alpha,\phi} {\pmb \gamma}(m)^{\pmb \alpha} \widehat \phi(m)
\end{equation}
by \eqref{projection.eq3}.
Therefore
\begin{eqnarray}\label{bessel.thm.pfeq2}
\|{\bf x}\|_2^2
& = & \sum_{m=1}^M \|\widehat {\bf x}(m)\|_2^2\nonumber\\
& \le &  \sum_{m=1}^M 
\big(\sigma_{\max}(F({\pmb \gamma}(m))\big )^2  \sum_{\phi\in \Phi}
\Big|\sum_{{\pmb \alpha}\in {\mathbb Z}_+^d} c_{\phi, {\pmb \alpha}} {\pmb \gamma}(m)^{\pmb \alpha}\Big|^2\nonumber \\
& \le & \sum_{m=1}^M \big(\sigma_{\max}(F({\pmb \gamma}(m))\big )^2 
 \sum_{\phi\in \Phi}
\Big(\sum_{{\pmb \alpha}\in {\mathbb Z}_+^d} |c_{\phi, {\pmb \alpha}}|^2\Big)\times
\Big(\sum_{{\pmb \alpha}\in {\mathbb Z}_+^d}
|{\pmb \gamma}(m)|^{2\pmb \alpha}\Big)\nonumber \\
&\le & 
 \sum_{m=1}^M \big(\sigma_{\max}(F({\pmb \gamma}(m))\big )^2 
 \Big( \prod_{l=1}^d (1-(\gamma_l(m))^2)^{-1}\Big) \times 
\Big(\sum_{\phi\in \Phi} \sum_{{\pmb \alpha}\in {\mathbb Z}_+^d} |c_{\phi, {\pmb \alpha}}|^2\Big),\quad 
\end{eqnarray}
where the first equality follows from
the the Paserval’s identity \eqref{parsevelidentity} and the second one holds by \eqref{bessel.thm.pfeq1} and the property for maximal singular value of a matrix.
Taking square roots at both sides of \eqref{bessel.thm.pfeq2} proves 
the desired Bessel bound estimate \eqref{bessel.thm.eq2}.

\subsection{Proof of Theorem \ref{riesz.thm}}
\label{riesz.thm.pfsection}
 $\Longrightarrow$: \quad  Suppose, on the contrary, that
\eqref{riesz.thm.eq1} does not hold. Then by Theorem \ref{fiber.thm},
 there exist $1\le m\le M$ and  $c_1,\ldots, c_r$ being not all zeros such that
\begin{equation*}
c_1 \widehat \phi_1(m)+\cdots+ c_r \widehat\phi_r(m)=0.
\end{equation*}
Following the proof of Theorem \ref{FGSIS.thm},  we can find a nonzero polynomial
$Q_m$ of order at most $M-1$ such that
${\bf P}_m= Q_m({\bf T})$.
Therefore
$$c_1  Q_m({\bf T})\phi_1+\cdots+ c_r Q_m({\bf T})\phi_r=0.$$
This contradicts to  the Riesz property for $ {\bf T}^k\phi_l,  0\le k\le M-1, 1\le l\le r$.

$\Longleftarrow$: \  Take $\x\in S(\Phi)$. Then there exist coefficients $c_{k, l}$ such that
\begin{equation*}
\x=\sum_{k=0}^{M-1} \sum_{l=1}^r c_{k,l} {\bf T}^k \phi_l
\end{equation*}
by Theorem \ref{FGSIS.thm}.
Taking Fourier transform at both sides of the above equation yields
\begin{equation*}
\widehat \x (m)=\sum_{l=1}^r \Big(\sum_{k=0}^{M-1} 
c_{k,l} (\mu(m))^k\Big) \widehat \phi_l(m).
\end{equation*}
This together with 
the Riesz property for $\widehat \phi_l(m), 1\le l\le r$, gives
\begin{eqnarray}\label{rieszpf.eq1}
\|\x\|_2^2 \hskip-0.08in& = \hskip-0.05in&\hskip-0.05in  \sum_{m=1}^M \Big\|\sum_{l=1}^r \Big(\sum_{k=0}^{M-1} 
c_{k,l} (\mu(m))^k\Big) \widehat \phi_l(m)\Big\|_2^2\nonumber\\
\hskip-0.08in& \ge \hskip-0.05in&\hskip-0.05in  \sum_{l=1}^r \sum_{m=1}^M ( r_0(m))^2
\Big| \sum_{k=0}^{M-1} 
c_{k,l} (\mu(m))^k \Big|^2\nonumber\\
\hskip-0.08in& \ge\hskip-0.05in& \hskip-0.05in
\Big(\min_{1\le m\le  M} (r_0(m))^2\Big) (\sigma_{\min}({\bf V}))^2
\sum_{k=1}^{M-1}\sum_{l=1}^r |c_{k,l}|^2,
\end{eqnarray}
where $r_0(m)$ is the lower Riesz bound in \eqref{riesz.thm.rem1}, and $\sigma_{\min}({\bf V})$ is the minimal singular value for the Vandermonde matrix ${\bf V}$ in 
\eqref{riesz.thm.rem2}.   By similar arguments, we have 
\begin{equation}\label{rieszpf.eq2}
  \|\x\|_2^2\le  \Big( \max_{1\le m\le  M} (r_1(m))^2\Big) (\sigma_{\max}({\bf V}))^2
\sum_{k=1}^{M-1}\sum_{l=1}^r |c_{k,l}|^2.
\end{equation}
Combining \eqref{rieszpf.eq1} and \eqref{rieszpf.eq2} completes the proof of sufficiency.

\subsection{Proof of Theorem \ref{rkhs.thm1}}
\label{rkhs.thm1.pfsection}
The sufficiency follows directly from \eqref{SIGRKHS.def} and \eqref{rksis.def}. Now we prove the  necessity.  Let $U$ be a shift-invariant space.
By Theorem \ref{fiber.thm}, we have
\begin{equation}
U=\bigoplus_{m=1}^M (W_m\cap U),
\end{equation}
where $W_m, 1\le m\le M$, are mutually orthogonal subspaces of ${\mathbb R}^V$  in  \eqref{projection.eq5}.
Define 
$${\bf K}=\sum_{m=1}^M \sum_{i=1}^{\dim W_m\cap U} 
{\bf u}_{m,i} {\bf u}_{m,i}^T,
$$
where  ${\bf u}_{m,i}, 1\le i\le \dim W_m\cap U=\dim_U({\pmb \gamma}(m))$, form an orthonormal basis of the subspace $W_m\cap U, 1\le m\le M$.
Then one may verify that
\begin{equation}
{\bf K} {\bf S}_l= {\bf S}_l {\bf K}=\sum_{m=1}^M \gamma_l(m)
\sum_{i=1}^{\dim W_m\cap U} 
{\bf u}_{m,i} {\bf u}_{m,i}^T, \quad 1\le l\le d
\end{equation}
by \eqref{projection.eq3} and Assumption \ref{graphshift.assumption}. Hence
${\bf K}$ is a shift-invariant. 

Define 
$$H=\{{\bf K} {\bf c} \ |\  {\bf c}\in {\mathbb R}^V\}.$$
Clearly  $H\subset U$ holds as ${\bf u}_{m, i}\in U$ for all $1\le i\le \dim W_m\cap U, 1\le m\le M$. On the other hand, for any
$\x\in U$, we have
\begin{equation}
\x=\sum_{m=1}^M {\bf P}_m \x= \sum_{m=1}^M \sum_{i=1}^{\dim W_m\cap U} {\bf u}_{m, i} {\bf u}_{m, i}^T {\bf P}_m {\bf x}=
 \sum_{m=1}^M \sum_{i=1}^{\dim W_m\cap U} {\bf u}_{m, i} {\bf u}_{m, i}^T  {\bf x}= {\bf K} \x\in H.
\end{equation}
This proves the desired conclusion that $U$ is a GRKHS with standard inner product on ${\mathbb R}^V$.

\subsection{Proof of Theorem \ref{rkhs.bandlimited.thm}}
\label{rkhs.bandlimited.thm.pfsection}
 First we prove the necessity. 
By \eqref{bandlimited.def}, there exists $\Omega \subset \{1, \ldots, N\}$ such that
$$W_m\cap U=\left\{ \begin{array}{ll} W_m  & {\rm if} \ m\in \Omega\\
\{0\} & {\rm if} \ m\not\in \Omega.
\end{array}\right.$$
Define 
$${\bf K}=\sum_{m\in \Omega} {\bf P}_m.$$
Then ${\bf K}$ is a polynomial filter by  Theorem \ref{projection.polynomialfilter.thm}.
Moreover, one may verify that
$$H=\{ {\bf K}{\bf c}\ |\ {\bf c}\in {\mathbb R}^V\}=\bigoplus_{m\in \Omega} W_m=U$$
and 
$${\bf K} {\bf x}={\bf x}\ {\rm for \ all} \ \x\in U.$$
This show that $U$ is the GRKHS with the reproducing kernel  $K$ being a polynomial filter.

Now the sufficiency.
Let ${\bf K}$ be the reproducing kernel. Then  there exists a polynomial $h$ such that
${\bf K}=h({\mathbf S})$. This together with \eqref{projection.eq2} and \eqref{projection.eq3} implies that
\begin{equation}  \label{rkhs.bandlimited.thm.pfeq5}
{\bf K}=\sum_{m=1}^M {\bf P}_m h({\mathbf S})=\sum_{m=1}^M h ({\pmb \gamma}(m)) {\bf P}_m.
\end{equation}
Set $\Omega=\{1\le m\le M\ | \ h({\pmb \gamma}(m))\ne 0\}$.
By \eqref{SIGRKHS.def} and  \eqref{rkhs.bandlimited.thm.pfeq5}, we see that
\begin{equation*}
   {\bf Kc}=\sum_{m=1}^{M}h(\pmb\gamma(m)){\bf P}_m{\bf c}=\sum_{m\in\Omega}h(\pmb\gamma(m)){\bf P}_m{\bf c}\in \bigoplus_{m\in \Omega} W_m,
\end{equation*}
holds for all ${\bf c}\in \R^V$, and hence 
\begin{equation}\label{rkhs.bandlimited.thm.pfeq6}
   H=\{ {\bf K}{\bf c}\ |\ {\bf c}\in {\mathbb R}^V\}\subset\bigoplus_{m\in \Omega} W_m. 
\end{equation}
On the other hand, take an arbitrary  ${\bf y}=\sum_{m\in \Omega}{\bf y}_m\in \bigoplus_{m\in \Omega} W_m $ with ${\bf y}_m\in {\bf W}_m$. Define ${\bf c}=\sum_{m\in \Omega}\dfrac{{\bf y}_m}{h(\pmb\gamma(m))}$. By \eqref{projection.eq1}, \eqref{SIGRKHS.def} and  \eqref{rkhs.bandlimited.thm.pfeq5},  we have 
\begin{equation*}
   h(\pmb\gamma(m)) {\bf P}_m{\bf c}={\bf y}_m, m\in \Omega,
\end{equation*}
and 
\begin{equation*}
  {\bf y}=\sum_{m\in\Omega}{\bf y}_m=\sum_{m\in \Omega} h(\pmb\gamma(m)){\bf P}_m{\bf c}=\sum_{m=1}^M h(\pmb\gamma(m)){\bf P}_m{\bf c}= {\bf Kc}\in H.
\end{equation*}
Therefore $\bigoplus_{m\in \Omega} W_m\subset H$. This together with \eqref{rkhs.bandlimited.thm.pfeq6} completes the proof.

\subsection{Proof of Theorem \ref{decomposition.thm}}
\label{decomposition.thm.pfsection}

 First we prove the necessity. Let $H_m=H\cap W_m$  equipped with the inner product induced from the space 
 $H$.
By \eqref{Juspace.def} and Theorem \ref{rangecharacterization.thm},  
\eqref{decomposition.thm.eq1} holds for the RKHSs $H_m, 1\le m\le M$, defined above.
By \eqref{projection.eq1}, Theorem \ref{shiftinvariantfilter.thm} and the shift-invariance assumption for the reproducing kernel ${\bf K}$ of the SIGRKHS $H$, we have
\begin{equation}\label{decomposition.thm.pfeq1}
{\bf K}=\sum_{m=1}^M {\bf P}_m {\bf K} {\bf P}_m.
\end{equation}
Then for any ${\bf x}, {\bf y}\in H$ and 
$1\le m\ne m'\le M$, we have
$${\bf x}={\bf K} {\bf c}\  {\rm and} \ {\bf y}={\bf K}{\bf d}$$ for some ${\bf c}, {\bf d}\in {\mathbb R}^V$ by \eqref{SIGRKHS.def},
and 
$$\langle {\bf P}_m {\bf x}, {\bf P}_{m'} {\bf y}\rangle_H= 
\langle {\bf P}_m {\bf K} {\bf c}, {\bf P}_{m'} {\bf K} {\bf d}\rangle_H=
\langle {\bf K} ({\bf P}_m {\bf c}), {\bf K} ({\bf P}_{m'}{\bf d})\rangle_H=
({\bf P}_m{\bf c})^T {\bf K} ({\bf P}_{m'}{\bf d})=0
$$
where the second equality holds since ${\bf K}$
is shift-invariant and ${\bf P}_m, 1\le m\le M$, are polynomial filters by Theorem \ref{projection.polynomialfilter.thm}, and 
the last equality follows from \eqref{decomposition.thm.pfeq1}.
Therefore
$$\langle {\bf x}, {\bf y}\rangle_H= \sum_{1\le m, m'\le M} \langle {\bf P}_m{\bf x}, {\bf P}_{m'}{\bf y}\rangle_H
=\sum_{m=1}^M \langle {\bf P}_m{\bf x}, {\bf P}_m {\bf y}\rangle_{H_m}.
$$
This proves \eqref{decomposition.thm.eq2}.

Next we prove  the sufficiency. Denote the reproducing kernels of GRKHSs $H_m$ by ${\bf K}_m, 1\le m\le M$ and set
\begin{equation}\label{decomposition.thm.pfeq2}
{\bf K}=\sum_{m=1}^M{\bf P}_m{\bf K}_m {\bf P}_m.
\end{equation}
Then it suffices to prove that ${\bf K}$
is a shift-invariant reproducing kernel of the inner product space  $H=\oplus_{m=1}^M H_m$.

Set
$$\tilde  H=\big\{ {\bf K} {\bf c} \   | \ {\bf c}\in {\mathbb R}^V\}.$$ 
First we prove 
\begin{equation} \label{decomposition.thm.pfeq3-}
\tilde H=H.
\end{equation}
Clearly $\tilde H\subset H$ by \eqref{decomposition.thm.pfeq2}. Then  it suffices to prove
\begin{equation} \label{decomposition.thm.pfeq3+}
H\subset \tilde H.
\end{equation}
Take arbitrary ${\bf x}\in H$. Then 
\begin{eqnarray*}
{\bf x}\hskip-0.1in & = \hskip-0.05in& \hskip-0.05in\sum_{m=1}^M {\bf K}_m {\bf c}_m =
\sum_{m=1}^M {\bf P}_m {\bf K}_m {\bf P}_m {\bf c}_m\nonumber\\
\hskip-0.1in & = \hskip-0.05in& \hskip-0.05in
\sum_{m=1}^M {\bf P}_m {\bf K}_m {\bf P}_m \Big(\sum_{m'=1}^M {\bf P}_{m'} {\bf c}_{m'}\Big)
={\bf K}\Big(\sum_{m'=1}^M {\bf P}_{m'} {\bf c}_{m'}\Big)\in \tilde H.
\end{eqnarray*}
where the existence of ${\bf c}_m\in W_m, 1\le m\le M$, follows from \eqref{SIGRKHS.def} and \eqref{decomposition.thm.eq1},  the second equality holds as $H_m\subset W_m, 1\le m\le M$, and the third one is true by \eqref{projection.eq1}.
This proves \eqref{decomposition.thm.pfeq3+}
and hence 
\begin{equation} \label{decomposition.thm.pfeq4}
H=\tilde H=\{ {\bf K}{\bf c}\ |\  {\bf c}\in {\mathbb R}^V\}.
\end{equation}

Write ${\bf A}=[{\bf A}(v, v')]_{v, v'\in V}$
for a matrix ${\bf A}\in {\mathbb R}^{V\times V}$
and ${\bf x}=[{\bf x}(v)]_{v\in V}$ for ${\bf x}\in {\mathbb R}^V$.
Take ${\bf x}\in H$ and $v\in V$. 
Then  we have 
\begin{eqnarray*} 
\langle {\bf x}, {\bf K}(\cdot, v)\rangle_H
\hskip-0.1in & = \hskip-0.05in& \hskip-0.05in \sum_{m=1}^M \langle {\bf P}_m {\bf x},
({\bf P}_m {\bf K})(\cdot, v)\rangle_{H_m}\nonumber\\
\hskip-0.1in & = \hskip-0.05in& \hskip-0.05in \sum_{m=1}^M
\langle {\bf P}_m {\bf x},
{\bf K}_m (\cdot, v)\rangle_{H_m}= \sum_{m=1}^M  ({\bf P}_m{\bf x})(v) ={\bf x} (v),
\end{eqnarray*}
where the first equality holds by \eqref{decomposition.thm.eq2}, the second equality is true  by \eqref{decomposition.thm.pfeq2}, the third equality  follows since $K_m$ are the reproducing kernels of  the GRKHS $H_m, 1\le m\le M$.

Finally, we prove the reproducing kernel property.
Take ${\bf x}\in H$ and $v\in V$.
By the reproducing property of the space $H_m, m\in \Omega$, there exist constants $c_m(v)$ such that
\begin{equation*} 
    | {\bf x}_m(v)|\le c_m(v) \|{\bf x}_m\|_{H_m} \ {\rm for \ all}\ {\bf x}_m\in  H_m, 1\le m\le M.
\end{equation*}
Therefore
\begin{eqnarray*}
    |\x(v)|\hskip-0.1in & \le \hskip-0.05in& \hskip-0.05in \sum_{m=1}^M |({\bf P}_m \x)(v)|
\le \sum_{m=1}^M  c_m(v) \|{\bf P}_mx\|_{H_m}\\
\hskip-0.1in & \le \hskip-0.05in& \hskip-0.05in\Big(\sum_{m=1}^M | c_m(v)|^2\Big)^{1/2}
\times \Big(\sum_{m=1}^M \|{\bf P}_m\x\|_{H_m}^2\Big)^{1/2}\le \Big(\sum_{m=1}^M |c_m(v)|^2\Big)^{1/2} \|{\bf x}\|_H,
\end{eqnarray*}
where the first inequality follows from \eqref{decomposition.thm.pfeq4}.
This proves that $H$ is a GRKHS with  reproducing kernel $K$.

On the other hand, we have
$${\bf S}_k{\bf K}=\sum_{m\in \Omega}
\gamma_l(m) {\bf P}_m {\bf K}_m {\bf P}_m=
{\bf K} {\bf S}_l, \quad 1\le l\le d.$$
by \eqref{projection.eq3} and \eqref{decomposition.thm.pfeq2}. Therefore the reproducing kernel $\bf K$ is shift-invariant. This completes the proof. 

\subsection{Proof of Theorem \ref{rkhsisometric.thm}}
\label{rkhsisometric.thm.pfsection}


$\Longrightarrow$: \  Take $1\le m\le M$ and  ${\bf x}\in W_m\cap H_1$. Observe  from \eqref{shiftinvariantfilter.projection}
and the shift-invariance assumption on the linear operator $T$ that
${\bf P}_mT {\bf P}_m {\bf x}=T{\bf x}\in W_m\cap H_2$. Then the isometric property
for the linear operator ${\bf P}_mT{\bf P}_m$ follows from the isometric property for the linear operator $T$.

$\Longleftarrow$: \ Take ${\bf x}\in H_1$. By \eqref{projection.eq1}, \eqref{shiftinvariantfilter.projection}, \eqref{decomposition.thm..eq5} and
Theorem \ref{decomposition.thm},
we have
\begin{eqnarray}
\|T{\bf x}\|_{H_2}^2\hskip-0.1in & =\hskip-0.05in &\hskip-0.05in  \sum_{m=1}^M 
\langle {\bf P}_m T {\bf x}, {\bf P}_m T {\bf x}\rangle_{H_2}=
\sum_{m=1}^M 
\langle {\bf P}_m T {\bf P}_m ({\bf P}_m{\bf x}), {\bf P}_m T {\bf P}_m ({\bf P}_m{\bf x})\rangle_{H_2}\nonumber\\
\hskip-0.1in & =\hskip-0.05in &\hskip-0.05in \sum_{m=1}^M 
\langle  {\bf P}_m {\bf x},  {\bf P}_m {\bf x}\rangle_{H_1}= \|{\bf x}\|_{H_1}.
\end{eqnarray}
This proves the isometric property for the linear operator $T$.

\begin{appendix}

\section{Graph Fourier transform on circulant graphs}
\label{circulantgraph.section} 

Let   $N\ge 1$ and 
write  $a=b\ {\rm mod }\ N$ if $(a-b)/N$ is an integer.
In this appendix, we consider the unweighted  circulant graph
${\mathcal C}:={\mathcal C}(N, Q)$  generated by  $Q=\{q_1, \ldots, q_d\}$ of positive integers 
 with  the vertex set   $V_N=\{0, 1, \ldots, N-1\}$  and the edge set
$E_N(Q)=\{(i, i\pm q\ {\rm mod}\ N),\  i\in V_N, q\in Q\}$,
where   $q_i\in Q, 1\le i\le d$ are ordered with $1\le q_1<\cdots< q_d\le N/2$ and they satisfy the condition that 
  $q_1, \ldots, q_d,N$  are  coprime \cite{emirov2022, kotzag2019}. 
  Take the symmetrically normalized Laplacians on the circulant subgraphs ${\mathcal C}(N, \{q_l\})$
 as  graph shifts
${\bf S}_l=(S_l(i ,j))_{i,j\in V_N},1\le l\le d$, where their entries are defined by $S_l(i,j)=1$ if $j=i$, $S_l(i,j)=-1/2$ if $i-j=\pm q_l\ {\rm mod} \ N$ and $S_l(i,j)=0$ otherwise. 
In this appendix, we  provide explicit description
for the joint spectrum  of the above graph shifts ${\bf S}_l, 1\le l\le d$, on the circulant graph ${\mathcal C}(N, Q)$.
 In addition, we derive the corresponding spectral decomposition \eqref{projection.eq0} of the identity operator, together with the associated GFT.

As the graph shifts ${\bf S}_l, 1\le l\le d$, are symmetric circulant matrices, they can be diagonalized simultaneously,
\begin{equation}\label{circulantgraph.example.eq1}
{\bf S}_l= {\bf U} {\pmb \Lambda}_l {\bf U}^T,
\end{equation}
where
\begin{equation} \label{circulantgraph.example.eq2}
{\bf U}=\big[{\bf u}_0, {\bf u}_1, {\bf v}_1, \ldots, {\bf u}_{(N-1)/2}, {\bf v}_{(N-1)/2}\big] 
\end{equation}
and
\begin{equation} \label{circulantgraph.example.eq3}
{\pmb \Lambda}_l ={\rm diag} \Big(0, 1-\cos \theta_l, 1-\cos \theta_l, 
\cdots, 1-\cos \frac{N-1}{2}\theta_l, 1-\cos\frac{N-1}{2} \theta_l\Big)
\end{equation}
for odd $N$, and
\begin{equation} \label{circulantgraph.example.eq4}
{\bf U} =\big[{\bf u}_0, {\bf u}_1, {\bf v}_1, \ldots, {\bf u}_{(N-2)/2}, {\bf v}_{(N-2)/2}, {\bf u}_{N/2}\big ]
\end{equation}
and 
\begin{equation} \label{circulantgraph.example.eq5} {\pmb \Lambda}_l ={\rm diag} \Big(0, 1-\cos \theta_l, 1-\cos \theta_l, 
\cdots, 1-\cos \frac{N-2}{2}\theta_l, 1-\cos  \frac{N-2}{2} \theta_l, 1-\cos \frac{N}{2}\theta_l\Big)\end{equation}
for even $N$,
where   $\theta_l= 2\pi q_l/N$ for $1\le l\le d$, 
$${\bf v}_k=(2/N)^{1/2}\Big[0, \sin \frac{2\pi k}{N}, \ldots, 
 \sin \frac{2\pi  k (N-1) }{N}\Big]^T$$
 if $1\le k\le  
 \lfloor (N-1)/2\rfloor$ (the largest integer no larger than $(N-1)/2$), 
 and  $  {\bf u}_0= N^{-1/2} {\bf 1}$, 
$${\bf u}_k=(2/N)^{1/2}\Big[1, \cos \frac{2\pi k}{N}, \ldots, 
 \cos \frac{2\pi  k (N-1) }{N}\Big]^T$$ 
 if $1\le k\le  \lfloor (N-1)/2\rfloor$, and
 ${\bf u}_{N/2}= N^{-1/2} [1, -1, \ldots, 1, -1]^T$ if $N$ is even.
Then the joint spectrum of the commutative graph shifts
${\bf S}_1, \ldots, {\bf S}_d$  are given by 
\begin{equation}  \label{circulantgraph.example.eq6}
{\pmb \Lambda}:=\big\{{\pmb \lambda}(n)\ | \ 1\le n\le N\big\} \subset {\mathbb R}^d,
\end{equation}
where
${\pmb \lambda}(1)=\left[0, \ldots, 0\right]^T$,
${\pmb \lambda}(2k)= {\pmb \lambda}(2k+1)=
\left[1-\cos k \theta_1, \ldots, 1-\cos k\theta_{d}\right]^T
$ for $1\le k\le \lfloor (N-1)/2\rfloor$, and 
${\pmb \lambda}(N)=[1-(-1)^{q_1}, \ldots,  1-(-1)^{q_d}]^T$
if $N$ is even.

Denote the distinct elements in the joint spectrum by ${\pmb \gamma}(m), 1\le m\le M$.
By the coprime assumption among $q_1, \ldots, q_d$ and $N$, we see that
${\pmb \lambda}(1)\ne {\pmb \lambda}(2k)$ for all $1\le k\le
 \lfloor (N-1)/2\rfloor$
for odd $N$, and ${\pmb \lambda}(1)\ne {\pmb \lambda}(N)$, and
$ {\pmb \lambda}(2k)\ne {\pmb \lambda}(1)$
and ${\pmb \lambda}(2k)\ne {\pmb \lambda}(N)$ hold for all $1\le k\le  \lfloor (N-1)/2\rfloor$
for even $N$.
We remark that ${\pmb \lambda}(2k), 1\le k\le \lfloor (N-1)/2\rfloor$,
are not necessarily distinct; see Proposition 
\ref{doubledimension.pr} for a necessary and sufficient condition under which this distinctness condition is satisfied.
Then we may order ${\pmb \mu}(m), 1\le m\le M$,
as follows: 
\begin{equation}
{\pmb \gamma}(m)=\left\{
\begin{array}{ll}
    \left[0, \ldots, 0\right]^T &  m=1 \\ 
     \left[1-\cos k_m \theta_1, \ldots, 1-\cos k_m\theta_{d}\right]^T&  2\le m\le M,
\end{array}
\right.
\end{equation} where $1\le k_m\le \lfloor (N-1)/2\rfloor, 2\le m\le M$ for odd $N$, and 
\begin{equation}
{\pmb \gamma}(m)=\left\{
\begin{array}{ll}
    \left[0, \ldots, 0\right]^T &  m=1 \\ 
     \left[1-\cos k_m \theta_1, \ldots, 1-\cos k_m\theta_{d}\right]^T&  2\le m\le M-1\\
     \left[1-(-1)^{q_1}, \ldots,  1-(-1)^{q_d}\right]^T & m=M
\end{array}
\right.
\end{equation} for even $N$, where $1\le k_m\le \lfloor (N-1)/2\rfloor, 2\le m\le M-1$.
The eigenspaces $W_m,1\le m\le M$, corresponding to the joint spectrum $\pmb\mu(m)$ are given by 
\begin{equation}\label{rangecirculant.def}
W_m=\left\{
\begin{array}{ll}
    \span\{ {\bf 1}\} &  m=1 \\ 
     \span\{{\bf v}_k,{\bf u}_k \ {\rm with} \ k \ {\rm satisfying }\ \pmb\lambda(2k)=\pmb\gamma(m)\}&  2\le m\le M,
\end{array}
\right.
\end{equation} where ${\bf u}_k,{\bf v}_k, 1\le k\le \lfloor (N-1)/2\rfloor$ are given in \eqref{circulantgraph.example.eq2} for odd $N$, and 
\begin{equation}
W_m=\left\{
\begin{array}{ll}
    \span \{\bf 1\} &  m=1 \\ 
     \span\{{\bf v}_k,{\bf u}_k \ {\rm with} \ k \ {\rm satisfying }\ \pmb\lambda(2k)=\pmb\gamma(m)\}&  2\le m\le M-1\\
     \span\{{\bf u}_{N/2}\} & m=M
\end{array}
\right.
\end{equation} for even $N$, where ${\bf u}_k,{\bf v}_k, 1\le k\le \lfloor (N-1)/2\rfloor$ and ${\bf u}_{N/2}$ are given in \eqref{circulantgraph.example.eq4}.

Denote the  greatest common divisor between two integers
$m$ and $n$ by ${\rm gcd}(m, n)$.
We say that the family of two nonempty subsets $Q_1$ and $Q_2$ form a {\em partition} of the set $Q=\{q_1, \ldots, q_d\}$ if  $Q_1\cap Q_2=\emptyset$ and $Q_1\cup Q_2=Q$. We conclude this appendix with  a necessary and sufficient condition on the set $Q$ for the  distinct property for
${\pmb \lambda}(1)$ and ${\pmb \lambda}(2k), 1\le k\le (N-1)/2 $ in
\eqref{circulantgraph.example.eq6} for odd $N$ and 
 ${\pmb \lambda}(1), {\pmb \lambda}(N)$ and
${\pmb \lambda}(2k), 1\le k\le N/2-1$ for even $N$.

\begin{proposition}\label{doubledimension.pr}
Let  $Q=\{q_1, \ldots, q_d\}$  be of positive integers ordered with $1\le q_1<\ldots< q_d\le N/2$ and $q_1, \ldots, q_d,N$ being coprime, and let ${\pmb \lambda}(n), 1\le n\le N$, be as in \eqref{circulantgraph.example.eq6}.  Then the following statements hold.

\begin{itemize}
\item[{(i)}]
For odd $N$,
${\pmb \lambda}(1)$ and ${\pmb \lambda}(2k), 1\le k\le (N-1)/2 $,
are distinct if and only if for any partition $\{Q_1, Q_2\}$ of  the set $Q$,
either 
the greatest  common divisor among N and all $q\in Q_1$  is one, or
the greatest common divisor among  $N$ and all $q\in Q_2$  is one. 

\item[{(ii)}]
For even $N$, ${\pmb \lambda}(1), {\pmb \lambda}(N/2)$ and
${\pmb \lambda}(2k), 1\le k\le N/2-1$,
are distinct if and only if 
for any partition $Q_1, Q_2$ of the set $Q$ one of the following requirements are satisfied: 
\begin{itemize}
    \item [{(a)}] ${\rm gcd}(r_1, N)=1$;
    
    \item[{(b)}] ${\rm gcd}(r_2, N)=1$;

  \item[{(c)}]  $N\ne 0 \ {\rm mod} \ 4$; 
  ${\rm gcd}(r_1, N)=2$ and ${\rm gcd}(r_2, N)\ge 3$;
  
  \item[{(d)}]  $N\ne 0 \ {\rm mod} \ 4$;
 ${\rm gcd}(r_1, N)\ge 3$ and ${\rm gcd}(r_2, N)= 2$;

    \item[{(e)}]  $N\ne 0{\ \rm  mod }\ 12$,   ${\rm gcd}(r_1, N)=3$ and ${\rm gcd}(r_2, N)=4$; and
    
    \item[{(f)}] $N\ne 0{\ \rm  mod }\ 12$,
    ${\rm gcd}(r_1, N)=4$ and ${\rm gcd}(r_2, N)=3$,
 \end{itemize}
where    $r_1$ and $r_2$ are the greatest common divisors  of all $q\in Q_1$ and 
$q\in Q_2$ respectively. 
\end{itemize}
\end{proposition}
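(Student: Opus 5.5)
The plan is to reduce the distinctness question to a purely arithmetic problem about pairs of frequencies $k\neq k'$, and then to settle that arithmetic problem by an explicit construction of colliding pairs.

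\emph{Reduction.} By \eqref{circulantgraph.example.eq6}, a coincidence $\pmb\lambda(2k)=\pmb\lambda(2k')$ holds if and only if
\[
\cos(2\pi k q_l/N)=\cos(2\pi k' q_l/N)\quad\text{for every }1\le l\le d.
\]
For each $l$ this means that either $N\mid q_l(k-k')$ or $N\mid q_l(k+k')$. Put $Q_1$ for the set of $q$ obeying the first alternative and $Q_2$ for the rest. Then $N\mid q(k-k')$ for all $q\in Q_1$ is equivalent to $\frac{N}{g_1}\mid k-k'$, where $g_1=\gcd(N,r_1)$ is the gcd of $N$ and all $q\in Q_1$. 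Likewise $N\mid q(k+k')$ for all $q\in Q_2$ is equivalent to $\frac{N}{g_2}\mid k+k'$, with $g_2=\gcd(N,r_2)$.

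The degenerate cases are disposed of using the coprimality of $q_1,\ldots,q_d,N$. If $Q_2=\emptyset$, then $N\mid k-k'$, which forces $k=k'$. If $Q_1=\emptyset$, then $N\mid k+k'$, which is impossible for $1\le k,k'<N/2$. So a collision forces a genuine partition $\{Q_1,Q_2\}$. In that case $\gcd(g_1,g_2)=1$, and the collision problem becomes: do there exist $k\ne k'$ in the admissible range with
\[
\tfrac{N}{g_1}\mid k-k' \quad\text{and}\quad \tfrac{N}{g_2}\mid k+k' \,?
\]
The comparisons with $\pmb\lambda(1)$, and for even $N$ with $\pmb\lambda(N)$, were already handled in the text: the former by coprimality, the latter by treating $k'=N/2$ formally, where $\cos(\pi q)=(-1)^q$. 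These must be folded into the same bookkeeping.

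\emph{Odd $N$.} If $g_1=1$, the first divisibility gives $k\equiv k'\pmod N$, so $k=k'$. If $g_2=1$, the second gives $N\mid k+k'$, which is impossible. This proves sufficiency of the stated condition. For the converse, suppose $g_1,g_2\ge 2$. They are coprime and odd, so they are distinct and at least one is $\ge 5$. Swap the labels so that $g_1>g_2$. Take
\[
d=\tfrac{N}{g_1},\qquad s=\tfrac{N}{g_2},\qquad k=\tfrac{s+d}{2},\qquad k'=\tfrac{s-d}{2}.
\]
Both $d$ and $s$ are odd, so $k$ and $k'$ are integers. Since $s>d$ we get $k'\ge1$, and $k\le(N/3+N/5)/2<N/2$. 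This exhibits a collision.

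\emph{Even $N$ (main obstacle).} The same reduction applies, but two complications arise. First, $N/g_i$ may be even or odd, so the parity matching of $s$ and $d$ can fail. Second, the range is $1\le k,k'\le N/2-1$, and $\pmb\lambda(N)$ corresponds to $k'=N/2$. I would enumerate the possibilities for $(g_1,g_2)$; since they are coprime, at most one is even.
\begin{itemize}
\item If $g_1=1$ or $g_2=1$, no collision can occur; these are cases (a) and (b).
\item If one of $g_i$ equals $2$, the candidate solutions are $d\in\{N/2\}$ or $s\in\{N/2, N\}$. I would check that a genuine pair in range, or a match with $\pmb\lambda(N)$, exists exactly when $4\mid N$; this yields (c) and (d).
\item If $\{g_1,g_2\}=\{3,4\}$, the smallest choices $d=N/3$, $s=N/4$ and their multiples fail on parity or range precisely when $12\nmid N$; this yields (e) and (f).
\item For all remaining pairs with $g_1,g_2\ge3$, I would show that taking multiples $d=aN/g_1$ and $s=bN/g_2$ with $a,b\in\{1,2\}$ makes the parities agree while keeping $k<N/2$. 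This produces a collision and shows the listed cases are exhaustive.
\end{itemize}
This finite but delicate case analysis is where I expect the real work, and I would verify each boundary case numerically on small $N$ before writing it up.
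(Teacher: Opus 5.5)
Your route is the paper's. You sort each $q$ by whether $N\mid q(k-k')$ or $N\mid q(k+k')$, pass to $N/g_1\mid k-k'$ and $N/g_2\mid k+k'$ with $\gcd(g_1,g_2)=1$, and build colliding pairs from explicit sums and differences. Part (i) is complete and coincides with the paper's argument. Part (ii), the harder half, is only announced, so it is not yet proved. In addition, your bullet on $\{3,4\}$ rests on a misreading of (e) and (f).

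The following closes the gap.

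\emph{The index $N$.} $\pmb{\lambda}(N)$ never collides with $\pmb{\lambda}(2k)$ for $1\le k\le N/2-1$. The conditions $N\mid q(N/2-k)$ and $N\mid q(N/2+k)$ are equivalent because $q(N/2-k)+q(N/2+k)=qN$, and coprimality would then force $N\mid N/2-k$. So only pairs $1\le k'<k\le N/2-1$ matter, and the ``match with $\pmb{\lambda}(N)$'' you mention never occurs.

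\emph{The case $\{g_1,g_2\}=\{2,g\}$ with $g\ge 3$ odd.} The part with gcd $2$ cannot sit on $k-k'$, since that needs $N/2\mid k-k'$. Hence $k+k'=N/2$, while $k-k'$ is a multiple of the even number $N/g$. This partition therefore yields a collision if and only if $4\mid N$, for instance $k,k'=N/4\pm N/(2g)$.

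\emph{The case $3\le g_1<g_2$.} A collision always exists. Take $k=N/g_1+N/g_2$ and $k'=N/g_1-N/g_2$, putting the $g_1$-part on $k+k'=2N/g_1$ and the $g_2$-part on $k-k'=2N/g_2$. Doubling removes every parity issue, and the pair is in range whenever $1/g_1+1/g_2<1/2$. That excludes only $\{3,5\}$ and $\{3,4\}$. For $\{3,5\}$ take halves, $k,k'=(N/3\pm N/5)/2$, which are integers since $30\mid N$.

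\emph{The case $\{3,4\}$.} Here $3\mid N$ and $4\mid N$ force $12\mid N$, so (e) and (f) can never hold. There is no regime ``$12\nmid N$'' in which the constructions fail. What you must exhibit is the collision $k=5N/12$, $k'=N/12$, with $k-k'=N/3$ and $k+k'=N/2$. Your proposed $d=N/3$, $s=N/4$ has $d>s$. The swapped choice $d=N/4$, $s=N/3$ fails parity unless $8\mid N$, so the multiple $s=2\cdot N/4$ is what is needed.

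These constructions are exactly the paper's Cases N1--N6, and the parity argument for $\{2,g\}$ is its Cases S2--S3.
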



\begin{proof}   (i):  
First the necessity.  Suppose, on the contrary,  that there is a partition of the set $Q$  into two nonoverlapping groups $Q_1$ and $Q_2$ such that the maximal divisor between $N$ and all two groups in the partition is larger than $2$. Without loss of generality, 
we assume that  $Q_1=\{q_1, \ldots, q_e\}$ and  $Q_2=\{q_{e+1}, \ldots q_d\}$. By the coprime assumption on  $q_1, \ldots, q_d$ and $N$, we have that $1\le e<d$. Denote the greatest common divisor of the first group  and the second group by $r_1={\rm gcd}(q_1, \ldots, q_e)$ and  $r_2={\rm gcd}(q_{e+1}, \ldots, q_d)$
respectively.  By the coprime assumption among $q_1, \ldots, q_d$ and $N$, we may,  without loss of generality, that
\begin{equation}\label{doubledimension.pr.pfeq1}
{\rm gcd}(r_2, N)> {\rm gcd}(r_1, N)\ge 2.
\end{equation}
Set 
$$k_1=\frac{1}{2}\Big(\frac{N}{{\rm gcd}(r_1, N)}+ \frac{N}{{\rm gcd}(r_2, N)}\Big) \ 
\ {\rm and} \ \  k_2=\frac{1}{2}\Big(\frac{N}{{\rm gcd}(r_1, N)}-\frac{N}{{\rm gcd}(r_2, N)}\Big).$$
Then $k_1, k_2$ are integers satisfying $1\le k_2< k_1\le (N-1)/2$  by \eqref{doubledimension.pr.pfeq1}
and the assumption $N$ is odd. 

Let   $\theta_i= 2\pi q_i/N$ for $1\le i\le d$. Then   we observe that 
 $k_1\theta_i=-k_2\theta_i \ {\rm mod} \  2\pi$ for $1\le i\le e$
 and  $k_1\theta_i=k_2\theta_i \ {\rm mod} \  2\pi$ for $e+1\le i\le d$. 
 Hence
$\cos k_1 \theta_i=\cos k_2 \theta_i$ for all $1\le i\le d$.
This together with \eqref{circulantgraph.example.eq3}  and \eqref{circulantgraph.example.eq6} implies that
${\pmb \lambda}(2k_1)={\pmb \lambda}(2k_2)$,
which is a contradiction. This proves the necessity.

Now we prove the sufficiency.  Suppose, on the contrary, that
two of ${\pmb \lambda}(1)$ and ${\pmb \lambda}(2k), 1\le k\le (N-1)/2 $,
are the same.  By the coprime assumption among $q_1, \ldots, q_d$ and $N$, we see that ${\pmb \lambda}(1)\ne {\pmb \lambda}(2k)$ for all $1\le k\le (N-1)/2$. Therefore there exist integers $1\le k_1\ne k_2\le (N-1)/2$ such that
${\pmb \lambda}(2k_1)={\pmb \lambda}(2k_2)$. 
Following the argument in the necessity, we can find a partition of the set $Q$ into $Q_1$ and $Q_2$ such that $Q_1\cap Q_2=\emptyset, Q_1\cup Q_2=Q$,
$k_1 q=-k_2 q \ {\rm mod} \ N$ for $q\in Q_1$
and  $k_1 q=k_2 q\ {\rm mod} \ N$ for $q\in Q_2$.

\smallskip 
{\em  Case 1: \   $Q_1=\emptyset$.} 

In this case,  $(k_1-k_2) q_l=0 \ {\rm mod} \ N$ for all $1\le l\le d$. 
This together with the coprime assumption among $q_1, \ldots, q_d$ and $N$
implies that $k_1-k_2=0 \ {\rm mod} \ N$, which is a contradiction as
$-N/2\le k_1-k_2\le N/2$ and $k_1\ne k_2$.

\smallskip
{\em  Case 2: \ $Q_2=\emptyset$.}

In this case,  $(k_1+k_2) q_l=0 \ {\rm mod} \ N$ for all $1\le l\le d$. 
This together with the coprime assumption among $q_1, \ldots, q_d$ and $N$
implies that $k_1+k_2=0 \ {\rm mod} \ N$, which is a contradiction as
$2\le k_1+k_2\le N-1$.

\smallskip

{\em  Case 3:  \ $Q_1\ne \emptyset$ and $Q_2\ne \emptyset$.}

Denote the greatest common divisor of integers in $Q_1$  and $ Q_2$ 
by $r_1$ and $r_2$ respectively. Following the arguments in Cases 1 and 2, we can show that
\begin{equation}\label{case3.eq1}
    (k_1+k_2) r_1=0 \ {\rm mod} \ N \ \ {\rm  and} \ \
(k_1-k_2) r_2=0 \ {\rm mod} \ N.
\end{equation}
By  the assumption of the sufficiency, either  ${\rm gcd}(r_1, N)=1$ or 
${\rm gcd}(r_2, N)=1$. This together with \eqref{case3.eq1}
implies that
either $k_1+k_2=0 \ {\rm mod} \ N$
or $k_1-k_2=0 \ {\rm mod} \ N$, which is a contradiction 
as $k_1+k_2\in [2, N-1]$ and
$k_1-k_2\in [-(N-1)/2, (N-1)/2]\backslash \{0\}$.

\medskip

(ii)\ \
  First the necessity. Suppose, on the contrary, that there is a partition  $\{Q_1, Q_2\}$ of the set $Q$ such that
the  greatest common divisor  $r_1$ of $q\in Q_1$ and
the  greatest common divisor  $r_2$ of $q\in Q_2$ satisfy
\begin{equation}\label{N2.eq1}
{\rm gcd}(r_1, N)\ge 2\ \  {\rm and} \ \ {\rm gcd}(r_2, N)\ge 2.
\end{equation}
Without loss of generality, we assume that
${\rm gcd}(r_2, N)>{\rm gcd}(r_1, N)$.

\smallskip 

{\em Case N1: \ ${\rm gcd}(r_1, N)\ge 4$.}

Set 
\begin{equation}\label{case4.eq1}
    k_1= \frac{N}{ {\rm gcd}(r_1, N)}+\frac{N}{ {\rm gcd}(r_2, N)}\ \  {\rm and}\ \ 
 k_2= \frac{N}{ {\rm gcd}(r_1, N)}-\frac{N}{ {\rm gcd}(r_2, N)}.
\end{equation}
Then  one may verify that 
$1\le k_1\ne k_2<N/2$ and 
${\pmb \lambda}(2k_1)={\pmb \lambda}(2k_2)$.
This is a contradiction. 

\smallskip 

{\em Case N2:\ ${\rm gcd}(r_1, N)=3$ and ${\rm gcd}(r_2, N)\ge 7$.}

One may verify that integers $k_1$ and $k_2$ in \eqref{case4.eq1} 
satisfy $1\le k_1\ne k_2<N/2$ and 
${\pmb \lambda}(2k_1)={\pmb \lambda}(2k_2)$. This is a contradiction.

\smallskip

{\em Case N3:\  ${\rm gcd}(r_1, N)=3$ and ${\rm gcd}(r_2, N)=6$}.

This is a contradiction as ${\rm gcd}(r_1, r_2, N)={\rm gcd}(q_1, \ldots, q_d, N)=1$.

\smallskip

{\em Case N4: \ ${\rm gcd}(r_1, N)=3$ and ${\rm gcd}(r_2, N)=5$.}

In this case, we set
\begin{equation}\label{case7.eq1}
k_1= \frac{1}{2}\Big(\frac{N}{ {\rm gcd}(r_1, N)}+\frac{N}{ {\rm gcd}(r_2, N)}\Big)\ \ 
{\rm and} \  \ 
k_2=\frac{1}{2}\Big(\frac{N}{ {\rm gcd}(r_1, N)}-\frac{N}{ {\rm gcd}(r_2, N)}\Big).
\end{equation}
Then we may verify that $1\le k_1\ne k_2\le (N-1)/2$ and 
${\pmb \lambda}(2k_1)={\pmb \lambda}(2k_2)$. This is a contradiction.

\smallskip

{\em Case N5:\ ${\rm gcd}(r_1, N)=3$ and ${\rm gcd}(r_2, N)=4$.}

In this case, we have $N=0 \ {\rm mod} \ 12$.
Set
$k_1={ 5N}/{12}$ and $k_2={N}/{12}$.
 Then 
$1\le k_1\ne k_2\le N/2-1$, 
  $4(k_1+k_2)=2N$ and $3(k_1-k_2)=N$, which implies that
 ${\pmb \lambda}(2k_1)={\pmb \lambda}(2k_2)$.
 This is a contradiction.  

\smallskip 

{\em Case N6:\ ${\rm gcd}(r_1, N)=2$, ${\rm gcd}(r_2, N)\ge 3$.}

In this case, we have  $N=0 \ {\rm mod} \ 4$ by Assumption (c) and (d), and 
${\rm gcd}(r_2, N)$ is an odd integer by the coprime assumption among $q_1, \ldots, q_N$ and $N$.
 Set 
$$k_1= \frac{N}{4 {\rm gcd}(r_2, N)}\ \ {\rm and} \ \ k_2= \frac{N}{2}-k_1.$$
Then one may verify that $1\le k_1\ne k_2\le N/2-1$,
$2(k_1+k_2)=N$ and ${\rm gcd}(r_2, N)(k_2-k_1)= ({\rm gcd}(r_2, N)-1)N/2=0 \ {\rm mod}\ N$.
This implies  that
 ${\pmb \lambda}(2k_1)={\pmb \lambda}(2k_2)$, which is  a contradiction.

\medskip

Now we prove the sufficiency.  Suppose, on the contrary, that
two of ${\pmb \lambda}(1), {\pmb \lambda}(N)$ and ${\pmb \lambda}(2k), 1\le k\le N/2-1 $
are the same.  By the coprime assumption among $q_1, \ldots, q_d$ and $N$, we see that ${\pmb \lambda}(1)\ne {\pmb \lambda}(N)$,
${\pmb \lambda(1)}\ne {\pmb \lambda}(2k)$ for all $1\le k\le N/2-1$
and ${\pmb \lambda}(N/2)\ne {\pmb \lambda}(2k)$ for all $1\le k\le N/2-1$.
Then there exist integers $1\le k_1\ne k_2\le N/2-1$ such that
${\pmb \lambda}(2k_1)={\pmb \lambda}(2k_2)$.  Following the argument in the sufficiency of the first conclusion, we conclude that for $1\le l\le d$, 
$$ {\rm either} \ (k_1-k_2) q_l=0  \ {\rm mod} \ N \ {\rm  or}\ 
(k_1+k_2) q_l= 0  \ {\rm mod} \ N.$$ 
Denote the set of all $q_l\in Q$ satisfying
 $(k_1+k_2) q_l= 0  \ {\rm mod} \ N$ by $Q_1$ and all other $q_l\in Q$
 by $Q_2$.  Following the argument used in the proof of the sufficiency in the conclusion for odd $N$, we can show that $Q_1$ and $Q_2$ are not empty sets. 
 Then  $Q_1, Q_2$ forms a partition of the set $Q$.
 Denote the greatest common divisor among $q\in Q_1$ by $r_1$
 and the greatest common divisor among $q\in Q_2$  by $r_2$.
Then by the sufficiency assumption, it suffices to one of the following conditions are satisfied: 1) either ${\rm gcd}(r_1, N)=1$ or  ${\rm gcd}(r_2, N)=1$, or 2)  ${\rm gcd}(r_1, N)=2$, ${\rm gcd}(r_2, N)\ge 3$ and $N\ne 0 \ {\rm mod} \ 4$,  or 3) ${\rm gcd}(r_1, N)\ge 3$, ${\rm gcd}(r_2, N)=2$ and $N\ne 0 \ {\rm mod} \ 4$, or 4) 
${\rm gcd}(r_1, N)=3$, ${\rm gcd}(r_2, N)\ge 4$ and $N\ne 0 \ {\rm mod} \ 12$, or 5) 
${\rm gcd}(r_1, N)\ge 4$, ${\rm gcd}(r_2, N)\ge 3$ and $N\ne 0 \ {\rm mod} \ 12$.

\smallskip 

 {\em Case S1: Either ${\rm gcd}(r_1, N)=1$ or ${\rm gcd}(r_2, N)=1$. }
 
  By the construction of integers $k_1$ and $k_2$, we see that
  $(k_1-k_2) {\rm gcd}(r_1, N)= 0  \ {\rm mod} \ N$ and
 $(k_1+k_2) {\rm gcd}(r_2, N)= 0  \ {\rm mod} \ N$, which
 is contradiction as  $k_1-k_2\in [-N/2+1, N/2-1]\backslash \{0\}$
 and $2\le k_1+k_2\le N-2$.

\smallskip

{\em Case S2: ${\rm gcd}(r_1, N)=2$, ${\rm gcd}(r_2, N)\ge 3$ and $N\ne 0 \ {\rm mod} \ 4$.}

In this case,  we obtain from the construction of integers that
$2(k_1-k_2)=0 \  {\rm mod} \ N$, which is a contradiction as
$k_1-k_2\in [-N/2+1, N/2-1]\backslash \{0\}$.

\smallskip 

{\em Case S3: ${\rm gcd}(r_1, N)\ge 3$, ${\rm gcd}(r_2, N)=2$ and $N\ne 0 \ {\rm mod} \ 4$.}

In this case,  we obtain from the construction of integers that
\begin{equation}\label{case10.eq1}
    {\rm gcd}(r_1, N)(k_1-k_2)=0 \  {\rm mod} \ N\   \ {\rm and}\ \ 2(k_1+k_2)=0 \ {\rm mod} \ N.\end{equation}
Therefore $k_1+k_2=N/2$ as  $2\le k_1+k_2\le N-2$.
Substituting the above expression into \eqref{case10.eq1}, we have
$$  {\rm gcd}(r_1, N) \frac{N}{2}- 2 {\rm gcd}(r_1, N) k_2= 0 \ {\rm mod} \ N,$$
which is a contradiction as the left hand side of the above equality is an odd integer and
the right hand side is an even integer. 

\smallskip 

{\em Case S4:
${\rm gcd}(r_1, N)=3$, ${\rm gcd}(r_2, N)\ge 4$ and $N\ne 0 \ {\rm mod} \ 12$.}

In this case,  we obtain from the construction of integers that
\begin{equation}\label{case12.eq1}
    3(k_1-k_2)=0 \  {\rm mod} \ N\   \ {\rm and}\ \ 4(k_1+k_2)=0 \ {\rm mod} \ N.\end{equation}
  By the assumption $k_1-k_2\in [-N/2+2, N/2-2]\backslash \{0\}$, we see that either  
  $k_1-k_2=N/3$ or $-N/3$.   Therefore either
  $4(N/3+2k_2)=0 \ {\rm mod}\  N$ or  $4(N/3+2k_1)=0 \ {\rm mod}\  N$.
  This together with  the range assumption on $k_1$ and $k_2$, we see that
  $4(N/3+2k_2)\in \{N, 2N, 3N\}$ or $4(N/3+2k_1)\in \{N, 2N, 3N\}$, which
  has no integer solution under the assumption that $N\ne 0 \ {\rm mod} \ 12$.  This leads to a contradiction. 

  \smallskip
  
{\em Case S5: ${\rm gcd}(r_1, N)\ge 4$, ${\rm gcd}(r_2, N)=3$ and $N\ne 0 \ {\rm mod} \ 12$. 
}

In this case,  we obtain from the construction of integers that
\begin{equation}\label{case13.eq1}
    4(k_1-k_2)=0 \  {\rm mod} \ N\   \ {\rm and}\ \ 3(k_1+k_2)=0 \ {\rm mod} \ N.\end{equation}
This together the range assumption on $k_1, k_2$ implies that
$4(k_1-k_2)=N$ or $-N$,  which is contradiction as 
the left hand size is a multiple of $4$ and the right hand size is
not a multiple of $4$.

Combining the above arguments for five different cases, we complete the proof of the sufficiency  in the conclusion (ii). \end{proof}

\section{Graph uncertainty principle}
\label{UP.subsection}

In this appendix,
we 
introduce a quantity $\alpha(Y, \Omega)$ for 
a nonempty set pair $(Y, \Omega)$
in the spatial-Fourier domain and show that $\alpha(Y, \Omega)<1$ is a sufficient condition for the  nonempty spatial-Fourier set pair $(Y, \Omega)$ to be  a strong annihilating pair;
see \eqref{uncertainty.thm.eq1}, Theorem \ref{uncertainty.thm} and Examples \ref{uncertainty.example2} and \ref{circulant.ex}.  As consequences of Theorem \ref{uncertainty.thm}, we obtain multiple formulations of uncertainty principles governing the (essential) supports of a nonzero graph signal and its GFT, see  Corollaries \ref{epsilonuncertainty.cor}, \ref{uncertainty.cor} and \ref{uncertainty.cor2}.

Let $Y\subset V$ and $ \Omega\subset\{1, 2, \ldots, M\}$ be nonempty sets. We say that  $(Y,\Omega)$
is a {\em strong annihilating pair} in the spatial-Fourier domain
if there exists a positive constant $C$ such that
\begin{equation} \label{sap.def}
   \|{\bf x}\|_2
   \le C \Big(\sum_{m\notin \Omega}\|\widehat {\bf x}(m)\|_2^2\Big)^{1/2} 
   + C    \Big(\sum_{v\notin Y}|x(v)|^2\Big)^{1/2} 
\end{equation}
hold for all graph signals ${\bf x}=[x(v)]_{v\in V}$ on the graph ${\mathcal G}$
\cite{demange2005, grochenig2025, jaming2007, Jaming25, ricaud2014}.
For $v\in V$, we denote the  delta signal
on the vertex $v$ by  ${\pmb \delta}_v=[\delta_{v v'}]_{v'\in V}$,
where $\delta$  is the Kronecker delta symbol.
In the following theorem, we show that any nonempty pair
$(Y, \Omega)$ in the spatial-Fourier domain satisfying
\begin{equation} \label{uncertainty.thm.eq1}
\alpha(Y,\Omega):=\Big(\sum_{m\in\Omega}\sum_{v\in Y}\big\|\widehat {{\pmb \delta}_v}(m)\big\|^2_2\Big)^{1/2}<1
\end{equation}
is a strong annihilating pair.

\begin{theorem}\label{uncertainty.thm} 
Let  $Y\subset V$ and $\Omega\subset \{1, 2, \ldots, M\}$  be nonempty sets satisfying  \eqref{uncertainty.thm.eq1}.
Then 
\begin{equation} \label{uncertainty.thm.eq2}
   \|{\bf x}\|_2
   \le (1-\alpha(Y,\Omega))^{-1} \Big(\sum_{m\notin \Omega}\|\widehat {\bf x}(m)\|_2^2\Big)^{1/2} 
   + \frac{(\dim \Omega-(\alpha(Y,\Omega))^2)^{1/2}}{1-\alpha(Y,\Omega) } 
   \Big(\sum_{v\notin Y}|x(v)|^2\Big)^{1/2}
\end{equation}
hold for all graph signals ${\bf x}=[x(v)]_{v\in V}$, where 
$\alpha(Y, \Omega)$ is the constant in \eqref{uncertainty.thm.eq1}, and
 $\dim \Omega $ is   dimension of the direct sum 
$\oplus_{m\in \Omega} W_m$ of the range spaces
$W_m, m\in \Omega$, in  \eqref{projection.eq4}.
\end{theorem}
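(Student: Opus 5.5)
Define the orthogonal projection onto the Fourier band by ${\bf P}_\Omega=\sum_{m\in\Omega}{\bf P}_m$. Define the spatial restriction operator ${\bf D}_Y={\rm diag}[\chi_Y(v)]_{v\in V}$. The plan is the standard Donoho--Stark style argument: first bound the operator norm of ${\bf P}_\Omega{\bf D}_Y$ by $\alpha(Y,\Omega)$, then split ${\bf x}$ into pieces.

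The first step is the Hilbert--Schmidt bound. The Frobenius norm of ${\bf P}_\Omega{\bf D}_Y$ is
$$\|{\bf P}_\Omega{\bf D}_Y\|_F^2=\sum_{v\in Y}\|{\bf P}_\Omega{\pmb\delta}_v\|_2^2=\sum_{v\in Y}\sum_{m\in\Omega}\|\widehat{{\pmb\delta}_v}(m)\|_2^2=\alpha(Y,\Omega)^2.$$
The middle equality uses the orthogonality \eqref{projection.eq1} of the projections ${\bf P}_m$. It follows that $\|{\bf P}_\Omega{\bf D}_Y{\bf z}\|_2\le \alpha\|{\bf z}\|_2$ for every ${\bf z}$, where we abbreviate $\alpha=\alpha(Y,\Omega)$.

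The second step is a decomposition of ${\bf x}$. Write ${\bf x}=({\bf I}-{\bf P}_\Omega){\bf x}+{\bf P}_\Omega{\bf D}_Y{\bf x}+{\bf P}_\Omega({\bf I}-{\bf D}_Y){\bf x}$. By Parseval \eqref{parsevelidentity}, the first term has norm $(\sum_{m\notin\Omega}\|\widehat{\bf x}(m)\|_2^2)^{1/2}=:a$. The second term has norm at most $\alpha\|{\bf x}\|_2$. This already gives the cruder inequality $(1-\alpha)\|{\bf x}\|_2\le a+\|{\bf P}_\Omega({\bf I}-{\bf D}_Y){\bf x}\|_2$.

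The third term needs a sharper constant. Its norm is at most $\|{\bf P}_\Omega({\bf I}-{\bf D}_Y)\|\,b$, where $b=(\sum_{v\notin Y}|x(v)|^2)^{1/2}$. To reach the stated constant, I would estimate ${\bf P}_\Omega({\bf I}-{\bf D}_Y)$ through its Frobenius norm. Since $\|{\bf P}_\Omega\|_F^2={\rm tr}\,{\bf P}_\Omega=\dim\Omega$, we get
$$\|{\bf P}_\Omega({\bf I}-{\bf D}_Y)\|_F^2=\|{\bf P}_\Omega\|_F^2-\|{\bf P}_\Omega{\bf D}_Y\|_F^2=\dim\Omega-\alpha^2.$$
The first equality holds because the columns indexed by $Y$ and by $V\setminus Y$ partition the columns of ${\bf P}_\Omega$. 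Hence the operator norm is at most $(\dim\Omega-\alpha^2)^{1/2}$. Dividing the inequality by $1-\alpha>0$ then yields \eqref{uncertainty.thm.eq2}.

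The main point to check is this last step, the splitting $\|{\bf P}_\Omega\|_F^2=\|{\bf P}_\Omega{\bf D}_Y\|_F^2+\|{\bf P}_\Omega({\bf I}-{\bf D}_Y)\|_F^2$ together with ${\rm tr}\,{\bf P}_\Omega=\dim\oplus_{m\in\Omega}W_m$. Both facts are immediate from ${\bf D}_Y+({\bf I}-{\bf D}_Y)={\bf I}$ acting on disjoint column sets, and from ${\bf P}_\Omega$ being the orthogonal projection onto $\oplus_{m\in\Omega}W_m$. I would not attempt a sharper operator-norm estimate, since the Frobenius bound matches the constant in the statement exactly.
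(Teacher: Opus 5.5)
Your proof is correct and is essentially the paper's argument. The paper likewise splits off $\big(\sum_{m\notin\Omega}\|\widehat{\bf x}(m)\|_2^2\big)^{1/2}$, expands $\widehat{\bf x}(m)=\sum_{v\in V}x(v)\widehat{{\pmb\delta}_v}(m)$ for $m\in\Omega$, separates $v\in Y$ from $v\notin Y$, and applies Cauchy--Schwarz together with the identity $\sum_{m\in\Omega}\sum_{v\notin Y}\|\widehat{{\pmb\delta}_v}(m)\|_2^2=\dim\Omega-\alpha(Y,\Omega)^2$; this is exactly your Frobenius-norm bounds on ${\bf P}_\Omega{\bf D}_Y$ and ${\bf P}_\Omega({\bf I}-{\bf D}_Y)$ written out coordinatewise, so your operator formulation is a cleaner packaging that yields identical constants.
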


We postpone the detailed proof to the end of this appendix.

Take  $\epsilon\in [0, 1]$ and let  $Y\subset V$ and $\Omega\subset \{1, \ldots, M\}$ be nonempty sets. We say that  a graph signal ${\bf x}=[x(v)]_{v\in V}$ is {\em $\epsilon$-concentrated on $Y$} if
$$\Big(\sum_{v\not\in Y} |x(v)|^2\Big)^{1/2}\le \epsilon \|{\bf x}\|_2,$$
and its Fourier transform $\widehat{\bf x}$ is 
{\em $\epsilon$-concentrated on $\Omega$} if 
$$\Big(\sum_{m\not\in \Omega} \|\widehat {\bf x} (m)\|_2^2\Big)^{1/2}\le \epsilon \|\widehat {\bf x}\|_F$$
\noindent \cite{bass2013, donoho1989, fuhr2019, li2022}.
By Theorem \ref{uncertainty.thm},  we have the following uncertainty principle of Donoho and Stark type, cf. \cite{donoho1989} for a similar result on the classical time-frequency  domain. 

\begin{corollary} \label{epsilonuncertainty.cor} Let
 $Y\subset V$ and $\Omega\subset \{1, \ldots, M\}$ be nonempty sets,
 and $\epsilon_T, \epsilon_F\in [0, 1]$.
If there exists a  nonzero graph signal ${\bf x}$
such that it is  $\epsilon_T$-concentrated
on $Y$  and  its Fourier transform $\widehat {\bf x}$ is $\epsilon_F$-concentrated on $\Omega$, then 
\begin{equation} \label{epsilonuncertainty.cor.eq1}
    \alpha(Y, \Omega)\ge 1- \epsilon_T- (\dim \Omega)^{1/2}\epsilon_F.
\end{equation}
\end{corollary}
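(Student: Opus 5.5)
The plan is to apply Theorem \ref{uncertainty.thm} directly to the nonzero signal ${\bf x}$ and then substitute the two concentration hypotheses. If $\alpha(Y,\Omega)\ge 1$, then \eqref{epsilonuncertainty.cor.eq1} is immediate, because its right-hand side is at most $1$ when $\epsilon_T,\epsilon_F\ge 0$. So I will assume $\alpha:=\alpha(Y,\Omega)<1$. In that case Theorem \ref{uncertainty.thm} applies, and \eqref{uncertainty.thm.eq2} gives
\begin{equation*}
\|{\bf x}\|_2\le (1-\alpha)^{-1}\Big(\sum_{m\notin\Omega}\|\widehat{\bf x}(m)\|_2^2\Big)^{1/2}+\frac{(\dim\Omega-\alpha^2)^{1/2}}{1-\alpha}\Big(\sum_{v\notin Y}|x(v)|^2\Big)^{1/2}.
\end{equation*}

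Next I substitute the hypotheses. The Fourier concentration gives $\big(\sum_{m\notin\Omega}\|\widehat{\bf x}(m)\|_2^2\big)^{1/2}\le\epsilon_F\|\widehat{\bf x}\|_F=\epsilon_F\|{\bf x}\|_2$, using Parseval's identity \eqref{parsevelidentity}. The spatial concentration bounds the vertex tail by $\epsilon_T\|{\bf x}\|_2$. Since ${\bf x}\ne 0$, I can divide by $\|{\bf x}\|_2$ and multiply by $1-\alpha>0$, which yields
\begin{equation*}
1-\alpha\le \epsilon_F+(\dim\Omega-\alpha^2)^{1/2}\epsilon_T .
\end{equation*}

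The main obstacle is that this inequality has the weights the wrong way round compared with \eqref{epsilonuncertainty.cor.eq1}: there the factor $(\dim\Omega)^{1/2}$ sits on $\epsilon_F$, while here it sits on $\epsilon_T$.

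I would try two ways around this, in order.

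\emph{First option: the symmetric bound.} Use $(\dim\Omega-\alpha^2)^{1/2}\le(\dim\Omega)^{1/2}$ and $\dim\Omega\ge 1$, which follows from $\Omega\ne\emptyset$. This gives $\alpha\ge 1-(\dim\Omega)^{1/2}\max(\epsilon_T,\epsilon_F)$. That is weaker than the stated inequality.

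\emph{Second option: redo the proof with the roles swapped.} Return to the proof of Theorem \ref{uncertainty.thm} and interchange the two localizations. Split ${\bf x}$ as $\chi_Y{\bf x}+\chi_{Y^c}{\bf x}$, where $\chi_Y$ is the restriction to $Y$, and apply the band projection $\sum_{m\in\Omega}{\bf P}_m$. The relevant operator $\big(\sum_{m\in\Omega}{\bf P}_m\big)\chi_Y$ has Frobenius, and hence operator, norm equal to $\alpha$. The dimension factor should then arise when bounding the out-of-band tail rather than the vertex tail, which would place $(\dim\Omega)^{1/2}$ on $\epsilon_F$.

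Whichever arrangement produces the coefficient pattern $\epsilon_T+(\dim\Omega)^{1/2}\epsilon_F$, the final step is the same: rearrange to get $\alpha\ge 1-\epsilon_T-(\dim\Omega)^{1/2}\epsilon_F$.

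I expect the real difficulty to be confirming which tail carries the $(\dim\Omega)^{1/2}$ factor. If \eqref{uncertainty.thm.eq2} is exactly as stated, the direct substitution gives only the version with $\epsilon_T$ and $\epsilon_F$ interchanged, and reproducing the corollary literally will require the swapped estimate from the second option.
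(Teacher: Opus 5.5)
Your diagnosis of the weights is correct, and it applies to the paper too. The paper justifies this corollary only with ``By Theorem \ref{uncertainty.thm}'', which is exactly your substitution. That substitution yields $1-\alpha(Y,\Omega)\le \epsilon_F+(\dim\Omega-\alpha(Y,\Omega)^2)^{1/2}\epsilon_T$, with the dimension factor on $\epsilon_T$. For $\dim\Omega\ge 2$ this does not by itself give \eqref{epsilonuncertainty.cor.eq1}; take $\epsilon_F=0$ to see it.

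However, your proposal stops short of a proof. The first option is weaker, and it is also misstated: what actually follows is $\alpha\ge 1-(\dim\Omega)^{1/2}(\epsilon_T+\epsilon_F)$, not $1-(\dim\Omega)^{1/2}\max(\epsilon_T,\epsilon_F)$. The second option uses the right decomposition. But you expect a factor $(\dim\Omega)^{1/2}$ to reappear on the out-of-band tail, and you leave the conclusion conditional (``whichever arrangement produces\ldots''). So the stated inequality is never actually derived.

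The missing observation is that no dimension factor arises at all. Put ${\bf P}_\Omega=\sum_{m\in\Omega}{\bf P}_m$, which is an orthogonal projection. The spectral norm of ${\bf P}_\Omega\chi_Y$ is at most (not equal to) its Frobenius norm $\alpha(Y,\Omega)$. Using Parseval for the first term and $\|{\bf P}_\Omega\|\le 1$ for the last, you get
\begin{equation*}
\|{\bf x}\|_2\le \|({\bf I}-{\bf P}_\Omega){\bf x}\|_2+\|{\bf P}_\Omega\chi_Y{\bf x}\|_2+\|{\bf P}_\Omega\chi_{V\setminus Y}{\bf x}\|_2\le \big(\epsilon_F+\alpha(Y,\Omega)+\epsilon_T\big)\|{\bf x}\|_2 .
\end{equation*}
Dividing by $\|{\bf x}\|_2\ne 0$ gives the Donoho--Stark bound $\alpha(Y,\Omega)\ge 1-\epsilon_T-\epsilon_F$. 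No case split on $\alpha<1$ is needed. Since $\dim\Omega\ge\#\Omega\ge 1$ (each $W_m\ne\{0\}$), this implies \eqref{epsilonuncertainty.cor.eq1}. It equally implies the transposed version that Theorem \ref{uncertainty.thm} delivers. So the corollary is true, but it needs this direct argument rather than a substitution into \eqref{uncertainty.thm.eq2}.
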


Taking $\epsilon_T=\epsilon_F=0$ in  Corollary \ref{epsilonuncertainty.cor}, we conclude that
for  nonempty sets $Y\subset V$ and $\Omega\subset \{1, \ldots, M\}$ with 
$\alpha(Y, \Omega)<1$, there does not exist a nonzero graph signal  such that
it is supported on $Y$ and its Fourier transform 
is supported on $\Omega$. In other words, we have the following uncertainty principle governing  the supports of any nonzero graph signal and its GFT.

\begin{corollary} \label{uncertainty.cor} Let ${\bf x}=[x(v)]_{v\in V}$ be a nonzero graph signal. Then  
its support ${\rm supp}\ \x =\{v\in V  \mid x(v)\ne0\}$
and the support ${\rm supp} \ \widehat\x=\{m\in \{1, \ldots, M\} \mid \widehat\x(m)\ne {\bf 0}\}$
of its GFT $\widehat {\bf x}$
satisfy
\begin{equation} \label{uncertainty.cor.eq1}
    \alpha({\rm supp}\ \x,\  {\rm supp}\ \widehat\x)\ge 1.
\end{equation}
\end{corollary}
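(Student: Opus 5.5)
This is a direct consequence of Corollary \ref{epsilonuncertainty.cor} (equivalently, of Theorem \ref{uncertainty.thm}), argued by contradiction.

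Let $\x\ne {\bf 0}$ and put $Y={\rm supp}\ \x$ and $\Omega={\rm supp}\ \widehat\x$. Both sets are nonempty. $Y$ is nonempty since $\x$ is nonzero. $\Omega$ is nonempty because, by Parseval's identity \eqref{parsevelidentity}, $\|\widehat\x\|_F=\|\x\|_2>0$, so some $\widehat\x(m)$ is nonzero.

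By the choice of $Y$ and $\Omega$ we have
$$\sum_{v\notin Y}|x(v)|^2=0 \quad {\rm and}\quad \sum_{m\notin\Omega}\|\widehat\x(m)\|_2^2=0.$$
Hence $\x$ is $0$-concentrated on $Y$, and $\widehat\x$ is $0$-concentrated on $\Omega$.

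Apply Corollary \ref{epsilonuncertainty.cor} with $\epsilon_T=\epsilon_F=0$. It gives $\alpha(Y,\Omega)\ge 1$, which is \eqref{uncertainty.cor.eq1}.

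To avoid relying on the corollary, one can argue from Theorem \ref{uncertainty.thm} directly. Suppose $\alpha(Y,\Omega)<1$. Then \eqref{uncertainty.thm.eq2} holds. Both sums on its right-hand side vanish, so it forces $\|\x\|_2\le 0$. This contradicts $\x\ne{\bf 0}$.

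There is no real obstacle here. The only points to check are that $Y$ and $\Omega$ are nonempty, so the theorem applies, and that the constants in \eqref{uncertainty.thm.eq2} are finite when $\alpha<1$. The second holds because $\dim\Omega\ge \alpha(Y,\Omega)^2$: the quantity $\alpha^2$ is a sum of squared norms of projections of orthonormal vectors onto $\oplus_{m\in\Omega}W_m$, and that sum is at most $\dim\Omega$.
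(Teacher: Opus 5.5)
Your proposal is correct and follows the paper's route: the paper obtains this corollary by setting $\epsilon_T=\epsilon_F=0$ in Corollary~\ref{epsilonuncertainty.cor}, which in turn rests on Theorem~\ref{uncertainty.thm}. Your direct contradiction argument from \eqref{uncertainty.thm.eq2} makes explicit details the paper leaves implicit, as do your checks that both supports are nonempty and that $\alpha(Y,\Omega)^2\le\dim\Omega$ (a trace bound for the orthogonal projection onto $\oplus_{m\in\Omega}W_m$).
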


A similar result to the conclusion in Corollary 
\ref{uncertainty.cor} is established in \cite{Stankovic20}
with the GFT being defined by \eqref{GFT.distinct}.
As shown in  Examples \ref{uncertainty.example2} and \ref{circulant.ex} below, the lower bound estimate in 
Corollary \ref{uncertainty.cor} is optimal on the complete graph $K_N, N\ge 1$, and  on the circulant graph ${\mathcal C}(N, Q)$.

\begin{example} \label{uncertainty.example2}
{\rm (Continuation of Examples \ref{uncertainty.example}, \ref{uncertainty.example.part2} and \ref{uncertainty.example.part3})\  On the complete graph $K_N$, we obtain from 
\eqref{uncertainty.example.eq1} that 
\begin{equation} \label{uncertainty.example2.eq1}
\alpha(Y, \Omega)= N^{-1/2} (\# Y \times \dim \Omega)^{1/2}=\left\{ 
\begin{array}{ll} (N^{-1} \# Y)^{1/2} & {\rm if} \ \Omega=\{1\}\\
((1-1/N) \# Y)^{1/2} & {\rm if} \ \Omega=\{2\}\\
 (\# Y)^{1/2} & {\rm if} \ \Omega=\{1, 2\},
\end{array}
\right.
\end{equation}
where $\# Y$ is  the cardinality of the set
$Y$.  
Therefore 
the requirement \eqref{uncertainty.thm.eq1}
is satisfied only when either $Y\subsetneq K_N$ and $\Omega=\{1\}$, or
$\# Y=1$ and $\Omega=\{2\}$. 
Also we see that  the lower bound estimate \eqref{uncertainty.cor.eq1}
in Corollary
\ref{uncertainty.cor} could be achieved on the complete graph $K_N$, since 
$\alpha({\rm supp}\ \x,\  {\rm supp}\ \widehat\x)=\alpha (K_N, \{1\})=1$
for the constant graph signal ${\bf x}={\bf 1}$. 
}\end{example}

\begin{example} \label{circulant.ex}  {\rm 
Consider the unweighted  circulant graph
${\mathcal C}:={\mathcal C}(N, Q)$
generated by $Q=\{q_1, \ldots, q_d\}$ in Appendix \ref{circulantgraph.section}, and take 
graph shifts ${\bf S}_l, 1\le l\le d$, as in 
\eqref{circulantgraph.example.eq1}.
For the delta signal ${\bf \delta}_v$ at the vertex $v\in V_N:=\{1, \ldots, N\}$, 
    we have 
\begin{equation} \label{circulant.uncertainty.eq1}
     \|\widehat{\pmb \delta}_v(1)\|^2_2=N^{-1}= N^{-1}{\dim W_1},
\end{equation}
and for $2\le m\le M$,
\begin{eqnarray} \label{circulant.uncertainty.eq2}
     \|\widehat{\pmb \delta}_v(m)\|^2_2&=&\sum_{\pmb\lambda(2k)=\pmb\mu(m)}
     \Big(2N^{-1}\cos^2 \frac{2(v-1)k\pi}{N} \|{\bf u}_k\|_2^2+ 2N^{-1}
     \sin^2 \frac{2(v-1)k\pi}{N} \|{\bf v}_k\|_2^2\Big)\nonumber\\
    & = &  N^{-1} {\rm dim}W_m, 
\end{eqnarray}
provided that the order $N$ is odd. 
Similarly  we can show that for even order $N$, 
\begin{equation} \label{circulant.uncertainty.eq3}
 \|\widehat{\pmb \delta}_v(m)\|^2_2=  N^{-1} {{\rm dim}W_m}, \ 1\le m\le M.
\end{equation}  
Therefore  the quantity 
 $\alpha(Y, \Omega)$ in \eqref{uncertainty.thm.eq1}
admits the following explicit form in the unweighted circulant graph setting,
\begin{equation}\label{circulant.uncertainty.eq4}
\alpha(Y, \Omega)=\big(N^{-1}{{\rm dim}\Omega \times  \#Y}\big)^{1/2}.
\end{equation}
As $\alpha({\rm supp}\ \x,\  {\rm supp}\ \widehat\x)=\alpha ({v}, \{1,2,\ldots, M\})=1$
for the delta graph signal ${\pmb \delta}_v,v\in V_N$, 
we see that  the lower bound estimate  in 
\eqref{uncertainty.cor.eq1}
could be achieved on the circulant graph ${\mathcal C}_N$. 
}
\end{example}

For the family of orthogonal projections ${\bf P}=\{{\bf P}_m, \ 1\le m\le M\}$ in \eqref{projection.eq0}, define 
\begin{equation}
    \label{Puncertainty.def}
\|{\bf P}\|_*:=\sup_{Y\subset V,\ \Omega\subset \{1, \ldots, M\}}\ \big\{
(\# Y \times  \dim \Omega)^{-1/2} \ \mid\ 
\alpha(Y, \Omega)\ge 1\big\},\end{equation}
\begin{equation}
    \label{Puncertainty.def2}
\|{\bf P}\|_{**}:=\sup_{Y\subset V,\ \Omega\subset \{1, \ldots, M\}}\ \big\{
(\# Y \times  \# \Omega)^{-1/2} \ \mid\ 
\alpha(Y, \Omega)\ge 1\big\},\end{equation}
and {\em graph Fourier coherence} by
$$\|{\bf P}\|_\infty :=\sup_{v\in V,\ 1\le m\le M}
\big\|\widehat {\pmb \delta}_v(m)\big\|_2.$$ 
Then
\begin{equation} \label{Puncertainty.eq1}
N^{-1/2}\le \|{\bf P}\|_*,
\ \ M^{-1/2}\le \|{\bf P}\|_{**} \ \ {\rm and} \ \ \|{\bf P}\|_*\le    \|{\bf P}\|_{**}\le  \|{\bf P}\|_\infty,
\end{equation}
where the first two inequalities follow from the observation that
$\sum_{m=1}^M \big\|\widehat {{\pmb \delta}_v}(m)\big\|_2^2=1$ for all $v\in V$, and the third one holds as 
$$\alpha(Y, \Omega)\le \|{\bf P}\|_\infty (\#Y\#\Omega)^{1/2}\le 
 \|{\bf P}\|_\infty (\#Y\times \dim\Omega)^{1/2}$$ hold for all $Y\subset V$ and $\Omega\subset \{1, \ldots, M\}$.
 On the complete graph $K_N$,
  we obtain from \eqref {uncertainty.example2.eq1} that 
$$\|{\bf P}\|_*=N^{-1/2},\  \|{\bf P}\|_{**}=M^{-1/2} \  {\rm  and} \ \|{\bf P}\|_\infty= (1-1/N)^{1/2}$$
 for the family of projections ${\bf P}$  containing ${\bf P}_1$ and ${\bf P}_2$ in \eqref{uncertainty.example.eq1}, and hence the lower bound estimate in \eqref{Puncertainty.eq1} for $\|{\bf P}\|_*$ 
 and  $\|{\bf P}\|_{**}$  are achieved on the complete graph $K_N$.  On the circulant graph ${\mathcal C}(N, Q)$, it follows from 
\eqref{circulant.uncertainty.eq1}, \eqref{circulant.uncertainty.eq2}
and \eqref{circulant.uncertainty.eq3} that
$$\|{\bf P}\|_*=N^{-1/2} \ \ {\rm and} \ \ \|{\bf P}\|_{**}=\|{\bf P}\|_\infty =(N^{-1} \max_{1\le m\le M} \dim W_m)^{1/2}$$
provided that $N/(\max_{1\le m\le M} \dim W_m)$ is an integer,
where $W_m, 1\le m\le M$ are the range space in \eqref{rangecirculant.def}. In this  case, 
 the upper bound 
in \eqref{Puncertainty.eq1} for $\|{\bf P}\|_{**}$ could be attained.

 \smallskip
 
By Corollary \ref{uncertainty.cor}, we have
the following lower bound estimate for supports of a nonzero graph signal and its Fourier transform.

\begin{corollary} \label{uncertainty.cor2} 
Let ${\bf P}=\{{\bf P}_m, \ 1\le m\le M\}$ be the family of projections in \eqref{projection.eq0} and define  $\|{\bf P}\|_*$  as
in \eqref{Puncertainty.def}.  Then 
    \begin{equation}\label{uncertain.support}
    \#({\rm supp}\ \x)\times \dim ({\rm supp}\ \widehat\x)\ge (\|{\bf P}\|_*)^{-2}
\end{equation}
hold for all nonzero  graph signals $\x$ on the graph $\G$.
\end{corollary}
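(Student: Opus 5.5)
The plan is to derive Corollary \ref{uncertainty.cor2} directly from Corollary \ref{uncertainty.cor} together with the definition \eqref{Puncertainty.def} of $\|{\bf P}\|_*$. Fix a nonzero graph signal ${\bf x}$ and set $Y={\rm supp}\ {\bf x}$ and $\Omega={\rm supp}\ \widehat{\bf x}$.

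First, I will check that both sets are nonempty, so that the pair $(Y,\Omega)$ is admissible in the supremum \eqref{Puncertainty.def}. Since ${\bf x}\ne {\bf 0}$, clearly $Y\ne\emptyset$. By the reconstruction formula ${\bf x}=\sum_{m=1}^M \widehat{\bf x}(m)$, at least one frequency component $\widehat{\bf x}(m)$ is nonzero, so $\Omega\ne\emptyset$. Moreover, for each $m\in\Omega$ the component $\widehat{\bf x}(m)$ is a nonzero vector of $W_m$. Hence $\dim W_m\ge 1$, and therefore $\dim\Omega=\dim\oplus_{m\in\Omega}W_m\ge \#\Omega\ge 1$. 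In particular the quantity $(\#Y\times\dim\Omega)^{-1/2}$ is a finite positive number.

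Second, Corollary \ref{uncertainty.cor} gives $\alpha(Y,\Omega)\ge 1$. So the pair $(Y,\Omega)$ belongs to the index set over which the supremum in \eqref{Puncertainty.def} is taken, and consequently
\begin{equation*}
(\#Y\times \dim\Omega)^{-1/2}\le \|{\bf P}\|_*.
\end{equation*}
Both sides are positive; the left side is positive as just noted, and $\|{\bf P}\|_*\ge N^{-1/2}>0$ by \eqref{Puncertainty.eq1}. Squaring and inverting therefore yields $\#({\rm supp}\ {\bf x})\times\dim({\rm supp}\ \widehat{\bf x})\ge (\|{\bf P}\|_*)^{-2}$, which is \eqref{uncertain.support}.

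No step presents a real obstacle. The only points needing care are the nonemptiness of $Y$ and $\Omega$ and the positivity of $\|{\bf P}\|_*$, which is what allows inverting the inequality. If one wants the supremum in \eqref{Puncertainty.def} to be well-defined independently of ${\bf x}$, note that the index set is nonempty: for any vertex $v$, the pair $(V,\{1,\ldots,M\})$ satisfies $\alpha\ge\big(\sum_m\|\widehat{\pmb\delta}_v(m)\|_2^2\big)^{1/2}=1$.
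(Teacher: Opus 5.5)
Your proposal is correct and is the paper's argument: the paper derives this corollary in one line from Corollary~\ref{uncertainty.cor}, which puts the pair $({\rm supp}\ {\bf x}, {\rm supp}\ \widehat{\bf x})$ in the index set of the supremum defining $\|{\bf P}\|_*$; inverting the resulting inequality then gives the bound. Your checks of nonemptiness and positivity spell out what the paper leaves implicit. They are in fact automatic, since any pair with an empty set has $\alpha=0<1$.
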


We remark that
the  lower bound  in \eqref{uncertain.support} is achieved
for delta signals on  complete graphs 
by \eqref{uncertainty.example.eq2},
and also for delta signals on circulant graphs 
by \eqref{circulant.uncertainty.eq1}, \eqref{circulant.uncertainty.eq2}, \eqref{circulant.uncertainty.eq3} and \eqref{circulant.uncertainty.eq4}. 
Similar low bound estimates 
on supports of a nonzero graph signal and its Fourier transform
to the one in Corollary \ref{uncertainty.cor2} can be found in \cite{agaskar2012, elad2002, pasdeloup2019,  Perraudin2018, Rebrova23, Stankovic20, Teke17, tsitvero2016}
with the GFT being defined by \eqref{GFT.distinct}
in the single graph shift setting, i.e., $d=1$.  

We finish this appendix with the proof of Theorem \ref{uncertainty.thm}.

\begin{proof}
[Proof of Theorem \ref{uncertainty.thm}]
Take a graph signal ${\bf x}=[x(v)]_{v\in V}$ on the graph ${\mathcal G}$. 
Then  we have
\begin{eqnarray*}  \|{\bf x}\|_2 & \hskip-0.08in = & \hskip-0.08in  \|\widehat\x\|_F \le 
\Big(\sum_{m\not\in\Omega}\|\widehat \x(m)\|_2^2\Big)^{1/2}+
\Big(\sum_{m\in\Omega}\|\widehat \x(m)\|_2^2\Big)^{1/2} 
\nonumber\\
& \hskip-0.08in \le  & \hskip-0.08in \Big(\sum_{m\not\in\Omega} \|\widehat \x(m)\|_2^2\Big)^{1/2}+
\Big(\sum_{m\in\Omega}\Big( \sum_{v\in Y} |x(v)| \big \|\widehat {{\pmb \delta}_v}(m)\big\|_2+ \sum_{v\not\in Y} |x(v)|  \big \|\widehat {{\pmb \delta}_v}(m)\big\|_2\Big)^2\Big)^{1/2}\nonumber\\
    & \hskip-0.08in\le  & \hskip-0.08in
    \Big(\sum_{m\not\in\Omega} \|\widehat \x(m)\|_2^2\Big)^{1/2}
  +
\Big\{\sum_{m\in\Omega}\Big( \|{\bf x}\|_2 \Big(\sum_{v\in Y} 
\big \|\widehat {{\pmb \delta}_v}(m)\big\|_2^2\Big)^{1/2}\nonumber\\
& & \qquad + \Big(\sum_{v\not\in Y} |x(v)|^2\Big)^{1/2} \Big(\sum_{v\not\in Y} \big \|\widehat {{\pmb \delta}_v}(m)\big\|_2^2\Big)^{1/2}\Big)^2\Big\}^{1/2}\nonumber\\
     & \hskip-0.08in\le  & \hskip-0.08in 
     \Big(\sum_{m\not\in\Omega} \|\widehat \x(m)\|_2^2\Big)^{1/2}
+     \|{\bf x}\|_2   \Big(\sum_{m\in \Omega} \sum_{v\in Y} 
\big \|\widehat {{\pmb \delta}_v}(m)\big\|_2^2\Big)^{1/2}\nonumber\\
& & + \Big(\sum_{v\not\in Y}  |x(v)|^2\Big)^{1/2}
\Big (\sum_{m\in\Omega} \sum_{v\not\in Y}  \big \|\widehat {{\pmb \delta}_v}(m)\big\|_2^2\Big)^{1/2}\\
 & \hskip-0.08in\le  & \hskip-0.08in \alpha(Y, \Omega)\|{\bf x}\|_2+ \Big(\sum_{m\not\in\Omega} \|\widehat \x(m)\|_2^2\Big)^{1/2}   
+ \big(\dim \Omega-  (\alpha(Y, \Omega))^2\big)^{1/2} \Big(\sum_{v\not\in Y}  |x(v)|^2\Big)^{1/2},
\end{eqnarray*}
where the equality is true by the Parseval identity \eqref{parsevelidentity}, the first, third and fourth inequalities are obtained by applying some triangle inequalities, the second equality holds by the representation $\widehat \x=\sum_{v\in V}x(v) \widehat {\pmb \delta}_v$, and the last inequality
follows from  \eqref{uncertainty.thm.eq1} and the observation
$$\sum_{m\in\Omega} \sum_{v\not\in Y} \big \|\widehat {{\pmb \delta}_v}(m)\big\|_2^2
  = \sum_{m\in \Omega} \sum_{v\in V}  \big \|\widehat {{\pmb \delta}_v}(m)\big\|_2^2-
\sum_{m\in \Omega} \sum_{v\in Y}  \big \|\widehat {{\pmb \delta}_v}(m)\big\|_2^2 =   \dim (\Omega)-  (\alpha(Y, \Omega))^2.$$
Subtracting $\alpha(Y, \Omega)\|{\bf x}\|_2$ and then dividing
$1-\alpha(Y, \Omega)$ at both sides of the established inequality yields the desired estimate \eqref{uncertainty.thm.eq2}.
\end{proof}

\end{appendix}

{\bf Acknowledgement}:\  
The authors acknowledge the assistance of Microsoft Copilot to provide phrasing refinements and typesetting suggestions.

\end{document}